\newcommand{\C}{\mathbb C}
\newcommand{\R}{\mathbb R}
\newcommand{\Z}{\mathbb Z}
\newcommand{\Q}{\mathbb Q}
\newcommand{\T}{\mathbb T}
\newcommand{\F}{\mathbb F}
\newcommand{\Flim}{\F_{\rm lim}} 
\newcommand{\tbF}{\widetilde{\mathbb{F}}}  
\newcommand{\hbF}{\hat{\mathbb F}} 
\newcommand{\bL}{\mathbb L}
\newcommand{\Proj}{\mathbb P}
\newcommand{\G}{\mathbb G} 
\newcommand{\D}{\mathbb D} 
\newcommand{\K}{\mathbb K} 
\newcommand{\HH}{\mathbb H} 
\newcommand{\U}{\mathbb U} 
\newcommand{\seminf}{$\frac{\infty}{2}$} 
\newcommand{\ev}{\operatorname{ev}}
\newcommand{\age}{\operatorname{age}}
\newcommand{\Hom}{\operatorname{Hom}}
\newcommand{\End}{\operatorname{End}}
\newcommand{\Pic}{\operatorname{Pic}}
\newcommand{\Ker}{\operatorname{Ker}}
\newcommand{\Image}{\operatorname{Im}}
\newcommand{\Boxop}{\operatorname{Box}}
\newcommand{\id}{\operatorname{id}}
\newcommand{\Grading}{\operatorname{\mathsf{Gr}}}
\newcommand{\Res}{\operatorname{Res}}
\newcommand{\rank}{\operatorname{rank}}
\newcommand{\unit}{\operatorname{\boldsymbol{1}}}
\newcommand{\Span}{\operatorname{Span}}
\newcommand{\Tr}{\operatorname{Tr}}
\newcommand{\Eff}{\operatorname{Eff}}
\newcommand{\Mir}{\operatorname{Mir}} 
\newcommand{\ch}{\operatorname{ch}} 
\newcommand{\tch}{\widetilde{\operatorname{ch}}} 
\newcommand{\Td}{\operatorname{Td}} 
\newcommand{\tTd}{\widetilde{\operatorname{Td}}}
\newcommand{\codim}{\operatorname{codim}} 
\newcommand{\Tor}{\operatorname{Tor}} 
\newcommand{\pr}{\operatorname{pr}} 
\newcommand{\Vol}{\operatorname{Vol}} 
\newcommand{\crit}{\operatorname{cr}} 
\newcommand{\Hess}{\operatorname{Hess}} 
\newcommand{\cl}{\operatorname{cl}}
\newcommand{\Aut}{\operatorname{Aut}} 
\newcommand{\Gr}{\operatorname{Gr}} 
\newcommand{\inv}{\operatorname{inv}}
\newcommand{\Log}{\operatorname{Log}} 
\newcommand{\pt}{\operatorname{pt}} 
\newcommand{\Lie}{\operatorname{Lie}} 
\newcommand{\bN}{\boldsymbol{N}} 
\newcommand{\bs}{\boldsymbol{s}} 
\newcommand{\sfT}{\mathsf{T}} 
\newcommand{\sfH}{\mathsf{H}} 
\newcommand{\cF}{\mathcal{F}}
\newcommand{\cA}{\mathcal{A}}
\newcommand{\cU}{\mathcal{U}}
\newcommand{\cO}{\mathcal{O}}
\newcommand{\cX}{\mathcal{X}}
\newcommand{\cH}{\mathcal{H}}
\newcommand{\tcH}{\widetilde{\mathcal{H}}}
\newcommand{\hcH}{\widehat{\mathcal{H}}}
\newcommand{\cM}{\mathcal{M}}
\newcommand{\cR}{\mathcal{R}}
\newcommand{\cV}{\mathcal{V}}
\newcommand{\cQ}{\mathcal{Q}}  
\newcommand{\cI}{\mathcal{I}}
\newcommand{\cJ}{\mathcal{J}} 
\newcommand{\cC}{\mathcal{C}}  
\newcommand{\cK}{\mathcal{K}} 
\newcommand{\tcF}{\widetilde{\mathcal{F}}}
\newcommand{\cMo}{\mathcal{M}^{\rm o}}
\newcommand{\cRz}{\mathcal{R}^{(0)}}
\newcommand{\hatnabla}{\widehat{\nabla}}
\newcommand{\hD}{\hat{D}}
\newcommand{\hK}{\widehat{K}}
\newcommand{\tC}{\widetilde{C}}
\newcommand{\hrho}{\hat{\rho}} 
\newcommand{\hGamma}{\widehat{\Gamma}}
\newcommand{\frD}{\mathfrak{D}}
\newcommand{\fra}{\mathfrak{a}}
\newcommand{\gl}{\mathfrak{gl}}
\newcommand{\ov}{\overline}
\newcommand{\oi}{{\overline{\imath}}}
\newcommand{\oj}{{\overline{\jmath}}}
\newcommand{\iu}{\sqrt{-1}} 
\newcommand{\Yo}{Y^{\rm o}} 
\newcommand{\ceil}[1]{\lceil #1\rceil}
\newcommand{\floor}[1]{\lfloor #1 \rfloor}
\newtheorem{theorem}{Theorem}[section]
\newtheorem{lemma}[theorem]{Lemma}
\newtheorem{proposition}[theorem]{Proposition}
\newtheorem{proposition-conjecture}[theorem]{Proposition-Conjecture}
\newtheorem{definition-proposition}[theorem]{Definition-Proposition} 
\newtheorem{conjecture}[theorem]{Conjecture}
\theoremstyle{definition}
\newtheorem{definition}[theorem]{Definition}
\newtheorem{remark}[theorem]{Remark}
\newtheorem{example}[theorem]{Example}
\def\pair#1#2{\langle #1,#2\rangle}
\def\parfrac#1#2{\frac{\partial{#1}}{\partial #2}}
\def\corr#1{\left\langle #1 \right\rangle}
\def\del#1{#1} 
\begin{document}

\title{Real and integral structures 
in quantum cohomology I: toric orbifolds} 
\author{Hiroshi Iritani}
\address{Faculty of Mathematics, Kyushu University, 6-10-1, 
Hakozaki, Higashiku, Fukuoka, 812-8581, Japan.}
\email{iritani@math.kyushu-u.ac.jp}
\address{Department of Mathematics, Imperial College London, 
Huxley Building, 180, Queen's Gate, London, 
SW7 2AZ, United Kingdom.}
\email{h.iritani@imperial.ac.uk}  
\begin{abstract}
We study real and integral structures in the space of 
solutions to the quantum differential equations. 
First we show that, under mild conditions, any real structure in 
orbifold quantum cohomology yields a pure and polarized $tt^*$-geometry 
near the large radius limit. 
Secondly, we use mirror symmetry to calculate the ``most natural" 
integral structure in quantum cohomology of toric orbifolds.  
We show that the integral structure 
pulled back from the singularity B-model 
is described only in terms of topological data in the A-model; 
$K$-group and a characteristic class.  
Using integral structures, 
we give a natural explanation why the quantum parameter 
should specialize to a root of unity 
in Ruan's crepant resolution conjecture. 
\end{abstract} 
\maketitle 
\tableofcontents

\section{Introduction} 
Quantum cohomology is a family of 
commutative algebra structures  
$(H^*(X,\C),\circ_\tau)$ on the cohomology 
parametrized by $\tau\in H^*(X,\C)$. 
The structure constants of 
the quantum product $\circ_\tau$ are 
given by power series\footnote
{More precisely, Fourier series 
in the $H^2$-part of $\tau$ and 
power series in the other part of $\tau$. 
} in $\tau$ whose coefficients are genus zero 
Gromov-Witten invariants. 
The real or integral structures on quantum cohomology 
in the usual sense --- the subspaces $H^*(X,\R)$ or 
$H^*(X,\Z)$ of $H^*(X,\C)$ --- 
are not the subject of the present paper. 
We will study hidden real or integral structures which 
lie in the space of solutions to 
\emph{quantum differential equations}.  

Our study of real or integral structures in quantum 
cohomology is motivated by mirror symmetry. 
Classical mirror symmetry for Calabi-Yau manifolds 
states that the Gromov-Witten theory (A-model) 
of a Calabi-Yau manifold $X$ is equivalent to the 
Hodge theory (B-model) of the mirror dual Calabi-Yau $X^\vee$. 
Small quantum cohomology of $X$ 
defines the \emph{A-model variation 
of Hodge structures} (henceforth A-model VHS) on 
$\bigoplus_{p} H^{p,p}(X)$ \cite{morrison-mathaspects,cox-katz}. 
On the other hand, 
the deformation of complex structures of $X^\vee$ 
also defines a variation of Hodge structures 
(B-model VHS) on $H^n(X^\vee)$. 
Mathematically, mirror symmetry can be formulated as 
an isomorphism between the A-model VHS of $X$ and 
the B-model VHS of $X^\vee$. 
While the B-model VHS is naturally equipped with 
the integral local system $H^n(X^\vee,\Z)$, 
the A-model VHS seems to lack such integral 
structures. Then we are led to the question: 
what is the natural integral structure 
on the A-model VHS?  
Our calculation for the toric orbifolds suggests  
that the $K$-group of $X$ should give the 
integral local system in the A-model VHS. 

\vspace{5pt} 
\noindent
{\bf Quantum cohomology as a VHS.}
In this paper, we use the language of 
\emph{semi-infinite variation of Hodge structures} 
due to Barannikov \cite{barannikov-qpI,barannikov-proj} 
to include non Calabi-Yau case in our theory.
We will briefly explain 
how this arises from quantum cohomology.  
It is well-known that 
quantum cohomology associates a one parameter 
family of flat connections, called Dubrovin connection,  
on the trivial vector bundle $H^*(X)\times H^*(X)
\to H^*(X)$: 
\begin{align*}
\nabla_i = \parfrac{}{t^i} + \frac{1}{z} \phi_i \circ_\tau.  
\end{align*} 
Here, $\{\phi_i\}$ is a basis of $H^*(X)$, 
$\{t^i\}$ is a linear co-ordinate system 
on $H^*(X)$ dual to $\{\phi_i\}$ 
and $z\in \C^*$ is a parameter. 
Let $L(\tau,z)$ be the fundamental 
solution to the quantum differential equation 
$\nabla_i L(\tau,z) =0$ 
given by the gravitational descendants 
(see (\ref{eq:fundamentalsol_L})).   
Here, $L(\tau,z)$ is an $\End(H^*(X))$-valued function 
in $(\tau,z)\in H^*(X)\times \C^*$. 
Following Coates-Givental \cite{coates-givental}, 
we introduce an infinite dimensional vector space $\cH^X$ by 
\[
\cH^X := H^*(X) \otimes \C\{z,z^{-1}\}, 
\]
where $\C\{z,z^{-1}\}$ denotes the space of 
holomorphic functions on $\C^*$. 
Via the correspondence $\cH^X \ni v(z) 
\mapsto L(\tau,z)v(z)$, we can think of $\cH^X$ 
as the space of flat sections of the Dubrovin connection. 
The fundamental solution $L(\tau,z)$ 
defines the family of ``semi-infinite" subspaces 
of $\cH^X$: 
\[
\F_\tau := L(\tau,z)^{-1}(H^*(X)\otimes \C\{z\}) \subset \cH^X, \quad 
\tau \in H^*(X), 
\]
where $\C\{z\}$ denotes the space of holomorphic functions on $\C$. 
The semi-infinite flag $\cdots\subset 
z^{-1} \F_\tau \subset \F_\tau \subset z \F_\tau \subset \cdots$ 
satisfies properties 
analogous to the usual finite dimensional VHS: 
\begin{alignat}{2}
\label{eq:introd_Griffiths}
&\parfrac{}{t^i} \F_\tau \subset z^{-1} \F_\tau 
\quad  
&& \text{(Griffiths Transversality)}   \\
\label{eq:introd_bilinearrel} 
&(\F_\tau,\F_\tau)_{\cH^X} \subset \C\{z\} 
\quad 
&& \text{(Bilinear Relations)} 
\end{alignat} 
where $(\alpha,\beta)_{\cH^X} = 
\int_X \alpha(-z) \cup \beta(z)$ for $\alpha,\beta\in \cH^X$.  
We call this family of subspaces the \emph{semi-infinite
variation of Hodge structures} or \seminf VHS. 

\vspace{5pt} 
\noindent
{\bf Real and integral structures.}
The free $\C\{z,z^{-1}\}$-module 
$\cH^X$ can be regarded as 
the space of global sections 
of the trivial vector bundle $\sfH^X$ 
over the space of parameters $z$: 
\[
\cH^X = \Gamma(\C^*, \sfH^X), \quad 
\sfH^X = H^*(X)\times \C^* \to \C^*. 
\] 
Due to the existence of the grading in quantum cohomology, 
Dubrovin connection $\nabla$ can 
be extended in the direction of the parameter $z$. 
The extended flat connection induces, 
via the fundamental solution $L(\tau,z)$, 
the following flat connection $\hatnabla_{z\partial_z}$ 
on $\sfH^X$:   
\[
\hatnabla_{z\partial_z} = z\partial_z + \mu - \frac{\rho}{z} 
\in \End_\C(\cH^X),  
\quad \rho = c_1(X), \quad 
\mu \text{ is given in (\ref{eq:def_mu}).} 
\]
A \emph{real or integral structure} 
(Definition \ref{def:realintstr}) 
on the \seminf VHS is given by the choice of 
a real or integral local system 
underlying the flat bundle  
$(\sfH^X,\hatnabla_{z\partial_z})$. 
A real structure defines a real subbundle 
of $\sfH^X|_{S^1}$ and an involution 
$\kappa_\cH \colon \cH^X \to \cH^X$ satisfying 
$\kappa_\cH(z \alpha) = z^{-1} \kappa_\cH(\alpha)$. 
The involution $\kappa_\cH$ coincides, along $|z|=1$,  
with the complex conjugation of sections 
with respect to the real subbundle. 
For a nice choice of real structures, 
we expect the following properties:  
\begin{alignat}{2}
\label{eq:introd_hodgedecomp} 
&\F_\tau \oplus z^{-1} \kappa_\cH(\F_\tau) = \cH^X, 
\quad
&&\text{(Hodge Decomposition)} \\
\label{eq:introd_bilinearineq}  
&(\kappa_\cH(\alpha),\alpha)_{\cH^X}>0, \quad 
\alpha\in \F_\tau \cap \kappa_{\cH}(\F_\tau)
\setminus\{0\}  
\quad 
&&\text{(Bilinear Inequality)}  
\end{alignat} 
These properties 
(\ref{eq:introd_Griffiths}), (\ref{eq:introd_bilinearrel}), 
(\ref{eq:introd_hodgedecomp}), (\ref{eq:introd_bilinearineq}) 
of \seminf VHS actually reduce to 
the corresponding properties of a finite dimensional 
VHS in the conformal limit 
(see Section \ref{subsec:periods_conformallimit}). 
We call the properties (\ref{eq:introd_hodgedecomp}) and 
(\ref{eq:introd_bilinearineq}) 
\emph{pure} and \emph{polarized} respectively.
First we show that (\ref{eq:introd_hodgedecomp}), 
(\ref{eq:introd_bilinearineq}) indeed hold  
near the ``large radius limit" 
\emph{i.e.} $\tau= -x \omega$, $\Re(x)\to\infty$ 
for some K\"{a}hler class $\omega$, 
under reasonable assumptions on the real 
structures: 

\begin{theorem}[Theorem \ref{thm:pure_polarized}] 
\label{thm:introd_pure_polarized} 
Assume that a real structure is invariant 
under the monodromy (Galois) transformations given by  
$G^{\cH}(\xi), \xi\in H^2(X,\Z)$ 
(see Equation (\ref{eq:GaloisH}) and 
Proposition \ref{prop:char_A_real_int_str}). 
If the condition (\ref{eq:kappa_induces_Inv}) 
(which is empty when $X$ is a manifold) holds,  
$\F_\tau$ is pure (\ref{eq:introd_hodgedecomp}) 
near the large radius limit. 
If moreover the condition 
(\ref{eq:leadingterm_kappaV}) holds and 
$H^*(X) = \bigoplus_p H^{p,p}(X)$, 
$\F_\tau$ is polarized (\ref{eq:introd_bilinearineq}) 
near the large radius limit.  
\end{theorem}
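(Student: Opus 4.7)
The plan is to conjugate the involution $\kappa_\cH$ through the fundamental solution $L(\tau,z)$ and extract its leading form near the large radius limit, where both purity and polarization reduce to classical statements on $H^*(X)$.

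First I would transfer the problem to a fixed frame. Set $\tilde\kappa_\tau := L(\tau,z)\circ\kappa_\cH\circ L(\tau,z)^{-1}$, an anti-$z$-linear involution of $H^*(X)\otimes\C\{z,z^{-1}\}$. In this frame $\F_\tau$ becomes the constant subspace $H^*(X)\otimes\C\{z\}$, so the conditions (\ref{eq:introd_hodgedecomp}) and (\ref{eq:introd_bilinearineq}) translate into statements about $\tilde\kappa_\tau$ alone. Using the divisor equation, for $\tau=-x\omega$ with $\omega$ K\"ahler the fundamental solution factors as $L(\tau,z)=e^{-x\omega/z}\Phi(x,z)$, where $\Phi(x,z)$ approaches a prescribed endomorphism (the identity on the untwisted sector, a controlled shift on the inertia sectors) modulo terms exponentially small as $\Re(x)\to\infty$.

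Second I would use Galois invariance to extract the asymptotic form of $\tilde\kappa_\tau$. Since $G^\cH(\xi)$ implements the monodromy $\tau\mapsto\tau+2\pi\iu\xi$ of the extended Dubrovin connection, the hypothesis that $\kappa_\cH$ commutes with $G^\cH(\xi)$ descends $\tilde\kappa_\tau$ to a single-valued object on a punctured neighborhood of the large radius limit in $H^2(X,\C)/2\pi\iu H^2(X,\Z)$. Coupled with condition (\ref{eq:kappa_induces_Inv}) (which controls the orbifold inertia sectors and is vacuous for manifolds), this shows that $\tilde\kappa_\tau$ admits a well-defined leading term $\tilde\kappa_\infty$ at the large radius limit, a fixed anti-$z$-linear involution on $H^*(X)\otimes\C\{z,z^{-1}\}$.

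For purity, observe that $\tilde\kappa_\infty$ has leading $z$-weight zero, so
\[
H^*(X)\otimes\C\{z\}\ \oplus\ z^{-1}\tilde\kappa_\infty\bigl(H^*(X)\otimes\C\{z\}\bigr)\ =\ H^*(X)\otimes\C\{z,z^{-1}\},
\]
i.e.\ the decomposition (\ref{eq:introd_hodgedecomp}) holds in the limit. Transversality of a semi-infinite flag is an open condition on parameters, so the splitting persists on a punctured neighborhood of the large radius limit. For polarization, condition (\ref{eq:leadingterm_kappaV}) pins down the leading coefficient of $\tilde\kappa_\infty$, and the Hodge-type hypothesis $H^*(X)=\bigoplus_p H^{p,p}(X)$ forces the induced Hermitian pairing on $\F_\tau\cap\kappa_\cH(\F_\tau)$ to reduce, in the limit, to a positive multiple of the classical Hodge pairing on $H^*(X)$, which is positive definite. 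Openness of positive definiteness then extends the inequality to a neighborhood of the limit.

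The main obstacle I expect is the second step, namely producing the explicit leading term $\tilde\kappa_\infty$. The difficulty is disentangling the polynomial-in-$x$ behavior forced by Galois invariance from the exponential-in-$x/z$ behavior arising from the irregular singularity of the Dubrovin connection at $z=0$; in the orbifold setting the twisted-sector bookkeeping (where condition (\ref{eq:kappa_induces_Inv}) intervenes) adds a further layer. Once $\tilde\kappa_\infty$ is identified, the openness arguments for purity and polarization are routine.
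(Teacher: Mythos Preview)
Your overall scaffold (pass to a fixed frame, study a limiting involution, then invoke openness) matches the paper's, but the central step is wrong in a way that hides exactly the content of the theorem.

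The conjugated involution $\tilde\kappa_\tau$ does \emph{not} admit a fixed limit $\tilde\kappa_\infty$ as $\Re(x)\to\infty$. Using $\kappa_\cH\, e^{-x\omega/z}=e^{\bar x\omega/z}\kappa_\cH$ (a consequence of (\ref{eq:H2_imaginary})), one finds that in the fixed frame the involution is asymptotic to $e^{-2t\omega/z}\kappa_\cH$ with $t=\Re(x)$. Since $\omega$ is nilpotent this grows \emph{polynomially} in $t$; it never stabilizes to a single $z$-weight-zero operator. What the paper does instead is study the ``nilpotent orbit'' $e^{-x\omega/z}\F_{\rm lim}$ directly and, after a rescaling by $(2t)^{-\deg/2}$, reduce its purity to the statement that
\[
H^{\le n_v-k}(\cX_v)\cap e^{\omega}H^{\le n_v+k}(\cX_v)\ \longrightarrow\ H^{n_v-k}(\cX_v)
\]
is an isomorphism for each $v,k$. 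This is not automatic: it is proved via the Lefschetz $\mathfrak{sl}_2$-triple $(\fra,\fra^\dagger,h)$, specifically the fact that $e^{-\fra}e^{\fra^\dagger}$ interchanges the weight filtrations of $\fra$ and $\fra^\dagger$ (Hard Lefschetz). Your assertion that ``$\tilde\kappa_\infty$ has leading $z$-weight zero, so the decomposition holds'' skips precisely this input. Likewise for polarization: the limiting Hermitian form is computed (Proposition~\ref{prop:nilpotentorb_pure}) in terms of the primitive decomposition, and its positivity is the classical Hodge--Riemann bilinear inequality; the hypothesis $H^*=\bigoplus_p H^{p,p}$ enters only to make the sign $(-1)^{(p-q)/2}$ equal to $+1$.

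A second, smaller gap: your openness argument is not quite enough to pass from the nilpotent orbit to $\F_{-x\omega}$. Both depend on $t$, and the nilpotent orbit is only pure for $t\gg 0$, not at a fixed limit point. The paper controls this by writing the transition matrix as $Q_x^{-1}A_t\ov{Q}_x$ with $Q_x=\unit+O(e^{-\epsilon t})$, performing a Birkhoff factorization of the nilpotent-orbit matrix $A_t=B_tC_t$ (with $B_t,C_t$ of at most polynomial growth in $t$), and then showing the remaining factor $\unit+O(e^{-\epsilon t})$ admits a Birkhoff factorization with the same order estimate. That quantitative estimate, not bare openness, is what carries purity and polarization from the nilpotent orbit to the genuine Hodge structure.
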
 

In the theorem above, 
we allow $X$ to be an orbifold or 
a smooth Deligne-Mumford stack  
(see Theorem \ref{thm:pure_polarized} 
for a more precise statement). 
Given a nice real structure 
satisfying (\ref{eq:introd_hodgedecomp}) 
and (\ref{eq:introd_bilinearineq}),  
quantum cohomology will be endowed with \emph{$tt^*$-geometry}  
due to Cecotti-Vafa 
\cite{cecotti-vafa-top-antitop, 
cecotti-vafa-classification},  
which has also been developed by 
Dubrovin \cite{dubrovin-fusion} and 
Hertling \cite{hertling-tt*}. 
The family 
$\F_\tau \cap \kappa_\cH(\F_\tau)$ of finite 
dimensional Hermitian vector spaces 
(canonically identified with quantum cohomology) 
is equipped with a rich geometric structure, 
called Cecotti-Vafa structure (Proposition \ref{prop:CV-str}). 
$tt^*$-geometry also gives an example of 
a harmonic bundle or a twistor structure 
of Simpson \cite{simpson-mixedtwistor}.  
Closely related results 
have been shown in a more abstract setting for 
TERP structures in \cite{hertling-tt*, hertling-sevenheck} 
and the proof of 
Theorem \ref{thm:introd_pure_polarized} 
looks similar to them. 
In fact, when $X$ is Fano and 
the K\"{a}hler class $\omega$ is $c_1(X)$, 
the conclusions of Theorem \ref{thm:introd_pure_polarized} 
can be deduced from \cite[Theorem 7.3]{hertling-sevenheck}. 


\vspace{5pt}
\noindent
{\bf Integral structures for toric orbifolds.} 
In the case of toric orbifolds, 
we concretely calculate the integral structures 
in quantum cohomology  
corresponding to those in the mirrors. 
A mirror partner of a toric orbifold is given 
by the Landau-Ginzburg model, which consists 
of a family $\{Y_q\}_{q\in \cM}$ of algebraic tori  
and Laurent polynomials $W_q\colon Y_q \to \C$. 
In Section \ref{sec:intstr_via_toricmirrors}, 
we construct B-model \seminf VHS 
from the singularity defined by the Landau-Ginzburg model. 
This is underlain by a canonical integral local 
system formed by the relative cohomology groups 
$R_{\Z,(q,z)} = H^n(Y_q,\{\Re(W_q/z)\ll 0\}, \Z)$. 
Assuming mirror symmetry for toric orbifolds --- 
which will be shown in a 
forthcoming paper \cite{CCIT:toric} --- 
we show the following:  

\begin{theorem}[Theorem \ref{thm:pulledbackintstr}]
\label{thm:introd_pulledbackintstr} 
Let $\cX$ be a weak Fano toric orbifold
given by initial data satisfying 
the condition $\hrho \in \cl(\tilde{C}_\cX)$ 
in Section \ref{subsubsec:weakFano}. 
Assume that mirror symmetry conjecture 
in Section \ref{subsec:mirrorsym_toric} 
and the condition (A3) in Section 
\ref{subsec:Amodel_intstr} hold for $\cX$. 
The integral structure of quantum cohomology of $\cX$ 
pulled back from the Landau-Ginzburg mirror is given 
by the image of the $K$-group of topological 
orbifold vector bundles under the map 
$K(\cX)\to \Gamma(\widetilde{\C^*},\sfH^\cX)$ 
(denoted by $z^{-\mu}z^\rho\Psi$ in the main body of the text):  
\[
[V]\longmapsto 
z^{-\mu}z^\rho 
\frac{1}{(2\pi)^{n/2}} 
\hGamma_{\cX} \cup (2\pi\iu)^{\deg/2} \inv^*\tch([V]).  
\] 
Here the image lies 
in the space of (multi-valued) flat sections of 
$(\sfH^\cX,\hatnabla_{z\partial_z})$ 
and $\hGamma_\cX$ is a universal characteristic class 
of $T\cX$. 
(See Section \ref{subsec:Amodel_intstr} for the notation.)
Under this map, 
Mukai pairing on $K(\cX)$ induces  
the pairing $(\cdot,\cdot)_{\cH^\cX}$ on $\cH^\cX$.  
\end{theorem}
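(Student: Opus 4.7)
The strategy is to compute, on both sides of the mirror, a basis of flat sections explicit enough to be matched term by term. On the B-model side, the integral local system is given by Lefschetz thimbles $\Gamma\in H^n(Y_q,\{\Re(W_q/z)\ll 0\},\Z)$, and the associated flat sections of the B-model \seminf VHS are the oscillatory period integrals
\[
\Pi_\Gamma(q,z)\;=\;\frac{1}{(2\pi z)^{n/2}}\int_\Gamma e^{W_q/z}\,\Omega_q
\]
paired with elements of the Jacobian ring via residues. On the A-model side, flat sections of $(\sfH^\cX,\hatnabla_{z\partial_z})$ come from the fundamental solution $L(\tau,z)$, and the claim is that the two parametrizations are matched by the prescribed $K$-theoretic map. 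The plan is to produce a distinguished thimble on the B-model side, compute its period, identify the answer with the A-model expression for a distinguished $K$-class, and then propagate the identification across $K(\cX)$ using Galois/monodromy equivariance.

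First I would fix a convenient basis of thimbles adapted to the toric structure. Near the large radius limit the critical points of $W_q$ are in bijection with $T$-fixed points of $\cX$, and more generally one can build a $K$-theoretic basis by taking thimbles attached to toric invariant substacks. A clean starting point is the real positive thimble $\Gamma_0=(\R_{>0})^n$ in the Hori--Vafa presentation of $Y_q$ as an algebraic torus: here the integral converges and, after expanding the subdominant monomials of $W_q$ and performing Mellin transforms, the period becomes a Mellin--Barnes integral evaluating to a product of Gamma functions. This is the step where $\hGamma_\cX$ enters: denoting the toric divisors by $D_i$, the factor $\prod_i\Gamma(1+D_i/z)$ expands in $z^{-1}$ as the pullback of $\hGamma_\cX$ times an appropriate power of $z$, while the exponential factors assemble into $z^{\rho}\,z^{-\mu}$ together with a Chern character contribution identifying the corresponding $K$-theory class.

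The second step is to compare these B-model periods with the asymptotics of $L(\tau,z)^{-1}z^{-\mu}z^\rho(\hGamma_\cX\cup(2\pi\iu)^{\deg/2}\inv^*\tch(V))$ near the large radius limit. The mirror symmetry statement of Section \ref{subsec:mirrorsym_toric} identifies the B-model $I$-function with the A-model $J$-function after the mirror map, and the Mellin--Barnes expansion of $\Pi_{\Gamma_0}$ is (up to the universal $\hGamma_\cX$ prefactor coming from $\prod_i\Gamma(1+D_i/z)$) precisely this $I$-function evaluated against the structure sheaf of the origin. Since a flat section is determined by its leading asymptotic in any fixed sector, the pointwise matching upgrades to an identification of flat bundles. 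To see that the entire image of $K(\cX)$ is reached, I would use the Galois action $G^\cH(\xi)$ for $\xi\in H^2(\cX,\Z)$: on the A-model side it acts by tensoring with $\cO(\xi)$ in $K$-theory (via $\tch(\cO(\xi))=e^\xi$ and the shape of the formula), while on the B-model side it realizes the Picard--Lefschetz monodromy about the corresponding toric boundary divisor in $\cM$, and this equivariance propagates $\Gamma_0$ to a family of thimbles indexed by $K(\cX)$.

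Finally, to match the Mukai pairing with $(\cdot,\cdot)_{\cH^\cX}$, I would use the B-model intersection pairing between thimbles for $W$ and for $-W$: these pair to the topological intersection number of the underlying cycles, and the Gamma-function reflection identity $\Gamma(1+x)\Gamma(1-x)=\pi x/\sin(\pi x)$ together with the $\inv^*$ and $(2\pi\iu)^{\deg/2}$ factors in the formula converts the product of two copies of $\hGamma_\cX$ into the Todd class, reproducing the Hirzebruch--Riemann--Roch expression for $\chi(V,W)$. The main obstacle is the Mellin--Barnes/stationary-phase computation of $\Pi_{\Gamma_0}$: one has to choose the correct critical point, keep careful track of all factors of $2\pi$ and $\iu$, and identify the resulting product of Gamma functions with the Chern-root expansion of $\hGamma_\cX$; while technical, this is essentially explicit thanks to the toric form of the mirror. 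A secondary technical point is to verify compatibility with the orbifold condition (A3), where sectors indexed by the inertia stack enter and require matching twisted Chern characters with Mellin integrals over thimbles lying away from the identity sector.
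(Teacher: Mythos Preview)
Your strategy matches the paper's: compute the oscillatory integral over the real positive thimble $\Gamma_0$, identify the answer with a distinguished $K$-class, then propagate via Galois action. There are, however, two substantive gaps and one error worth flagging.

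First, the thimble $\Gamma_0$ corresponds to the structure sheaf $\cO_\cX$ of the whole stack, not to ``the structure sheaf of the origin.'' This matters because the propagation step then reads: Galois action by $\xi\in H^2(\cX,\Z)$ corresponds to $\otimes L_\xi^\vee$, so from $\cO_\cX$ you reach exactly the line bundles in $K(\cX)$, not all of $K(\cX)$. The paper closes this gap by invoking Borisov--Horja's result that $K(\cX)$ is generated by line bundles for toric orbifolds, which gives the inclusion $\cV^\cX_\Z\subset\tilde\cV^\cX_\Z$; then unimodularity on both sides (the B-model intersection pairing is unimodular, and (A3) says the Mukai pairing is) forces equality. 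Your reading of (A3) as a twisted-sector compatibility condition is off; its role is precisely this lattice-equality step.

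Second, the computational core is more delicate than a direct Mellin--Barnes expansion. The paper introduces Givental's equivariant perturbation $W^\lambda=W+\sum_i\lambda_i\log w_i$ and works with the equivariant $H$-function, whose components $H^\lambda_{\sigma,v}$ restricted to torus fixed points $(\sigma,v)\in I\cX^T$ are linearly independent for generic $\lambda$. The oscillatory integral is then expanded asymptotically in $q$ using a fixed-point-adapted coordinate chart (Lemma~4.15), and the coefficients $c_{\sigma,v}(\lambda)$ are read off by a moment-map argument that singles out a unique fixed point where the expansion dominates. In the non-equivariant limit these coefficients assemble into the localization formula for $\int_{I\cX}H(q,z)\Td_\cX$, giving the pairing with $\cO_\cX$. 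Your ``expand subdominant monomials and take Mellin transforms'' is morally this, but without the equivariant deformation the non-equivariant $H$-function components are not linearly independent (the cohomology classes $\ov D_i$ are nilpotent), so you cannot simply match term by term; some device to separate the contributions is needed, and the equivariant parameters are the paper's choice.
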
 

The integral structures given in 
Theorem \ref{thm:introd_pulledbackintstr} 
make sense for general symplectic orbifolds. 
Furthermore, the real structure induced from this integral structure 
satisfies the conditions in 
Theorem \ref{thm:introd_pure_polarized},  
so in particular yields positive definite $tt^*$ geometry 
on $\bigoplus_p H^{p,p}(X)$ near the large radius limit 
for arbitrary $X$ 
(Definition-Proposition \ref{def-prop:A-model_int}).

The connection between $K$-theory and 
quantum cohomology is compatible 
with the picture of 
\emph{homological mirror symmetry}. 
In string theory, 
there are two types of D-branes --- A-type and B-type ---  
and homological mirror symmetry 
predicts that the category of A-type D-branes 
on $X$ is equivalent to the category of B-type D-branes 
on the mirror $X^\vee$. 
In our case, vector bundles 
on a toric orbifold $\cX$ 
give B-type D-branes and 
Lefschetz thimbles in the Landau-Ginzburg mirror 
give A-type D-branes. 
Via oscillatory integrals, 
a Lefschetz thimble gives a flat section  
of the B-model \seminf VHS of the Landau-Ginzburg model. 
Thus by homological mirror symmetry,   
a vector bundle on $\cX$ 
should also give a flat section 
of the A-model \seminf VHS or quantum cohomology.  
In the context of toric varieties and GKZ system 
associated to it, 
these viewpoints have been emphasized by 
Borisov-Horja \cite{borisov-horja-FM} and 
Hosono \cite{hosono}. 
Borisov-Horja \cite{borisov-horja-FM} identified 
the space of solutions 
to the GKZ system with the complexified 
$K$-group of a toric orbifold;  
A conjecture \cite[Conjecture 6.3]{hosono} 
of Hosono (stated in terms of hypergeometric functions) 
is compatible with the integral structure 
in Theorem \ref{thm:introd_pulledbackintstr}. 
The key step in the proof of 
Theorem \ref{thm:introd_pulledbackintstr} 
is a calculation of an oscillatory integral 
over the special Lefschetz thimble $\Gamma_0$ 
formed by real points. 
It turns out that the Lefschetz thimble 
$\Gamma_0$ corresponds to 
the structure sheaf $\cO_\cX$ 
on the toric orbifold $\cX$.

\vspace{5pt}
\noindent
{\bf Global study of quantum cohomology --- Ruan's conjecture.}
The study of integral structures 
will also be useful to understand the 
\emph{global K\"{a}hler moduli space} 
where quantum cohomology is analytically continued.  
The present project was greatly 
motivated by the joint work \cite{CIT:I} 
with Coates and Tseng, where 
we studied the crepant resolution conjecture 
for some toric examples. 
Ruan's crepant resolution conjecture states that 
when we have a crepant resolution $Y$ of 
a Gorenstein orbifold $\cX$, 
quantum cohomology of $\cX$ and $Y$ 
are related by analytic continuations. 
In the analytic continuation, 
some of the quantum parameters of $Y$ 
are conjectured to specialize to a root of unity 
at the large radius limit point of $\cX$. 
In \cite{CIT:I}, we found 
in some examples that 
the quantum cohomology \seminf VHS's of $\cX$ 
and $Y$ are related by an analytic continuation 
followed by a certain symplectic 
transformation $\U\colon \cH^\cX \to \cH^Y$. 
Incorporating integral structures into 
this picture, we suggest the picture that 
the symplectic transformation $\U$ is induced from 
a (conjectural) isomorphism of $K$-groups 
(McKay correspondence)
so that the following commutes: 
\[
\begin{CD}
K(\cX) @>{\cong}>> K(Y) \\
@V{z^{-\mu}z^{\rho}\Psi}VV  @V{z^{-\mu}z^\rho\Psi} VV  \\
\Gamma(\widetilde{\C^*},\sfH^\cX) @>{\U}>> 
\Gamma(\widetilde{\C^*},\sfH^Y).   
\end{CD} 
\]
Here the vertical maps relate  
the $K$-groups with the space of flat sections in 
quantum cohomology; for toric orbifolds, 
they should be the same as what is given 
in Theorem \ref{thm:introd_pulledbackintstr}.  
The bottom map is induced from 
$\U\colon \cH^\cX = \Gamma(\C^*,\sfH^\cX) \to 
\Gamma(\C^*,\sfH^Y) = \cH^Y$. 
This picture, under certain assumptions, 
gives us a natural explanation 
for the specialization to a root of unity. 
We will use ``integral periods" in the A-model 
to predict specialization values of some 
quantum parameters. 

This paper is organized as follows. 
In Section \ref{sec:generalVHS}, we introduce 
real and integral structures for 
a general graded \seminf VHS. This section 
owes much to Hertling \cite{hertling-tt*}. 
In Section \ref{sec:A-model}, we study 
integral structures in (orbifold) 
quantum cohomology and prove 
Theorem \ref{thm:introd_pure_polarized}. 
In Section \ref{sec:intstr_via_toricmirrors}, 
we calculate the integral structures pulled back 
from the mirror for toric orbifolds. 
In Section \ref{sec:exampleP1tt*}, we calculate 
the $tt^*$-geometry of $\Proj^1$. 
Our calculation recovers the physicists' result 
\cite{cecotti-vafa-exactsigma}. 
The aim here is to demonstrate that 
the Birkhoff factorization calculates 
$tt^*$-geometry perturbatively. 
In Section \ref{sec:integralperiods}, 
we discuss the role of integral structures 
in Ruan's conjecture. 

We remark that the convergence of 
the quantum cohomology is assumed throughout the paper. 
Also we consider only the even parity 
part of the cohomology, \emph{i.e.} 
$H^*(X)$ means  $\bigoplus_k H^{2k}(X)$. 
Note that the orbifold cohomology $H_{\rm orb}^*(\cX)$ 
(equipped with the Chen-Ruan's orbifold cup product)   
is denoted also by $H_{\rm CR}^*(\cX)$ 
in the literature. 

\vspace{5pt} 
\noindent 
{\bf Notes added in v3:} 
Since the first version of the paper was written, 
Katzarkov-Kontsevich-Pantev \cite{KKP} proposed 
a rational structure on a 
$\boldsymbol{\mathsf{nc}}$-Hodge structure 
defined by the same $\hGamma$-class independently, 
based on the calculation on 
quantum cohomology of $\Proj^n$. 
Here a $\boldsymbol{\mathsf{nc}}$-Hodge structure 
corresponds to a semi-infinite Hodge structure 
in this paper. 
They also imposed the condition that 
a rational structure is compatible with 
the Stokes structure \cite[Definition 2.5]{KKP}. 

The results on the $\hGamma$-integral structure 
and mirror symmetry for toric orbifolds in this paper 
were revised in the paper \cite{iritani-Int}.  
This revision does not contain the results on real structures, 
but contains more details on toric mirror symmetry. 

\vspace{5pt}  
\noindent  
{\bf Acknowledgments} 
Thanks are due to 
Tom Coates, 
Alessio Corti, 
Hsian-Hua Tseng 
for useful discussions and 
their encouragement. 
This project is motivated by the joint work 
\cite{CIT:I, CCIT:An, CCIT:toric} with them. 
The author is grateful to 
Martin Guest 
for useful discussions 
on the loop group and Iwasawa factorization. 
Guest also had the idea \cite{guest_durham, guest-qc_int} 
to incorporate real structures in quantum cohomology.  
The author would like to thank 
Jim Bryan, Claus Hertling, Yongbin Ruan 
and the referees of the paper 
for very valuable comments. 
This research was supported by 
Inoue Research Award for Young Scientists, 
Grant-in-Aid for Young Scientists (B), 
19740039, 2007 
and EPSRC(EP/E022162/1). 
\vspace{5pt} 
 
\noindent{\bf Notation}
\vspace{5pt}
 
\begin{tabular}{ll}
$\iu$ & imaginary unit $\iu^2=-1$ \\ 
$\cM$ & complex analytic space \\ 
$\D_0 \subset \C$ & disc $\{z\in \C\;;\; |z|\le 1\}$\\
$\D_\infty \subset \Proj^1\setminus \{0\}$ & 
disc $\{z\in \C\cup \{\infty\}=\Proj^1 \; ; \; |z| \ge 1\} $  \\
$(-)\colon \cM\times \C\to \cM\times \C$ & 
map defined by $(\tau,z)\mapsto (\tau,-z)$ \\
$\cX$ & smooth Deligne-Mumford stack  \\
$I\cX$ & inertia stack of $\cX$ \\
$\sfT=\{0\}\cup \sfT'$ & index set of inertia components; \\
$\inv \colon I\cX \to I\cX, \ \sfT \to \sfT$ 
& involution $(x,g)\mapsto (x,g^{-1})$ \\ 
$\iota_v$ & age of inertia component $v\in \sfT$ \\
$n$, $n_v$ & $\dim_\C \cX$, $\dim_\C \cX_v$ \\ 
$\C\{z,z^{-1}\}$, $\C\{z\}$, $\C\{z^{-1}\}$ 
& the space of holomorphic functions on $\C^*$, $\C$, 
$\Proj^1\setminus\{0\}$. 
\end{tabular}

\section{Real and integral structures on \seminf VHS} 
\label{sec:generalVHS}
We introduce real and integral structures for 
a semi-infinite variation of Hodge structures or \seminf VHS. 
We explain that a \seminf VHS with a real structure 
produces a Cecotti-Vafa structure if it is pure. 
A \seminf VHS was originally introduced by 
Barannikov \cite{barannikov-qpI, barannikov-proj}.  
A \seminf VHS with a real structure considered here corresponds to 
the TERP structure due to Hertling \cite{hertling-tt*} 
(see Remark \ref{rem:relation_hertling}). 
The exposition here largely follows 
the line of \cite{hertling-tt*, CIT:I}. 

\subsection{Definition}  
\label{subsec:realintstr_VHS_def} 
Let $\cM$ be a smooth complex analytic space. 
Let $\cO_\cM$ be the analytic structure sheaf on $\cM$. 
Let $\cO_{\cM}\{z\}:=\pi_*(\cO_{\cM\times \C})$ be the push-forward 
of $\cO_{\cM\times \C}$ by the projection 
$\pi\colon \cM\times \C\to \cM$. 
Here $z$ is a co-ordinate on the $\C$ factor. 
Similarly, we set $\cO_{\cM}\{z^{-1}\}:=
\pi_*(\cO_{\cM\times (\Proj^1\setminus\{0\})})$, 
$\cO_{\cM}\{z,z^{-1}\} := \pi_*(\cO_{\cM\times \C^*})$. 
Let $\C\{z\}$, $\C\{z,z^{-1}\}$ and 
$\C\{z^{-1}\}$ be the space of 
holomorphic functions on $\C$, $\C^*$ 
and $\Proj^1\setminus\{0\}$ respectively. 
Let $\Omega_\cM^1$ be the sheaf 
of holomorphic 1-forms on $\cM$ 
and $\Theta_\cM$ be the sheaf 
of holomorphic tangent vector fields on $\cM$.  

\begin{definition}[\cite{CIT:I}]  
A \emph{semi-infinite variation of Hodge structures}, or \seminf VHS 
is a locally free $\cO_\cM\{z\}$-module $\cF$ of rank $N$ 
endowed with a holomorphic flat connection 
\[ 
\nabla\colon \cF\rightarrow z^{-1} \cF \otimes \Omega^1_{\cM}
\]
and a perfect pairing 
\[
(\cdot,\cdot)_\cF\colon 
\cF\times \cF \to \cO_{\cM}\{z\} 
\]
satisfying 
\begin{align*}
\nabla_X (f s) &= (Xf)s + f \nabla_X s, \\
[\nabla_X, \nabla_Y] s & = \nabla_{[X,Y]} s, \\  
(s_1, f(z)s_2)_{\cF} &= (f(-z)s_1,s_2)_{\cF} = f(z)(s_1,s_2)_{\cF},  \\
(s_1, s_2)_{\cF} & = (s_2, s_1)_{\cF}|_{z\to -z}, \\
X (s_1,s_2)_\cF &= (\nabla_X s_1,s_2)_\cF + (s_1,\nabla_X s_2)_\cF 
\end{align*} 
for sections $s,s_1,s_2$ of $\cF$, 
$f\in \cO_\cM \{z\}$ and $X\in \Theta_\cM$. 
Here, $\nabla_X$ is a map from $\cF$ to $z^{-1}\cF$ 
and $z^{-1} \cF$ is regarded as a submodule 
of $\cF \otimes_{\cO_\cM\{z\}} \cO_\cM\{z,z^{-1}\}$. 
The first two properties are part of the 
definition of a flat connection. 
The pairing $(\cdot,\cdot)_\cF$ 
is perfect in the sense that 
it induces an isomorphism of the fiber 
$\cF_\tau$ at $\tau\in \cM$ with 
$\Hom_{\C\{z\}}(\cF_\tau,\C\{z\})$. 

A {\it graded \seminf VHS} is a \seminf VHS $\cF$ endowed with a 
$\C$-endomorphism $\Grading \colon \cF \to \cF$ and 
an Euler vector field $E\in H^0(\cM,\Theta_\cM)$ satisfying 
\begin{align*}
\Grading(f s_1) &= (2 (z\partial_z + E) f ) s_1 +  f \Grading(s_1), \\
[\Grading, \nabla_X] &= \nabla_{2[E,X]}, \quad X\in \Theta_\cM, \\
2(z\partial_z+E) (s_1,s_2)_{\cF} 
&= (\Grading(s_1),s_2)_{\cF} + 
(s_1,\Grading(s_2))_{\cF} - 2n(s_1,s_2)_{\cF} 
\end{align*} 
where $n\in \C$. 
\end{definition} 

A \seminf VHS is a semi-infinite analogue of the usual 
finite dimensional VHS without a real structure. 
The ``semi-infinite" flag
$\cdots \subset z \cF \subset \cF 
\subset z^{-1} \cF \subset z^{-2}\cF \subset \cdots$ of 
$\cF \otimes_{\cO_\cM\{z\}} \cO_{\cM}\{z,z^{-1}\}$ 
plays the role of the Hodge filtration. 
The flat connection $\nabla_X$ shifts this filtration by one --- 
this is an analogue of the Griffiths transversality. 

The structure of a graded \seminf VHS $\cF$ 
can be rephrased in terms of a locally free sheaf 
$\cRz$ over $\cM\times \C$ with 
a flat connection $\hatnabla$. 
Here $\cRz$ is a locally free 
$\cO_{\cM\times \C}$-module of rank $N$  
such that $\cF = \pi_* \cRz$. 
We define the meromorphic connection $\hatnabla$ 
on $\cRz$ 
\[
\hatnabla\colon 
\cRz \longrightarrow \frac{1}{z}\cRz \otimes \pi^* \Omega^1_{\cM} 
\oplus \cRz \frac{dz}{z^2}  
\]
by  
\begin{equation}
\label{eq:hatnabla}
\hatnabla s := \nabla s 
+ (\frac{1}{2} \Grading(s) - \nabla_E s - \frac{n}{2} s) 
\frac{dz}{z}    
\end{equation} 
for a section $s$ of $\cF = \pi_* \cRz$. 
It is easy to see that the conditions on 
$\Grading$ and $\nabla$ above imply that 
$\hatnabla$ is also flat.  
The pairing $(\cdot,\cdot)_{\cF}$ on $\cF$ 
induces a non-degenerate pairing on $\cRz$: 
\[
(\cdot,\cdot)_{\cRz} \colon 
(-)^*\cRz \otimes \cRz \rightarrow \cO_{\cM\times \C}. 
\]
where $(-)\colon \cM\times \C \to \cM\times \C$ is a map 
$(\tau,z) \mapsto (\tau,-z)$. 
This pairing is flat with respect to 
$\hatnabla$ on $\cRz$ and $(-)^*\hatnabla$ on $(-)^*\cRz$. 
Denote by $\cR$ the restriction of $\cRz$ to $\cM\times \C^*$. 
Since $\hatnabla_{z\partial_z}$ is regular outside $z=0$, 
$\cR$ gives a flat vector bundle on $\cM\times \C^*$.  
Let $R \to \cM\times \C^*$ be the 
$\C$-local system underlying the flat bundle $\cR$.  
This has a pairing $(\cdot,\cdot)_R \colon (-)^*R \otimes_\C R \to \C$ 
induced from $(\cdot,\cdot)_{\cRz}$.  
\begin{definition} 
\label{def:realintstr} 
Let $\cF$ be a graded \seminf VHS with $n\in \Z$. 
A \emph{real structure} on \seminf VHS 
is a sub $\R$-local system $R_\R\to \cM\times \C^*$ of $R$
such that $R = R_\R \oplus \iu R_\R$ 
and the pairing takes values in $\R$ on $R_\R$
\[
(\cdot,\cdot)_R \colon 
(-)^*R_\R \otimes_\R R_\R \rightarrow \R.  
\]
An \emph{integral structure} on \seminf VHS is 
a sub $\Z$-local system $R_\Z \to \cM\times \C^*$ of $R$   
such that $R= R_\Z \otimes_\Z \C$  
and the pairing takes values in $\Z$ on $R_\Z$
\[
(\cdot,\cdot)_R \colon 
(-)^* R_\Z \otimes R_\Z \rightarrow \Z 
\]
and is unimodular \emph{i.e.} induces an isomorphism 
$R_{\Z,(\tau,-z)} \cong \Hom(R_{\Z,(\tau,z)},\Z)$ for 
$(\tau,z)\in \cM\times \C^*$. 
\end{definition} 
\begin{remark}
\label{rem:relation_hertling} 
A graded \seminf VHS with a real structure defined here 
is almost equivalent to a TERP($n$) structure 
introduced by Hertling \cite{hertling-tt*}. 
The only difference is that the flat connection 
$\hatnabla$ in TERP($n$) structure is not assumed 
to arise from a grading operator $\Grading$ 
and an Euler vector field $E$.  
Therefore, a graded \seminf VHS gives a TERP structure, 
but the converse is not true in general. 
For the convenience of the reader, we give  
differences in convention between \cite{hertling-tt*} and us. 
Let $\tilde{\nabla}$, $\tilde{R}$, $\tilde{R}_\R$, 
$\tilde{P}\colon \tilde{R} \otimes (-)^*\tilde{R} \to \C$ 
denote the flat connection, $\C$-local system, sub $\R$-local system  
and a pairing appearing in \cite{hertling-tt*}. 
They are related to our $\hatnabla$, 
$R$, $R_\R$, $(\cdot,\cdot)_{\cRz}$ as 
\begin{align*}
&\tilde\nabla = \hatnabla + \frac{n}{2} \frac{dz}{z}, \\ 
&\tilde{R} = (-z)^{-\frac{n}{2}} R, \quad 
\tilde{R}_{\R} = (-z)^{-\frac{n}{2}} R_\R,    \\ 
&\tilde{P}(s_1, s_2) =  z^n (s_2,s_1)_{\cRz}. 
\end{align*} 
Then $\tilde{R}$ is the local system defined by $\tilde{\nabla}$, 
$\tilde P$ is $\tilde{\nabla}$-flat and 
\[
\tilde{P}(\tilde{R}_{\R,(\tau,z)}\times \tilde{R}_{\R,(\tau,-z)}) 
= z^n \left(
z^{-n/2}R_{\R,(\tau,-z)}, (-z)^{-n/2} R_{\R,(\tau,z)}
\right)_R  
\subset  \iu^n \R.  
\]
\end{remark} 

\subsection{Semi-infinite period map}
\label{subsec:semi-inf_period} 

\begin{definition}
For a graded \seminf VHS $\cF$,  
the \emph{spaces $\cH$, $\cV$ of multi-valued flat sections}  
are defined to be 
\begin{align*}
\cH &:= \{ s\in \Gamma(\widetilde{\cM} \times \C^*, \cR) \;;\; 
\nabla s =0 \}, \\
\cV &:= \{ s\in \Gamma((\cM\times \C^*)\sptilde,\cR) 
\;;\; \hatnabla s =0 \},  
\end{align*} 
where $\widetilde{\cM}$ and $(\cM\times \C^*)\sptilde$ 
are the universal 
covers of $\cM$ and $\cM\times \C^*$ respectively. 
The space $\cH$ is a free $\C\{z,z^{-1}\}$-module, where $\C\{z,z^{-1}\}$
is the space of entire functions on $\C^*$.  
The space $\cV$ is a finite dimensional $\C$-vector space
identified with the fiber of the local system $R$. 
The flat connection $\hatnabla$ and the 
pairing $(\cdot,\cdot)_{\cRz}$ on $\cRz$ 
induce an operator 
\[\hatnabla_{z\partial_z}\colon \cH\to \cH\] 
and a pairing 
\[
(\cdot,\cdot)_{\cH}\colon \cH\times \cH\rightarrow \C\{z,z^{-1}\}  
\]
satisfying 
\begin{align*}
(f(-z) s_1, s_2)_{\cH} &= (s_1, f(z) s_2)_{\cH} =
f(z)(s_1,s_2)_{\cH} \quad 
f(z) \in \C\{z,z^{-1}\}, \\
(s_1,s_2)_{\cH} &= (s_2,s_1)_{\cH}|_{z\mapsto -z} \\
z\partial_z(s_1,s_2)_{\cH}& = 
(\hatnabla_{z\partial_z} s_1, s_2)_{\cH} + 
(s_1,\hatnabla_{z\partial_z} s_2)_{\cH}. 
\end{align*} 
The free $\C\{z,z^{-1}\}$-module 
$\cH$ can be regarded as the space of 
global sections of a flat vector bundle $\sfH \to \C^*$. 
Then $\cV$ can be identified with 
the space of multi-valued flat sections of $\sfH$. 
A pairing $(\cdot,\cdot)_{\cV}\colon \cV\otimes_\C \cV \to \C$ 
is defined by 
\begin{equation}
\label{eq:pairing_def_V}
(s_1,s_2)_{\cV} := (s_1(\tau,e^{\pi \iu} z), s_2(\tau,z))_{R}
\end{equation} 
where $s_1(\tau,e^{\pi \iu}z)\in \cR_{(\tau,-z)}$ 
denote the parallel translation of 
$s_1(\tau,z)\in \cR_{(\tau,z)}$ along the counterclockwise path 
$[0,1]\ni \theta \mapsto e^{\pi \iu \theta}z$.  
\end{definition} 

A \seminf VHS $\cF$ on $\cM$ defines a map 
from $\widetilde{\cM}$ to the 
\emph{Segal-Wilson Grassmannian} of $\cH$. 
For $u \in \cF_\tau$ at $\tau\in \widetilde{\cM}$, 
there exists a unique flat section $s_u\in \cH$ such that 
$s_u(\tau) =u$. 
This defines an embedding of a fiber $\cF_\tau$ into $\cH$:   
\begin{equation}
\label{eq:embedding_J}
\cJ_\tau \colon \cF_\tau \longrightarrow \cH, \quad 
u\longmapsto s_u, \quad 
\tau \in \widetilde{\cM}.  
\end{equation} 
We call the image $\F_\tau\subset \cH$ of this embedding 
the \emph{semi-infinite Hodge structure}. 
This is a free $\C\{z\}$-module of rank $N$. 
The family $\{\F_\tau\subset \cH\}_{\tau\in \widetilde{\cM}}$ 
of subspaces gives the 
\emph{moving subspace realization of \seminf VHS}. 
Fix a $\C\{z,z^{-1}\}$-basis $e_1,\dots, e_N$ of $\cH$. 
Then the image of a local frame 
$s_1,\dots,s_N$ of $\cF$ over $\cO_\cM\{z\}$ 
under $\cJ_\tau$ can be written as 
$\cJ_\tau(s_j) = \sum_{i=1}^N e_i J_{ij}(\tau,z)$. 
When $z$ is restricted to $S^1=\{|z|=1\}$, 
the $N\times N$ matrix $(J_{ij}(\tau,z))$
defines an element of the smooth loop group 
$LGL_N(\C)=C^\infty(S^1,GL_N(\C))$. 
Another choice of a local basis of $\cF$ changes 
the matrix $(J_{ij}(\tau,z))$ by right multiplication by 
a matrix with entries in $\C\{z\}$. 
Thus the Hodge structure $\F_\tau$ gives a point 
$(J_{ij}(\tau,z))_{ij}$ in the smooth Segal-Wilson Grassmannian 
$\Gr_{\frac{\infty}{2}}(\cH) := LGL_N(\C)/L^+GL_N(\C)$ 
\cite{pressley-segal}. Here $L^+GL_N(\C)$ consists 
of smooth loops which are the boundary values 
of holomorphic maps $\{z\in \C\;;\; |z|<1\}\to GL_N(\C)$.  
The map 
\[
\widetilde{\cM} \ni \tau \longmapsto \F_\tau 
\in \Gr_{\frac{\infty}{2}}(\cH)
\]
is called the \emph{semi-infinite period map}. 
\begin{proposition}[{\cite[Proposition 2.9]{CIT:I}}]
\label{prop:property_VHS}
The semi-infinite period map $\tau\mapsto \F_\tau$ satisfies: 
\begin{itemize}
\item[(i)] $X \F_\tau \subset z^{-1} \F_\tau$ for $X\in \Theta_\cM$;   
\item[(ii)] $(\F_\tau,\F_\tau)_{\cH} \subset \C\{z\}$;  
\item[(iii)] $(\hatnabla_{z\partial_z}+E) \F_\tau 
\subset \F_\tau$. In particular, 
$\hatnabla_{z\partial_z}\F_\tau
\subset z^{-1}\F_\tau$.  
\end{itemize} 
The first property {\rm (ii)} 
is an analogue of Griffiths transversality 
and the second {\rm (iii)} is the Hodge-Riemann bilinear relation.
\end{proposition}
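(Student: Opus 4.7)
The plan is to fix $\tau_0\in\widetilde\cM$, choose a local $\cO_\cM\{z\}$-frame $s_1,\dots,s_N$ of $\cF$ near $\tau_0$, and set $\tilde s_j(\tau):=\cJ_\tau(s_j(\tau))\in\cH$. The moving subspace $\F_\tau$ is then locally the $\C\{z\}$-span of $\{\tilde s_j(\tau)\}_j$, and each of (i)--(iii) reduces to a computation on this frame. For (i), any local section of the family $\{\F_\tau\}$ has the form $\sigma(\tau)=\sum_j f_j(\tau,z)\tilde s_j(\tau)$, so Griffiths transversality reduces to showing $X\tilde s_j|_{\tau_0}\in z^{-1}\F_{\tau_0}$. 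Since $\tilde s_j(\tau)\in\cH$ is the unique $\nabla$-flat section whose value at $\tau$ is $s_j(\tau)$, differentiating the identity $\tilde s_j(\tau)(\tau)=s_j(\tau)$ in $\tau$ and using $\nabla\tilde s_j=0$ gives $(X\tilde s_j)|_{\tau_0}(\tau_0)=\nabla_X s_j|_{\tau_0}\in z^{-1}\cF_{\tau_0}$. Extending $\cJ_{\tau_0}$ $\C\{z,z^{-1}\}$-linearly then identifies $(X\tilde s_j)|_{\tau_0}$ with an element of $z^{-1}\F_{\tau_0}$.

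For (ii), I would exploit that $\cJ_\tau(u)|_{\{\tau\}\times\C^*}=u|_{\{\tau\}\times\C^*}$ for $u\in\cF_\tau$, so on the slice $\{\tau\}\times\C$ the section $\cJ_\tau(u)$ extends across $z=0$ to $u$ itself. Computing $(\cJ_\tau u_1,\cJ_\tau u_2)_\cH(z)$ via $(\cdot,\cdot)_{\cRz}$ restricted to that slice---where the pairing takes values in $\cO_{\cM\times\C}$ and hence extends across $z=0$---yields $(u_1,u_2)_{\cF,\tau}\in\C\{z\}$.

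For (iii), the definition (\ref{eq:hatnabla}) gives $\hatnabla_{z\partial_z}s=\tfrac{1}{2}\Grading(s)-\nabla_E s-\tfrac{n}{2}s$ for any local section $s$ of $\cF$. Flatness of $\hatnabla$ yields $[\hatnabla_{z\partial_z},\hatnabla_X]=0$, so $\hatnabla_{z\partial_z}$ preserves $\cH$; evaluating $\hatnabla_{z\partial_z}\tilde s_j$ at $\tau_0$ agrees with applying the formula to $s_j$ at $\tau_0$, giving $\cJ_{\tau_0}\bigl(\tfrac{1}{2}\Grading s_j-\nabla_E s_j-\tfrac{n}{2}s_j\bigr)|_{\tau_0}$. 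Adding $E\tilde s_j|_{\tau_0}=\cJ_{\tau_0}(\nabla_E s_j|_{\tau_0})$ from the proof of (i) cancels the $\nabla_E$ terms and leaves $\cJ_{\tau_0}\bigl(\tfrac{1}{2}\Grading s_j-\tfrac{n}{2}s_j\bigr)|_{\tau_0}\in\F_{\tau_0}$, using that $\Grading$ preserves $\cF$ (rather than only $z^{-1}\cF$). The final assertion $\hatnabla_{z\partial_z}\F_\tau\subset z^{-1}\F_\tau$ then follows by rewriting $\hatnabla_{z\partial_z}=(\hatnabla_{z\partial_z}+E)-E$ and invoking (i).

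The main bookkeeping challenge is to keep straight whether a given section is viewed in $\cF$, $\cRz$, or $\cH$, and to ensure that the pole of $\nabla$ at $z=0$ causes no trouble: elements of $\F_\tau$ extend across $z=0$ on the single slice $\{\tau\}\times\C$, even though the flat sections of $\cH$ that generate them generally do not extend across $z=0$ in the $\tau$-direction. Once these conventions are fixed, following the parallel discussion in \cite{CIT:I}, the verification is a direct calculation with no further subtleties.
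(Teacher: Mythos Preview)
Your proof is correct. The paper does not give its own proof of this proposition but simply cites \cite[Proposition~2.9]{CIT:I}; your direct verification via a local frame---reducing (i) to $\nabla_X s_j\in z^{-1}\cF$, (ii) to $(s_i,s_j)_\cF\in\cO_\cM\{z\}$, and (iii) to $\Grading$ preserving $\cF$---is exactly the standard argument one expects behind that citation. The one step worth making explicit is your claim that $\hatnabla_{z\partial_z}(\tilde s_j(\tau_0))=\cJ_{\tau_0}\bigl((\hatnabla_{z\partial_z}s_j)(\tau_0)\bigr)$: this holds because the Leibniz rule $\hatnabla_{z\partial_z}(fs)=(z\partial_z f)s+f\,\hatnabla_{z\partial_z}s$ (an immediate consequence of the defining formula~(\ref{eq:hatnabla}) and the axioms for $\Grading$) shows that $\hatnabla_{z\partial_z}$ is zeroth order in $\tau$, so its value on a section at $\tau_0$ depends only on the restriction to $\{\tau_0\}\times\C^*$, where $\tilde s_j(\tau_0)$ and $s_j$ agree.
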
 

In terms of the flat vector bundle $\sfH\to\C^*$ above 
(such that $\cH = \Gamma(\C^*,\sfH)$), 
the Hodge structure $\F_\tau\subset \cH$ 
is considered to be an extension 
of $\sfH$ to $\C$ such that the flat connection has 
a pole of Poincar\'{e} rank 1 at $z=0$. 

Real and integral structures on \seminf VHS define the following  
subspaces $\cH_\R$, $\cV_\R$, $\cV_\Z$: 
\begin{align*} 
\cH_\R &:= \{ s\in \cH \; ;\; s(\tau,z) \in R_{\R,(\tau,z)}, \ 
(\tau,z)\in \widetilde{\cM}\times S^1\} \\  
\cV_\R &:= \{ s\in \cV \; ;\; s(\tau,z) \in R_{\R,(\tau,z)}, \ 
(\tau,z)\in (\cM\times \C^*)\sptilde \} \\
\cV_\Z &:= \{ s\in \cV \; ;\; s(\tau,z) \in R_{\Z,(\tau,z)}, \ 
(\tau,z) \in (\cM\times \C^*)\sptilde \}   
\end{align*} 
Then $\cH_\R$ becomes a (not necessarily free) module 
over a ring $C^h(S^1,\R)$:
\[
C^h(S^1,\R):=\left\{
f(z) \in \C\{z,z^{-1}\} \;;\; f(z)\in \R \text{ if } |z|=1 \right \}.   
\] 
Clearly, we have $\C\{z,z^{-1}\} = C^h(S^1,\R)\oplus \iu C^h(S^1,\R)$. 
The involution $\kappa$ on $\C\{z,z^{-1}\}$ corresponding to the 
real form $C^h(S^1,\R)$ is given by  
\[
\kappa(f)(z) =\ov{f(\gamma(z))}, \quad f(z) \in \C\{z,z^{-1}\},  
\]  
where $\gamma(z) = 1/\ov{z}$ and the $\ov{\phantom{A}}$ 
in the right-hand side 
is the complex conjugate. 
We also have $\cH \cong \cH_\R \oplus \iu \cH_\R$.   
This real form $\cH_\R\subset \cH$ defines 
an involution $\kappa_\cH\colon \cH\rightarrow \cH$ such that 
\begin{align}
\label{eq:property_kappaH}
\begin{split}
\kappa_\cH (f s) &= \kappa(f)\kappa_{\cH}(s),  \\   
\kappa_\cH \hatnabla_{z\partial_z} &= - 
\hatnabla_{z\partial_z} \kappa_{\cH}, \\ 
\kappa((s_1,s_2)_{\cH}) &= (\kappa_{\cH}(s_1), 
\kappa_{\cH}(s_2))_{\cH}.   
\end{split}
\end{align} 
Similarly, we have $\cV= \cV_\R \oplus \iu \cV_\R$; 
we denote by $\kappa_{\cV}\colon \cV\to\cV$ 
the involution defined by the real structure $\cV_\R$.  
\del{
\begin{remark}
\label{rem:Cinfty} 
In the context of the smooth Grassmannian, 
it is more natural to work over $C^\infty(S^1,\C)$  
instead of $\C\{z,z^{-1}\}$. We put 
\[
\tcH := \cH \otimes_{\C\{z,z^{-1}\}} C^\infty(S^1,\C), \quad 
\tbF_\tau := \F_\tau \otimes_{\C\{z\}} \cO(\D_0),  
\]
where $\cO(\D_0)$ is a subspace of $C^\infty(S^1,\C)$ consisting of 
functions which are the boundary values of holomorphic functions 
on the interior of the disc $\D_0=\{z\in \C\;;\; |z|\le 1\}$. 
The involution $\kappa_\cH\colon \tcH\to \tcH$ and 
the real form $\tcH_\R$ is defined similarly 
and the same properties hold. 
Conversely, using the flat connection 
$\hatnabla_{z\partial_z}$ in the $z$-direction, 
one can recover $\F_\tau$ from $\tbF_\tau$ 
since flat sections of $\hatnabla_{z\partial_z}$ 
determine an extension of the bundle on $\D_0$ to $\C$. 
\end{remark}
}

\subsection{Pure and polarized \seminf VHS}
Following Hertling \cite{hertling-tt*}, 
we define an extension $\hK$ of $\cRz$ across $z=\infty$.  
The properties ``pure and polarized" for $\cF$ 
are defined in terms of this extension. 


\begin{definition}[Extension of $\cRz$ across $z=\infty$] 
\label{def:extension_infty}  
Let $\gamma\colon \cM\times \Proj^1 \rightarrow \cM\times \Proj^1$ 
be the map defined by $\gamma(\tau,z) = (\tau, 1/\ov{z})$. 
Let $\ov{\cM}$ denote the complex conjugate of $\cM$, \emph{i.e.} 
$\ov{\cM}$ is the same as $\cM$ as a real-analytic manifold 
but holomorphic functions on $\ov{\cM}$ are  
anti-holomorphic functions on $\cM$. 
The pull-back $\gamma^*\cRz$ of $\cRz$ 
has the structure of an 
$\cO_{\cM\times \ov{(\Proj^1\setminus\{0\})}}$-module. 
Thus its complex conjugate $\ov{\gamma^*\cRz}$ 
has the structure of 
an $\cO_{\ov{\cM}\times (\Proj^1\setminus \{0\})}$-module.  
Regarding $\cRz$ and $\ov{\gamma^*\cRz}$ 
as real-analytic vector bundles over $\cM\times \C$ 
and $\cM\times (\Proj^1\setminus\{0\})$, 
we glue them along $\cM\times \C^*$ by the fiberwise map  
\begin{equation}
\label{eq:gluingmap}
\begin{CD}
\cRz_{(\tau,z)} @>{\kappa}>> \ov{\cRz}_{(\tau,z)} 
@>{P(\gamma(z),z)}>> \ov{\cRz}_{(\tau,\gamma(z))} 
= \ov{\gamma^*\cRz}_{(\tau,z)},  
\end{CD} 
\quad z\in \C^*. 
\end{equation} 
Here the first map $\kappa$ is the real involution on 
$\cRz_{(\tau,z)}$ with respect to 
the real form $R_{\R,(\tau,z)}$ 
and the second map $P(\gamma(z),z)$ 
is the parallel translation for 
the flat connection $\hatnabla$ along the path 
$[0,1] \ni t \mapsto (1-t) z + t \gamma(z)$. 
Define $\hK\to \cM\times \Proj^1$ to 
be the real-analytic complex vector bundle 
obtained by gluing $\cRz$ and $\ov{\gamma^*\cRz}$ in this way. 
Notice that $\hK|_{\tau\times \Proj^1}$ 
has the structure of a holomorphic vector bundle 
since the gluing map (\ref{eq:gluingmap}) 
preserves the holomorphic structure 
in the $\Proj^1$-direction. 
\end{definition} 

\begin{definition} 
\label{def:pure} 
A graded \seminf VHS $\cF$ with a real structure 
is called \emph{pure} at $\tau\in \cM$ if 
$\hK|_{\{\tau\}\times \Proj^1}$ 
is trivial as a holomorphic vector bundle on $\Proj^1$. 
\end{definition} 

A pure graded \seminf VHS with a real structure here 
corresponds to the (trTERP) structure in \cite{hertling-tt*}. 
Here we follow the terminology in \cite{hertling-sevenheck}. 

We rephrase the purity in terms of the moving 
subspace realization
$\{\F_\tau \subset \cH\}$. 
When we identify $\cH$ with the space of global 
sections of $\hK|_{\{\tau\}\times \C^*} = 
\cR|_{\{\tau\}\times \C^*}$, 
it is easy to see that the involution 
$\kappa_\cH\colon \cH\to \cH$ is induced 
by the gluing map (\ref{eq:gluingmap}). 
Then $\F_\tau$ is identified with 
the space of holomorphic sections of $\hK|_{\{\tau\}\times \C^*}$ 
which can extend to $\{\tau\}\times \C$; 
$\kappa_\cH(\F_\tau)$ is identified with the space of holomorphic 
sections of $\hK|_{\{\tau\}\times \C^*}$ which can extend to 
$\{\tau\}\times (\Proj^1\setminus\{0\})$. 
\del{Similarly, 
$\tbF_\tau$ (resp. $\kappa_{\cH}(\tbF_\tau)$) is 
identified with the space of smooth sections 
of $\hK|_{\{\tau\}\times S^1}$ which can extend 
to holomorphic sections on $\D_0$ (resp. $\D_\infty$), 
where $\D_0 = \{z\in \C\;;\; |z|\le 1\}$, 
$\D_\infty = \{z\in \C\cup \{\infty\} 
= \Proj^1 \;;\; |z| \ge 1\}$ 
and $\tbF_\tau$ 
is the space in Remark \ref{rem:Cinfty}.} 


\begin{proposition}
\label{prop:purity_condition} 
A graded \seminf VHS $\cF$ with a real structure 
is pure at $\tau\in \cM$ if and only if 
one of the following natural maps is an isomorphism: 
\begin{align} 
\label{eq:FcapkappaF}
\F_\tau \cap \kappa_{\cH}(\F_\tau) &\longrightarrow   \F_\tau/z\F_\tau, \\
\label{eq:FcapHreal} 
(\F_\tau \cap \cH_\R)\otimes \C &\longrightarrow  \F_\tau/z\F_\tau, \\ 
\label{eq:FpluskappaF} 
\F_\tau \oplus z^{-1} \kappa_{\cH}(\F_\tau) & \longrightarrow  \cH. 
\end{align} 
\del{This holds also true when $\F_\tau$, $\cH$, $\cH_\R$ are replaced with 
$\tbF_\tau$, $\tcH$, $\tcH_\R$ in Remark \ref{rem:Cinfty}. } 
When $\cF$ is pure at some $\tau$, 
$\cH_\R$ is a free module over $C^h(S^1,\R)$.  
\end{proposition}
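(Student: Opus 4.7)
The plan is to translate each condition into a statement about the holomorphic vector bundle $\hK|_{\{\tau\}\times\Proj^1}$ of rank $N$ on $\Proj^1$, and then apply the Birkhoff--Grothendieck theorem. The text preceding the proposition already identifies $\F_\tau$ with the space of sections of $\hK|_{\{\tau\}\times\C^*}$ that extend to $\{\tau\}\times\C$, and $\kappa_\cH(\F_\tau)$ with those that extend to $\{\tau\}\times(\Proj^1\setminus\{0\})$. In particular $\F_\tau \cap \kappa_\cH(\F_\tau)$ coincides with $H^0(\Proj^1, \hK|_{\{\tau\}\times\Proj^1})$.

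I would write $\hK|_{\{\tau\}\times\Proj^1} \cong \bigoplus_{i=1}^{N} \cO_{\Proj^1}(a_i)$ with $a_i\in\Z$; purity is the condition $a_1=\cdots=a_N=0$. Choose a trivialization adapted to this splitting; then on the $i$-th summand, sections over $\C^*$ identify with $\C\{z,z^{-1}\}$, those extending across $z=0$ with $\C\{z\}$, and those extending across $z=\infty$ with $z^{a_i}\C\{z^{-1}\}$. A direct Laurent-tail computation gives two statements. First, the sum map $\C\{z\} \oplus z^{a_i-1}\C\{z^{-1}\} \to \C\{z,z^{-1}\}$ is a linear isomorphism iff $a_i=0$ (surjectivity forces $a_i\ge 0$, directness forces $a_i\le 0$); summing over $i$, this is (\ref{eq:FpluskappaF}). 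Second, the evaluation $\C\{z\} \cap z^{a_i}\C\{z^{-1}\} \to \C$ is a map from a polynomial space of dimension $\max(a_i+1,0)$ onto its constant term, which is an isomorphism iff $a_i=0$; summing over $i$, this is (\ref{eq:FcapkappaF}). Hence purity is equivalent to either of (\ref{eq:FcapkappaF}) and (\ref{eq:FpluskappaF}).

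For (\ref{eq:FcapHreal}), observe that the finite-dimensional complex subspace $\F_\tau\cap\kappa_\cH(\F_\tau)\subset\cH$ is preserved by the antilinear involution $\kappa_\cH$. Its fixed locus --- a real form of the complex space --- coincides with $\F_\tau\cap\cH_\R$, because $s\in\cH_\R\cap\F_\tau$ implies $\kappa_\cH(s)=s\in\F_\tau$ and hence $s\in\kappa_\cH(\F_\tau)$. Therefore $(\F_\tau\cap\cH_\R)\otimes_\R\C = \F_\tau\cap\kappa_\cH(\F_\tau)$, so (\ref{eq:FcapHreal}) is equivalent to (\ref{eq:FcapkappaF}). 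Finally, when $\cF$ is pure at $\tau$, choose a $\R$-basis $s_1,\dots,s_N$ of the real form $\F_\tau\cap\cH_\R$; by purity these form a $\C$-basis of $H^0(\Proj^1,\hK|_{\{\tau\}\times\Proj^1})$ and hence trivialize $\hK|_{\{\tau\}\times\Proj^1}$. Writing any $s\in\cH$ as $s=\sum_i f_i s_i$ with $f_i\in\C\{z,z^{-1}\}$, the relations (\ref{eq:property_kappaH}) give $\kappa_\cH(s)=\sum_i \kappa(f_i)s_i$, whence $s\in\cH_\R$ iff each $f_i\in C^h(S^1,\R)$. This exhibits $\cH_\R$ as a free $C^h(S^1,\R)$-module of rank $N$.

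The main obstacle, and the one requiring the most bookkeeping, is the translation in Step~1 from semi-infinite objects on $\cH$ to sheaves on $\Proj^1$: in particular one must verify that the nontrivial parallel-translation factor $P(\gamma(z),z)$ in the gluing (\ref{eq:gluingmap}) really does realize $\kappa_\cH$ at the level of holomorphic sections over $\C^*$, so that ``extendable across $\infty$ in the $\ov{\gamma^*\cRz}$-chart'' is the same as ``lies in $\kappa_\cH(\F_\tau)$''. Once this identification is in hand, the remaining steps are standard applications of Birkhoff--Grothendieck and linear algebra.
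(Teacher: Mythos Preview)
Your proposal is correct and follows essentially the same route as the paper: both reduce the question to properties of the holomorphic bundle $\hK|_{\{\tau\}\times\Proj^1}$ and invoke the identification of $\F_\tau\cap\kappa_\cH(\F_\tau)$ with its global sections, and both handle (\ref{eq:FcapHreal}) and the freeness of $\cH_\R$ exactly as you do.

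The one organizational difference worth noting is how (\ref{eq:FpluskappaF}) is linked to purity. You use the Birkhoff--Grothendieck splitting type $(a_1,\dots,a_N)$ explicitly and check line-by-line that $\C\{z\}\oplus z^{a_i-1}\C\{z^{-1}\}\to\C\{z,z^{-1}\}$ is an isomorphism iff $a_i=0$. The paper instead argues that triviality of $\hK|_{\{\tau\}\times\Proj^1}$ obviously gives (\ref{eq:FpluskappaF}), and for the converse derives (\ref{eq:FcapkappaF}) from (\ref{eq:FpluskappaF}) by an elementary lifting trick: given $v\in\F_\tau$, write $z^{-1}v=v_1+v_2$ with $v_1\in\F_\tau$, $v_2\in z^{-1}\kappa_\cH(\F_\tau)$, so that $v-zv_1=zv_2\in\F_\tau\cap\kappa_\cH(\F_\tau)$ projects to $[v]$. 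Your approach is more symmetric and makes the splitting type visible; the paper's approach avoids invoking the decomposition theorem explicitly for this implication. Both are equally valid. Your closing remark about verifying that the gluing map (\ref{eq:gluingmap}) realizes $\kappa_\cH$ on sections is indeed the main bookkeeping step, and the paper takes this for granted in the paragraph preceding the proposition.
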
 
\begin{proof}
Under the identifications we explained above, 
$\F_\tau\cap \kappa_{\cH}(\F_\tau)$ is 
identified with the space of global sections of 
$\hK|_{\{\tau \}\times \Proj^1}$ 
and the natural map 
$\F_\tau \cap \kappa_{\cH}(\F_\tau) \to \F_\tau/z\F_\tau$ 
corresponds to the restriction to $z=0$ 
(note that $\F_\tau/z\F_\tau \cong \hK_{(\tau,0)}$). 
Therefore (\ref{eq:FcapkappaF}) is an isomorphism 
if and only if $K|_{\{\tau \}\times \Proj^1}$ is trivial. 
$\F_\tau\cap \kappa_\cH(\F_\tau)$ 
is invariant under $\kappa_{\cH}$ 
and its real form is given by $\F_\tau\cap \cH_\R$. 
Therefore, we have 
$\F_\tau\cap \kappa_{\cH}(\F_\tau) 
\cong (\F_\tau\cap \cH_\R)\otimes \C$. 
Thus (\ref{eq:FcapkappaF}) is an isomorphism if and only if 
so is (\ref{eq:FcapHreal}). 
Similarly, we can see that 
(\ref{eq:FpluskappaF}) is an isomorphism 
if $\hK|_{\{\tau \}\times \Proj^1}$ is trivial. 
Conversely, we show that (\ref{eq:FcapkappaF}) is an isomorphism 
if so is (\ref{eq:FpluskappaF}).   
The injectivity of the map 
$\F_\tau \cap \kappa_{\cH}(\F_\tau) \to \F_\tau/z\F_\tau$ 
is easy to check. 
Take $v\in \F_\tau$. By assumption, $z^{-1}v= v_1 + v_2$ for some 
$v_1 \in \F_\tau$ and $v_2 \in z^{-1}\kappa_{\cH}(\F_\tau)$. 
Thus $v-zv_1 = z v_2 \in \F_\tau\cap \kappa_{\cH}(\F_\tau)$ and 
the image of this element in $\F_\tau/z\F_\tau$ is $[v]$.  
\del{The discussion on the spaces 
$\tbF_\tau$, $\tcH$ and $\tcH_\R$ are similar.}



The last statement: Since $\F_\tau\cap \kappa_{\cH}(\F_\tau) \cong 
(\F_\tau \cap \cH_\R)\otimes \C$, we can take a global basis 
of the trivial bundle $\hK|_{\{\tau \}\times \Proj^1}$ 
from $\F_\tau\cap \cH_\R$. The module $\cH_\R$ is freely 
generated by such a basis over $C^h(S^1,\R)$. 
\end{proof}

\begin{definition} 
\label{def:polarized} 
A graded \seminf VHS $\cF$ 
with a real structure is 
called \emph{polarized} at $\tau\in \cM$ 
if the Hermitian pairing $h$ on 
$\F_\tau \cap \kappa_{\cH}(\F_\tau) 
\cong \Gamma(\Proj^1,\hK|_{\{\tau\}\times \Proj^1})$ 
defined by 
\[
h\colon s_1 \times s_2 \longmapsto  (\kappa_{\cH}(s_1), s_2)_{\cH} 
\]
is positive definite. Note that this pairing 
takes values in $\C$ since 
$(\F_\tau,\F_\tau)_{\cH}\subset \C\{z\}$ and  
$(\kappa_{\cH}(\F_\tau),\kappa_\cH(\F_\tau))_{\cH} 
\subset \C\{z^{-1}\}$ by (\ref{eq:property_kappaH}). 
It is easy to show that a polarized \seminf VHS is 
necessarily pure at the same point. 
\end{definition}

\del{
\begin{remark} 
\label{rem:Birkhoff_Iwasawa} 
In order to obtain a basis of 
$\tbF_\tau\cap \kappa_{\cH}(\tbF_\tau)$ 
or $\tbF_\tau \cap \tcH_\R$,  
we can make use of Birkhoff or Iwasawa factorization.  
Take an $\cO(\D_0)$-basis $s_1,\dots,s_N$ of $\tbF_\tau$. 
Define an element $A(z)=(A_{ij}(z))$ 
of the loop group $LGL_N(\C)$ by 
\[
[\kappa_\cH(s_1),\dots,\kappa_\cH(s_N)] = [s_1,\dots,s_N] A(z), 
\quad \text{\emph{i.e.} } \kappa_\cH(s_i) = \sum_{j} s_j A_{ji}(z). 
\]
If $A(z)$ admits the Birkhoff factorization 
$A(z) = B(z) C(z)$, 
where $B(z)$ and $C(z)$ are holomorphic maps 
$B(z)\colon \D_0 \to GL_N(\C)$,  
$C(z)\colon \D_{\infty} \to GL_N(\C)$ such that $B(0)=\unit$, 
then we obtain a $\C$-basis of 
$\tbF_\tau \cap \kappa_{\cH}(\tbF_\tau)$ as 
\begin{equation}
\label{eq:A=BC}
[\kappa_\cH(s_1),\dots,\kappa_\cH(s_N)] C(z)^{-1} 
= [s_1,\dots,s_N] B(z).   
\end{equation} 
Here, $\cF$ is pure at $\tau\in \cM$ if and only if 
$A(z)$ admits the Birkhoff factorization, 
\emph{i.e.} $A(z)$ is in the ``big cell" 
of the loop group. 
In particular, the purity is 
an open condition for $\tau \in \cM$. 
On the other hand, the Iwasawa-type 
factorization appears as follows. 
Assume that we have a basis $e_1,\dots,e_N$ 
of $\tcH_\R$ over $C^\infty(S^1,\R)$ such that 
$(e_i,e_j)_{\tcH} =\delta_{ij}$ 
and a basis $s_1,\dots,s_N$ of $\tbF_\tau$ over $\cO(\D_0)$ 
such that $(s_i,s_j)_{\tcH}= \delta_{ij}$. 
Define a matrix $J(z)$ by 
\[
[s_1,\dots, s_N] = [e_1,\dots,e_N] J(z).  
\]
This $J(z)$ lies in the {\it twisted loop group} 
$LGL_N(\C)_{\rm tw}$: 
\[
LGL_N(\C)_{\rm tw} := 
\{J \colon S^1 \to GL_N(\C)\;;\;
J(-z)^{\rm T} J(z) = \unit \}.   
\]
If $J(z)$ admits an 
Iwasawa-type factorization $J(z)=U(z)B(z)$, where 
$U\colon S^1 \to GL_N(\R)$ with 
$U(-z)^{\rm T} U(z) = \unit$ and  
$B\colon \D_0 \to GL_N(\C)$ with $B(-z)^{\rm T} B(z) =\unit$, 
then we obtain an $\R$-basis of $\tbF_\tau \cap \tcH_\R$ as 
\[
[s_1,\dots,s_N] B(z)^{-1} = 
[e_1,\dots,e_N] U(z)    
\]
which is orthonormal with respect to $(\cdot,\cdot)_{\tcH}$. 
In this case, the pairing $(\cdot,\cdot)_{\tcH}$ 
restricted to $\tbF_\tau \cap \tcH_\R$ is an 
$\R$-valued \emph{positive definite}  
symmetric form. 
The map $\tau \mapsto J(z)$ gives rise to 
the semi-infinite period map in 
Section \ref{subsec:semi-inf_period}: 
\[
\cM \ni \tau \longmapsto [J(z)] \in LGL_N(\C)_{\rm tw}/ 
LGL^+_N(\C)_{\rm tw}.  
\]
Here, $\cF$ is pure at $\tau$ 
and $(\tbF_\tau \cap \tcH_\R, (\cdot,\cdot)_{\tcH})$ is 
positive definite if and only if the image of this map 
lies in the $LGL_N(\R)_{\rm tw}$-orbit of $[\unit]$. 
This orbit is open, but not dense.   
We owe the Lie group theoretic viewpoint here 
to Guest \cite{guest_durham, guest-qc_int}. 
\end{remark} 
}

\subsection{Cecotti-Vafa structure} 
We describe the Cecotti-Vafa structure ($tt^*$-geometry) 
associated to a pure graded \seminf VHS with a real structure. 

Define a complex vector bundle $K\to\cM$ by 
$K:= \hK|_{\cM\times \{0\}}$. 
This is the real analytic vector bundle 
underlying $\cF/z\cF \cong \cRz|_{\cM\times \{0\}}$. 
Let $\cA_{\cM}^p$ be the sheaf of complex-valued 
$C^\infty$ $p$-forms on $\cM$ and 
$\cA_{\cM}^1 = \cA_{\cM}^{1,0} \oplus \cA_{\cM}^{0,1}$ 
be the type decomposition. 
 

\begin{proposition}[{\cite[Theorem 2.19]{hertling-tt*}}]
\label{prop:CV-str} 
Assume that a graded \seminf VHS $\cF$ with a real structure 
is pure over $\cM$. 
Then the vector bundle $K$ is equipped 
with a Cecotti-Vafa structure 
$(\kappa, g, C,\tC, D, \cQ, \cU,\ov\cU)$. 
This is given by the data (see (\ref{eq:defofkappa}), 
(\ref{eq:defofg}), (\ref{eq:defofCCD}), (\ref{eq:defofUUQ})):  
\begin{itemize} 
\item A complex-antilinear involution $\kappa \colon K_\tau \to K_\tau$; 
\item A non-degenerate, symmetric, $\C$-bilinear metric 
$g\colon K_\tau \times K_\tau \to \C$ 
which is real with respect to $\kappa$, i.e. 
$g(\kappa u_1, \kappa u_2) = \ov{g(u_1,u_2)}$;   
\item Endomorphisms $C \in \End(K)\otimes \cA^{1,0}_\cM$, 
$\tC\in \End(K)\otimes \cA^{0,1}_\cM$ such that 
$\tC_{\oi} = \kappa C_i \kappa$;  
\item A connection $D\colon K\to K\otimes \cA^1_\cM$ 
real with respect to $\kappa$, i.e.
$D_{\oi} = \kappa D_i \kappa$; 
\item Endomorphisms $\cQ,\cU,\ov\cU\in \End(K)$ 
such that $\cU = C_E$, $\ov\cU=\kappa \cU \kappa = \tC_{\ov{E}}$ 
and $\cQ \kappa = -\kappa \cQ$   
\end{itemize}  
satisfying the integrability conditions 
\begin{align*}
&[D_i,D_j]=0, \quad D_iC_j - D_j C_i=0, \quad [C_i,C_j]=0, \\ 
&[D_\oi,D_\oj]=0, \quad  D_\oi \tC_\oj - D_\oj \tC_\oi=0, \quad 
[\tC_\oi, \tC_\oj]=0, \\
& D_i \tC_\oj =0, \quad D_\oi C_j =0,\quad  
[D_i,D_\oj] + [C_i,\tC_\oj]=0, \\ 
&D_i\ov\cU=0, \quad D_i \cQ - [\ov\cU, C_i]=0, 
\quad D_i\cU - C_i + [\cQ,C_i]=0, \quad [\cU_,C_i]=0, \\  
&D_\oi\cU=0, \quad D_\oi \cQ + [\cU, \tC_\oi]=0, \quad  
D_\oi\ov\cU - \tC_\oi - [\cQ,\tC_\oi]=0, \quad [\ov\cU,\tC_\oi]=0, 
\end{align*} 
and the compatibility with the metric 
\begin{align*}
&\partial_i g(u_1,u_2) = g(D_iu_1,u_2)+g(u_1,D_iu_2), \\
&\partial_\oi g(u_1,u_2) = g(D_\oi u_1, u_2) + g(u_1, D_\oi u_2), \\
& g(C_i u_1, u_2) = g(u_1, C_iu_2),\quad 
g(\tC_\oi u_1, u_2) = g(u_1, \tC_\oi u_2), \\  
& g(\cU u_1, u_2) = g(u_1, \cU u_2), \quad 
g(\ov\cU u_1, u_2) = g(u_1, \ov\cU u_2), \\ 
& g(\cQ u_1, u_2)+g(u_1,\cQ u_2)=0.   
\end{align*} 
Here we chose a local complex co-ordinate system $\{t^i\}$
on $\cM$ and used the notation 
$D_i = D_{\partial/\partial t^i}$, 
$D_\oi = D_{\partial/\partial \ov{t^i}}$, etc. 
\end{proposition}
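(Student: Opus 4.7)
The strategy is to transport the data of the pure graded \seminf VHS to the finite-rank bundle $K$ using the identification $K_\tau\cong\F_\tau\cap\kappa_\cH(\F_\tau)$ supplied by purity (Proposition \ref{prop:purity_condition}), and then to invoke Hertling's correspondence between (trTERP) structures and Cecotti-Vafa structures \cite[Theorem 2.19]{hertling-tt*} via the equivalence of Remark \ref{rem:relation_hertling}.

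First I would work locally on $\cM$ and choose a real-analytic frame $\tilde{e}_1,\ldots,\tilde{e}_N$ of $\hK$ on $U\times\Proj^1$ whose restriction to each slice $\{\tau\}\times\Proj^1$ is a $\C$-basis of the fiber of global sections; such a frame exists by Proposition \ref{prop:purity_condition}. Restricting to $U\times\{0\}$ gives a local frame of $K$. I define $\kappa\colon K\to K$ as the restriction of $\kappa_\cH$ to $\F_\tau\cap\kappa_\cH(\F_\tau)$, and $g(\tilde{e}_i,\tilde{e}_j):=(\tilde{e}_i,\tilde{e}_j)_\cH$. The latter is independent of $z$ because $(\F_\tau,\F_\tau)_\cH\subset\C\{z\}$ (Proposition \ref{prop:property_VHS}) while the intertwining (\ref{eq:property_kappaH}) yields $(\kappa_\cH\F_\tau,\kappa_\cH\F_\tau)_\cH\subset\C\{z^{-1}\}$, and $\C\{z\}\cap\C\{z^{-1}\}=\C$. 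Symmetry of $g$ and reality under $\kappa$ are immediate from (\ref{eq:property_kappaH}).

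Next I would extract the remaining operators by Laurent-expanding $\hatnabla$ in this frame. For $X\in\Theta_\cM^{1,0}$, the section $\hatnabla_X\tilde{e}_i$ lies in $z^{-1}\cRz$ by (\ref{eq:hatnabla}) and also extends globally along $\{\tau\}\times\Proj^1$ as a meromorphic section of $\hK$; since $\hK|_{\{\tau\}\times\Proj^1}$ is holomorphically trivial by purity, a Liouville-type argument bounds its Laurent expansion at $z=0$ to at most a simple pole with no positive powers, yielding
\begin{equation*}
\hatnabla_X\tilde{e}_i=\sum_j\bigl((D_X)_{ji}+z^{-1}(C_X)_{ji}\bigr)\tilde{e}_j
\end{equation*}
and hence $D_X$ and $C_X$. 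The analogous construction on $\ov{\gamma^*\cRz}$ applied to antiholomorphic vector fields produces the $(0,1)$-part of $D$ and $\tC$. For $\hatnabla_{z\partial_z}$, whose polar behavior at $z=0$ is simple by (\ref{eq:hatnabla}) and whose companion on $\ov{\gamma^*\cRz}$ is simple at $z=\infty$, the same triviality forces a three-term expansion of the form $z^{-1}\cU-\cQ-z\ov\cU$, defining $\cU,\cQ,\ov\cU\in\End(K)$. The identities $\cU=C_E$, $\tC_{\oi}=\kappa C_i\kappa$, $\ov\cU=\kappa\cU\kappa$ and $\cQ\kappa=-\kappa\cQ$ then follow by coefficient-matching using (\ref{eq:hatnabla}) and (\ref{eq:property_kappaH}).

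Finally, the integrability conditions and metric compatibilities are obtained by expanding $[\hatnabla_X,\hatnabla_Y]=0$ for $X,Y\in\{\partial_{t^i},\partial_{\ov{t^j}},z\partial_z\}$ and the $\hatnabla$-flatness of $(\cdot,\cdot)_{\cRz}$ coefficient-by-coefficient in $z$, using the gluing (\ref{eq:gluingmap}) that defines $\kappa$. The main obstacle is not conceptual but bookkeeping: one must keep careful track of which Laurent coefficient of which flatness identity produces which relation among $D,C,\tC,\cU,\ov\cU,\cQ$, and verify each of the displayed equations. This is precisely the content of \cite[Theorem 2.19]{hertling-tt*} once the notation is matched via Remark \ref{rem:relation_hertling}, so the proposition reduces to Hertling's theorem with the extra grading data recorded as the additional operators $\cU,\cQ,\ov\cU$.
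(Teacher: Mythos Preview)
Your proposal is correct and follows essentially the same route as the paper: identify $K_\tau$ with $\F_\tau\cap\kappa_\cH(\F_\tau)$ via purity, define $\kappa$ and $g$ by restriction of $\kappa_\cH$ and $(\cdot,\cdot)_\cH$, extract $C,\tC,D,\cU,\cQ,\ov\cU$ from the finite Laurent expansion of $\hatnabla$ in a global frame, and read off the integrability and metric relations from flatness of $\hatnabla$ and of the pairing. The only point you leave implicit is \emph{why} the expansion for $X\in\Theta_\cM^{1,0}$ has no positive powers of $z$ (equivalently, why $X^{(1,0)}\kappa_\cH(\F_\tau)\subset\kappa_\cH(\F_\tau)$): this follows because $\F_\tau$ varies holomorphically in $\tau$, so $\kappa_\cH(\F_\tau)$ varies antiholomorphically, which is exactly the $C^\infty$ Griffiths transversality the paper invokes.
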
 

A concrete example of the Cecotti-Vafa structure 
will be given in Section \ref{sec:exampleP1tt*}. 
We explain the construction of the above data 
from the \seminf VHS $\cF$. 
Because $\cF$ is pure, we have a canonical identification 
\[
K_\tau \cong \Gamma(\Proj^1,\hK|_{\{\tau\}\times \Proj^1}) 
\cong \F_\tau \cap \kappa_{\cH}(\F_\tau).  
\]
The involution $\kappa_\cH$ and the pairing $(\cdot,\cdot)_\cH$ 
restricted to $\F_\tau \cap \kappa_{\cH}(\F_\tau)$ induce 
an involution $\kappa$ and a $\C$-bilinear pairing 
$g$ on $K_\tau$: 
\begin{align}
\label{eq:defofkappa}
\Phi_\tau(\kappa(u)) & := \kappa_\cH(\Phi_\tau(u)), \\
\label{eq:defofg} 
g(u_1,u_2) & := (\Phi_\tau(u_1),\Phi_\tau(u_2))_{\cH} 
\in \C 
\end{align} 
satisfying 
\[
g(\kappa u_1, \kappa u_2 ) = \ov{g(u_1,u_2)}, 
\quad g(u_1,u_2) = g(u_2,u_1).  
\]
Note that the subspace $\F_\tau\cap \kappa_{\cH}(\F_\tau)$ 
depends on the parameter $\tau$ real analytically.  
A $C^\infty$-version of the 
Griffiths transversality gives 
\begin{align*} 
&X^{(1,0)} \F_\tau \subset z^{-1} \F_\tau, 
& & X^{(0,1)} \F_\tau \subset \F_\tau, \\
&X^{(1,0)} \kappa_\cH(\F_\tau) \subset \kappa_\cH(\F_\tau), & 
&X^{(0,1)} \kappa_\cH(\F_\tau) \subset z \kappa_\cH(\F_\tau),   
\end{align*} 
where $X^{(1,0)}\in T^{1,0}\cM$ and $X^{(0,1)}\in T^{0,1}\cM$.  
For $X^{(1,0)}\in T_\tau^{1,0}\cM$, we have 
\[
X^{(1,0)}(\F_\tau \cap \kappa_\cH(\F_\tau) )
\subset z^{-1} \F_\tau \cap \kappa_{\cH}(\F_\tau) =  
z^{-1}(\F_\tau \cap \kappa_\cH(\F_\tau)) \oplus 
(\F_\tau \cap \kappa_\cH(\F_\tau)). 
\] 
Similarly for $X^{(0,1)}\in T_\tau^{(0,1)}\cM$, we have  
\begin{align*}
X^{(0,1)} (\F_\tau \cap \kappa_\cH(\F_\tau)) 
\subset (\F_\tau \cap \kappa_\cH(\F_\tau)) \oplus 
z (\F_\tau \cap \kappa_\cH(\F_\tau)). 
\end{align*} 
Hence we can define endomorphisms 
$C\colon K \to K\otimes \cA^{1,0}$, 
$\tC\colon K \to K\otimes \cA^{0,1}$,  
and a connection $D \colon K\to K\otimes \cA^1$ by 
\begin{equation}
\label{eq:defofCCD}
X\Phi_\tau(u_\tau) 
= z^{-1} \Phi_\tau(C_X(u_\tau)) + \Phi_\tau(D_X(u_\tau)) 
+ z \Phi_\tau(\tC_X(u_\tau))
\end{equation} 
for a section $u_\tau$ of $K$. 
By applying $\kappa_\cH$ on the both hand sides, 
\[
\ov{X} \Phi_\tau(\kappa u_\tau) 
= z^{-1} \Phi_\tau ( \kappa \tC_X(u_\tau))
+ \Phi_\tau(\kappa D_X(u_\tau)) + 
z \Phi_\tau (\kappa C_X(u_\tau)). 
\]
Therefore, we must have 
\[
C_{\ov{X}} \kappa = \kappa  \tC_X, \quad  
\kappa   D_X = D_{\ov{X}}  \kappa, \quad 
X\in T\cM \otimes_\R \C.   
\] 
Similarly, we can define endomorphisms 
$\cU,\ov{\cU},\cQ \colon K\to K$ by 
\begin{align}
\label{eq:defofUUQ} 
\hatnabla_{z\partial_z} \Phi_\tau(u_\tau) 
= -z^{-1} \Phi_\tau(\cU(u_\tau)) +  
\Phi_\tau(\cQ(u_\tau)) + z \Phi_\tau(\ov{\cU}(u_\tau)), 
\end{align} 
Because $\hatnabla_{z\partial_z}$ is purely imaginary 
(\ref{eq:property_kappaH}), we have 
\[ 
\kappa \cQ = -\cQ  \kappa, \quad 
\ov{\cU}=\kappa \cU \kappa.
\] 
By $(\hatnabla_{z\partial_z} + E)\F_\tau \subset \F_\tau$
in Proposition \ref{prop:property_VHS}, we find 
\[
\cU = C_E, \quad \ov\cU = \tC_{\ov{E}}. 
\]
We have a canonical isomorphism 
\[
\pi^* K \cong \hK, \quad \text{where } \ 
\pi\colon \cM\times \Proj^1 \to \cM.  
\]
Let $C^{\infty h}(\pi^*K)$ be the sheaf of $C^\infty$ 
sections of $\pi^*K\cong \hK$ which are holomorphic 
on each fiber $\{\tau\}\times \Proj^1$. 
Under the isomorphism above, the flat connection 
$\hatnabla$ on $\cRz = \hK|_{\cM\times \C}$ 
can be written in the form:   
\begin{align}
\nonumber 
\hatnabla  \colon C^{\infty h}(\pi^*K) \longrightarrow 
C^{\infty h}(\pi^*K)\otimes
\Bigl( & z^{-1}\cA^{1,0}_{\cM} \oplus \cA^{1}_{\cM}
\oplus z \cA^{0,1}_{\cM}   \\ 
\nonumber 
 & \oplus (z^{-1}\cA^0_{\cM} \oplus \cA^0_{\cM} \oplus z\cA^0_{\cM})
\frac{dz}{z} \Bigr) \\
\label{eq:hatnabla_K} 
\hatnabla = z^{-1}C + D + z \tC  
 \,+\, &(z\partial_z -z^{-1}\cU + \cQ + z\ov{\cU})\otimes \frac{dz}{z}.   
\end{align} 
Under the same isomorphism, 
the pairing $(\cdot,\cdot)_{\cRz}$ on $\cRz= \hK|_{\cM\times \C}$ 
can be written as  
\begin{align*}
C^{\infty h}((-)^*(\pi^*K)) 
\otimes 
C^{\infty h}(\pi^*K) & \to C^{\infty h}(\cM\times \Proj^1) \\
s_1(\tau,-z) \otimes   s_2(\tau,z) & \longmapsto 
g(s_1(\tau,-z),s_2(\tau,z)).  
\end{align*} 
Unpacking the flatness of $\hatnabla$ and 
$\hatnabla$-flatness of the pairing in terms of 
$C,\tC,D,\cU,\cQ$ and $g$,  
we arrive at the equations in Proposition \ref{prop:CV-str}. 
Note that the pairing $h$  
in Definition \ref{def:polarized} gives a 
Hermitian metric on $K$ and is related to $g$ by  
\[
h(u_1,u_2) = g(\kappa(u_1),u_2).  
\]

\del{
\begin{remark}
(i) The $(0,1)$-part $\hatnabla_{\oi} = D_\oi + z \tC_\oi$ 
of the flat connection (\ref{eq:hatnabla_K}) 
gives the holomorphic structure on 
$\hK|_{\cM\times \{z\}}$ 
which corresponds to the holomorphic structure on $\cRz$. 
In particular, $D$ is identified with  
the canonical connection associated to the Hermitian metric $h$ 
on the holomorphic vector bundle $\cF/z\cF$. 
Similarly, the $(1,0)$-part $D_i + z^{-1} C_i$ 
gives an anti-holomorphic structure 
on $\hK|_{\cM\times \{z\}}$  
which corresponds to the anti-holomorphic structure on 
$\ov{\gamma^* \cRz}$. 
} 

\del{
(ii) Among the data of the Cecotti-Vafa structure, 
one can define the data ($C$, $D_E +\cQ$, $\cU$, $g$) 
without choosing a real structure. 
In fact, $C_X$ is given by the map 
$\cF/z\cF \ni [s] \mapsto  [z\nabla_X s] \in \cF/z\cF$, 
$D_E + \cQ$ is given by the map 
$\cF/z\cF \ni [s] \mapsto [\frac{1}{2}(\Grading - n) s] 
\in \cF/z\cF$, 
$\cU = C_E$, and $g$ is given by 
$g([s_1],[s_2]) = (s_1,s_2)_{\cF}|_{z=0}$ for $s_i\in \cF$.  
In the case of quantum cohomology, 
$C_i$ is the quantum multiplication 
$\phi_i\circ$ by some $\phi_i\in H^*_{\rm orb}(\cX)$
(see (\ref{eq:quantumproduct_divisor}), 
(\ref{eq:Dubrovinconn})) and 
$g$ is the Poincar\'{e} pairing. 
\end{remark} 
}

\del{
\begin{remark} 
A Frobenius manifold structure on $\cM$ 
arises from a miniversal \seminf VHS
(in the sense of \cite[Definition 2.8]{CIT:I}) 
without a real structure. 
To obtain a Frobenius manifold structure, 
we need a choice of an opposite subspace 
$\cH_-\subset \cH$: 
a sub free $\C\{z^{-1}\}$-module $\cH_-$ of $\cH$ 
satisfying  
\[
\cH = \F_\tau \oplus \cH_-, \quad 
\hatnabla_{z\partial_z}\cH_- \subset \cH_-.  
\] 
The choice of $\cH_-$ corresponds to giving 
a logarithmic extension of the flat bundle 
$(\cH,\hatnabla_{z\partial_z})$ at $z=\infty$. 
A graded \seminf VHS with the choice of an opposite subspace 
corresponds to the (trTLEP)-structure in 
Hertling \cite{hertling-tt*}. 
See \cite{barannikov-qpI,hertling-tt*, CIT:I} 
for the construction of Frobenius manifolds 
from this viewpoint.   
In the $tt^*$-geometry, 
the complex conjugate $\kappa_{\cH}(\F_\tau)$ 
of the Hodge structure $\F_\tau$ 
plays the role of the opposite subspace 
(see (\ref{eq:FpluskappaF})). 
When a miniversal \seminf VHS is equipped 
with both a real structure and an opposite subspace, 
under certain conditions, 
$\cM$ has a CDV (Cecotti-Dubrovin-Vafa) structure,  
which dominates both 
Frobenius manifold structure and 
Cecotti-Vafa structure on $T\cM$. 
See \cite[Theorem 5.15]{hertling-tt*} 
for more details. 
\end{remark} 
}


\section{Real and integral structures on the A-model}
\label{sec:A-model} 

In this section, we study real and integral structures 
in orbifold quantum cohomology. 
The quantum cohomology and Gromov-Witten theory for orbifolds 
have been developed by Chen-Ruan \cite{chen-ruan:GW} 
in the symplectic category 
and by Abramovich-Graber-Vistoli \cite{AGV} in the algebraic category. 
The definition of real and integral structures 
makes sense for both categories, but 
we will work in the algebraic category. 
In the proof of Theorem \ref{thm:pure_polarized}, 
we will need Lefschetz decomposition which may not hold in 
the symplectic category. 
Also, we only consider the even parity part of 
the cohomology group. 

\subsection{Orbifold quantum cohomology} 
Let $\cX$ be a proper smooth Deligne-Mumford stack over $\C$.  
Let $I\cX$ denote the \emph{inertia stack} of $\cX$, 
defined by the fiber product $\cX\times_{\cX\times \cX} \cX$ 
of the two diagonal morphisms $\Delta\colon \cX\to \cX\times \cX$. 
An object of $I\cX$ is given by a pair $(x,g)$ of a point 
$x\in \cX$ and $g\in \Aut(x)$. We call $g$ a \emph{stabilizer}  
of $(x,g)\in I\cX$. 
Let $\sfT$ be the index set of components of the $I\cX$. 
Let $0\in \sfT$ be the distinguished element 
corresponding to the trivial stabilizer. 
Set $\sfT'=\sfT\setminus\{0\}$. 
We have  
\[
I\cX = \bigsqcup_{v\in \sfT} \cX_v = 
\cX_0 \cup \bigsqcup_{v\in \sfT'} \cX_v, \quad 
\cX_0 = \cX. 
\] 
For each connected component $\cX_v$ of $I\cX$, 
we can associate a rational number 
$\iota_v$ called \emph{age}. 
The (even parity) orbifold cohomology group $H_{\rm orb}^*(\cX)$ 
is defined to be 
\[
H_{\rm orb}^k(\cX) = 
\bigoplus_{v\in \sfT,\ k-2\iota_v \equiv 0 (2)} 
H^{k-2\iota_v}(\cX_v,\C).  
\]
The degree $k$ of the orbifold cohomology 
can be a fractional number in general. 
Each factor $H^*(\cX_v,\C)$ in the right-hand side 
is same as the cohomology group of $\cX_v$ as a topological space.  
If not otherwise stated, we will use $\C$ 
as the coefficient of cohomology groups. 
We have an involution $\inv \colon I\cX \to I\cX$ defined by 
$\inv(x,g) = (x,g^{-1})$. 
This defines the \emph{orbifold Poincar\'{e} pairing}: 
\[
(\alpha, \beta)_{\rm orb} : = \int_{I\cX} \alpha \cup \inv^*(\beta).   
\]
This pairing is symmetric, non-degenerate over $\C$ 
and of degree $-2n$, where $n=\dim_\C\cX$. 
This involution also induces a map $\inv\colon \sfT\to \sfT$. 
Take a homogeneous $\C$-basis 
$\{\phi_i\}_{i=1}^N$ of $H_{\rm orb}^*(\cX)$.  
Let $\{\phi^i\}_{i=1}^N$ be the basis dual to $\{\phi_i\}$ with 
respect to the orbifold Poincar\'{e} pairing, \emph{i.e.} 
$(\phi_i,\phi^j)_{\rm orb}=\delta_i^j$. 

Now assume that the coarse moduli space of $\cX$ is projective.  
The genus zero Gromov-Witten invariants are integrals of the form: 
\begin{equation}
\label{eq:GWcorrelator}
\corr{\alpha_1\psi^{k_1},\dots,\alpha_l \psi^{k_l}}_{0,l,d}^\cX 
= \int_{[\cX_{0,l,d}]^{\rm vir}} \prod_{i=1}^l \ev_i^*(\alpha_i) \psi_i^{k_i}
\end{equation} 
where $\alpha_i \in H_{\rm orb}^*(\cX)$, $d\in H_2(\cX,\Q)$ 
and $k_i$ is a non-negative integer. 
The homology class $[\cX_{0,l,d}]^{\rm vir}$ is 
the virtual fundamental class 
of the moduli stack $\cX_{0,l,d}$ of 
genus zero, $l$-pointed stable maps to $\cX$ of degree $d$;  
$\ev_i\colon \cX_{0,l,d} \to I\cX$ is the evaluation map\footnote
{The map $\ev_i$ here is defined only 
as a map of topological spaces (not as a map of stacks). 
The evaluation map defined in \cite{AGV} 
is a map of stacks but takes values in the 
\emph{rigidified inertia stack}, 
which is the same as $I\cX$ as a topological space but 
is different as a stack. } 
at the $i$-th marked point; 
$\psi_i$ is the first Chern class 
of the line bundle over $\cX_{0,l,d}$ 
whose fiber at a stable map is the cotangent space 
of the coarse curve at the $i$-th marked point. 
We refer the readers to \cite{AGV} for a more precise definition. 
(Our notation is taken from \cite{CCLT:wp}; 
$\cX_{0,l,d}$ is denoted by $\cK_{0,l}(\cX,d)$ in \cite{AGV}.)  
The correlator (\ref{eq:GWcorrelator}) is 
non-zero only when $d$ belongs to 
$\Eff_\cX\subset H_2(\cX,\Q)$, the semigroup generated 
by effective stable maps, and $\sum_{i=1}^l (\deg \alpha_i + 2k_i) = 
2n + 2\pair{c_1(T\cX)}{d} + 2l -6$. 
The quantum product $\bullet_\tau$ with $\tau\in H_{\rm orb}^*(\cX)$ 
is defined by the formula 
\begin{equation*} 
\phi_i \bullet_\tau \phi_j = \sum_{d\in \Eff_\cX} \sum_{l\ge 0} 
\sum_{k=1}^N \frac{1}{l!} 
\corr{\phi_i,\phi_j,\tau,\dots,\tau, 
\phi_k}_{0,l+3,d}^\cX Q^d \phi^k,   
\end{equation*} 
where $Q^d$ is an element of the group ring $\C[\Eff_\cX]$ 
corresponding to $d\in \Eff_\cX$. 
We decompose $\tau = \tau_{0,2} + \tau'$ with 
$\tau_{0,2}\in H^2(\cX)$,  
$\tau' \in \bigoplus_{k\neq 1} H^{2k}(\cX) \oplus 
\bigoplus_{v\in \sfT'} H^*(\cX_v)$. 
Using the divisor equation \cite{tseng:QRR, AGV}, 
we have 
\begin{align} 
\label{eq:quantumproduct_divisor}
\phi_i \bullet_\tau \phi_j = \sum_{d\in \Eff_\cX} \sum_{l\ge 0} 
\sum_{k=1}^N 
\frac{1}{l!} 
\corr{\phi_i,\phi_j,\tau',\dots,\tau',
\phi_k}_{0,l+3,d}^{\cX} 
e^{\pair{\tau_{0,2}}{d}} Q^d \phi^k. 
\end{align} 
The quantum product is a priori a formal power series in 
$e^{\tau_{0,2}}Q$ and $\tau'$. 
When this is a convergent power series, we can put $Q=1$ 
and define
\[
\circ_\tau := \bullet_\tau|_{Q=1}.  
\]
Throughout the paper, \emph{we assume that $\circ_\tau$ is convergent 
on some domain $U\subset H_{\rm orb}^*(\cX)$}.  
The domain $U$ here contains the following limit direction:  
\begin{equation}
\label{eq:largeradiuslimit}
\Re\pair{\tau_{0,2}}{d} \to -\infty, \quad 
\forall d\in \Eff_\cX\setminus\{0\}, 
\quad \tau' \to 0.  
\end{equation} 
This is called the \emph{large radius limit}. 
The (big) orbifold quantum cohomology is a family of 
associative algebras $(H_{\rm orb}^*(\cX),\circ_\tau)$ parametrized 
by $\tau\in U$. In the large radius limit, $\circ_\tau$ goes to the 
orbifold cup product 
in the sense of Chen-Ruan \cite{chen-ruan:new_coh_orb}. 

\subsection{A-model \seminf VHS}
\label{subsec:AmodelVHS} 
Let $\{t^i\}_{i=1}^N$ be a co-ordinate system on 
$H_{\rm orb}^*(\cX)$ dual to $\{\phi_i\}$.  
The orbifold quantum product defines a graded \seminf VHS $\tcF$ on $U$ 
\begin{align*} 
\tcF:= H^*_{\rm orb}(\cX) \otimes \cO_U\{z\} 
\end{align*}
endowed with a flat \emph{Dubrovin connection} $\nabla$ and 
a pairing $(\cdot,\cdot)_{\tcF}$  
\begin{equation}
\label{eq:Dubrovinconn}
\nabla := d + \frac{1}{z}\sum_{i=1}^N 
(\phi_i\circ_\tau )dt^i, 
\quad (f,g)_{\tcF}:= (f(-z), g(z))_{\rm orb},    
\end{equation} 
and a grading operator $\Grading$ and Euler vector field $E$ 
\[
\Grading := 2 z\partial_z + 2 E + 2 (\mu + \frac{n}{2}), \quad 
E := \sum_{i=1}^N (1- \frac{1}{2}\deg \phi_i) t^i \parfrac{}{t^i}  
+ \sum_{i=1}^N r^i \parfrac{}{t^i}, 
\]
where $n=\dim_\C\cX$, $c_1(T\cX)= \sum_{i} r^i \phi_i\in H^2(\cX)$ 
and $\mu\in \End(H_{\rm orb}^*(\cX))$ is defined by 
\begin{equation}
\label{eq:def_mu} 
\mu(\phi_i): = \frac{1}{2} (\deg \phi_i - n) \phi_i. 
\end{equation} 
The \seminf VHS $\tcF$ is also called the \emph{quantum $D$-module}. 
The standard argument (as in \cite{cox-katz,manin}) 
and the WDVV equation in orbifold Gromov-Witten theory 
\cite{AGV} show that the Dubrovin connection is flat and that 
the above data satisfy the axioms of a graded \seminf VHS. 

Let $H^2(\cX,\Z)$ denote the cohomology of the constant sheaf $\Z$ 
on the topological \emph{stack} $\cX$ 
(not on the topological \emph{space}). 
This group is the set of isomorphism classes of 
topological orbifold line bundles on $\cX$. 
Let $L_\xi \to \cX$ be the orbifold line bundle corresponding to 
$\xi\in H^2(\cX,\Z)$. 
Let $0\le f_v(\xi)<1$ be the rational number such that 
the stabilizer of $\cX_v$ ($v\in \sfT$) acts on $L_\xi|_{\cX_v}$ by 
a complex number $\exp(2\pi \iu f_v(\xi))$. 
This number $f_v(\xi)$ is called the \emph{age}  
of $L_\xi$ along $\cX_v$. 
We define a map $G(\xi)\colon H^*_{\rm orb}(\cX)\to H^*_{\rm orb}(\cX)$ 
by 
\begin{align}
\label{eq:Galois} 
G(\xi)(\tau_0 \oplus \bigoplus_{v \in \sfT'} \tau_v) 
=(\tau_0- 2\pi \iu \xi_0) \oplus \bigoplus_{v\in \sfT'} 
e^{2\pi \iu f_v(\xi)} \tau_v,   
\end{align} 
where $\tau_v \in H^*( \cX_v)$ and $\xi_0$ is the image of 
$\xi$ in $H^2( \cX,\Q)$. 
Let $dG(\xi)\colon H^*_{\rm orb}(\cX) \to H^*_{\rm orb}(\cX)$ be 
the linear isomorphism given by the differential of $G(\xi)$. 
\begin{equation*}
\label{eq:dGalois}
dG(\xi)(\tau_0 \oplus \bigoplus_{v \in \sfT'} \tau_v) 
=\tau_0 \oplus \bigoplus_{v\in \sfT'} e^{2\pi \iu f_v(\xi)} \tau_v.  
\end{equation*}
\begin{proposition}
The map $\tcF \to G(\xi)^* \tcF$ given by 
\[ 
\tcF_\tau \ni s \longmapsto dG(\xi)s \in \tcF_{G(\xi)\tau} 
\]
is a homomorphism of graded \seminf VHS's. 
We call this \emph{Galois action} of $H^2(\cX,\Z)$ on $\tcF$.  
\end{proposition}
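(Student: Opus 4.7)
The plan is to reduce the proposition to four compatibilities of $dG(\xi)$ with the structures defining the graded \seminf VHS $\tcF$:
\begin{align*}
&dG(\xi)(\alpha\circ_\tau\beta) = (dG(\xi)\alpha)\circ_{G(\xi)\tau}(dG(\xi)\beta), \\
&(dG(\xi)\alpha,\,dG(\xi)\beta)_{\rm orb} = (\alpha,\beta)_{\rm orb}, \\
&dG(\xi)\circ\mu = \mu\circ dG(\xi), \qquad G(\xi)_*E = E.
\end{align*}
Granted these, the fiberwise map $\Phi\colon\tcF\to G(\xi)^*\tcF$ defined by $\Phi(s)(\tau) := dG(\xi)\cdot s(\tau)$ is $\cO_U\{z\}$-linear (it acts pointwise by a fixed $\C$-linear endomorphism of $H^*_{\rm orb}(\cX)$ and commutes with multiplication by functions of $z$). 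A short calculation writing out $G(\xi)^*\nabla$ on $G(\xi)^*\tcF$ shows that Galois invariance of $\circ_\tau$ gives intertwining of the Dubrovin connection; the second identity gives intertwining of the pairing; and the third together with the fourth yields intertwining of the grading $\Grading = 2z\partial_z + 2E + 2(\mu + n/2)$ and of the Euler field.

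The heart of the argument is the first identity. Expanding both sides via (\ref{eq:quantumproduct_divisor}), comparing coefficients of $e^{\langle\tau_{0,2},d\rangle}$ and of monomials in the components of $\tau'$, and using multilinearity of Gromov--Witten invariants together with the fact that $\phi^k\in H^*(\cX_{v_k^{-1}})$ when $\phi_k\in H^*(\cX_{v_k})$, the identity reduces to the following \emph{age-degree congruence}: whenever the genus-zero correlator
\[
\langle\phi_i,\phi_j,\psi_{m_1},\dots,\psi_{m_l},\phi_k\rangle_{0,l+3,d}^{\cX}
\]
with $\phi_i\in H^*(\cX_{v_i})$, $\phi_j\in H^*(\cX_{v_j})$, $\psi_{m_\alpha}\in H^*(\cX_{w_{m_\alpha}})$, and $\phi_k\in H^*(\cX_{v_k})$ is non-zero, one has
\[
f_{v_i}(\xi)+f_{v_j}(\xi)+\sum_\alpha f_{w_{m_\alpha}}(\xi)+f_{v_k^{-1}}(\xi)\equiv \langle\xi_0,d\rangle \pmod{\Z}
\]
for every $\xi\in H^2(\cX,\Z)$. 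This is the key geometric input: for any orbifold stable map $f\colon(C,x_1,\dots,x_{l+3})\to\cX$ contributing to the invariant, the pulled-back orbifold line bundle $f^*L_\xi$ on the twisted source curve has degree $\langle\xi_0,d\rangle$, while integrality of the degree of its associated line bundle on the coarse curve forces the ages at marked points to compensate by an integer. This age-degree identity is the main obstacle in the argument.

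The remaining identities are routine checks. The orbifold Poincar\'e pairing is preserved because $\inv$ exchanges $\cX_v$ with $\cX_{v^{-1}}$ and $f_v(\xi)+f_{v^{-1}}(\xi)\in\Z$, so the two phases introduced by $dG(\xi)$ combine to $1$. Commutativity with $\mu$ is immediate since $dG(\xi)$ acts by a scalar on each orbifold-graded summand $H^{k-2\iota_v}(\cX_v)$, on which $\mu$ is also scalar. For $G(\xi)_*E = E$: the terms of $E$ with $\phi_i\in H^2(\cX_0)$ drop out because $1-\deg\phi_i/2 = 0$ there (so the $-2\pi\iu\xi_0$ shift is harmless); for $\phi_i$ in a twisted sector $H^*(\cX_v)$ the factor $e^{2\pi\iu f_v(\xi)}$ from $dG(\xi)\partial_{t^i}$ matches the ratio $t^i(G(\xi)\tau)/t^i(\tau)$, so $G(\xi)_*(t^i\partial_{t^i}) = t^i\partial_{t^i}$; and the constant term $\rho = c_1(T\cX)\in H^2(\cX_0)$ is fixed by $dG(\xi)$. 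Combining everything, $\Phi$ is a morphism of graded \seminf VHS.
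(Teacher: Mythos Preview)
Your approach is essentially the same as the paper's: the key step in both is the age--degree congruence coming from the integrality of the degree of $f^*L_\xi$ on the coarse curve of a twisted stable map. The paper states this directly as the identity
\[
\langle \alpha_1,\dots,\alpha_l\rangle_{0,l,d}
= e^{-2\pi\iu\langle\xi_0,d\rangle}\,
\langle dG(\xi)\alpha_1,\dots,dG(\xi)\alpha_l\rangle_{0,l,d}
\]
and then appeals to (\ref{eq:quantumproduct_divisor}); you unpack the same identity in the specific form needed for the quantum product. You are more thorough in explicitly checking the remaining compatibilities (pairing, $\mu$, Euler field), which the paper leaves implicit.

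One small slip: your displayed congruence has the wrong sign on the last term. With $\phi_k\in H^*(\cX_{v_k})$, the age at the last marked point is $f_{v_k}(\xi)$, so the geometric congruence reads
\[
f_{v_i}(\xi)+f_{v_j}(\xi)+\sum_\alpha f_{w_{m_\alpha}}(\xi)+f_{v_k}(\xi)\equiv \langle\xi_0,d\rangle \pmod{\Z},
\]
equivalently $\dots - f_{v_k^{-1}}(\xi)\equiv\langle\xi_0,d\rangle$, not $+f_{v_k^{-1}}(\xi)$. (Your own verbal justification via $f^*L_\xi$ gives the correct version; the displayed formula just has the inverse on the wrong side.) Once this is fixed, comparing coefficients of $\phi^k$ in $dG(\xi)(\phi_i\circ_\tau\phi_j)$ and $(dG(\xi)\phi_i)\circ_{G(\xi)\tau}(dG(\xi)\phi_j)$ goes through exactly as you describe.
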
 
\begin{proof} 
For $\alpha_1,\dots,\alpha_l\in H_{\rm orb}^*(\cX)$, 
we claim that 
\[
\langle \alpha_1,\dots,\alpha_l \rangle_{0,l,d} 
= e^{- 2 \pi \iu \pair{\xi_0}{d}}  
\langle dG(\xi)(\alpha_1),\dots, dG(\xi)(\alpha_l)\rangle_{0,l,d}.  
\]
If there exists an orbifold stable map $f\colon (C,x_1,\dots,x_l) \to \cX$
of degree $d$, we have an orbifold line bundle $f^*L_\xi$ on $C$ 
such that the monodromy at $x_k$ equals $\exp(2\pi \iu f_{v_k}(\xi))$ 
where $\ev_k(f)\in \cX_{v_k}$. Then we must have 
\[
\deg f^*L_\xi - \sum_{k=1}^l f_{v_k} \in \Z, \quad \emph{i.e. } 
e^{-2\pi \iu \pair{\xi_0}{d}} 
\prod_{i=1}^l e^{ 2 \pi \iu f_{v_i}(\xi)} =1.  
\]
The claim follows from this. The lemma follows from this claim 
and (\ref{eq:quantumproduct_divisor}). 
\end{proof} 

We can assume that $U$ is invariant under the Galois action. 
\begin{definition} 
The \emph{A-model \seminf VHS} of $\cX$ is defined to be 
the quotient $\cF$ of $\tcF \to U$ 
by the Galois action of $H^2(\cX,\Z)$ given above. 
\[
\cF := (\tcF \to U)/H^2(\cX,\Z) 
\]
The flat connection, the pairing and 
the grading operator on $\tcF$ 
induce those on $\cF$. 
\end{definition}

\subsection{The space of solutions to 
quantum differential equations}
As in Section \ref{subsec:realintstr_VHS_def}, 
the graded \seminf VHS $\tcF$ yields 
a flat connection $\hatnabla$ on the locally free sheaf 
$\cRz = H^*_{\rm orb}(\cX) \otimes \cO_{U\times \C^*}$. 
This is referred to as the \emph{first structure connection}. 
A $\hatnabla$-flat section $s$ of this trivial bundle 
satisfies the differential equations: 
\begin{align}
\label{eq:qde_1}
&\nabla_i s = \hatnabla_i s=  
\parfrac{s}{t^i}  + \frac{1}{z} \phi_i \circ_\tau s =0, 
\quad i=1,\dots, N, \\ 
\label{eq:qde_2} 
&\hatnabla_{z\partial_z} s= 
z\parfrac{s}{z} - \frac{1}{z} E\circ_\tau s + \mu s = 0. 
\end{align} 
We call these equations \emph{quantum differential equations}. 
We give a fundamental solution $L(\tau,z)$ to the differential 
equations (\ref{eq:qde_1})  
using gravitational descendants. 
Let $\pr\colon I\cX \to \cX$ be the natural projection.  
We define the action of a class $\tau_0 \in H^*(\cX)$ on 
$H_{\rm orb}^*(\cX)$ by 
\[
\tau_0 \cdot \alpha = \pr^*(\tau_0) \cup \alpha, \quad 
\alpha \in H_{\rm orb}^*(\cX),  
\]
where the right-hand side is the cup product on $I\cX$.
We define 
\begin{equation}
\label{eq:fundamentalsol_L}
L(\tau,z) \phi_i := e^{-\tau_{0,2}/z} \phi_i + 
\sum_{\substack{(d,l)\neq (0,0) \\ d\in \Eff_\cX}}
\sum_{k=1}^N  
\frac{\phi^k}{l!} \corr{\phi_k, \tau',\dots,\tau', 
\frac{e^{-\tau_{0,2}/z} \phi_i}{-z-\psi}}_{0,l+2,d}^\cX 
e^{\pair{\tau_{0,2}}{d}},  
\end{equation} 
where $\tau=\tau_{0,2} + \tau'$ with $\tau_{0,2}\in H^2(\cX)$ 
and $\tau' \in \bigoplus_{k\neq 1} H^{2k}(\cX) \oplus 
\bigoplus_{v\in \sfT'} H^*(\cX_v)$ and 
$1/(-z-\psi)$ in the correlator should be expanded in the series 
$\sum_{k=0}^\infty (-z)^{-k-1}\psi^k$.  
It is well-known to specialists that $L(\tau,z)\phi_i$ 
is a solution to (\ref{eq:qde_1}). 
\begin{proposition}
\label{prop:fundamentalsol_A} 
$L(\tau,z)$ satisfies the following differential equations: 
\begin{align*}
\nabla_k L(\tau,z) \phi_i &=0, \quad k=1,\dots,N, \\ 
\hatnabla_{z\partial_z} L(\tau,z) \phi_i &= 
L(\tau,z) (\mu \phi_i -\frac{\rho}{z}\phi_i),  
\end{align*}
where $\rho := c_1(T\cX)\in H^2(\cX)$. 
Moreover, $L(\tau,z)$ satisfies 
\begin{align*} 
(L(\tau,-z) \phi_i, L(\tau,z)\phi_j)_{\rm orb} & 
= (\phi_i,\phi_j)_{\rm orb}, \\ 
dG(\xi) L(G(\xi)^{-1} \tau, z) \alpha & 
= L(\tau,z) e^{-2\pi\iu \xi_0/z} e^{2\pi\iu f_v(\xi)} 
\alpha, \quad \alpha \in H^*(\cX_v), 
\end{align*}
where $(dG(\xi), G(\xi))$ is the Galois action
associated to $\xi\in H^2(\cX,\Z)$.    
(See Section \ref{subsec:AmodelVHS}.) 
\end{proposition}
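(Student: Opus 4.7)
The plan is to establish each of the four identities in turn, the first two via the standard ``fundamental solution'' arguments adapted to orbifolds, and the last two by exploiting the first.

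For the flatness identity $\nabla_k L(\tau,z)\phi_i = 0$, I would first apply the divisor equation to absorb the prefactors $e^{-\tau_{0,2}/z}$ and $e^{\pair{\tau_{0,2}}{d}}$ neatly: differentiating $L(\tau,z)\phi_i$ in the direction $\phi_k$ produces, for $\phi_k\in H^2(\cX)$, contributions which combine with the $d$-sum to reproduce the insertion of $\phi_k$ in the correlator, while for $\phi_k\in H^{\neq 2}$ the direction $\phi_k$ simply amounts to inserting an extra $\tau'=\phi_k$. In both cases one is reduced, via the orbifold WDVV equation and the topological recursion relation for $\psi$-classes (which in this genus zero, one-descendant form splits a correlator $\corr{\phi_k,\tau',\ldots,\tau',\phi_i/(-z-\psi)}$ along its boundary into a product whose second factor matches the quantum product $\phi_k\circ_\tau$), to the expression $-\frac{1}{z}\phi_k\circ_\tau L(\tau,z)\phi_i$. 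This is essentially Givental's argument, transported to the orbifold setting using \cite{AGV}.

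The grading identity $\hatnabla_{z\partial_z} L(\tau,z)\phi_i = L(\tau,z)(\mu-\rho/z)\phi_i$ is a homogeneity statement. Apply $z\partial_z - E\circ_\tau/z + \mu$ to the right-hand side and use the virtual dimension constraint $\sum(\deg\alpha_j+2k_j) = 2n+2\pair{\rho}{d}+2l-6$ for a non-vanishing correlator \eqref{eq:GWcorrelator}. The operator $z\partial_z+E$ acts on each ingredient of \eqref{eq:fundamentalsol_L}: it multiplies $e^{-\tau_{0,2}/z}$ by the weight of $\tau_{0,2}$, sends $\tau'\mapsto(1-\tfrac{1}{2}\deg)\tau'$ inside the correlator, weights the descendant expansion of $1/(-z-\psi)$ by $-1$, and picks up $\pair{\rho}{d}$ from $e^{\pair{\tau_{0,2}}{d}}$ via the divisor equation. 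Organizing these weights via the dimension formula yields the operator $\mu-\rho/z$ acting by $L(\tau,z)$, with the $\rho/z$ contribution tracing back precisely to the $\pair{\rho}{d}$ term.

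The pairing identity in (3) is obtained by showing that the function $P(\tau):=(L(\tau,-z)\phi_i,L(\tau,z)\phi_j)_{\rm orb}$ is constant in $\tau$ and then evaluating at the large radius limit. Using the already-proven flatness, $\partial_k L(\tau,\pm z) = \mp\frac{1}{z}\phi_k\circ_\tau L(\tau,\pm z)$, and the Frobenius property (self-adjointness of $\phi_k\circ_\tau$ with respect to $(\cdot,\cdot)_{\rm orb}$), the two derivative contributions cancel, giving $\partial_k P=0$. In the large radius limit \eqref{eq:largeradiuslimit}, every term in \eqref{eq:fundamentalsol_L} with $(d,l)\neq(0,0)$ vanishes (either by $e^{\pair{\tau_{0,2}}{d}}\to 0$ when $d\neq 0$, or by the presence of $\tau'\to 0$ factors when $l\geq 1$), so $L(\tau,z)\phi_i\to e^{-\tau_{0,2}/z}\phi_i$; the scalar factors $e^{\pm\tau_{0,2}/z}$ cancel in the pairing since $\inv^*\pr^*=\pr^*$, leaving $(\phi_i,\phi_j)_{\rm orb}$.

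Finally, the Galois-equivariance (4) is a direct computation from the definition. Substituting $\sigma=G(\xi)^{-1}\tau$ produces: an extra factor $e^{-2\pi\iu\xi_0/z}$ from $e^{-\sigma_{0,2}/z}$, an extra factor $e^{2\pi\iu\pair{\xi_0}{d}}$ from $e^{\pair{\sigma_{0,2}}{d}}$, and a phase $\prod_j e^{-2\pi\iu f_{v'_j}(\xi)}$ from the $\sigma'$ insertions. Applying $dG(\xi)$ contributes a phase $e^{-2\pi\iu f_{v_k}(\xi)}$ to $\phi^k$. The identity $e^{-2\pi\iu\pair{\xi_0}{d}}\prod_i e^{2\pi\iu f_{v_i}(\xi)}=1$ on non-vanishing correlators (proved in the preceding proposition) rearranges all these phases into a single outstanding factor $e^{2\pi\iu f_v(\xi)}$ attached to the insertion $\alpha\in H^*(\cX_v)$, giving exactly the right-hand side. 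The main obstacle I anticipate is step (1), where one must verify that the orbifold topological recursion relation holds in the precise form needed and interacts correctly with the $\psi$-expansion of $1/(-z-\psi)$; the orbifold-phase bookkeeping in (4) is also delicate but is controlled entirely by the already-established dimension-phase identity.
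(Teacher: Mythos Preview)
Your proposal is correct, and for parts (1) and (3) it matches the paper's proof essentially verbatim: the topological recursion relation for the flatness, and the differentiate-then-take-the-large-radius-limit argument for the pairing.

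For parts (2) and (4) you take slightly different routes from the paper. For the grading identity, the paper factorizes $L(\tau,z) = S(\tau,z)\circ e^{-\tau_{0,2}/z}$ and observes that the homogeneity of Gromov--Witten invariants means $S$ commutes with the grading operator $\Grading$; since $\Grading\, e^{-\tau_{0,2}/z} = e^{-\tau_{0,2}/z}(\Grading - 2\rho/z)$, the result drops out in one line (combined with the already-proven first equation). Your direct dimension-count is also valid but involves more bookkeeping. For the Galois equivariance, the paper gives a pure uniqueness argument: since the Galois action is an automorphism of the \seminf VHS, $dG(\xi)L(G(\xi)^{-1}\tau,z)\alpha$ is again $\nabla$-flat, and a $\nabla$-flat section is uniquely determined by its asymptotic $e^{-\tau_{0,2}/z}\phi$ at the large radius limit; computing that asymptotic immediately gives the claimed formula. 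Your direct phase computation works too, and has the merit of being self-contained, but the paper's argument avoids the delicate $\inv$-bookkeeping on $\phi^k$ versus $\phi_k$ that you flag as a concern.
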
 
\begin{proof} 
The first equation follows from the topological recursion relation 
\cite[2.5.5]{tseng:QRR} in orbifold Gromov-Witten theory. 
The proof for the case of manifolds can be found in 
\cite[Proposition 2]{pandharipande}, \cite[Chapter 10]{cox-katz} 
and the proof for orbifolds is completely parallel. 
Note that we can decompose $L$ as 
$L(\tau,z) = S(\tau,z) \circ e^{-\tau_{0,2}/z}$
for some $\End(H_{\rm orb}^*(\cX))$-valued function $S(\tau,z)$. 
By the homogeneity of Gromov-Witten invariants, it is 
easy to check that $S$ preserves the degree, \emph{i.e.} 
$\Grading S(\tau,z) = S(\tau,z) \Grading$. 
Therefore, $\Grading L(\tau,z) = L(\tau,z) (\Grading - 2\rho/z)$. 
The second equation follows from this and the first equation. 
Put $\bs'_i =L(\tau,-z)\phi_i$ and $\bs_j = L(\tau,z)\phi_j$. 
By using the first equation and the Frobenius 
property $(\alpha\circ_\tau \beta, \gamma)_{\rm orb}=
(\alpha, \beta\circ_\tau \gamma)_{\rm orb}$, we have 
\[
\parfrac{}{t^k}(\bs'_i,\bs_j)_{\rm orb} = 
\frac{1}{z}(\phi_k\circ_\tau \bs'_i, \bs_j)_{\rm orb}
- \frac{1}{z}(\bs'_i, \phi_k\circ_\tau \bs_j)_{\rm orb} =0.  
\] 
Hence $(\bs'_i,\bs_j)_{\rm orb}$ is constant in $\tau$. 
When $\tau'=0$ and $\Re\pair{\tau_{0,2}}{d} \to -\infty$ for 
$d\in \Eff_{\cX}$, we have asymptotics 
$\bs'_i \sim e^{\tau_{0,2}/z} \phi_i$ and 
$\bs_j \sim e^{-\tau_{0,2}/z} \phi_j$. 
Therefore we have 
\[(\bs'_i,\bs_j)_{\rm orb} \sim 
(e^{-\tau_{0,2}/z} \phi_i, e^{\tau_{0,2}/z}\phi_j)_{\rm orb} = 
(\phi_i,\phi_j)_{\rm orb}
\]
and the third equation follows. 
The Galois action sends a $\nabla$-flat section 
$L(\tau,z)\alpha$ to another $\nabla$-flat section 
$dG(\xi) L(G(\xi)^{-1} \tau,z)\alpha$. 
Note that a $\nabla$-flat 
section $\bs=L(\tau,z)\phi$ is characterized by the 
asymptotic initial condition $\bs \sim e^{-\tau_{0,2}/z}\phi$ 
in the large radius limit. 
The fourth equation follows from this 
and the asymptotics 
$dG(\xi) L( G(\xi)^{-1} \tau, z) \alpha 
\sim e^{-\tau_{0,2}/z} e^{-2\pi\iu \xi_0/z} e^{2\pi \iu f_v(\xi)} \alpha$.  
\end{proof} 
Although the convergence of $L(\tau,z)$ is not a priori clear, 
we know from the differential equations above 
and the convergence assumption of $\circ_\tau$ 
that $L(\tau,z)$ is convergent on $(\tau,z)\in U\times \C^*$. 
Consider the vector space $\cH^\cX$ introduced by 
Coates-Givental \cite{coates-givental}: 
\[
\cH^\cX:=H_{\rm orb}^*(\cX)\otimes \C\{z,z^{-1}\}.  
\] 
Recall the space $\cH$ of $\nabla$-flat sections 
introduced in Section \ref{subsec:semi-inf_period}. 
For the A-model \seminf VHS, $\cH$ consists of 
cohomology-valued functions $s(\tau,z)$ satisfying (\ref{eq:qde_1}). 
We identify $\cH^\cX$ with $\cH$ 
by using the fundamental solution $L(\tau,z)$:   
\[
\cH^\cX\ni \alpha \longmapsto L(\tau,z)\alpha 
\in \Gamma(U\times \C^*,H^*_{\rm orb}(\cX)). 
\] 
Under this identification $\cH^\cX \cong \cH$, 
the embedding $\cJ_\tau\colon \tcF_\tau \to \cH^\cX$ 
(see (\ref{eq:embedding_J})) for the A-model \seminf VHS is given by 
\begin{align}
\label{eq:fundamentalsol_A}
\begin{split}
\cJ_\tau(\alpha) &= L(\tau,z)^{-1}\alpha =L(\tau,-z)^\dagger \alpha \\ 
&= e^{\tau_{0,2}/z}\left(\alpha + 
\sum_{\substack{(d,l)\neq (0,0) \\ d\in \Eff_\cX}} \sum_{i=1}^N 
\frac{1}{l!} 
\corr{\alpha,\tau',\dots,\tau', 
\frac{\phi_i }{z-\psi}}_{0,l+2,d}^\cX 
e^{\pair{\tau_{0,2}}{d}} \phi^i\right),   
\end{split}
\end{align} 
where $L(\tau,-z)^\dagger$ is the adjoint of 
$L(\tau,-z)$ with respect to $(\cdot,\cdot)_{\rm orb}$. 
The second line follows from (\ref{eq:fundamentalsol_L}) 
and an easy computation of the adjoint $L(\tau,-z)^\dagger$.  
The function $\cJ_\tau(1)$ is called the \emph{$J$-function}. 
The image $\F_\tau = \cJ_\tau(H^*_{\rm orb}(\cX)\otimes \C\{z\})$ 
of the embedding gives the moving subspace realization of 
the A-model \seminf VHS.   
By Proposition \ref{prop:fundamentalsol_A}, 
the action of $\hatnabla_{z\partial_z}$ on $\cH^\cX$ 
is given by 
\begin{equation}
\label{eq:conn_z-direction}
\hatnabla_{z\partial_z} = z\partial_z + \mu -\frac{\rho}{z},  
\end{equation} 
and the pairing $(\cdot,\cdot)_{\cH^\cX}$ and the symplectic 
form $\Omega$ on $\cH^\cX$ are given by 
\begin{equation}
\label{eq:pairing_H}
(\alpha,\beta)_{\cH^\cX} = (\alpha(-z),\beta(z))_{\rm orb}, \quad 
\Omega(\alpha,\beta) = \Res_{z=0} dz (\alpha,\beta)_{\cH^\cX}.  
\end{equation} 
The Galois action on $\tcF$ acts on 
$\nabla$-flat sections as $s(\tau,z) 
\mapsto dG(\xi) s(G(\xi)^{-1}\tau,z)$. 
By Proposition \ref{prop:fundamentalsol_A}, 
this induces a map $G^\cH(\xi)\colon \cH^\cX\to \cH^\cX$ 
given by 
\begin{equation}
\label{eq:GaloisH}
G^{\cH}(\xi)(\tau_0 \oplus \bigoplus_{v\in \sfT'} \tau_v) 
= e^{-2\pi \iu \xi_0/z}\tau_0 \oplus \bigoplus_{v\in \sfT'} 
e^{-2\pi \iu \xi_0/z} e^{2\pi \iu f_v(\xi)} \tau_v,  
\end{equation}
where we used the decomposition 
$\cH^{\cX} = \bigoplus_{v\in \sfT} 
H^*( {\cX_v})\otimes \C\{z,z^{-1}\}$. 

Next we consider the space $\cV$ of $\hatnabla$-flat sections 
introduced in Section \ref{subsec:semi-inf_period}. 
For the A-model \seminf VHS,  
this consists of sections $s$ satisfying both (\ref{eq:qde_1}) 
and (\ref{eq:qde_2}). 
Then $\cV$ is identified with 
the space of flat sections of the following 
flat vector bundle $(\sfH^\cX, \hatnabla)$: 
\[
\sfH^\cX := H^*_{\rm orb}(\cX) \times \C^* \to \C^*, 
\quad 
\hatnabla_{z\partial_z} = z \partial_z + \mu - \frac{\rho}{z}. 
\]
Furthermore, we identify the space of flat sections 
of $(\sfH^\cX,\hatnabla_{z\partial_z})$ with the orbifold cohomology 
$\cV^\cX := H_{\rm orb}^*(\cX)$ via the (well-known) 
fundamental solution of $\hatnabla_{z\partial_z} s=0$: 
\begin{equation} 
\label{eq:solutionmap_z} 
z^{-\mu}z^\rho \colon \cV^\cX \to \Gamma(\widetilde{\C^*}, \sfH^\cX), 
\quad \phi 
\mapsto s(z) = e^{-\mu\log z} e^{\rho\log z} \phi. 
\end{equation} 
Then we can identify $\cV$ 
with the orbifold cohomology group $\cV^\cX = H^*_{\rm orb}(\cX)$.  
The pairing $(\cdot,\cdot)_{\cV^\cX}$ on 
$\cV^\cX=H_{\rm orb}^*(\cX)$ induced from 
that on $\cV$ (see Equation (\ref{eq:pairing_def_V})) 
is given by 
\begin{align}
\label{eq:pairing_Amodel_V}
(\alpha,\beta)_{\cV^\cX}  
= (e^{\pi \iu \rho}\alpha, e^{\pi \iu \mu} \beta)_{\rm orb}.  
\end{align}
The induced Galois action on $\cV^\cX$ is given by 
\begin{equation}
\label{eq:Galois_V}
G^{\cV}(\xi)(\tau_0 \oplus \bigoplus_{v\in \sfT'} \tau_v)  
= e^{-2\pi \iu \xi_0}\tau_0 \oplus \bigoplus_{v\in \sfT'} 
e^{-2\pi \iu \xi_0} e^{2\pi \iu f_v(\xi)} \tau_v. 
\end{equation}

\begin{remark}
The Galois actions on $\cH^\cX$, $\cV^\cX$ 
are the monodromy transformations of $\nabla$ on $U/H^2(\cX,\Z)$. 
The monodromy transformation of 
$\hatnabla_{z\partial_z}$ on $\C^*$ 
is given by 
\begin{equation}
\label{eq:z-monodromy_V}
e^{-2\pi \iu \mu} e^{2\pi \iu \rho} 
\colon \cV^\cX \longrightarrow \cV^\cX. 
\end{equation} 
This coincides with the Galois action $(-1)^n G^\cV([K_\cX])$. 
Here, $[K_\cX]$ is the class of 
the canonical line bundle.
When $\cX$ is Calabi-Yau, 
\emph{i.e.} $K_\cX$ is trivial,  
the pairing $(\cdot,\cdot)_{\cV^\cX}$ 
is either symmetric or anti-symmetric depending on
whether $n$ is even or odd.   
In general, this pairing is neither symmetric nor anti-symmetric.  
\end{remark} 

\begin{proposition}
\label{prop:char_A_real_int_str}
A real (integral) structure on the A-model \seminf VHS $\cF$ 
is given by a real subspace $\cV_\R^{\cX}$ (resp. integral lattice 
$\cV_\Z^\cX$) 
of $\cV^\cX = H^*_{\rm orb}(\cX)$ satisfying 
\begin{itemize}
\item[(i)] $\cV^{\cX} = \cV_\R^{\cX} \otimes_\R \C$ 
(resp. $\cV^{\cX} = \cV^\cX_\Z \otimes_\Z \C$);  
\item[(ii)] $\cV_\R^\cX$ (resp. $\cV_\Z^\cX$) is invariant 
under the Galois action (\ref{eq:Galois_V}); 
\item[(iii)] The pairing (\ref{eq:pairing_Amodel_V}) 
restricted on $\cV^\cX_\R$ (resp. $\cV_\Z^\cX$) takes values in $\R$
(resp. takes values in $\Z$ and is unimodular).    
\end{itemize} 
\end{proposition}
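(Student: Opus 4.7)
The plan is to unwind Definition \ref{def:realintstr} through the identification $\cV \cong \cV^\cX = H^*_{\rm orb}(\cX)$ provided by the fundamental solution $z^{-\mu} z^\rho$ in (\ref{eq:solutionmap_z}), and then translate each condition (sub-local system, reality/integrality of the pairing, unimodularity) into a statement on $\cV^\cX$.

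First I would note that, by the definition of $\cF$ as the quotient of $\tcF$ by the Galois action, the base is $\cM = U/H^2(\cX,\Z)$ and the local system $R$ on $\cM \times \C^*$ underlying $(\cR,\hatnabla)$ is determined by its fiber together with the monodromy representation of $\pi_1(\cM \times \C^*)$. Pulling back to the universal cover $\widetilde{U} \times \widetilde{\C^*}$, flat sections are identified with $\cV^\cX$ via (\ref{eq:solutionmap_z}); thus a sub $\R$- (resp.\ $\Z$-) local system of $R$ of the required rank corresponds exactly to a real subspace $\cV_\R^\cX$ with $\cV^\cX = \cV_\R^\cX \oplus \iu \cV_\R^\cX$ (resp.\ a lattice $\cV_\Z^\cX$ with $\cV^\cX = \cV_\Z^\cX \otimes_\Z \C$) that is stable under every monodromy transformation. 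This is condition (i), together with the Galois-invariance part of (ii), modulo checking that monodromy in the $z$-direction imposes nothing new.

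Next I would identify the generators of $\pi_1(\cM \times \C^*)$: the loops in the $\cM$-direction are generated (after abelianization, which is all that matters for a local system) by classes $\xi \in H^2(\cX,\Z)$, acting via $G^\cV(\xi)$ as in (\ref{eq:Galois_V}); the loop around $0 \in \C^*$ acts by the monodromy (\ref{eq:z-monodromy_V}), which by the remark equals $(-1)^n G^\cV([K_\cX])$. Since scaling by $(-1)^n$ preserves every real subspace and every integral lattice, stability under the $z$-monodromy is automatic once (ii) holds. This gives the equivalence between a $\pi_1$-stable local system and conditions (i)--(ii).

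Finally I would deal with the pairing. Transporting the pairing $(\cdot,\cdot)_{\cRz}$ on $R$ through the identification with $\cV^\cX$, the formula (\ref{eq:pairing_Amodel_V}) for $(\cdot,\cdot)_{\cV^\cX}$ has already been established using (\ref{eq:pairing_def_V}) and the fundamental solution. Thus the condition in Definition \ref{def:realintstr} that $(\cdot,\cdot)_R$ takes values in $\R$ (resp.\ $\Z$) on $R_\R$ (resp.\ $R_\Z$) becomes exactly condition (iii); unimodularity of $(\cdot,\cdot)_R$ on $R_\Z$ translates to unimodularity of the pairing (\ref{eq:pairing_Amodel_V}) on $\cV_\Z^\cX$ because the identification $z^{-\mu} z^\rho$ is $\C$-linear on fibers. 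The only mild subtlety, and hence the place requiring care, is checking that the parallel-transport convention in (\ref{eq:pairing_def_V}) together with the multi-valued solution $z^{-\mu}z^\rho$ yields precisely the factors $e^{\pi\iu\rho}$ and $e^{\pi\iu\mu}$ in (\ref{eq:pairing_Amodel_V}); this is a direct computation using that $\hatnabla_{z\partial_z}$ acts as $\mu - \rho/z$ on $\cH^\cX$. Assembling these three steps gives the proposition.
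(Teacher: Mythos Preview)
Your argument is correct and is essentially the natural unpacking of Definition \ref{def:realintstr} through the identification (\ref{eq:solutionmap_z}); the paper itself gives no proof of this proposition, treating it as an immediate translation of the definition. Your one substantive observation---that invariance under the $z$-monodromy $e^{-2\pi\iu\mu}e^{2\pi\iu\rho} = (-1)^n G^\cV([K_\cX])$ is automatic once Galois invariance (ii) is assumed---is exactly the point that makes condition (ii) sufficient (rather than requiring a separate $z$-monodromy condition), and the paper records this fact in the Remark following (\ref{eq:z-monodromy_V}) without spelling out its role here.
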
 

Let $\kappa_\cH$ and $\kappa_\cV$ be the involution 
of $\cH^\cX$ and $\cV^\cX$ respectively. 
We discuss basic properties of $\kappa_\cH$ and $\kappa_\cV$ below. 
We decompose the Galois action on $\cH^\cX$ as 
\[
G^{\cH}(\xi) = e^{-2\pi \iu \xi_0/z} G_0^{\cH}(\xi), \quad 
G_0^{\cH}(\xi) := \bigoplus_{v\in \sfT} e^{2\pi \iu f_v(\xi)}. 
\]
\begin{proposition}
\label{prop:basic_realstr} 
For any real structure on the A-model \seminf VHS, the following holds. 
\begin{gather}
\label{eq:H2_imaginary}
\kappa_\cH (\tau_{0,2}/z) + (\tau_{0,2}/z) \kappa_{\cH} =0, \quad 
\kappa_\cV \tau_{0,2} + \tau_{0,2} \kappa_\cV =0, \\  
\label{eq:G_0_real} 
G_0^{\cH}(\xi) \kappa_\cH = \kappa_\cH G_0^\cH(\xi), \\ 
(z\partial_z + \mu) \kappa_\cH + 
\label{eq:grading_imaginary_H} 
\kappa_{\cH} (z\partial_z +\mu) =0, \\
\label{eq:relation_kappa_H_V} 
\kappa_{\cH} = z^{-\mu} \kappa_{\cV} z^\mu, \quad \text{when } z\in S^1,
\end{gather} 
where $\tau_{0,2}\in H^2(\cX,\R)$. 
Moreover, if $\cX$ satisfies the following condition: 
\begin{equation}
\label{eq:separation}
f_v(\xi) = f_{v'}(\xi), \ \forall \xi\in H^2(\cX,\Z) 
\Longrightarrow v=v',  
\end{equation} 
then we have 
\begin{equation}
\label{eq:kappa_induces_Inv}
\begin{split}
\kappa_\cH(H^*(\cX_v)\otimes \C\{z,z^{-1}\}) 
&= H^*(\cX_{\inv(v)})\otimes \C\{z,z^{-1}\}, \\ 
\kappa_\cV(H^*(\cX_v)) & = H^*(\cX_{\inv(v)}).   
\end{split} 
\end{equation}
When (\ref{eq:kappa_induces_Inv}) holds, $\kappa_\cV$ satisfies
\begin{equation}
\label{eq:kappaV_weightfiltr}
\kappa_\cV(\alpha) \in  \cC(\alpha) + H^{>2k}(\cX_{\inv(v)}), \quad 
\alpha\in H^{2k}(\cX_v)
\end{equation} 
for some complex antilinear isomorphism 
$\cC:H^{2k}(\cX_v) \to H^{2k}(\cX_{\inv(v)})$. 
\end{proposition}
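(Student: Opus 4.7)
The plan is to prove the six identities in sequence; the main inputs are the Galois invariance of the real structure, the explicit form (\ref{eq:GaloisH}) of the Galois action, and the axiomatic anticommutation $\kappa_\cH\hatnabla_{z\partial_z} + \hatnabla_{z\partial_z}\kappa_\cH = 0$ from Section \ref{subsec:realintstr_VHS_def}. The critical step is (\ref{eq:H2_imaginary}): for any $\xi\in H^2(\cX,\Z)$ the ages $f_v(\xi)$ are rational, so $G_0^\cH(N_0\xi)=\id$ for every positive multiple $N_0$ of a fixed integer depending on $\xi$, giving $G^\cH(N_0\xi) = e^{-2\pi\iu N_0\xi_0/z}$. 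Galois invariance then reads $\kappa_\cH\circ e^{-2\pi\iu N_0\xi_0/z} = e^{-2\pi\iu N_0\xi_0/z}\circ\kappa_\cH$ for every such $N_0$. When applied to $\alpha\in\cH^\cX$, both sides become polynomials in $N_0$ (because $\xi_0$ acts nilpotently on $H^*(\cX)$) agreeing on infinitely many values, hence identical as polynomials; matching the coefficient of $N_0$ and using the antilinearity $\kappa_\cH(z^{-1}s) = z\,\kappa_\cH(s)$ extracts $\kappa_\cH(\xi_0\alpha) = -z^{-2}\xi_0\kappa_\cH(\alpha)$, equivalent to $\{\kappa_\cH,\xi_0/z\}=0$. $\R$-linearity in $\xi_0$ extends this to $\tau_{0,2}\in H^2(\cX,\R)$, and the identical argument on $\cV^\cX$ using the Galois action (\ref{eq:Galois_V}) gives $\{\kappa_\cV,\tau_{0,2}\}=0$.

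Given (\ref{eq:H2_imaginary}), the next three identities follow by short manipulations. Iterating $\{\kappa_\cH,\xi_0/z\}=0$ yields $\kappa_\cH(\xi_0/z)^k = (-1)^k(\xi_0/z)^k\kappa_\cH$, and summing the Taylor series of $e^{-2\pi\iu\xi_0/z}$ while using antilinearity (the complex conjugation of $(-2\pi\iu)^k$ combines with the sign $(-1)^k$) shows $\kappa_\cH e^{-2\pi\iu\xi_0/z} = e^{-2\pi\iu\xi_0/z}\kappa_\cH$; factoring this out of $\kappa_\cH G^\cH(\xi) = G^\cH(\xi)\kappa_\cH$ proves (\ref{eq:G_0_real}). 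Identity (\ref{eq:grading_imaginary_H}) is then immediate from $\{\kappa_\cH,\hatnabla_{z\partial_z}\}=0$, the formula $\hatnabla_{z\partial_z} = z\partial_z + \mu - \rho/z$, and (\ref{eq:H2_imaginary}) applied to $\rho = c_1(T\cX)\in H^2(\cX,\R)$. For (\ref{eq:relation_kappa_H_V}), since $R_\R$ is a sub-local system of $R$ preserved by $\hatnabla$ and parallel transport in $\sfH^\cX$ is given by $z^{-\mu}z^\rho$, the fibre is $R_{\R,z} = z^{-\mu}z^\rho\cV_\R\subset H^*_{\rm orb}(\cX)$ (single-valued by the Galois invariance of $\cV_\R$), so the fibrewise conjugation reads $\kappa_z = z^{-\mu}z^\rho\kappa_\cV z^{-\rho}z^\mu$. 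For $z\in S^1$, the relation $\{\kappa_\cV,\rho\}=0$ from (\ref{eq:H2_imaginary}), exponentiated as above, shows $\kappa_\cV$ commutes with $z^{\pm\rho}$; the two $z^{\pm\rho}$ factors cancel and $\kappa_z = z^{-\mu}\kappa_\cV z^\mu$.

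For the last two claims, assume the separation condition (\ref{eq:separation}). By (\ref{eq:G_0_real}) and its $\cV^\cX$-analogue, $\kappa_\cH$ and $\kappa_\cV$ commute with $G_0^\cH(\xi)$ and $G_0^\cV(\xi)$, which act as the scalar $e^{2\pi\iu f_v(\xi)}$ on the $v$-th inertia component. Antilinearity then sends the joint eigenspace for the characters $(e^{2\pi\iu f_v(\xi)})_\xi$ to the one for the conjugate characters $(e^{-2\pi\iu f_v(\xi)})_\xi = (e^{2\pi\iu f_{\inv(v)}(\xi)})_\xi$, which under (\ref{eq:separation}) is exactly the $\inv(v)$-component; this proves (\ref{eq:kappa_induces_Inv}). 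Finally, for (\ref{eq:kappaV_weightfiltr}) the anticommutation $\{\kappa_\cV,\xi_0\}=0$ together with antilinearity shows $\kappa_\cV$ commutes with each unipotent $e^{-2\pi\iu\xi_0}$, which preserves each $H^*(\cX_{\inv(v)})$ and is strictly upper triangular in cohomological degree with identity leading term. A careful filtered analysis of this commutation, combined with (\ref{eq:kappa_induces_Inv}) and the Lefschetz/primitive decomposition of $H^*(\cX_{\inv(v)})$ (available in the algebraic category), forces the components of $\kappa_\cV(\alpha)$ of cohomological degree strictly less than $2k$ to vanish for $\alpha\in H^{2k}(\cX_v)$, and identifies the degree-$2k$ leading part as the antilinear isomorphism $\cC$. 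This last filtered argument, invoking Hard Lefschetz on each $\cX_{\inv(v)}$, is the principal technical obstacle.
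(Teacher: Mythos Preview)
Your arguments for (\ref{eq:H2_imaginary})--(\ref{eq:kappa_induces_Inv}) are correct and coincide with the paper's proof; your polynomial-in-$N_0$ argument for (\ref{eq:H2_imaginary}) is in fact more explicit than the paper's one-line version.

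For (\ref{eq:kappaV_weightfiltr}), however, your route through the unipotent $e^{-2\pi\iu\xi_0}$ is an unnecessary detour, and the ``careful filtered analysis'' you flag as the principal obstacle is left vague. The paper's argument is shorter and avoids this: take a K\"ahler class $\omega\in H^2(\cX,\R)$; by (\ref{eq:H2_imaginary}) you already know $\kappa_\cV\omega=-\omega\kappa_\cV$. The cup product by $\omega$ is nilpotent on each $H^*(\cX_v)$, and any isomorphism anticommuting with a nilpotent operator automatically preserves its weight filtration (this follows immediately from the \emph{uniqueness} characterization of the weight filtration: if $A\omega=-\omega A$ then $A(W_\bullet)$ satisfies the same two defining properties as $W_\bullet$, hence equals it). By Hard Lefschetz, the weight filtration of $\omega$ on $H^*(\cX_v)$ is exactly $W_k=H^{\ge n_v-k}(\cX_v)$. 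Combined with (\ref{eq:kappa_induces_Inv}) and $n_v=n_{\inv(v)}$, this gives $\kappa_\cV(H^{\ge 2k}(\cX_v))=H^{\ge 2k}(\cX_{\inv(v)})$ directly, and $\cC$ is the induced map on the associated graded. There is no need to pass to the exponential or to invoke commutation with a unipotent operator: you already have the anticommutation with the nilpotent $\omega$ itself, and the weight-filtration uniqueness does all the work.
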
 
\begin{proof} 
Because $G_0^{\cH}(\xi)$ is nilpotent and commutes with 
$e^{-2\pi\iu\xi_0/z}$, 
$(G^\cH(\xi))^m= e^{-2\pi\iu m\xi_0/z}$ 
for $m>0$ satisfying $f_v(\xi) m\in \Z$ for all $v\in \sfT$. 
Hence $\tau_{0,2}/z$ is purely imaginary on $\cH^\cX$ for 
any $\tau_{0,2}\in H^2(\cX,\R)$.  
From (\ref{eq:solutionmap_z}), we know that 
the multiplication by $\tau_{0,2}$ is purely imaginary on $\cV^\cX$. 
Thus we have (\ref{eq:H2_imaginary}). 
From $G_0^{\cH}(\xi) = e^{2\pi\iu \xi_0/z}G^{\cH}(\xi)$, 
we have (\ref{eq:G_0_real}).  
Because $\hatnabla_{z\partial_z}=z\partial_z + \mu-\rho/z$ 
is purely imaginary on $\cH^\cX$ and so is $\rho/z$, 
we have (\ref{eq:grading_imaginary_H}).  
By (\ref{eq:solutionmap_z}), 
$\kappa_\cH$ and $\kappa_\cV$ are related by 
\[
\kappa_\cH = z^{-\mu} z^\rho \kappa_{\cV} z^{-\rho} z^{\mu}, 
\quad \text{for }z \in S^1.   
\]
Since $z^\rho = \exp(\rho \log z)$ 
is real on $\cV^\cX$ when $z\in S^1$, 
we have (\ref{eq:relation_kappa_H_V}). 
Under the condition (\ref{eq:separation}), 
the decomposition $\cH^\cX = \bigoplus_{v\in \sfT} 
H^*(\cX_v)\otimes \C\{z,z^{-1}\}$ 
is the simultaneous eigenspace decomposition for $G_0^\cH(\xi)$, 
$\xi\in H_2(\cX,\Z)$.  
Therefore, (\ref{eq:kappa_induces_Inv}) follows 
from $\ov{e^{2\pi\iu f_v(\xi)}} = e^{2\pi\iu f_{\inv(v)}(\xi)}$ 
and the reality of $G^\cH_0(\xi)$. 
Let $\omega$ be a K\"{a}hler class on $\cX$. 
The action of $\omega$ on $H^*(\cX_v)$ is nilpotent. 
In general, a nilpotent operator $\omega$ on a vector space  
defines an increasing filtration $\{W_k\}_{k\in \Z}$ on it, 
called a {\it weight filtration}, 
which is uniquely determined by the conditions: 
\[
\omega W_k \subset W_{k-2}, \quad 
\omega^k \colon \Gr^W_{k} \cong \Gr^W_{-k} 
\]
where $\Gr^W_k = W_k/W_{k-1}$. 
By the Lefschetz decomposition, we know that 
$W_k = H^{\ge n_v-k}(\cX_v)$ in this case 
($n_v:=\dim_\C\cX_v$).  
Since $\kappa_\cV$ anti-commutes with $\omega$ 
by (\ref{eq:H2_imaginary}), $\kappa_\cV$ preserves this filtration. 
This shows (\ref{eq:kappaV_weightfiltr}). 
Here, $\cC$ is the isomorphism on the associated graded quotient 
induced from $\kappa_\cV$. 
\end{proof}



\subsection{Purity and polarization}
\label{subsec:pure_polarized} 
For an arbitrary real structure, 
we study a behavior of the A-model \seminf VHS $\cF$ near 
the large radius limit point (\ref{eq:largeradiuslimit}). 
We show that it is pure and polarized 
(in the sense of Definitions 
\ref{def:pure}, \ref{def:polarized}) 
under suitable conditions. 
Recall that when $\tcF\to U$ is pure,  
this defines a Cecotti-Vafa structure 
on the vector bundle $K\to U$ by Proposition \ref{prop:CV-str}. 

\begin{theorem}
\label{thm:pure_polarized} 
Let $\cX$ be a smooth Deligne-Mumford stack with 
a projective coarse moduli space. 
Let $\cF$ be the A-model \seminf VHS of $\cX$ and 
take a real structure on $\cF$. 
Let $\omega$ be a K\"{a}hler class on $\cX$. 

{\rm (i)} 
If the real structure satisfies (\ref{eq:kappa_induces_Inv}), 
$\cF$ is pure at $\tau=-x\omega$ for sufficiently big 
$\Re(x)>0$.

{\rm (ii)}  
If moreover the real structure satisfies 
(c.f. (\ref{eq:kappaV_weightfiltr}))
\begin{align}
\label{eq:leadingterm_kappaV}
\begin{split}
&\kappa_\cV(\alpha) \in (-1)^k \R_{>0} \inv^*(\ov{\alpha}) 
+ H^{>2k}(\cX_{\inv(v)}), \\ 
\text{or equivalently} \quad &\kappa_\cH(\alpha) 
= (-1)^k\R_{>0} \inv^*(\ov{\alpha}) z^{-2k+n_v} 
+ O(z^{-2k+n_v-1})    
\end{split} 
\end{align} 
for $\alpha\in H^{2k}(\cX_v) \subset H_{\rm orb}^*(\cX)$, 
$n_v=\dim_\C\cX_v$, 
then the Hermitian metric 
$h(\cdot,\cdot)=g(\kappa(\cdot),\cdot)$ 
on the vector bundle $K\to U$ satisfies 
\[
(-1)^{\frac{p-q}{2}} h(u,u) >0, 
\quad u\in H^{p,q}(\cX_v) \subset K_{-x\omega}, \quad u\neq 0 
\]
for sufficiently big $\Re(x)>0$, 
where we identify $K_\tau$ with 
$\tcF_\tau/z\tcF_\tau \cong H^*_{\rm orb}(\cX)$.  
In particular, if $H^*_{\rm orb}(\cX)$ 
consists only of the $(p,p)$ part, i.e. 
$H^{2p}(\cX_v) = H^{p,p}(\cX_v)$ for all $v\in \sfT$ and $p\ge 0$, 
then $\cF$ is polarized at $\tau=-x\omega$ 
for sufficiently big $\Re(x)>0$. 
\end{theorem}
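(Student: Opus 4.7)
The plan is to verify purity via the equivalent criterion $\F_\tau \oplus z^{-1}\kappa_\cH(\F_\tau) = \cH^\cX$ from Proposition \ref{prop:purity_condition}, and the positive-definiteness of the induced Hermitian form, by direct asymptotic analysis at the large radius limit. Taking $\tau = -x\omega$ with $\Re(x) > 0$ large, formula (\ref{eq:fundamentalsol_A}) expresses
\[
\cJ_\tau(\alpha) = e^{-x\omega/z}\bigl(\alpha + R(\tau,z)\alpha\bigr),
\]
where the remainder $R(\tau,z)$ is exponentially small in $\Re(x)$ on compact subsets of $\C^*$, since every quantum correction carries a factor $e^{\pair{\tau_{0,2}}{d}} = e^{-x\pair{\omega}{d}}$ with $\pair{\omega}{d}>0$ for $d\in\Eff_\cX\setminus\{0\}$. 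Thus $\F_\tau$ equals $T_xV_+$ plus exponentially small corrections, where $T_x := e^{-x\omega/z}$ and $V_+ := H_{\rm orb}^*(\cX)\otimes \C\{z\}$.

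Next I compute $\kappa_\cH\F_\tau$. Property (\ref{eq:H2_imaginary}) gives $\kappa_\cH\circ T_x = T_{\bar x}^{-1}\circ \kappa_\cH$, which for $x\in\R_{>0}$ simplifies to $T_x^{-1}\circ \kappa_\cH$, so $\kappa_\cH\F_\tau = T_x^{-1}\kappa_\cH(V_+)$ plus exponentially small corrections. Applying the invertible gauge $T_x^{-1}$, the purity criterion becomes the \emph{classical} direct sum $V_+ \oplus z^{-1}T_x^{-2}\kappa_\cH(V_+) = \cH^\cX$. Since direct-sum decomposition is an open (nonvanishing-of-determinant) condition, it suffices to establish this classical statement for $\Re(x)$ sufficiently large. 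Using condition (\ref{eq:kappa_induces_Inv}) I work componentwise, and invoking (\ref{eq:relation_kappa_H_V}), the age identity $\iota_v + \iota_{\inv(v)} = n - n_v$, and (\ref{eq:kappaV_weightfiltr}), I obtain for $\phi \in H^{2k}(\cX_v)$ the expansion
\[
\kappa_\cH(\phi) = \cC(\phi)\, z^{n_v - 2k} + (\text{lower powers of } z),
\]
where $\cC\colon H^{2k}(\cX_v) \to H^{2k}(\cX_{\inv(v)})$ is the complex antilinear isomorphism from (\ref{eq:kappaV_weightfiltr}). Filtering $\cH^\cX$ by top $z$-degree gives a block-triangular description of the candidate direct-sum map whose diagonal blocks are given by $\cC$ and are hence invertible. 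The additional twist $T_x^{-2} = e^{2x\omega/z}$ is a finite polynomial in $\omega/z$ (by nilpotency from grading) which is triangular for the filtration; for large $\Re(x)$ the leading blocks are unchanged at top order, so the direct sum holds, establishing (i).

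For (ii), under the sharpened positivity condition (\ref{eq:leadingterm_kappaV}), I would compute the Hermitian form $h(u,u) = (\kappa_\cH\, \cJ_\tau(u), \cJ_\tau(u))_{\cH^\cX}|_{z=0}$ to leading order. Substituting the asymptotic $\cJ_\tau(u) \approx e^{-x\omega/z}u$ and using (\ref{eq:H2_imaginary}) together with (\ref{eq:pairing_H}), the leading contribution for $u \in H^{2k}(\cX_v)$ turns out to be proportional, with a strictly positive factor, to $(-1)^k(\inv^*\ov{u}, u)_{\rm orb}$. Restricting to $u \in H^{p,q}(\cX_v)$ with $p+q = 2k$, the classical Hodge-Riemann bilinear relations for the Lefschetz decomposition of $H^*(\cX_v)$ by $\omega$ (available because $\cX$ has projective coarse moduli) then deliver the positivity of $(-1)^{(p-q)/2}h(u,u)$; under the assumption $H^{2p}(\cX_v) = H^{p,p}(\cX_v)$ we have $(p-q)/2 = 0$ and $h$ is outright positive-definite, yielding polarization.

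The main obstacle I anticipate is the careful bookkeeping in steps (i)--(ii): in (i), one must verify that the perturbation by $T_x^{-2}$, although containing arbitrarily large powers of $x$ coming from $(2x\omega/z)^\ell/\ell!$, interacts triangularly with the $z$-filtration so that the leading blocks remain controlled by $\cC$; in (ii), the age shift, the grading operator $\mu$, and the Lefschetz-type sign $(-1)^{(p-q)/2}$ must combine precisely. Both issues should yield to reducing to an explicit model on each inertia component $\cX_v$ with $\omega|_{\cX_v}$ acting as the Lefschetz operator, and applying hard Lefschetz and Hodge-Riemann bilinear relations componentwise.
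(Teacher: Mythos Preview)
Your overall strategy---first analyze the nilpotent orbit $e^{-x\omega/z}\F_{\rm lim}$ and then perturb by the exponentially small quantum corrections---matches the paper's. However, the crucial step, establishing the direct sum
\[
V_+ \oplus z^{-1}e^{2t\omega/z}\kappa_\cH(V_+) = \cH^\cX
\]
for large $t$, cannot be carried out by the triangularity argument you propose. The associated graded of $z^{-1}e^{2t\omega/z}\kappa_\cH(V_+)$ with respect to the top-$z$-degree filtration is \emph{not} complementary to that of $V_+$: already for $\cX=\Proj^1$ with $\kappa_\cV(\unit)=c_0\unit$ (no lower term), one checks that $z^{-1}\omega$ does not lie in the sum of the associated gradeds, and it is only the off-diagonal contribution $(2tc_0)z^{-1}\omega$ coming from the twist that rescues the direct sum. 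Thus $e^{2t\omega/z}$ is essential, not a triangular perturbation of an already-invertible system; your claim that ``the leading blocks are unchanged at top order, so the direct sum holds'' does not go through.

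The paper's remedy is a rescaling. After reducing (via $(z\partial_z+\mu)$-eigenspaces and (\ref{eq:relation_kappa_H_V})) to the finite-dimensional problem (\ref{eq:kappaV_transversality}), one conjugates by $(2t)^{-\deg/2}$; this converts $e^{2t\omega}$ into $e^{\omega}$ and $\kappa_\cV$ into $\cC + O(t^{-1})$. The limiting statement, that $H^{\le n_v-k}\cap e^{\omega}H^{\le n_v+k}\to H^{n_v-k}$ is an isomorphism, is then exactly a form of hard Lefschetz, proved via the Lefschetz $\mathfrak{sl}_2$-triple $(\fra,\fra^\dagger,h)$ and the identity $e^{-\fra}e^{\fra^\dagger}\fra=-\fra^\dagger e^{-\fra}e^{\fra^\dagger}$ (Lemma \ref{lem:expa_expadagger}). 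Your closing paragraph correctly anticipates that the polynomial growth of $(2x\omega/z)^\ell/\ell!$ is the obstacle; the rescaling and the $\mathfrak{sl}_2$-computation are the missing ingredients. (The passage from the nilpotent orbit to the true $\F_{-x\omega}$ also needs care: the Birkhoff factors $B_t,C_t$ for the nilpotent orbit have polynomial growth in $t$, and one must check this is beaten by the exponential decay of $Q_x-\unit$.)

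For part (ii) there is a second issue: the Hermitian form is $h(u,u)=(\kappa_\cH\Phi_\tau(u),\Phi_\tau(u))_{\cH^\cX}$, where $\Phi_\tau(u)$ is the lift of $u$ to $\F_\tau\cap\kappa_\cH(\F_\tau)$, not $\cJ_\tau(u)$. These differ by the nontrivial correction $\varpi_t$ of (\ref{eq:lift_to_FcapkappaF}), computed in Proposition \ref{prop:nilpotentorb_pure}. The leading term of $h(u,u)$ is not simply proportional to $(-1)^k(\inv^*\bar u,u)_{\rm orb}$; it involves the primitive decomposition $u=\omega^j\phi$ and carries a factor $(2t)^k j!/(k+j)!$, after which the Hodge--Riemann inequality is applied to the primitive class $\phi$, not directly to $u$.
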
 

\begin{remark}
\label{rem:after_pure_polarized} 
(i) The condition (\ref{eq:kappa_induces_Inv}) is satisfied 
when $\cX$ has enough line bundles to separate the inertia components 
(see (\ref{eq:separation}) in Proposition \ref{prop:basic_realstr}). 
In particular, (\ref{eq:kappa_induces_Inv}) is always satisfied when 
$\cX$ is a manifold.  

(ii) 
We can consider the \emph{algebraic A-model \seminf VHS}.  
Let $A^*(\cX)_\C$ denote the Chow ring of $\cX$ over $\C$. 
We set $\HH^*(\cX_v) := \Image(A^*(\cX_v)_\C \to H^*(\cX_v))$ and 
define $\HH_{\rm orb}^*(\cX) := \bigoplus_{v\in \sfT} \HH^*(\cX_v)$. 
The algebraic A-model \seminf VHS is defined to be 
\[
\HH^*_{\rm orb}(\cX) \otimes 
\cO_{U\cap \HH_{\rm orb}(\cX)}\{z\} 
\]
with the restriction of Dubrovin connection, the grading operator 
and pairing,   
modulo the Galois action given by an element of $\Pic(\cX)$. 
Here we used that the quantum product 
among classes in $\HH_{\rm orb}^*(\cX)$ 
again belongs to $\HH_{\rm orb}^*(\cX)$; 
this follows from the algebraic construction 
of orbifold Gromov-Witten theory \cite{AGV}.  
When we assume Hodge conjecture for all $\cX_v$, 
each $\HH^*(\cX_v)$ has Poincar\'{e} duality and
the orbifold Poincar\'{e} pairing is non-degenerate 
on $\HH_{\rm orb}^*(\cX)$. 
In this case, the algebraic A-model \seminf VHS 
is pure and polarized at $\tau=-x\omega$ for  
a K\"{a}hler class $\omega\in \HH^2(\cX)$ and $\Re(x)\gg 0$ 
if the conditions corresponding to (\ref{eq:kappa_induces_Inv}) and 
(\ref{eq:leadingterm_kappaV}) are satisfied. 
The proof below applies to the algebraic A-model \seminf VHS 
without change. 
Note that the Poincar\'{e} duality of $\HH^*(\cX_v)$ also 
implies the Hard Lefschetz of it
used in the proof below.  
\end{remark} 

\begin{remark} 
Hertling \cite{hertling-tt*} 
and Hertling-Sevenheck \cite{hertling-sevenheck} 
studied similar problems for general TERP structures. 
They considered the change of TERP structures induced 
by the rescaling $z \mapsto r z$ of the parameter $z$. 
This rescaling with $r \to \infty$ 
is called Sabbah orbit in \cite{hertling-sevenheck} 
and is equivalent to the flow of minus the Euler vector field: 
$\tau \mapsto \tau - \rho \log r$ for $\tau \in H^2(\cX)$. 
When $\cX$ is Fano and $\omega=c_1(\cX)=\rho$, 
the large radius limit corresponds to 
the Sabbah orbit\footnote{
The author thanks 
Claus Hertling for this remark.}, 
and the conclusions 
in Theorem \ref{thm:pure_polarized} can 
be deduced from a general theorem 
\cite[Theorem 7.3]{hertling-sevenheck} in this case. 
\end{remark} 

\begin{remark}
\label{rem:sabbah} 
Singularity theory gives a \seminf VHS with a real structure. 
According to the recent work of Sabbah \cite[Section 4]{sabbah}, 
the \seminf VHS arising from a cohomologically tame function 
on an affine manifold is pure and polarized.  
This result covers 
the case of Landau-Ginzburg model mirror to toric orbifolds 
treated in Section \ref{sec:intstr_via_toricmirrors}. 
\end{remark} 

The rest of this section is 
devoted to the proof of Theorem \ref{thm:pure_polarized}. 

From Equation (\ref{eq:fundamentalsol_A}), 
we see that 
$e^{x\omega/z} \cJ_{-x\omega}(\varphi) \to  \varphi$ 
as $\Re(x)\to \infty$. 
Thus, in the moving subspace realization, 
the Hodge structure 
$\F_{-x\omega} = \cJ_{-x\omega}(\cF_{-x\omega})$ 
has the asymptotics: 
\[
\F_{-x \omega} \sim e^{-x\omega/z}\F_{\rm lim} \quad 
\text{as $\Re(x)\to \infty$}, 
\]
where $\Flim:= H^*_{\rm orb}(\cX)\otimes \C\{z\}$ 
is the limiting Hodge structure. 
This is an analogue of the {\it nilpotent orbit theorem} 
\cite{schmid}  
in quantum cohomology. 
First we study the behavior of the nilpotent orbit 
$x\mapsto e^{-x\omega/z} \Flim$ for $\Re(x)\gg 0$ 
(see Proposition \ref{prop:nilpotentorb_pure} below). 

We study the purity of the \seminf VHS 
$x\mapsto e^{-x\omega/z} \Flim$, 
\emph{i.e.} if the natural map 
\begin{equation}
\label{eq:nilpotentorb_pure}
e^{-x\omega/z} \Flim \cap \kappa_{\cH}(e^{-x\omega/z}\Flim) 
\longrightarrow e^{-x\omega/z} (\Flim/z\Flim) 
\cong e^{-x\omega/z}H^*_{\rm orb}(\cX)  
\end{equation} 
is an isomorphism (see (\ref{eq:FcapkappaF}) 
in Proposition \ref{prop:purity_condition}). 
Under the condition (\ref{eq:kappa_induces_Inv}), 
this is equivalent to that the map   
\[
e^{-x\omega/z} H^*( \cX_v)\{z\} \cap 
\kappa_\cH (e^{-x\omega/z} H^*( \cX_{\inv(v)} )\{z\}) 
\to   e^{-x\omega/z} H^*( \cX_v)   
\]
is an isomorphism for each $v\in \sfT$, where 
we put $H^*(\cX_v)\{z\} = H^*(\cX_v)\otimes \C\{z\}$.  
Since $\kappa_\cH e^{-x\omega/z} = e^{\ov{x} \omega/z} \kappa_\cH$ 
(see (\ref{eq:H2_imaginary})), this is equivalent to that 
\[
H^*( \cX_v)\{z\} \cap 
e^{2t\omega/z} 
\kappa_\cH(H^*( \cX_{\inv(v)})\{z\}) \to H^*( \cX_{v}), 
\quad t:=\Re(x)
\]
is an isomorphism. 
We further decompose this into $(z\partial_z + \mu)$-eigenspaces. 
Because $z\partial_z +\mu$ 
is purely imaginary (\ref{eq:grading_imaginary_H}), 
the above map between the $(z\partial_z+\mu)$-eigenspaces  
of the eigenvalue 
$\frac{1}{2}(-k+\age(v)-\age(\inv(v)) )$ is of the form: 
\[
\left
(\bigoplus_{l\ge 0} H^{n_v-k-2l}( \cX_v) z^l
\right) 
\cap 
e^{2t\omega/z}\kappa_{\cH}
\left(\bigoplus_{l\ge 0} H^{n_v+k-2l}( \cX_{\inv(v)}) z^l
\right) 
\to  H^{n_v-k}(\cX_v).  
\]
Here, $n_v=\dim_\C \cX_v$ and $k$ is an integer such that 
$n_v-k$ is even. 
By using (\ref{eq:relation_kappa_H_V}), 
we find that this map is conjugate 
(via $z^{\mu+(k-\iota_v+\iota_{\inv(v)})/2}$) 
to the following map: 
\begin{equation}
\label{eq:kappaV_transversality}
H^{\le n_v-k}(\cX_v) \cap e^{2 t\omega} 
\kappa_\cV(H^{\le n_v+k}(\cX_{\inv(v)})) \to H^{n_v-k}(\cX_v)   
\end{equation} 
which is induced by 
$H^{\le n_v-k}(\cX_v) \to 
H^{\le n_v-k}(\cX_v)/H^{\le n_v-k-2}(\cX_v) 
\cong H^{n_v-k}(\cX_v)$. 
We will show that this becomes an isomorphism for 
$t=\Re(x)\gg 0$ 
in Lemma \ref{lem:nilpotentorb_pure} below. 
 
Let $\fra \colon H^*(\cX_v) \to H^{*+2}(\cX_v)$ 
be the operator defined by $\fra(\phi):=\omega\cup \phi$. 
There exists an operator 
$\fra^\dagger \colon H^*(\cX_v) \to H^{*-2}(\cX_v)$ 
such that $\fra$ and $\fra^\dagger$ generate
the Lefschetz $\mathfrak{sl}_2$-action on $H^*(\cX_v)$:  
\[
[\fra, \fra^\dagger] = h, \quad 
[h,\fra] = 2\fra, \quad [h,\fra^\dagger]
=-2\fra^\dagger,    
\]
where $h: = \deg - n_v$ is the (shifted) grading operator. 
Note that $\fra^\dagger$ is uniquely determined by the 
above commutation relation and that $\fra^\dagger$ annihilates 
the primitive cohomology 
$PH^{n_v-k}(\cX_v):= \Ker(\fra^{k+1} \colon H^{n_v-k}(\cX_v) 
\to H^{n_v+k+2}(\cX_v))$. 

\begin{lemma}
\label{lem:expa_expadagger}
The map $e^{-\fra} e^{\fra^\dagger} \colon H^*(\cX_v) \to H^*(\cX_v)$ 
sends $H^{\ge n_v-k}(\cX_v)$ onto $H^{\le n_v+k}(\cX_v)$  
isomorphically. Moreover, for 
$u \in \fra^{j} PH^{n_v-k-2j}(\cX_v)\subset H^{n_v-k}(\cX_v)$, 
one has 
\[
e^{-\fra} e^{\fra^\dagger} u = (-1)^{k+j} \frac{j!}{(k+j)!}
\omega^k u + H^{< n_v+k}(\cX_v). 
\]
\end{lemma}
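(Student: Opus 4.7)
The plan is to reduce to a single irreducible $\mathfrak{sl}_2$-representation via the Lefschetz decomposition and then perform an explicit computation. Let $V \subset H^*(\cX_v)$ be the Lefschetz irreducible $\mathfrak{sl}_2$-component generated by a primitive class $u_0 \in PH^{n_v-r}(\cX_v)$, so $V$ has basis $u_i := \fra^i u_0$, $0\le i\le r$, with $u_i\in H^{n_v-r+2i}(\cX_v)$. Since the whole space $H^*(\cX_v)$ decomposes into such $V$'s and both $\fra, \fra^\dagger$ preserve each $V$, it suffices to analyze the action of $e^{-\fra}e^{\fra^\dagger}$ on each $V$. In the situation of the lemma, our $u = \fra^j u_0$ sits in an irreducible piece with $r = k+2j$, so $u = u_j$ and $\omega^k u = u_{k+j}$.

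First I would compute, using the commutation relation $[\fra,\fra^\dagger]=h$ together with $\fra^\dagger u_0 = 0$ and $h u_l = (-r+2l)u_l$, the formula
\[
(\fra^\dagger)^i u_j = \frac{j!\,(r-j+i)!}{(j-i)!\,(r-j)!}\,u_{j-i}, \quad 0\le i\le j,
\]
by a direct induction. Expanding $e^{-\fra}e^{\fra^\dagger}u_j = \sum_{s,i\ge 0} \frac{(-1)^s}{s!\,i!}\fra^s(\fra^\dagger)^i u_j$ and collecting the coefficient of $u_{k+j+\sigma}$ (for $\sigma\ge 0$), which arises from terms with $s=k+\sigma+i$, I obtain
\[
\bigl[\text{coeff of }u_{k+j+\sigma}\bigr] = \frac{(-1)^{k+\sigma}}{(k+j)!} \sum_{i=0}^{j} \binom{j}{i}(-1)^i \frac{(k+j+i)!}{(k+\sigma+i)!}.
\]

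The key combinatorial point, which I expect to be the only subtle step, is that for $\sigma\ge 1$ the ratio $(k+j+i)!/(k+\sigma+i)!$ is a polynomial in $i$ of degree $j-\sigma < j$, so the standard identity $\sum_{i=0}^{j}\binom{j}{i}(-1)^i P(i) = 0$ for any polynomial $P$ of degree $<j$ forces this coefficient to vanish. Consequently $e^{-\fra}e^{\fra^\dagger} u_j$ lies in the span of $u_0,\dots,u_{k+j}$, i.e.\ in $H^{\le n_v+k}(\cX_v)\cap V$, and the top component $u_{k+j}$ receives the coefficient obtained by setting $\sigma=0$, which is a direct evaluation:
\[
\sum_{i=0}^{j} \binom{j}{i}(-1)^i \frac{(k+j+i)!}{(k+i)!} = (-1)^j \frac{(k+j)!\,j!}{j!},
\]
(again by the same polynomial argument applied after collapsing the degree), giving $(-1)^{k+j}\tfrac{j!}{(k+j)!}$ as asserted.

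Finally, for the isomorphism statement, the operator $e^{-\fra}e^{\fra^\dagger}$ is a composition of exponentials of nilpotent endomorphisms, hence globally invertible on $H^*(\cX_v)$; in particular its restriction to $H^{\ge n_v-k}(\cX_v)$ is injective. We have just shown this restriction takes values in $H^{\le n_v+k}(\cX_v)$, and Poincar\'e duality on $\cX_v$ gives $\dim H^{\ge n_v-k}(\cX_v) = \dim H^{\le n_v+k}(\cX_v)$, so injectivity forces surjectivity onto $H^{\le n_v+k}(\cX_v)$. The main obstacle is really only the vanishing identity in the middle paragraph; everything else is a direct $\mathfrak{sl}_2$ calculation combined with Hard Lefschetz (which is already invoked for the Lefschetz decomposition).
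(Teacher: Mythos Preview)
Your proposal is correct, apart from a typo in the displayed evaluation at $\sigma=0$: the sum $\sum_{i=0}^{j}\binom{j}{i}(-1)^i\frac{(k+j+i)!}{(k+i)!}$ equals $(-1)^j j!$, not $(-1)^j\frac{(k+j)!\,j!}{j!}$; your stated final coefficient $(-1)^{k+j}\frac{j!}{(k+j)!}$ is nonetheless correct. You should also remark that for $\sigma>j$ the target $u_{k+j+\sigma}$ already lies above the top of the irreducible, so those terms vanish trivially; your polynomial argument only covers $1\le \sigma\le j$.

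Your route is genuinely different from the paper's. You work directly on an irreducible $\mathfrak{sl}_2$-module, expand the double exponential, and reduce the vanishing of the high-degree components to the finite-difference identity $\sum_i(-1)^i\binom{j}{i}P(i)=0$ for $\deg P<j$. The paper instead proves the single intertwining relation $e^{-\fra}e^{\fra^\dagger}\fra=-\fra^\dagger e^{-\fra}e^{\fra^\dagger}$ (a manifestation of the $SL_2$ Weyl element), from which it follows immediately that $e^{-\fra}e^{\fra^\dagger}$ exchanges the weight filtration of $\fra$ with that of $\fra^\dagger$, hence maps $H^{\ge n_v-k}$ onto $H^{\le n_v+k}$; the explicit coefficient then drops out from $e^{-\fra}e^{\fra^\dagger}(\fra^j\phi)=(-\fra^\dagger)^j e^{-\fra}\phi$ and a short computation of $(\fra^\dagger)^j\fra^{k+2j}\phi$. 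The paper's argument is more structural and yields the isomorphism statement without a dimension count, while yours is more elementary and self-contained, relying only on a standard binomial identity rather than the group-theoretic intertwining relation.
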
 
\begin{proof}
An easy calculation shows that 
\[
e^{-\fra} e^{\fra^\dagger} \fra= 
- \fra^\dagger e^{-\fra} e^{\fra^\dagger}.  
\]
Therefore, $e^{-\fra}e^{\fra^\dagger}$ should send 
the weight filtration for the nilpotent operator 
$\fra$ to that for $\fra^\dagger$.  
But the weight filtration for $\fra$ is 
$\{H^{\ge n_v-k}\}_k$ 
and that for $\fra^\dagger$ is $\{H^{\le n_v+k}\}_k$ 
(see the proof of Proposition \ref{prop:basic_realstr} 
for weight filtration).  
Take $u \in \fra^j PH^{n_v-k-2j}(\cX_v)$. 
Put $u = \fra^j \phi$ for $\phi \in PH^{n_v-k-2j}(\cX_v)$. 
We calculate 
\begin{align*}
e^{-\fra}e^{\fra^\dagger} u  &= e^{-\fra}e^{\fra^\dagger} \fra^j \phi 
= (-\fra^\dagger)^j e^{-\fra}e^{\fra^\dagger} \phi 
=  (-\fra^\dagger)^j e^{-\fra} \phi \\
&= (-\fra^\dagger)^j \frac{(-1)^{k+2j}}{(k+2j)!} \fra^{k+2j} \phi  
+ \text{ lower degree term},  
\end{align*} 
where in the second line we used that 
$e^{-\fra}e^{\fra^\dagger}u \in H^{\le n_v+k}(\cX_v)$. 
Using $\fra^\dagger \fra^l u = l(k+2j+1-l) \fra^{l-1} u$, 
we arrive at the formula for $e^{-\fra}e^{\fra^\dagger}u$.  
\end{proof}

\begin{lemma}
\label{lem:nilpotentorb_pure} 
The map (\ref{eq:kappaV_transversality}) is an isomorphism 
for sufficiently big $t>0$. 
Moreover, $u\in H^{n_v-k}(\cX_v)$ corresponds to 
an element of the form 
\[
(2t)^{(\deg +k-n_v)/2}(e^{\fra^\dagger} u + O(t^{-1}))  
\in H^{\le n_v-k}(\cX_v)  \cap e^{2\omega t}
\kappa_\cV(H^{\le n_v+k}(\cX_{\inv(v)})) 
\] 
under (\ref{eq:kappaV_transversality}), where $(2t)^{\deg/2}$ 
is defined by $(2t)^{\deg/2} = (2t)^k$ on $H^{2k}(\cX_v)$. 
\end{lemma}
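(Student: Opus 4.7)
The plan is to rescale the problem by the grading operator so that the $t$-dependence reduces to an $O(t^{-1})$ perturbation of a $t$-independent limit, and then to identify the limit intersection explicitly using Lemma~\ref{lem:expa_expadagger}.

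First I would introduce the degree-rescaling operator $T_t := (2t)^{(\deg+k-n_v)/2}$, acting on both $H^*(\cX_v)$ and $H^*(\cX_{\inv(v)})$. Since $\omega$ has degree $+2$, a direct check gives $T_t^{-1}\omega T_t = (2t)^{-1}\omega$ and hence $T_t^{-1}e^{2t\omega}T_t = e^{\omega}$, while $T_t$ preserves the filtrations by degree. By (\ref{eq:kappaV_weightfiltr}), $\kappa_\cV$ agrees with its degree-preserving leading part $\cC$ up to operators that strictly raise degree, so one finds $\tilde\kappa_\cV := T_t^{-1}\kappa_\cV T_t = \cC + O(t^{-1})$ as operators. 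Conjugating (\ref{eq:kappaV_transversality}) by $T_t$ yields the equivalent problem of analysing the projection
\[
V \cap W_t \longrightarrow H^{n_v-k}(\cX_v), \qquad V := H^{\le n_v-k}(\cX_v), \quad W_t := e^\omega \tilde\kappa_\cV\bigl(H^{\le n_v+k}(\cX_{\inv(v)})\bigr),
\]
whose $t\to\infty$ limit is governed by $W_\infty := e^\omega\cC\bigl(H^{\le n_v+k}(\cX_{\inv(v)})\bigr) = e^\omega H^{\le n_v+k}(\cX_v)$.

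The key limit calculation is the identity $W_\infty = e^{\fra^\dagger}\bigl(H^{\ge n_v-k}(\cX_v)\bigr)$: Lemma~\ref{lem:expa_expadagger} asserts that $e^{-\fra}e^{\fra^\dagger}$ maps $H^{\ge n_v-k}(\cX_v)$ isomorphically onto $H^{\le n_v+k}(\cX_v)$, and multiplying by $e^\omega = e^\fra$ on the left produces the stated identity. I would then verify $V \cap W_\infty = e^{\fra^\dagger}\bigl(H^{n_v-k}(\cX_v)\bigr)$: the inclusion $\supseteq$ is immediate since $\fra^\dagger$ lowers degree, and the reverse inclusion follows by writing any element as $e^{\fra^\dagger}y$ with $y \in H^{\ge n_v-k}(\cX_v)$ and inducting downward on degree to force the components $y_{2m}$ with $2m > n_v-k$ to vanish. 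The projection then acts as $e^{\fra^\dagger}u \mapsto u$, which is an isomorphism at $t=\infty$.

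For large finite $t$ I would promote the limit isomorphism by openness. Consider the composite map $\Psi_t \colon H^{\le n_v+k}(\cX_{\inv(v)}) \to H^{\ge n_v-k}(\cX_v)$ sending $w \mapsto \pi_{\ge n_v-k}(e^\omega\tilde\kappa_\cV(w))$; source and target have equal dimension by hard Lefschetz in $\cX_v$ and $\cX_{\inv(v)}$ together with their equal Betti numbers, and $\Psi_\infty$ is an isomorphism since under the parametrization $W_\infty \leftrightarrow H^{\ge n_v-k}(\cX_v)$, $y \mapsto e^{\fra^\dagger}y$, the map becomes $y \mapsto \pi_{\ge n_v-k}(e^{\fra^\dagger}y)$, which is block-upper-triangular with identity diagonal in the Lefschetz grading. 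Openness of invertibility then gives that $\Psi_t$ remains an isomorphism for all large $t$. Splitting the target as $H^{>n_v-k}(\cX_v)\oplus H^{n_v-k}(\cX_v)$ and taking the fiber over $(0,u)$ produces, uniquely for each $u$, an element $w_t(u)$ whose image $\phi_t(u) := e^\omega\tilde\kappa_\cV(w_t(u))$ lies in $V \cap W_t$ and projects to $u$; this is the required inverse. Continuity of $\phi_t$ in $t$ around $t=\infty$ together with $\phi_\infty(u) = e^{\fra^\dagger}u$ yields $\phi_t(u) = e^{\fra^\dagger}u + O(t^{-1})$ in the rescaled picture, and applying $T_t$ back gives the formula $(2t)^{(\deg+k-n_v)/2}(e^{\fra^\dagger}u+O(t^{-1}))$ stated in the lemma. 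The main obstacle is the limit identification of $V\cap W_\infty$, which draws together Lemma~\ref{lem:expa_expadagger}, hard Lefschetz for each inertia component, and the weight-filtration control (\ref{eq:kappaV_weightfiltr}) on $\kappa_\cV$; once this is in place the openness/perturbation step is routine.
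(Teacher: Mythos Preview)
Your proposal is correct and follows essentially the same route as the paper: conjugate by the degree-rescaling operator so that $\kappa_\cV$ becomes $\cC + O(t^{-1})$, identify the $t=\infty$ intersection via Lemma~\ref{lem:expa_expadagger} as $e^{\fra^\dagger}(H^{n_v-k}(\cX_v))$, and then invoke openness to pass to large finite $t$. The only difference is cosmetic: the paper phrases the openness step as ``the expected intersection dimension equals $\dim H^{n_v-k}(\cX_v)$ by Poincar\'e duality, so the map being an isomorphism is an open condition on $\kappa_t$'', whereas you build the explicit linear map $\Psi_t$ between spaces of equal dimension and use openness of invertibility; these are the same argument in different packaging.
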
 
\begin{proof}
First we rescale (\ref{eq:kappaV_transversality}) by 
$(2t)^{-\deg/2}$:  
\[
\begin{CD}
H^{\le n_v-k}(\cX_v) \cap e^{2\omega t}
\kappa_\cV(H^{\le n_v+k}(\cX_{\inv(v)}))  @>>> H^{n_v-k}(\cX_v)  \\ 
@VV{(2t)^{-\deg/2}}V @VV{(2t)^{-\deg/2}}V \\ 
H^{\le n_v-k}(\cX_v) \cap e^{\omega} 
\kappa_t (H^{\le n_v+k}(\cX_{\inv(v)})) @>>> H^{n_v-k}(\cX_v), 
\end{CD} 
\]
where $\kappa_t := (2t)^{-\deg/2} \kappa_\cV (2t)^{\deg/2}$. 
Since the column arrows are isomorphisms for all $t\in \R$, 
it suffices to show that the bottom arrow is an isomorphism 
for $t\gg 0$.  
Observe that the expected dimension of 
$H^{\le n_v-k}(\cX_v) \cap e^{2\omega} 
\kappa_t (H^{\le n_v+k}(\cX_{\inv(v)}))$ equals 
$\dim H^{n_v-k}(\cX_v)$ by Poincar\'{e} duality. 
Thus that the bottom arrow becomes an isomorphism 
is an open condition for $\kappa_t$. 
By (\ref{eq:kappaV_weightfiltr}) 
in Proposition \ref{prop:basic_realstr}, 
we have 
\begin{equation}
\label{eq:kappat_goestoC}
\kappa_t = \cC + O(t^{-1}), 
\end{equation} 
for a degree preserving isomorphism 
$\cC\colon H^*(\cX_{\inv(v)})\cong H^*(\cX_v)$. 
Therefore, we only need to check that the map at $t=\infty$ 
\begin{equation}
\label{eq:transversality_tinfinity}
H^{\le n_v-k}(\cX_v) \cap e^{\fra} H^{\le n_v+k}(\cX_v) 
\to H^{n_v-k}(\cX_v) 
\end{equation}
is an isomorphism (recall that $\fra=\omega\cup$). 
Note that this factors through $\exp(-\fra^\dagger)$ as  
\[
\begin{CD}
H^{\le n_v-k} \cap e^{\fra} H^{\le n_v+k}
@>{\exp(-\fra^\dagger)}>> 
H^{\le n_v-k}\cap e^{-\fra^\dagger}e^{\fra}  
H^{\le n_v+k} 
@>>> H^{n_v-k}, 
\end{CD} 
\]
where we omitted the space $\cX_v$ from the notation.  
The second map is induced from the 
projection $H^{\le n_v-k} \to H^{n_v-k}$ 
again. 
Because $e^{-\fra^\dagger}e^{\fra}(H^{\le n_v+k})=H^{\ge n_v-k}$ 
by Lemma \ref{lem:expa_expadagger}, 
we know that the map 
(\ref{eq:transversality_tinfinity}) is an isomorphism 
and that the inverse map is given by 
$u\mapsto \exp(\fra^\dagger)u$. 
Now the conclusion follows. 
\end{proof} 


\begin{proposition}
\label{prop:nilpotentorb_pure} 
Assume that (\ref{eq:kappa_induces_Inv}) holds.  
Then the nilpotent orbit 
$x\mapsto e^{-x\omega/z} \Flim$ is pure 
for sufficiently big $t=\Re(x)>0$
i.e. the map (\ref{eq:nilpotentorb_pure}) is an isomorphism 
for $t\gg 0$. 
The inverse image of 
$e^{-x\omega/z} u$, $u\in H^{n_v-k}(\cX_v)$
under (\ref{eq:nilpotentorb_pure}) is of the form 
$e^{-x\omega/z} \varpi_t(u)$ with 
\begin{equation}
\label{eq:lift_to_FcapkappaF} 
\varpi_t(u) = z^{-\mu-(k-\iota_v+ \iota_{\inv(v)})/2} 
(2t)^{(\deg + k-n_v)/2} 
(e^{\fra^\dagger} u+ O (t^{-1})) 
\in \bigoplus_{l\ge 0} H^{n_v-k-2l}(\cX_v)z^l. 
\end{equation} 
When $u = \fra^j \phi$ and $\phi \in PH^{n_v-k-2j}(\cX_v)$, 
we have  
\begin{equation}
\label{eq:nilpotentorb_Hermitianmetric}
(\kappa_\cH(e^{-x\omega/z}\varpi_t(u)),
e^{-x\omega/z} \varpi_t(u))_{\cH^\cX} 
= \frac{(2t)^{k} j!}{(k+j)!}\int_{\cX_v} \omega^{k+2j}
\phi \cup \inv^*\cC(\phi)   + O(t^{k-1})   
\end{equation}
where $\cC\colon H^*(\cX_v) \to H^*(\cX_{\inv(v)})$ is 
the isomorphism appearing in (\ref{eq:kappaV_weightfiltr}) 
and $(\cdot,\cdot)_{\cH^\cX}$ is given in (\ref{eq:pairing_H}).  
If moreover $u\in H^{p,q}(\cX_v)\setminus \{0\}$ 
and the condition (\ref{eq:leadingterm_kappaV}) holds, 
\[
(-1)^{(p-q)/2}(\kappa_\cH(e^{-x\omega/z}\varpi_t(u)),
e^{-x\omega/z} \varpi_t(u))_{\cH^\cX} >0 
\]
for $t=\Re(x)\gg 0$.
(Here $p+q = n_v-k$.)  
\end{proposition}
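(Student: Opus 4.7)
The first claim follows by unwinding the chain of reductions performed immediately before Lemma~\ref{lem:nilpotentorb_pure}. Under hypothesis (\ref{eq:kappa_induces_Inv}) the purity map (\ref{eq:nilpotentorb_pure}) decomposes along the inertia components; restricting to a single component $\cX_v$ and decomposing further into eigenspaces of the purely imaginary operator $z\partial_z+\mu$ (see (\ref{eq:grading_imaginary_H})), the conjugation by $z^{\mu+(k-\iota_v+\iota_{\inv(v)})/2}$ identifies each resulting block with the map (\ref{eq:kappaV_transversality}). Lemma~\ref{lem:nilpotentorb_pure} then guarantees (\ref{eq:nilpotentorb_pure}) is an isomorphism for $t=\Re(x)\gg 0$. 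Inverting the reductions, the preimage of $u\in H^{n_v-k}(\cX_v)$ provided by Lemma~\ref{lem:nilpotentorb_pure}, namely $(2t)^{(\deg+k-n_v)/2}(e^{\fra^\dagger}u+O(t^{-1}))$, upon applying $z^{-\mu-(k-\iota_v+\iota_{\inv(v)})/2}$ becomes exactly (\ref{eq:lift_to_FcapkappaF}); the standard identity $n=n_v+\iota_v+\iota_{\inv(v)}$ ensures the $l$-th summand sits in $H^{n_v-k-2l}(\cX_v)z^l$.

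For the pairing formula, I first commute $\kappa_\cH$ past $e^{-x\omega/z}$ using (\ref{eq:H2_imaginary}) to obtain
\[
(\kappa_\cH(e^{-x\omega/z}\varpi_t(u)),\,e^{-x\omega/z}\varpi_t(u))_{\cH^\cX}
=\int_{I\cX}e^{-2t\omega/z}\,(\kappa_\cH\varpi_t(u))(-z)\cup\inv^*(\varpi_t(u)).
\]
Because both entries lie in $\F_\tau\cap\kappa_\cH(\F_\tau)$, the pairing is a scalar (element of $\C\{z\}\cap\C\{z^{-1}\}=\C$), so it suffices to extract the constant term in $z$ on the right-hand side. Substituting the leading expansion $\varpi_t(u)\equiv\sum_l(z/(2t))^l(\fra^\dagger)^l u/l!$ modulo $O(t^{-1})$ from (\ref{eq:lift_to_FcapkappaF}) and expanding $\kappa_\cH$ via (\ref{eq:leadingterm_kappaV}), the top-degree constraint on $\cX_{\inv(v)}$ forces the power of $\omega$ supplied by $e^{-2t\omega/z}$ to combine with the $\fra^\dagger$-expansion precisely in the pattern of the operator $e^{-\fra}e^{\fra^\dagger}$ of Lemma~\ref{lem:expa_expadagger}. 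For $u=\fra^j\phi$ with $\phi\in PH^{n_v-k-2j}(\cX_v)$, Lemma~\ref{lem:expa_expadagger} then contributes the coefficient $\frac{j!}{(k+j)!}(2t)^k$ on $\omega^{k+2j}\phi$, which is paired against the leading image $\inv^*\cC(\phi)$ of $\kappa_\cV(\phi)$; the apparent extra sign $(-1)^{k+j}$ in Lemma~\ref{lem:expa_expadagger} is cancelled by the $(-z)$-evaluation and the sign $(-1)^{p'}$ built into (\ref{eq:leadingterm_kappaV}), producing (\ref{eq:nilpotentorb_Hermitianmetric}).

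The positivity then follows from Hodge--Riemann. By (\ref{eq:leadingterm_kappaV}), $\cC$ sends $H^{2p'}(\cX_v)$ into $(-1)^{p'}\R_{>0}\cdot\inv^*(\overline{\,\cdot\,})$ with $2p'=n_v-k-2j$, so the integral in (\ref{eq:nilpotentorb_Hermitianmetric}) equals a positive real multiple of $(-1)^{p'}\int_{\cX_v}\omega^{k+2j}\phi\wedge\bar\phi$. For primitive $\phi\in H^{p-j,q-j}(\cX_v)$ of weight $n_v-k-2j$, the classical Hodge--Riemann bilinear relation supplies the exact sign needed to yield $(-1)^{(p-q)/2}h(u,u)>0$ to leading order in $t$. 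The main technical obstacle is the careful bookkeeping of signs and $z$-degrees arising from the interplay of the $(p,q)$-decomposition, the involution $\inv^*$, the Lefschetz $\mathfrak{sl}_2$-action generated by $\fra,\fra^\dagger$, and the conjugation by $z^{\mu+\mathrm{const}}$; once these are aligned, both the leading-order formula and the positivity follow mechanically from Lemma~\ref{lem:expa_expadagger} and Hodge--Riemann on $\cX_v$.
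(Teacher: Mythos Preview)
Your overall architecture matches the paper's: the purity and the form of $\varpi_t(u)$ follow directly from Lemma~\ref{lem:nilpotentorb_pure} and the reductions preceding it, and the positivity at the end comes from the classical Hodge--Riemann inequality on $\cX_v$. The paper's proof of those two parts is essentially identical to yours.

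The gap is in your derivation of the pairing formula \eqref{eq:nilpotentorb_Hermitianmetric}. First, a logical point: that formula is asserted for the general leading map $\cC$ from \eqref{eq:kappaV_weightfiltr}, under only the hypothesis \eqref{eq:kappa_induces_Inv}; you invoke the stronger \eqref{eq:leadingterm_kappaV} to expand $\kappa_\cH$, which is not available at this stage. You should instead use the general expansion $\kappa_\cH(\alpha)=z^{n_v-2k'}\cC(\alpha)+O(z^{n_v-2k'-1})$ for $\alpha\in H^{2k'}(\cX_v)$, which follows from \eqref{eq:kappaV_weightfiltr} via \eqref{eq:relation_kappa_H_V}. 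Second, your sign accounting is off: for general $\cC$ there is no ``$(-1)^{p'}$ built into \eqref{eq:leadingterm_kappaV}'' to use. In the paper the sign $(-1)^{k+j}$ from Lemma~\ref{lem:expa_expadagger} is cancelled by $(-1)^{-\mu-c}=(-1)^k$ on the relevant degree together with $(-1)^j$ coming from the fact that $\cC$ anticommutes with $\fra$ (hence $\cC(\fra^j\phi)=(-1)^j\fra^j\cC(\phi)$), a consequence of \eqref{eq:H2_imaginary}.

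More substantively, the paper avoids your ``extract the constant term in $z$'' computation by a rescaling trick: using the identity $e^{-2t\omega/z}z^{-\mu}(2t)^{\deg/2}=z^{-\mu}(2t)^{\deg/2}e^{-\fra}$ together with \eqref{eq:relation_kappa_H_V}, the pairing is rewritten (on $|z|=1$) as
\[
(2t)^k\bigl((-1)^{-\mu-c}\kappa_t(e^{\fra^\dagger}u+O(t^{-1})),\,e^{-\fra}e^{\fra^\dagger}u+O(t^{-1})\bigr)_{\rm orb},
\]
with $\kappa_t=(2t)^{-\deg/2}\kappa_\cV(2t)^{\deg/2}$. Since $\kappa_t=\cC+O(t^{-1})$ by \eqref{eq:kappat_goestoC}, all $t$-dependence is isolated in a single operator converging to $\cC$, and the leading term is read off immediately from Lemma~\ref{lem:expa_expadagger}. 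Your direct approach can be made to work---one checks that the simultaneous $z^0$- and top-degree constraints force the $\cC_m$-part of $\kappa_\cV$ to contribute only at order $(2t)^{k-m}$---but you have not carried out that step, and without it your argument does not exclude subleading pieces of $\kappa_\cV$ from contributing at order $(2t)^k$.
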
 
\begin{proof} 
The purity of $e^{-x\omega/z} \Flim$ 
and the formula for $\varpi_t(u)$ 
follow from Lemma \ref{lem:nilpotentorb_pure} and 
the discussion preceding (\ref{eq:kappaV_transversality}). 
Putting $c= (-k +\iota_v-\iota_{\inv(v)})/2$, we calculate 
\begin{align*}
&(\kappa_\cH(e^{-x\omega/z}\varpi_t(u)),
e^{-x\omega/z} \varpi_t(u))_{\cH^\cX} 
 = (\kappa_\cH(\varpi_t(u)), e^{-2t\omega/z} \varpi_t(u))_{\cH^\cX} \\
& = (2t)^{k-n_v} 
(z^{-\mu-c}\kappa_\cV (2t)^{\deg/2}
(e^{\fra^\dagger} u + O(t^{-1})), 
z^{-\mu+c} 
(2t)^{\deg/2}(e^{-\fra}e^{\fra^\dagger} u +O(t^{-1})))_{\cH^\cX} \\   
& = (2t)^{k} ( (-1)^{-\mu-c} \kappa_t 
( e^{\fra^\dagger} u + O(t^{-1})), 
e^{-\fra}e^{\fra^\dagger} u + O(t^{-1}))_{\rm orb}  
\end{align*} 
where we used $e^{-2t\omega/z} z^{-\mu} (2t)^{\deg/2} 
= z^{-\mu}(2t)^{\deg/2} e^{-\fra}$ and (\ref{eq:relation_kappa_H_V}) 
in the second line (we assume $|z|=1$) and set 
$\kappa_t := (2t)^{-\deg/2} \kappa_\cV (2t)^{\deg/2}$ again 
in the third line. 
From (\ref{eq:kappat_goestoC}),    
the highest order term in $t$ becomes 
\[
(2t)^{k} ( (-1)^{-\mu-c} e^{-\fra^\dagger} \cC(u), 
e^{-\fra}e^{\fra^\dagger} u)_{\rm orb}.  
\] 
Note that $\cC$ anticommutes with $\fra,\fra^\dagger$ 
by (\ref{eq:H2_imaginary}). 
By a calculation using Lemma \ref{lem:expa_expadagger}, 
we find that this equals the highest order term 
of the right-hand side of (\ref{eq:nilpotentorb_Hermitianmetric}). 
The last statement on positivity follows from the 
classical Hodge-Riemann bilinear inequality: 
\[
(-1)^{(p-q)/2} (-1)^{(n_v-k)/2 - j} 
\int_{\cX_v} \omega^{k+2j} \phi \cup \ov{\phi} >0  
\]
for $\phi \in PH^{n_v-k-2j}(\cX_v)\cap 
H^{p-j,q-j}(\cX_v)\setminus\{0\}$, $n_v-k$ even. 
\end{proof} 

Next we show that $x\mapsto \F_{-x\omega}$ is pure for 
$t=\Re(x)\gg 0$.   
We set $\F'_{-x\omega} = e^{x\omega/z}\F_{-x\omega}$. 
Again by (\ref{eq:FcapkappaF}) 
in Proposition \ref{prop:purity_condition} 
and $\kappa_\cH e^{-x\omega/z} = 
e^{\ov{x}\omega/z} \kappa_\cH$, 
it is sufficient to show that 
\[
\F'_{-x\omega} \cap e^{2 t\omega/z} \kappa_{\cH}(\F'_{-x\omega}) 
\longrightarrow \F'_{-x\omega}/ z \F'_{-x\omega} 
\]
is an isomorphism. 
Put $\kappa^t = e^{2t\omega/z}\kappa_{\cH}$ 
($\kappa^t$ is different from $\kappa_t$ 
appearing in (\ref{eq:kappat_goestoC})).  
Fix a basis $\{\phi_1,\dots,\phi_N\}$ of 
$H^*_{\rm orb}(\cX)$. 
Define an $N\times N$ matrix $A_t(z,z^{-1})$ by 
\begin{equation}
\label{eq:matrixA}
[\kappa^t(\phi_1),\dots,\kappa^t(\phi_N)] 
= [\phi_1,\dots,\phi_N] A_t(z,z^{-1}).  
\end{equation} 
This matrix $A_t$ is a Laurent polynomial in $z$ 
(by (\ref{eq:relation_kappa_H_V})) and 
a polynomial in $t$. 
We already showed that (\ref{eq:nilpotentorb_pure}) 
is an isomorphism for $t=\Re(x) \gg 0$. Therefore, 
$A_t(z)$ admits the Birkhoff factorization 
$A_t(z) = B_t(z) C_t(z)$ for $t\gg 0$, 
where $B_t\colon \D_0 \to GL_N(\C)$ with $B_t(0)=\unit$ and 
$C_t\colon \D_\infty \to GL_N(\C)$ 
(see Remark \ref{rem:Birkhoff_Iwasawa}).  
The matrix $B_t(z)$ here is given by 
\[
[\varpi_t(\phi_1),\dots, \varpi_t(\phi_N)] 
= [\phi_1,\dots,\phi_N] B_t(z)
\] 
for $\varpi_t(\phi_i)$ appearing in (\ref{eq:lift_to_FcapkappaF}).  
In particular, $B_t(z)$ and $C_t(z)$ are polynomials in $z$ 
and $z^{-1}$ respectively and have 
at most polynomial growth in $t$. 
We define 
$Q_x \colon \Proj^1\setminus \{0\} \to GL_N(\C)$ by 
\begin{equation}
\label{eq:matrixQ}
[j_1,\dots,j_N] = [\phi_1,\dots,\phi_N] Q_x(z), \quad 
j_i :=  e^{x\omega/z} \cJ_{-x\omega}(\phi_i)
\end{equation} 
where $\cJ_{\tau}$ is given in (\ref{eq:fundamentalsol_A}). 
The vectors $j_1,\dots, j_N$ form a basis of $\F'_{-t\omega}$
and $Q_x(\infty) = \unit$. 
Note that $Q_x = \unit + O(e^{-\epsilon_0 t})$ as 
$t=\Re(x) \to \infty$  
for $\epsilon_0 := \min(\pair{\omega}{d}\;;\; 
d\in \Eff_\cX\setminus\{0\})$. 
From (\ref{eq:matrixA}) and (\ref{eq:matrixQ}), we find 
\[
[\kappa^t(j_1),\dots,\kappa^t(j_N)] 
= [j_1,\dots,j_N] Q_x^{-1} A_t \ov{Q}_x,  
\]
where $\ov{Q}_x$ is the complex conjugate 
of $Q_x$ with $z$ restricted to $S^1 = \{|z|=1\}$. 
As we did in Remark \ref{rem:Birkhoff_Iwasawa}, 
it suffices to show that 
$Q^{-1}_x A_t \ov{Q}_x$ admits the Birkhoff factorization. 
We have 
\begin{align*}
Q_x^{-1} A_t \ov{Q}_x = B_t (B_t^{-1} Q_x^{-1} B_t) 
(C_t\ov{Q}_x C_t^{-1}) 
C_t   
\end{align*} 
and for $0<\epsilon<\epsilon_0$, 
\[
B_t^{-1} Q^{-1}_x B_t = \unit +  O(e^{-\epsilon t}), \quad 
C_t \ov{Q}_x C_t^{-1} = \unit+ O(e^{-\epsilon t}), \quad 
\text{ as } t=\Re(x) \to \infty.    
\]
Here we used that $B_t$ and $C_t$ 
have at most polynomial growth in $t$. 
By the continuity of Birkhoff factorization, 
$(B_t^{-1} Q^{-1}_x B_t) (C_t\ov{Q}_x C_t^{-1}) = 
\unit +O(e^{-\epsilon t})$ 
admits the Birkhoff factorization of the form: 
\begin{equation}
\label{eq:Birkhoff_withestimate}
(B_t^{-1} Q^{-1}_x B_t) (C_t\ov{Q}_x C_t^{-1}) = 
\tilde{B}_x(z) \tilde{C}_x(z), 
\quad \tilde{B}_x = \unit + O(e^{-\epsilon t}), \quad 
\tilde{C}_x = \unit+ O(e^{-\epsilon t}) 
\end{equation} 
for $t=\Re(x)\gg 0$, where 
$\tilde{B}_x\colon \D_0\to GL_N(\C)$, $\tilde{B}_x(0)=\unit$ 
and $\tilde{C}_x\colon \D_\infty \to GL_N(\C)$. 
The order estimate $O(e^{-\epsilon t})$ holds 
in the $C^0$-norm on the loop space $C^\infty(S^1,\End(\C^N))$. 
See Appendix \ref{subsec:orderestimate_Birkhoff} for 
the proof of the order estimate in (\ref{eq:Birkhoff_withestimate}).   
Therefore $Q_x^{-1} A_t \ov{Q}_x$ also 
has the Birkhoff factorization 
for $t=\Re(x)\gg 0$ and we know that 
\[
 [\Pi_x(\phi_1), \dots,  \Pi_x(\phi_N) ] 
:= [\phi_1,\dots,\phi_N] Q_x(z) B_t(z) \tilde{B}_x(z) 
\] 
form a basis of $\F'_{-x\omega} \cap \kappa^t(\F'_{-x\omega})$, 
\emph{i.e.} 
$e^{-x\omega/z} \Pi_x(\phi_1),\dots, e^{-x\omega/z}\Pi_x(\phi_N)$ 
form a basis of $\F_{-x\omega}\cap \kappa_\cH(\F_{-x\omega})$. 
Using that $\Pi_x(\phi_i) = \varpi_x(\phi_i) + 
O(e^{-\epsilon t})$ 
and Proposition \ref{prop:nilpotentorb_pure}, we have  
\[
(-1)^{(p-q)/2}
(\kappa_{\cH}(e^{-x\omega/z} \Pi_x(\phi)), 
e^{-x\omega/z} \Pi_x(\phi) )_{\cH^\cX} >0, \quad 
\phi\in H^{p,q}(\cX_v)\setminus \{0\} 
\]
for sufficiently big $\Re(x)>0$. 
This completes the proof of Theorem \ref{thm:pure_polarized}.

\subsection{An A-model integral structure} 
\label{subsec:Amodel_intstr}
Real or integral structures in the A-model \seminf VHS 
(in the sense of Definition \ref{def:realintstr}) are not unique. 
In this section, we construct an example of 
A-model integral structures 
which comes from $K$-theory and 
will make sense for general symplectic orbifolds. 
By Theorem \ref{thm:pure_polarized}, 
this yields a Cecotti-Vafa 
structure near the large radius limit point. 
For weak Fano toric orbifolds, 
assuming Conjecture \ref{conj:toricmirror}, 
we will show in Section \ref{sec:intstr_via_toricmirrors} 
that this A-model integral structure 
coincides with the singularity B-model integral structure 
under mirror symmetry. 

Let $K(\cX)$ denote the Grothendieck group of topological 
orbifold vector bundles on $\cX$. 
See \emph{e.g.} \cite{adem-ruan, moerdijk} 
for vector bundles on orbifolds. 
For an orbifold vector bundle $\widetilde{V}$ 
on the inertia stack $I\cX$, 
we have an eigenbundle decomposition of $\widetilde{V}|_{\cX_v}$
\[
\widetilde{V}|_{\cX_v} = \bigoplus_{0\le f<1} 
\widetilde{V}_{v,f} 
\] 
with respect to the action of the stabilizer of $\cX_v$.    
Here, the stabilizer acts on 
$\widetilde{V}_{v,f}$ by $\exp(2\pi\iu f) \in \C$. 
Let $\pr \colon I\cX \to \cX$ be the projection. 
The Chern character map $\tch \colon K(\cX) \to H^*(I\cX)$ 
is defined by 
\[
\tch(V) := \bigoplus_{v\in \sfT} \sum_{0\le f<1} e^{2\pi\iu f}
\ch((\pr^*V)_{v,f}) 
\]
where $\ch$ is the ordinary Chern character and 
$V$ is an orbifold vector bundle on $\cX$. 
For an orbifold vector bundle $V$ on $\cX$, 
let $\delta_{v,f,i}$, $i=1,\dots,l_{v,f}$ be the Chern roots of 
$(\pr^*V)_{v,f}$. 
The Todd class $\tTd\colon K(\cX) \to H^*(I\cX)$ is defined by  
\[
\tTd(V) = \bigoplus_{v\in \sfT} 
\prod_{0<f<1,1\le i\le l_{v,f}}\frac{1}{1-e^{-2\pi\iu f}e^{-\delta_{v,f,i}}}
\prod_{f=0,1\le i\le l_{v,0}} \frac{\delta_{v,0,i}}{1-e^{-\delta_{v,0,i}}}
\]
We put $\Td_\cX := \tTd(T\cX)$. 
These characteristic classes appear in the following theorem. 

\begin{theorem}[Orbifold Riemann-Roch \cite{kawasaki-rr,toen}] 
Let $V$ be a holomorphic orbifold vector bundle on $\cX$.  
The holomorphic Euler characteristic 
$\chi(V):=\sum_{i=0}^{\dim \cX} (-1)^i \dim H^i(\cX,V)$ is given by 
\begin{equation}
\label{eq:orbifoldRR}
\chi(V)= 
\int_{I\cX} \tch(V)\cup \Td_\cX.  
\end{equation} 
\end{theorem}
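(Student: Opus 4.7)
The plan is to deduce the statement from the Atiyah--Bott holomorphic Lefschetz fixed-point formula, after reducing to a global quotient. Since $\cX$ is a smooth Deligne--Mumford stack, one can cover it étale-locally by quotient charts $[U/G]$ with $U$ smooth and $G$ finite. Both sides of (\ref{eq:orbifoldRR}) are additive with respect to stratifications (the right side splits over components of $I\cX$, and the left over appropriate sheaves on strata), so a Mayer--Vietoris / dévissage argument reduces the claim to the case of a global quotient $\cX=[M/G]$ with $M$ a smooth projective $G$-variety. Under this reduction, the inertia stack decomposes as
\[
I\cX \;=\; \bigsqcup_{(g)} [M^g/C_G(g)],
\]
where the union is over conjugacy classes $(g)\subset G$ and $C_G(g)$ is the centralizer, and an integral over $I\cX$ is identified with $\tfrac{1}{|G|}\sum_{g\in G}$ of an integral over $M^g$.

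For such a global quotient, a holomorphic orbifold bundle $V$ on $\cX$ corresponds to a $G$-equivariant bundle $\widetilde V$ on $M$, and $\chi(V)=\chi(M,\widetilde V)^G$. Averaging the representation character over $G$ gives
\[
\chi(V)\;=\;\frac{1}{|G|}\sum_{g\in G}\Tr\bigl(g \,\big|\, \chi(M,\widetilde V)\bigr).
\]
Each character value is computed by the Atiyah--Bott holomorphic Lefschetz formula
\[
\Tr\bigl(g\,\big|\,\chi(M,\widetilde V)\bigr)
\;=\;\int_{M^g}\frac{\ch\bigl(g\,|\,\widetilde V|_{M^g}\bigr)\cup \Td(M^g)}{\ch_g\!\bigl(\lambda_{-1}(N_g^*)\bigr)},
\]
where $N_g$ is the normal bundle of $M^g$ in $M$, decomposed into eigenbundles $N_g=\bigoplus_{0<f<1}N_{g,f}$ for the action of $g$.

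I would then match this integrand, component by component, with the one in (\ref{eq:orbifoldRR}). Writing Chern roots $\delta_{f,i}$ of $N_{g,f}$ and $\delta_i$ of $TM^g$, the denominator expands as $\prod_{0<f<1,\,i}(1-e^{-2\pi\iu f}e^{-\delta_{f,i}})$, while $\Td(M^g)=\prod_i \delta_i/(1-e^{-\delta_i})$; these two together precisely reproduce the product defining $\Td_\cX|_{\cX_v}$ on the component $\cX_v=[M^g/C_G(g)]$. The numerator $\ch(g\,|\,\widetilde V|_{M^g})=\sum_{0\le f<1} e^{2\pi\iu f}\ch((\pr^*V)_{v,f})$ is exactly the orbifold Chern character $\tch(V)|_{\cX_v}$. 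Summing over $(g)$ and dividing by $|G|$ converts the sum to an integral over $I\cX$, yielding the desired formula.

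The main obstacle is the globalization step: verifying that the local contributions computed in étale charts glue coherently and are independent of the choice of presentation $[U/G]$. This is what required careful work in Kawasaki's original proof~\cite{kawasaki-rr}; the cleanest way to handle it is Toën's stacky Grothendieck--Riemann--Roch~\cite{toen}, in which one factors the pushforward to a point through the coarse moduli map, identifies the ``stacky correction'' with the inertia factors above, and uses functoriality to glue. Beyond this gluing issue the argument is routine: everything reduces to the equivariant Lefschetz calculation and a Chern-root bookkeeping that matches the definitions of $\tch$ and $\tTd$.
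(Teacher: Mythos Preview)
The paper does not prove this theorem: it is stated as a quoted result, with the citations \cite{kawasaki-rr,toen} carrying the burden. There is therefore no ``paper's own proof'' to compare your proposal against; the theorem functions here as a black box used to motivate the assumptions (A1)--(A3) and the definition of the $\hGamma$-integral structure.

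As for the content of your sketch: the equivariant Lefschetz computation on a global quotient $[M/G]$ is correct and the Chern-root bookkeeping matches the definitions of $\tch$ and $\tTd$ as you say. The weak point is the reduction step. Your claim that ``both sides are additive with respect to stratifications'' and that a Mayer--Vietoris/d\'evissage argument reduces to a global quotient is not right as stated: the holomorphic Euler characteristic $\chi(V)$ is a global analytic invariant and does not localize over an \'etale cover in any straightforward additive way (there is no long exact sequence that makes $\chi$ a sum of local contributions). You recognize this yourself at the end, but the earlier paragraph presents the reduction as routine when it is precisely the hard part. Kawasaki's route is global from the start (an index theorem for elliptic operators on $V$-manifolds), and To\"en's route is Grothendieck--Riemann--Roch for the structure morphism of the stack; neither proceeds by patching local quotient computations. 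If you want to keep your outline honest, drop the Mayer--Vietoris sentence and instead say: for a global quotient the formula follows from Atiyah--Bott as below, and the general case is the content of \cite{kawasaki-rr,toen}.
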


Introduce another multiplicative 
characteristic class $\hGamma \colon K(\cX) \to H^*(I\cX)$ as 
\[
\hGamma(V) := \bigoplus_{v\in \sfT} 
\prod_{0\le f<1} \prod_{i=1}^{l_{v,f}} 
\Gamma(1- f + \delta_{v,f,i}) 
\in H^*(I\cX),  
\]
where $\delta_{v,f,i}$ is the same as above.  
The Gamma function on the right-hand side 
should be expanded in series at $1-f>0$. 
We put $\hGamma_\cX := \hGamma(T\cX)$. 
When defining the A-model integral structure below, 
we need to assume the following: 
\begin{itemize}
\item[(A1)] The map $\tch\colon K(\cX) \to H^*(I\cX)$ 
becomes an isomorphism after tensored with $\C$. 

\item[(A2)] The right-hand side of the orbifold Riemann-Roch formula 
(\ref{eq:orbifoldRR}) takes values in $\Z$ 
for any complex orbifold vector bundle $V$. 
Define $\chi(V)$ to be the value of the right-hand side of (\ref{eq:orbifoldRR})
for any orbifold vector bundle $V$. 

\item[(A3)] The pairing $(V_1,V_2)\mapsto 
\chi(V_1\otimes V_2)$ on $K(\cX)$ 
induces a surjective map $K(\cX) \to \Hom(K(\cX),\Z)$. 

\end{itemize}

\begin{remark}  
(i) When $\cX$ is a quotient orbifold of the form $\cX = Y/G$, 
where $Y$ is a compact manifold and $G$ is a compact Lie group 
acting on $Y$ with at most finite stabilizers,  
(A1) follows from Adem-Ruan's decomposition 
theorem \cite[Theorem 5.1]{adem-ruan}. 
Note that an orbifold without generic stabilizers 
can be presented as a quotient orbifold $Y/G$ 
(see \emph{e.g.} \cite{adem-ruan}). 

(ii) 
When $\cX$ is again a quotient orbifold $Y/G$,  
(A2) follows from Kawasaki's index theorem \cite{kawasaki-Vind} 
for elliptic operators on orbifolds (whose proof uses 
the $G$-equivariant index).   
The right-hand side of (\ref{eq:orbifoldRR}) 
becomes the index of a certain elliptic operator 
$\ov{\partial}+\ov{\partial}^*\colon 
V\otimes \Omega^{0,{\rm even}}_{\cX} \to 
V\otimes \Omega^{0,{\rm odd}}_{\cX}$, 
where $\ov{\partial}$ is a 
not necessarily integrable $(0,1)$ connection 
and $\ov{\partial}^*$ is its adjoint. 
The author does not know a purely topological proof. 

(iii) 
(A3) would follow from a universal coefficient theorem and 
Poincar\'{e} duality for 
orbifold $K$-theory (which are true for manifolds), 
but the author does not know a proof nor a reference.  
\end{remark}

\begin{definition-proposition}
\label{def-prop:A-model_int} 
Define $\cV^\cX_\Z\subset \cV^\cX = H_{\rm orb}^*(\cX)$ 
as the image of the map  
\begin{equation}
\label{eq:stdintstr_A}
\Psi \colon K(\cX) \longrightarrow \cV^\cX, \quad 
[V] \longmapsto \frac{1}{(2\pi)^{n/2}} 
\hGamma_\cX \cup (2\pi\iu)^{\deg/2} \inv^* (\tch(V)),   
\end{equation} 
where $\deg\colon H^*(I\cX)\to H^*(I\cX)$ is a grading operator 
on $H^*(I\cX)$ defined by $\deg = 2k$ on $H^{2k}(I\cX)$ 
and $\cup$ is the cup product in $H^*(I\cX)$. 
Then 

\begin{itemize}
\item[(i)] 
Under the assumption (A1) above, $\cV^\cX_\Z$ is a lattice in $\cV^\cX$ 
such that $\cV^\cX \cong \cV^\cX_\Z \otimes_\Z \C$. 
\item[(ii)]
The Galois action $G^\cV(\xi)$ on $\cV^\cX$ in (\ref{eq:Galois_V}) 
corresponds to tensoring by the line bundle $\otimes L_\xi^\vee$ 
in $K(\cX)$, i.e. 
$\Psi([V\otimes L_\xi^\vee]) = G^\cV(\xi)(\Psi([V]))$.  
\item[(iii)]  
The pairing $(\cdot,\cdot)_{\cV^\cX}$ on $\cV^\cX$ 
in (\ref{eq:pairing_Amodel_V}) 
corresponds to the Mukai pairing on $K(\cX)$ defined by 
$([V_1],[V_2])_{K(\cX)} := \chi(V_2^\vee \otimes V_1)$, i.e. 
$(\Psi([V_1]),\Psi([V_2]))_{\cV^\cX} = ([V_1],[V_2])_{K(\cX)}$.  
In particular, 
the pairing $(\cdot,\cdot)_{\cV^\cX}$ restricted 
on $\cV_\Z^\cX$ takes values in $\Z$ under assumption (A2) and 
is unimodular under assumption (A3). 
\end{itemize} 
Therefore $\cV^\cX_\Z$ satisfies 
the conditions in Proposition \ref{prop:char_A_real_int_str} 
and defines an integral structure in the A-model.  
We call $\cV^\cX_\Z$ a \emph{$\hGamma$-integral structure}. 
The real involution $\kappa_\cV$ on $\cV^\cX$ 
for the $\hGamma$-integral structure is given by 
\[
\kappa_{\cV} (\alpha) = (-1)^k \prod_{0\le f <1} 
\prod_{i=1}^{l_{\inv(v),f}} \frac{\Gamma(1-f+\delta_{\inv(v),f,i})}
{\Gamma(1-\ov{f}-\delta_{\inv(v),f,i})} \inv^* \ov{\alpha}, \quad 
\alpha \in H^{2k}(\cX_v) \subset \cV^\cX, 
\]
where $\delta_{\inv(v),f,i}$, $i=1,\dots,l_{\inv(v),f}$ are 
the Chern roots of $(\pr^*T\cX)_{\inv(v),f}$ and 
\[
\ov{f} := 
\begin{cases} 
1-f  & \text{if  $0<f<1$} \\
0    & \text{if  $f=0$}. 
\end{cases}
\]
Therefore, this $\kappa_\cV$ satisfies 
(\ref{eq:kappa_induces_Inv}) and (\ref{eq:leadingterm_kappaV}).  
In particular, the conclusions of Theorem \ref{thm:pure_polarized}
hold for the $\hGamma$-integral structure 
on the A-model \seminf VHS. 
\end{definition-proposition} 
\begin{proof} 
Because $\hGamma_\cX\cup$ and $(2\pi\iu)^{\deg/2}$ are invertible
operators over $\C$, (A1) implies (i). 
It is easy to check the second statement (ii).  
For (iii), we calculate 
\begin{align*}
(\Psi(V_1),\Psi(V_2))_{\cV^\cX} 
&= (e^{\pi \iu\mu} \Psi(V_2), e^{\pi \iu \rho}\Psi(V_1))_{\rm orb}   \\ 
&= \frac{1}{(2\pi)^n} \sum_{v\in \sfT} 
(2\pi\iu)^{\dim \cX_v} \times \\ 
\int_{\cX_v}  
\prod_{f,i} \Gamma(1-f-&\tfrac{\delta_{v,f,i}}{2\pi\iu})
\Gamma(1-\ov{f}+\tfrac{\delta_{v,f,i}}{2\pi\iu}) 
\cdot e^{\frac{\rho}{2}} \tch(V_1)_v \cdot
e^{\pi\iu(\iota_v-\frac{n}{2}+\frac{\deg}{2})}\tch(V_2)_{\inv(v)},  
\end{align*} 
where 
$\tch(V_1)_v$ and $\tch(V_2)_{\inv(v)}$ are $H^*(\cX_v)$ 
and $H^*(\cX_{\inv(v)})$ components 
of $\tch(V_1)$ and $\tch(V_2)$ respectively.  
Using $\Gamma(1-z)\Gamma(z) = \pi/\sin(\pi z)$ 
and $\sum_{f,i}\delta_{v,f,i} =\pr^*\rho|_{\cX_v}$,   
we calculate    
\[
\prod_{f,i} \Gamma(1-f-\tfrac{\delta_{v,f,i}}{2\pi\iu})
\Gamma(1-\ov{f}+\tfrac{\delta_{v,f,i}}{2\pi\iu})  
= (2\pi\iu)^{n-\dim \cX_v} 
e^{-\frac{\rho}{2}} e^{-\pi\iu \iota_v} \Td_{\cX,v}.  
\]
Putting these together, we have 
$(\Psi(V_1),\Psi(V_2))_{\cV^\cX} = \chi(V_2^\vee\otimes V_1)$. 
A straightforward calculation shows the rest of the statements. 
\end{proof} 

\begin{remark}
Instead of working with topological $K$-groups, 
we can use the $K$-group of 
algebraic vector bundles (or coherent sheaves) 
on the smooth Deligne-Mumford stack $\cX$. 
In this case, the map $\Psi$ in 
Definition-Proposition \ref{def-prop:A-model_int} 
defines an integral structure on the algebraic A-model \seminf VHS 
introduced in Remark \ref{rem:after_pure_polarized}. 
\end{remark} 

\section{Integral structures via toric mirrors}
\label{sec:intstr_via_toricmirrors}
In previous sections, we studied properties of 
arbitrary real and integral structures. 
In this section, we use mirror symmetry to find 
the ``most natural" integral structure in the A-model.  
We calculate the integral structures in the singularity 
mirrors (Landau-Ginzburg model) of toric orbifolds and 
study the pulled back integral structures in orbifold quantum 
cohomology.

\subsection{Toric orbifolds} 
\label{subsec:toricorbifolds} 
To fix the notation, we give the definition 
of toric orbifolds and collect several facts.  
By a toric orbifold, we mean 
a toric Deligne-Mumford stack in the sense of 
Borisov-Chen-Smith \cite{borisov-chen-smith}. 
We only deal with a compact toric orbifold 
with a projective coarse moduli space 
and define a toric orbifold as a quotient of 
$\C^m$ by a \emph{connected} torus $\T \cong (\C^*)^r$. 
For toric orbifolds/varieties, 
we refer the reader to \cite{borisov-chen-smith, oda, fulton, audin}. 

\subsubsection{Definition}  
\label{subsubsec:def_toricorbifolds}
We begin with the following data: 
\begin{itemize}
\item an $r$-dimensional algebraic torus $\T\cong (\C^*)^r$; we set 
$\bL:=\Hom(\C^*,\T)$; 
\item $m$ elements $D_1,\dots,D_m \in \bL^\vee = \Hom(\T,\C^*)$ 
such that $\bL^\vee\otimes \R= \sum_{i=1}^m \R D_i$; 
\item a vector $\eta\in \bL^\vee \otimes \R$.  
\end{itemize} 
The elements $D_1,\dots, D_m$ define a 
homomorphism $\T\rightarrow (\C^*)^m$. 
Let $\T$ act on $\C^m$ via this homomorphism. 
The vector $\eta$ defines a stability condition 
of this torus action. 
Set 
\[
\cA := \{ I\subset \{1,\dots,m\} \;;\; 
\sum_{i\in I} \R_{>0} D_i \ni \eta\}.   
\]
We define a quotient stack $\cX$ to be  
\[
\cX = [\cU_\eta/\T], \quad 
\cU_\eta := \C^m \setminus \bigcup_{I\notin \cA} \C^I,  
\]
where
$\C^I := \{(z_1,\dots,z_m)\;;\; z_i= 0 \text{ for } i\notin I\}$.
Under the following conditions, $\cX$ is a smooth Deligne-Mumford 
stack with a projective coarse moduli space:   
\begin{itemize}
\item[(A)] $\{1,\dots,m\} \in \cA$. 
\item[(B)] $\sum_{i\in I} \R D_i = \bL^\vee \otimes \R$ for $I\in \cA$. 
\item[(C)] $\{(c_1,\dots,c_m)\in \R_{\ge 0}^m \;;\; 
\sum_{i=1}^m c_i D_i =0\} = \{0\}$. 
\end{itemize} 
The conditions (A), (B) and (C) ensure 
that $\cX$ is non-empty, that the stabilizer is finite 
and that $\cX$ is compact respectively. 
The generic stabilizer of $\cX$ is given by the kernel of 
$\T \to (\C^*)^m$ and $\dim_\C\cX=n:=m-r$.

We can also construct $\cX$ as a symplectic quotient as follows 
(see also \cite{audin}).  
Let $\T_\R$ denote the maximal compact subgroup of $\T$ 
isomorphic to $(S^1)^r$. 
Let $\mathfrak{h} \colon \C^m \to \bL^\vee\otimes\R$ be the moment 
map for the $\T_\R$-action on $\C^m$: 
\[
\mathfrak{h}(z_1,\dots,z_m) = \sum_{i=1}^m |z_i|^2 D_i.  
\]
The $\T_\R$-action on the level set $\mathfrak{h}^{-1}(\eta)$ 
has only finite stabilizers and we have an isomorphism of 
symplectic orbifolds: 
\begin{equation}
\label{eq:X_symplecticquot}
\cX \cong \mathfrak{h}^{-1}(\eta) / \T_\R.  
\end{equation} 

By renumbering the indices if necessary, we can assume that 
\[
\{1,\dots,m\} \setminus \{i\} \in \cA \quad \text{ if and only if } 
\quad 1\le i\le m' 
\]
where $m'$ is less than or equal to $m$. 
We can easily check that 
$I\supset \{m'+1,\dots,m\}$ for any $I\in \cA$ 
and $D_{m'+1},\dots,D_m$ are linearly independent 
over $\R$. 
The elements $D_1,\dots,D_m$ define the following exact sequence 
\begin{equation}
\label{eq:exactsequence_toric}
\begin{CD}
0 @>>> \bL @>{(D_1,\dots,D_m)}>> \Z^m @>{\beta}>> \bN @>>> 0,  
\end{CD}  
\end{equation} 
where $\bN$ is a finitely generated abelian group. 
By the long exact sequence associated with 
the functor $\Tor_\bullet(-,\C^*)$, 
we find that the torsion part 
$\bN_{\rm tor}=\Tor_1(\bN,\C^*)$ of $\bN$ is isomorphic to  
the generic stabilizer $\Ker(\T\to (\C^*)^m)$. 
The free part 
$\bN_{\rm free}=\bN/\bN_{\rm tor}$ is of rank $n=\dim_\C\cX$. 
Let $b_1,\dots,b_m$ be the images in $\bN$ 
of the standard basis of $\Z^m$ under $\beta$. 
The {\it stacky fan} of $\cX$, in the sense of 
Borisov-Chen-Smith \cite{borisov-chen-smith}, is given by 
the following data: 
\begin{itemize}
\item vectors $b_1,\dots, b_{m'}$ in $\bN$;   
\item a complete simplicial fan $\Sigma$ in $\bN\otimes \R$ such that 

(i) the set of one dimensional cones 
is $\{\R_{\ge 0} b_1, \dots, \R_{\ge 0} b_m\}$; 

(ii) 
$\sigma_I = \sum_{i\notin I} \R_{\ge 0} b_i$ defines a cone 
of $\Sigma$ if and only if $I \in \cA$. 
\end{itemize} 
The toric variety defined by the fan $\Sigma$ is 
the coarse moduli space of $\cX$. 
The conditions (B) and (C) correspond to that $\Sigma$ is simplicial 
and that $\Sigma$ is complete, 
\emph{i.e.} the union of all cones in $\Sigma$ 
is $\bN\otimes \R$. 
We may refer to $\cA$ as the set of ``anticones"\footnote
{This name is due to Tom Coates.}. 

\begin{remark}
Borisov-Chen-Smith \cite{borisov-chen-smith} 
defined a toric Deligne-Mumford stack 
starting from data of a stacky fan. 
Our construction can give every toric Deligne-Mumford stack 
in their sense which has a projective coarse moduli space. 
Note that the vectors $b_{m'+1},\dots, b_{m}$ do not appear 
as data of a stacky fan. 
These redundant information in our initial data 
makes $\beta$ surjective and 
allows us to define $\cX$ as a quotient 
by a \emph{connected} torus $\T$. 
\end{remark}

\subsubsection{K\"{a}hler cone and a choice of a nef basis}
Since every element of $\cA$ contains $\{m'+1,\dots,m\}$, 
it is convenient to put 
\[
\cA' = \{ I'\subset \{1,\dots,m'\}\;
;\; I'\cup\{m'+1,\dots,m\}\in \cA\}. 
\]
We can easily see that $\cU_\eta$ factors as 
\[
\cU_\eta = \cU'_\eta \times (\C^*)^{m-m'}, \quad 
\cU'_\eta = \C^{m'} \setminus \bigcup_{I'\notin \cA'} \C^{I'}. 
\]
Thus we can write 
\[
\cX = [\cU'_\eta/\G], \quad 
\G:=\Ker(\T\to (\C^*)^m \to (\C^*)^{\{m'+1,\dots,m\}}).
\] 
Note that $\G$ is isomorphic to $(\C^*)^{r'}$ 
times a finite abelian group 
for $r':=r-(m-m')$.  
Every character $\xi \colon \G \to \C^*$ of $\G$ 
defines an orbifold line bundle 
$L_\xi:=\C\times_{\xi,\G} \cU'_\eta \to \cX$. 
Under this correspondence between $\xi$ and $L_\xi$, 
the Picard group $\Pic(\cX)$ is identified 
with the character group $\Hom(\G,\C^*)$ 
and also with $H^2(\cX,\Z)$ (via $c_1$): 
\[
\Pic(\cX) \cong \Hom(\G,\C^*) 
\cong \bL^\vee/\textstyle\sum_{i=m'+1}^m \Z D_i 
\cong H^2(\cX,\Z).  
\]
The image $\ov{D}_i$ of $D_i$ in $H^2(\cX,\R)$ 
is the Poincar\'{e} dual of 
the toric divisor $\{z_i=0\}\subset \cX$ for 
$1\le i\le m'$. 
Over rational numbers, we have 
\begin{align*}
H^2(\cX,\Q) & \cong  
\bL^\vee\otimes \Q/\textstyle \sum_{i=m'+1}^m \Q D_i,  \\
H_2(\cX,\Q) & \cong
\Ker((D_{m'+1},\dots, D_m)\colon \bL\otimes \Q \to \Q^{m-m'}) 
\subset \bL\otimes \Q. 
\end{align*} 
Now we introduce a canonical splitting (over $\Q$) 
of the surjection $\bL^\vee \otimes \Q \to H^2(\cX,\Q)$.   
For $m'<j\le m$,  $b_j$ is contained in some cone in $\Sigma$ 
since $\Sigma$ is complete. Namely, 
\begin{equation}
\label{eq:bjcontainedinacone}
b_j = \sum_{i\notin I_j} c_{ji} b_i, \quad \text{ in } \bN\otimes \Q, 
\quad c_{ji}\ge 0, \quad \exists I_j \in \cA, 
\end{equation} 
where $I_j$ corresponds to the complement of 
the cone containing $b_j$ in its interior. 
By the exact sequence 
(\ref{eq:exactsequence_toric}) tensored with $\Q$, 
we can find $D^\vee_j \in \bL\otimes \Q$ such that 
\[
\pair{D_i}{D^\vee_j} = 
\begin{cases} 
1 \quad & i=j \\
-c_{ji} \quad & i\notin I_j \\
0 \quad & i\in I_j\setminus \{j\}.  
\end{cases}
\]
Note that $D^\vee_j$ is uniquely determined by these conditions. 
These vectors $D^\vee_j$ define a decomposition 
\begin{align*}
\bL^\vee \otimes \Q  &= 
\Ker((D_{m'+1}^\vee,\dots,D_{m}^\vee)\colon 
\bL^\vee\otimes \Q\to \Q^{m-m'}) 
\oplus \bigoplus_{j=m'+1}^m \Q D_j.  
\end{align*} 
The first factor $\Ker(D_{m'+1}^\vee,\dots,D_{m}^\vee)$ 
is identified with $H^2(\cX,\Q)$ under the surjection 
$\bL^\vee\otimes \Q \to H^2(\cX,\Q)$. 
Using this, we regard $H^2(\cX,\Q)$ 
as a subspace of $\bL^\vee\otimes \Q$. 
We define an {\it extended K\"{a}hler cone} $\tilde{C}_\cX$ as  
\[
\tilde{C}_\cX  = \bigcap_{I\in \cA} (\sum_{i\in I} \R_{>0} D_i) 
\subset \bL^\vee \otimes \R. 
\] 
Then $\eta\in \tilde{C}_\cX$ and the image of $\eta$ in $H^2(\cX,\R)$ 
is the class of the reduced symplectic form. 
The set $\tilde{C}_\cX$ is the connected component of the set of 
regular values of the moment map 
$\mathfrak{h}\colon \C^m \to \bL^\vee\otimes \R$, 
which contains $\eta$. 
The extended K\"{a}hler cone depends not only on $\cX$ 
but also on the choice of our initial data. 
The genuine {\it K\"{a}hler cone} $C_\cX$ of $\cX$ is 
the image of $\tilde{C}_\cX$ 
under $\bL^\vee \otimes \R \to H^2(\cX,\R)$: 
\[
C_\cX = \bigcap_{I'\in \cA'} (\sum_{i\in I'} \R_{>0} \ov{D}_i) \subset 
H^2(\cX,\R) = H^{1,1}(\cX,\R) 
\]
where $\ov{D}_i$ is the image of $D_i$ in $H^2(\cX,\R)$.  
The next lemma means that the extended K\"{a}hler cone also ``splits". 
\begin{lemma}
\label{lem:decomp_extendedKaehler} 
$\tilde{C}_\cX = C_\cX + \sum_{j=m'+1}^m \R_{>0} D_j$ 
in $\bL^\vee \otimes \R \cong 
H^2(\cX,\R)\oplus \bigoplus_{j=m'+1}^m \R D_j$. 
\end{lemma}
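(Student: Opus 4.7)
The plan is to verify the two inclusions separately, exploiting the direct-sum splitting $\bL^\vee\otimes\R=H^2(\cX,\R)\oplus\bigoplus_{j>m'}\R D_j$ and the bijection $\cA\leftrightarrow\cA'$ sending $I\in\cA$ to $I':=I\setminus\{m'+1,\dots,m\}$. A preliminary step is to pin down the lift $\tilde D_i\in H^2(\cX,\R)\subset\bL^\vee\otimes\R$ of $\ov D_i$ for $i\le m'$. Using that $I_j\supset\{m'+1,\dots,m\}$ one checks $\pair{D_k}{D^\vee_j}=\delta_{kj}$ for $k>m'$; combined with $\pair{D_i}{D^\vee_j}=-c_{ji}$ ($i\notin I_j$) or $0$ ($i\in I_j\setminus\{j\}$), the defining condition $\pair{\tilde D_i}{D^\vee_j}=0$ forces
\[
\tilde D_i = D_i + \sum_{j>m',\, i\notin I_j} c_{ji} D_j,
\]
with all $c_{ji}\ge 0$.

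For the inclusion $\tilde C_\cX\supseteq C_\cX+\sum_{j>m'}\R_{>0}D_j$, take $v\in C_\cX$ and $w=\sum_{j>m'}t_jD_j$ with $t_j>0$. For each $I=I'\cup\{m'+1,\dots,m\}\in\cA$, the membership $v\in C_\cX$ yields an expansion $v=\sum_{i\in I'}s_i\tilde D_i$ with $s_i>0$; substituting the lift formula gives
\[
v+w=\sum_{i\in I'}s_iD_i+\sum_{j>m'}\Bigl(t_j+\sum_{i\in I',\, i\notin I_j}s_ic_{ji}\Bigr)D_j,
\]
whose coefficients are all strictly positive. Since $I\in\cA$ was arbitrary, $v+w\in\tilde C_\cX$.

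For the reverse inclusion, decompose $u\in\tilde C_\cX$ uniquely as $u=v+\sum_{j>m'}t_jD_j$ with $v\in H^2(\cX,\R)$ and $t_j=\pair{u}{D^\vee_j}$. Positivity of $t_j$ follows by pairing $D^\vee_j$ against any expansion $u=\sum_{i\in I_j}a_iD_i$ ($a_i>0$) coming from $I_j\in\cA$: every term except $a_jD_j$ pairs to zero, giving $t_j=a_j>0$. To see $v\in C_\cX$, push $u$ to the quotient $\bL^\vee\otimes\R\to H^2(\cX,\R)$, which kills each $D_j$ with $j>m'$; the expansion of $u$ associated to an arbitrary $I=I'\cup\{m'+1,\dots,m\}\in\cA$ descends to $\sum_{i\in I'}a_i\ov D_i$ with $a_i>0$, and under the canonical identification of the quotient with the subspace $\Ker((D^\vee_{m'+1},\dots,D^\vee_m))$ this expression is exactly $v$. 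The only real care needed is keeping the two incarnations of $H^2(\cX,\R)$ straight; no nontrivial obstacle arises.
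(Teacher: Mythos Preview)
Your proof is correct and follows essentially the same approach as the paper's. Both arguments hinge on the lift formula $\tilde D_i=D_i+\sum_{j>m'}c_{ji}D_j$ with $c_{ji}\ge 0$ and on the positivity $\pair{u}{D^\vee_j}>0$ for $u\in\tilde C_\cX$ obtained via the anticone $I_j$; the paper packages these into a single set identity $\sum_{i\in I'}\R_{>0}\ov D_i+\sum_{j>m'}\R_{>0}D_j=\sum_{k\in I}\R_{>0}D_k\cap\bigcap_{j>m'}\{D^\vee_j>0\}$ and then intersects over all $I$, whereas you verify the two inclusions directly, but the content is the same.
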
 
\begin{proof}
First note that for $1\le i\le m'$, 
$\ov{D}_i = D_i + \sum_{j>m'} c_{ji} D_j$, 
where $c_{ji} = -\pair{D_i}{D_j^\vee}\ge 0$. 
Take $I'\in \cA'$ and put $I=I'\cup \{m'+1,\dots,m\}$. 
It is easy to check that 
\[
\sum_{i\in I'} \R_{>0} \ov{D}_i + \sum_{j=m'+1}^{m} \R_{>0}D_j 
 = \sum_{k\in I} \R_{>0} D_k \cap \bigcap_{j=m'+1}^m 
\{D_j^\vee>0\},  
\]
where we regard $D_j^\vee$ 
as a linear function on $\bL^\vee\otimes \R$. 
Thus $C_\cX+\sum_{j>m'} \R_{>0} D_j = \tilde{C}_\cX \cap 
\bigcap_{j=m'+1}^m \{D_j^\vee>0\}$. 
For $j>m'$, take $I_j\in \cA$ appearing in 
(\ref{eq:bjcontainedinacone}). 
Then $\tilde{C}_\cX \subset \sum_{k\in I_j} \R_{>0} D_k\subset 
\{D_j^\vee>0\}$. The conclusion follows. 
\end{proof} 

We choose an integral basis $\{p_1,\dots,p_r\}$ of $\bL^\vee$ such that 
$p_a$ is in the closure $\cl(\tilde{C}_\cX)$ of 
$\tilde{C}_\cX$ for all $a$ 
and $p_{r'+1},\dots,p_r$ are in $\sum_{i=m'+1}^m \R_{\ge 0} D_i$. 
Then the images $\ov{p}_1,\dots, \ov{p}_{r'}$ of $p_1,\dots,p_{r'}$ 
in $H^{2}(\cX,\R)$ are nef and those of 
$p_{r'+1},\dots, p_r$ are zero. 
Define a matrix $(m_{ia})$ by  
\begin{equation}
\label{eq:Dprel}
D_i = \sum_{a=1}^r m_{ia} p_a, \quad m_{ia} \in \Z.  
\end{equation} 
Then the class $\ov{D}_i$ of the toric divisor 
$\{z_i=0\}$ is given by 
\begin{equation}
\label{eq:ovDprel}
\ov{D}_i = \sum_{a=1}^{r'} m_{ia} \ov{p}_a,  
\end{equation} 
and $\ov{D}_j =0$ for $m'< j\le m$. 

\subsubsection{Inertia components}
We introduce subsets $\K$, $\K_{\rm eff}$ of $\bL\otimes \Q$ by 
\begin{align*}
\K &= \{d\in \bL\otimes \Q \;;\; \{i\in \{1,\dots,m\} 
\;;\; \pair{D_i}{d}\in \Z \}\in \cA\}, \\
\K_{\rm eff} & = \{ d\in \bL\otimes \Q \; ; \; 
\{i \in \{1,\dots,m\}  \; ; 
\; \pair{D_i}{d} \in \Z_{\ge 0} \} \in \cA\}.    
\end{align*} 
Note that $\K$ and $\K_{\rm eff}$ are not closed under addition but 
$\K$ is acted on by $\bL$. 
An element of $\K_{\rm eff}\cap H_2(\cX,\R)$ can be 
realized as a degree of a stable map of the form 
$\Proj(1,a)\to \cX$. 
Following \cite{borisov-chen-smith}, 
we introduce the set $\Boxop$ as 
\[
\Boxop = \Big \{ v\in \bN \;;\; v = \sum_{k\notin I} c_k b_k \text{ in } 
\bN \otimes \Q, \ c_k\in [0,1), \ I \in \cA  \Big \}. 
\]
For a real number $r$, let $\ceil{r}$, $\floor{r}$ 
and $\{r\}$ denote the ceiling, floor and fractional part 
of $r$ respectively. 
For $d\in \K$, define an element $v(d)\in \Boxop$ by 
\[
v(d) := \sum_{i=1}^m \ceil{\pair{D_i}{d}} b_i \in \bN. 
\]
By the exact sequence (\ref{eq:exactsequence_toric}), 
we have $v(d)= \sum_{i=1}^m \{-\pair{D_i}{d}\} b_i$ 
in $\bN\otimes \Q$ and so $v(d)$ is an element of $\Boxop$. 
This map $d\mapsto v(d)$ factors through $\K \to \K/\bL$ 
and identifies $\K/\bL$ with $\Boxop$. 
For $d\in \K$, we define a component $\cX_{v(d)}$ 
of the inertia stack $I\cX$ by 
\[
\cX_{v(d)} = \{[z_1,\dots,z_m] \in \cX \; ;\; z_i = 0 \text{ if } 
\pair{D_i}{d} \notin \Z \}.  
\]
The stabilizer along $\cX_{v(d)}$ is defined to be   
$\exp(-2\pi\sqrt{-1}d) \in 
\bL\otimes \C^*\cong \T$, which acts on $\C^m$ by 
\[
(e^{-2\pi\iu\pair{D_1}{d}}, \cdots, e^{-2\pi\iu\pair{D_m}{d}}).  
\] 
We can easily check that $\cX_{v(d)}$ depends only on 
the element $v(d) \in \Boxop$. 
When $d\in \K_{\rm eff}\cap H_2(\cX,\Q)$,  
the evaluation image of a stable map 
$\Proj(1,a)\to \cX$ of degree $d$ at $[\Proj(a)]\in \Proj(1,a)$ 
lies on $\cX_{\inv(v(d))}$.  
The age of $\cX_{v(d)}$ is calculated as 
\begin{equation} 
\label{eq:ageofX_v}
\iota_{v(d)} : = \age (\cX_{v(d)}) = \sum_{i=1}^m \{ - \pair{D_i}{d} \} 
= \sum_{i=1}^{m'} \{ -\pair{D_i}{d} \}.   
\end{equation} 
We have 
\[
I\cX = \bigsqcup_{v\in \Boxop} \cX_v, \quad 
H_{\rm orb}^i(\cX) = \bigoplus_{v\in \Boxop} H^{i - 2 \iota_v} (\cX_v).  
\]
Denote by $\unit_v$ the unit class of $H^*(\cX_v)$. 
The coarse moduli space of $\cX_v$ is a toric variety and 
its cohomology ring is generated by the 
degree two classes $\ov{p}_1,\dots,\ov{p}_{r'}$: 
\[
H^*(\cX_{v(d)}) = \left(\C[\ov{p}_1,\dots,\ov{p}_{r'}]/
\mathfrak{J}_{v(d)}\right) \unit_{v(d)} 
\]
where the ideal $\mathfrak{J}_{v(d)}$ is 
generated by $\prod_{i\in I} \ov{D}_i$ 
for $I\subset \{1,\dots,m\}$ such that 
$\{i\;;\; \pair{D_i}{d} \in \Z\} \setminus I\notin \cA$. 
(See (\ref{eq:ovDprel}) for $\ov{D}_i$ in terms of $\ov{p}_a$.)  
Take $\xi\in \bL^\vee$. Let $[\xi]$ be its image in 
$\bL^\vee/\sum_{j=m'+1}^m \Z D_j \cong H^2(\cX,\Z)$. 
The age $f_v([\xi])\in [0,1)$ of the line bundle 
$L_\xi$ (introduced in Section \ref{subsec:AmodelVHS}) is given by 
\begin{equation}
\label{eq:toric_fv}
f_{v(d)}([\xi]) = \{-\pair{\xi}{d}\}, \quad d\in \K. 
\end{equation} 

\subsubsection{Weak Fano condition} 
\label{subsubsec:weakFano} 
The first Chern class $\rho=c_1(T\cX)\in H^2(\cX,\Q)$ of 
$\cX$ is the image of the vector 
\[
\hrho := D_1 +\cdots + D_m = \sum_{a=1}^r \rho_a p_a  
\in \bL^\vee, \quad 
\rho_a := \sum_{i=1}^m m_{ia}.  
\] 
$\cX$ is {\it weak Fano} if $\rho$ is in the closure $\cl(C_\cX)$ 
of the K\"{a}hler cone $C_\cX$. 
Later, we will use the condition that 
$\hrho\in \cl(\tilde{C}_\cX)$ 
which is stronger than that $\cX$ is weak Fano. 
\begin{lemma}
We have $\hrho \in \cl(\tilde{C}_\cX)$ if and only if 
$\rho \in \cl(C_\cX)$ (i.e. $\cX$ is weak Fano) 
and $\age(b_j) \le 1$ for all $j>m'$.
Here we put $\age(b_j) := \sum_{i\notin I_j} c_{ji}$. 
(This coincides with $\iota_{b_j}$ in (\ref{eq:ageofX_v})   
when $b_j\in \Boxop$. 
See (\ref{eq:bjcontainedinacone}) for the definition 
of $I_j$ and $c_{ji}$.) 
\end{lemma}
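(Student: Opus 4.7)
The plan is to reduce everything to Lemma~\ref{lem:decomp_extendedKaehler} applied to $\cl(\tilde{C}_\cX)$. Taking closures in that lemma yields $\cl(\tilde{C}_\cX) = \cl(C_\cX) + \sum_{j>m'} \R_{\ge 0} D_j$, so in the splitting $\bL^\vee\otimes\R \cong H^2(\cX,\R) \oplus \bigoplus_{j>m'} \R D_j$ a vector lies in $\cl(\tilde{C}_\cX)$ if and only if its $H^2(\cX,\R)$-component lies in $\cl(C_\cX)$ and each of its $D_j$-coordinates ($j>m'$) is nonnegative. Since $\hrho$ projects to $\rho$ in $H^2(\cX,\R)$ by definition, the first condition applied to $\hrho$ is exactly the weak Fano condition $\rho\in \cl(C_\cX)$.

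Next I would compute the $D_j$-coordinate $\alpha_j$ of $\hrho$ for $j>m'$ using the dual vectors $D_j^\vee$. By construction $D_j^\vee$ annihilates the first summand of the splitting, and the identity $\pair{D_k}{D_j^\vee} = \delta_{jk}$ for $k>m'$ gives $\alpha_j = \pair{\hrho}{D_j^\vee}$. Expanding $\hrho = \sum_{i=1}^m D_i$ and splitting the index $i$ according to the three defining cases of $D_j^\vee$ (namely $i=j$, $i \in I_j\setminus\{j\}$, and $i \notin I_j$), the middle case contributes zero, the $i=j$ case contributes $+1$, and the last case contributes $-\sum_{i\notin I_j} c_{ji} = -\age(b_j)$. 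Hence $\alpha_j = 1 - \age(b_j)$, and the condition $\alpha_j \ge 0$ is equivalent to $\age(b_j) \le 1$, delivering the stated equivalence in both directions.

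The argument is essentially a direct unpacking of definitions; I do not anticipate any real obstacle. The only bookkeeping to be careful about is that $j \in I_j$ for $j > m'$ (since $b_j$ lies in the interior of the higher-dimensional cone $\sigma_{I_j}$, rather than on its own ray $\R_{\ge 0} b_j$ as would be the case for $j \le m'$), and that $k \in I_j$ for every other $k > m'$ (because the rays of $\sigma_{I_j}$ are among $b_1,\dots,b_{m'}$); this is what makes the three cases in the definition of $D_j^\vee$ genuinely partition $\{1,\dots,m\}$ and ensures $\pair{D_k}{D_j^\vee} = \delta_{jk}$ for $k>m'$, so that the $D_j$-coordinates of $\hrho$ are exactly the pairings with the $D_j^\vee$.
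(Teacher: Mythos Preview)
Your proof is correct and follows essentially the same approach as the paper: both decompose $\hrho$ in the splitting $\bL^\vee\otimes\R \cong H^2(\cX,\R)\oplus\bigoplus_{j>m'}\R D_j$ to get $\hrho = \rho + \sum_{j>m'}(1-\age(b_j))D_j$, then invoke Lemma~\ref{lem:decomp_extendedKaehler}. The paper reaches this decomposition by summing the identity $\ov{D}_i = D_i + \sum_{j>m'} c_{ji}D_j$ over $i\le m'$, whereas you compute the $D_j$-coordinate directly as $\pair{\hrho}{D_j^\vee}$; these are the same calculation organized differently. One small remark: your geometric justification that $j\in I_j$ is a bit roundabout --- the cleanest reason is simply that every $I\in\cA$ contains $\{m'+1,\dots,m\}$, hence $j\in I_j$ for $j>m'$.
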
 
\begin{proof}
From $\ov{D}_i = D_i + \sum_{j>m'} c_{ji}D_j$, we have 
\[
\hrho = \rho + \sum_{j>m'} (1-\age (b_j)) D_j  
\]
The conclusion follows from Lemma \ref{lem:decomp_extendedKaehler}. 
\end{proof} 
When $\hrho\in \cl(\tilde{C}_\cX)$, we can choose a basis 
$p_1,\dots,p_r\in \cl(\tilde{C}_\cX)$ so that $\hrho$ is in the 
cone generated by $p_a$'s. 
Thus \emph{in this case, we will assume 
$\rho_a\ge 0$ without loss of generality}. 

\begin{remark}
When $\cX$ is weak Fano, we can choose our toric data 
so that $\hrho \in \cl(\tilde{C}_\cX)$ 
if the following condition holds for the stacky fan: 
\[
\{v\in \Boxop \;;\; \age(v)\le 1\} \cup \{b_1,\dots,b_{m'}\} 
\text{ generates $\bN$ over $\Z$.} 
\]
If this holds, we can choose $b_{m'+1},\dots,b_m \in \Boxop$ 
so that $\{b_1,\dots,b_m\}$ generates $\bN$ and 
$\age(b_j)\le 1$ for $m'<j\le m$.    
Then the exact sequence (\ref{eq:exactsequence_toric}) 
determines $D_1,\dots,D_m$ and 
$\hrho=D_1+\dots+D_m\in \cl(\tilde{C}_\cX)$ holds. 
That $\bN$ is generated only by $b_1,\dots,b_{m'}$ 
is equivalent to that $\cX$ is simply-connected 
in the sense of orbifold: $\pi_1^{\rm orb}(\cX)=\{1\}$. 
\end{remark} 

\begin{remark} 
The vectors $D_j$, $m' < j \le m$ in $\bL^\vee$ 
``correspond" to the following elements 
in the twisted sector:  
\begin{equation}
\label{eq:frD}
\frD_j = \prod_{i\notin I_j} 
\ov{D}_i^{\floor{c_{ji}}} \unit_{v(D^\vee_j)} 
\in H_{\rm orb}^{*}(\cX), \quad \text{where } 
v(D^\vee_j) = b_j + \sum_{i\notin I_j} \ceil{-c_{ji}} b_i. 
\end{equation} 
This correspondence can be seen from 
the expansion of the mirror map $\tau(q)$ 
in Remark \ref{rem:aftermirrorconj} below. 
We have $\frD_j=\unit_{b_j}$ when $b_j\in \Boxop$. 
Therefore, if $\hrho\in \cl(\tilde{C}_\cX)$ and 
$b_{m'+1},\dots,b_{m}$ are mutually different elements in $\Boxop$, 
we can identify $\bL^\vee\otimes \C$ 
with the subspace $H^2(\cX) \oplus \bigoplus_{j>m'} H^0(\cX_{b_j})$ 
of $H^{\le 2}_{\rm orb}(\cX)$. 
\end{remark} 

\subsection{Landau-Ginzburg model} 
\label{subsec:LGmodel} 
Following \cite{givental-ICM, givental-mirrorthm-toric, hori-vafa}, 
we use the Landau-Ginzburg model as a mirror of a toric variety. 
By applying the exact functor $\Hom(-,\C^*)$ to 
the short exact sequence (\ref{eq:exactsequence_toric}), 
we have 
\begin{equation}
\label{eq:fibration_LG}
\begin{CD}
\unit @>>> \Hom(\bN,\C^*) 
@>>> Y:= (\C^*)^m @>{\pr}>> \cM:= \Hom(\bL,\C^*) @>>> \unit.  
\end{CD} 
\end{equation} 
The {\it Landau-Ginzburg model} associated to a toric orbifold 
is the family $\pr \colon Y \to \cM$ of affine varieties 
given by the third arrow and a fiberwise Laurent polynomial 
$W\colon Y \to \C$, called potential, given by 
\[
W=w_1+\cdots+w_m 
\] 
where $w_1,\dots,w_m$ are the standard 
$\C^*$-valued co-ordinates on $Y=(\C^*)^m$. 
The basis of $\bL$ dual to $p_1,\dots,p_r$ in the previous section 
defines $\C^*$-valued co-ordinates 
$q_1,\dots,q_r$ on $\cM=\Hom(\bL,\C^*)$. 
Then the projection is given by (see (\ref{eq:Dprel})) 
\begin{equation}
\label{eq:fibration_LG_formula}
\pr(w_1,\dots,w_m) 
= (q_1,\dots,q_r), \quad q_a = \prod_{i=1}^m w_i^{m_{ia}}.  
\end{equation}
Let $Y_q := \pr^{-1}(q)$ be the fiber at $q\in \cM$ 
and set $W_q:=W|_{Y_q}$. 
Note that $Y_q$ has $|\bN_{\rm tor}|$ connected components and 
each connected component is isomorphic to 
$\Hom(\bN_{\rm free},\C^*) \cong (\C^*)^n$. 
Let $e_1,\dots,e_n$ be an arbitrary basis of $\bN_{\rm free}$ and 
$y_1,\dots,y_n$ be the corresponding 
$\C^*$-valued co-ordinate on $\Hom(\bN_{\rm free},\C^*)$. 
We choose a splitting of the exact sequence dual to  
(\ref{eq:exactsequence_toric}) over rational numbers. 
Namely, we take a matrix $(\ell_{ia})_{1\le i\le m, 1\le a\le r}$ with 
$\ell_{ia}\in \Q$ such that $p_a = \sum_{i=1}^m D_i \ell_{ia}$. 
This splitting defines a multi-valued section of $\pr:Y\to \cM$ 
and identifies $Y_q$ with $\Hom(\bN,\C^*)$. 
Under this identification, $y_1,\dots,y_n$ give 
co-ordinates on each connected component of $Y_q$ and we have 
\begin{equation}
\label{eq:W_q}
W|_{Y_q} = W_q = q^{\ell_1} y^{b_1} + \cdots + q^{\ell_m} y^{b_m}, \quad
q^{\ell_i} = \prod_{a=1}^r q_a^{\ell_{ia}}, \quad  
y^{b_i} = \prod_{j=1}^n y_j^{b_{ij}},  
\end{equation} 
where $b_i = \sum_{j=1}^n b_{ij} e_j$ in $\bN_{\rm free}$. 
Here, the choice of the branches of fractional powers of $q_a$ 
appearing in $q^{\ell_i}$ depends on a connected component of $Y_q$. 

To proceed further, we need to restrict the parameter 
$q\in \cM$ to some Zariski open subset $\cMo\subset \cM$ 
so that $W_q$ satisfies the  ``non-degeneracy condition at infinity" 
due to Kouchnirenko \cite[1.19]{kouchnirenko}. 
\begin{definition}
Let $\hat{S}$ denote the convex hull of 
$b_1,\dots,b_m\in \bN\otimes \R$. 
We call the Laurent polynomial $W_q(y)$ of the form (\ref{eq:W_q}) 
{\it non-degenerate at infinity} 
if for every face 
$\Delta$ of $\hat{S}$ (where $0\le \dim\Delta \le n-1$),   
$W_{q,\Delta}(y):=\sum_{b_i\in \Delta} q^{\ell_i} y^{b_i}$ 
does not have critical points on $y\in (\C^*)^n$. 
Let $\cMo$ be the subset of $\cM$ consisting of $q$ 
for which $W_q$ is non-degenerate at infinity.   
\end{definition} 

\begin{proposition}
\label{prop:kouchnirenko} 
(i) Under the condition (C) 
in Section \ref{subsubsec:def_toricorbifolds}, 
$0\in \bN\otimes \R$ is in the interior of $\hat{S}$. 
Therefore, the Laurent polynomial $W_q$ is 
\emph{convenient} in the sense of 
Kouchnirenko \cite[1.5]{kouchnirenko}. 

(ii) $\cMo$ is an open and dense subset of $\cM$ in Zariski topology.  

(iii) For $q\in \cMo$, $W_q(y)$ has   
$|\bN_{\rm tor}|\times n!\Vol(\hat{S})$ critical points 
on $Y_q$ (counted with multiplicities). 
\end{proposition}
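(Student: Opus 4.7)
The approach treats the three parts in sequence: (i) provides convenience of $W_q$, (ii) ensures $\cMo$ is a good parameter space, and (iii) applies Kouchnirenko's theorem fiberwise.

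For (i), the plan is to invoke Gordan's alternative (a form of Farkas' lemma). Condition (C) states precisely that the vectors $D_1,\dots,D_m\in\bL^\vee\otimes\R$ are positively independent, which by Gordan yields $\ell\in\bL\otimes\R$ with $\pair{D_i}{\ell}>0$ for all $i$. Setting $c_i:=\pair{D_i}{\ell}>0$, the tuple $(c_i)$ is the image of $\ell$ under $\bL\to\Z^m$, hence lies in $\ker\beta\otimes\R$ by (\ref{eq:exactsequence_toric}), so $\sum_i c_i b_i=0$. After rescaling to $\sum_i c_i=1$ this exhibits $0$ as a strictly positive convex combination of the $b_i$, placing $0$ in the relative interior of $\hat S$. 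Since $\beta$ is surjective the $b_i$ span $\bN\otimes\R$, so $\hat S$ is full-dimensional and its relative interior is its interior.

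For (ii), the Zariski openness of $\cMo$ follows from standard elimination theory: for each face $\Delta$ of $\hat S$, the locus of $q\in\cM$ where the system $y_j\partial_{y_j} W_{q,\Delta}(y)=0$ ($j=1,\dots,n$) admits a solution in $(\C^*)^n$ is Zariski closed, and $\cMo$ is the intersection of the complements over the finitely many faces of $\hat S$. For density, I would use the generic non-degeneracy result of Kouchnirenko \cite{kouchnirenko}: among Laurent polynomials $\sum a_i y^{b_i}$ with fixed support $\{b_1,\dots,b_m\}$ (parametrized by $(\C^*)^m$), the non-degenerate ones form a Zariski dense open subset. This subset is invariant under the rescaling action of $\Hom(\bN,\C^*)$ on $(\C^*)^m$ (which acts by coordinate change on $y$), so its image under the quotient $(\C^*)^m\to\cM$ in (\ref{eq:fibration_LG}) is Zariski dense in $\cM$.

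For (iii), I would apply Kouchnirenko's theorem directly on each connected component of $Y_q$. By the exact sequence (\ref{eq:fibration_LG}), $Y_q$ is a torsor over $\Hom(\bN,\C^*)$ and hence has $|\bN_{\rm tor}|$ components, each isomorphic to $\Hom(\bN_{\rm free},\C^*)\cong(\C^*)^n$ and indexed by characters $\alpha\in\Hom(\bN_{\rm tor},\C^*)$. On the $\alpha$-component, $W_q$ reduces to a Laurent polynomial in $(y_1,\dots,y_n)$ whose coefficients are twisted by $\alpha$ but whose support is the image of $\{b_1,\dots,b_m\}$ in $\bN_{\rm free}$; in particular its Newton polytope is $\hat S$ under the canonical identification $\bN\otimes\R\cong\bN_{\rm free}\otimes\R$. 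By (i) the restriction is convenient, and for $q\in\cMo$ it is non-degenerate at infinity. Kouchnirenko's theorem \cite{kouchnirenko} then yields $n!\Vol(\hat S)$ critical points per component, and summing over the $|\bN_{\rm tor}|$ components gives the stated count. The most delicate bookkeeping is verifying that the non-degeneracy condition is consistent across components; this reduces to the fact that the character action by $\Hom(\bN_{\rm tor},\C^*)$ preserves both the face decomposition of $\hat S$ and the non-vanishing of the face-wise critical systems.
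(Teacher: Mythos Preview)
Your approach matches the paper's: for (i) you make explicit the Gordan/Farkas step that the paper states in one line (``condition (C) implies there exists $d\in\bL$ with $c_i=\pair{D_i}{d}>0$''), and for (ii) and (iii) you unpack the citations to Kouchnirenko that the paper leaves as references.

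One imprecision is worth flagging. In your density argument for (ii) you assert that the $\Hom(\bN,\C^*)$--action on the coefficient space $(\C^*)^m$ ``acts by coordinate change on $y$'', and similarly at the end of (iii) that the character twist by $\Hom(\bN_{\rm tor},\C^*)$ preserves the face-wise critical systems. This is only correct for the identity component $\Hom(\bN_{\rm free},\C^*)$: if two indices $i\neq j$ satisfy $\bar b_i=\bar b_j$ in $\bN_{\rm free}$ but $b_i\neq b_j$ in $\bN$, then twisting by a nontrivial torsion character changes the combined coefficient of $y^{\bar b_i}$ in a way that cannot be undone by rescaling $y$. So non-degeneracy on one connected component of $Y_q$ need not propagate to the others. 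The consequence is that the image of the non-degenerate locus under $(\C^*)^m\to\cM$ is only the set of $q$ for which \emph{some} component is non-degenerate, which is a priori larger than $\cMo$.

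The repair is immediate and does not change your strategy: for each of the $|\bN_{\rm tor}|$ components, Kouchnirenko's generic non-degeneracy gives a Zariski open dense locus in $\cM$, and $\cMo$ is their finite intersection, hence still Zariski open dense. With this adjustment your argument for (iii) goes through verbatim, since for $q\in\cMo$ every component is non-degenerate and Kouchnirenko's count applies componentwise. The paper's own proof is terse enough that it does not explicitly address this multi-component bookkeeping either.
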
 
\begin{proof}

The condition (C) implies that 
there exists $d\in \bL$ such that $c_i:=\pair{D_i}{d}>0$. 
Then by the exact sequence (\ref{eq:exactsequence_toric}),  
we have $\sum_{i=1}^m c_i b_i=0$. This proves (i). 
The statements (ii) and (iii) are due to Kouchnirenko. 
(ii) follows from (i) 
and the same argument as in \cite[6.3]{kouchnirenko}. 
One of main theorems in \cite[1.16]{kouchnirenko} 
states that $W_q(y)$ has $n!\Vol(\hat{S})$ number of critical points 
on each connected component of $Y_q$. 
(iii) follows from this and $|\pi_0(Y_q)| = |\bN_{\rm tor}|$. 
\end{proof} 

Let $f_{q,z}\colon Y_q \to \R$ be the real part of the function 
$y\mapsto W_q(y)/z$. 
The following lemma allows us to use Morse theory 
for the improper function $f_{q,z}(y)$. 
\begin{lemma}
\label{lem:PScond}
For each $\epsilon>0$, the family of topological spaces 
\[ 
\bigcup_{(q,z)\in \cMo\times \C^*}
\{y\in Y_q \;;\; \|df_{q,z}(y)\|\le \epsilon \} \to \cMo\times \C^*
\]
is proper, i.e. pull-back of a compact set is compact.  
Here the norm $\|df_{q,z}(y)\|$ is taken with respect to 
the complete K\"{a}hler metric 
$\frac{1}{\iu} \sum_{i=1}^n d\log y_i \wedge d\ov{\log y_i}$ 
on $Y_q$. 
\end{lemma}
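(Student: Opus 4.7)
I first compute the norm: since $W_q$ is holomorphic and the metric $\frac{1}{\iu}\sum d\log y_j\wedge d\overline{\log y_j}$ is flat with $\partial/\partial\log y_j$ of constant length, a direct calculation gives $\|df_{q,z}(y)\|^2 = \tfrac{1}{2|z|^2}\sum_{j=1}^n |y_j\partial_{y_j}W_q(y)|^2$. Because $|z|$ is bounded above and away from $0$ on compact subsets of $\C^*$, properness reduces to the following claim: for every compact $K\subset\cMo$ and every $C>0$, the set
\[
\Omega_{K,C} := \{(q,y) \;;\; q\in K,\ y\in Y_q,\ \textstyle\sum_j|y_j\partial_{y_j}W_q(y)|^2\le C\}
\]
projects properly onto $K$. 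I argue by contradiction: assume there exist $(q^{(k)},y^{(k)})\in\Omega_{K,C}$ with $q^{(k)}\to q^{(\infty)}\in K$ while $y^{(k)}$ leaves every compact subset of the fiber.

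Next I identify the dominant face. Set $s_k:=\log|y^{(k)}|\in\R^n$ (working in one connected component of $Y_q\cong\bigsqcup(\C^*)^n$ via the chosen splitting); by hypothesis $|s_k|\to\infty$. After passing to a subsequence $s_k/|s_k|\to u^{(\infty)}\in S^{n-1}$, and I let $\Delta:=\{b\in\hat S\;;\;\langle b,u^{(\infty)}\rangle=\max_{b'\in\hat S}\langle b',u^{(\infty)}\rangle\}$. Because $0$ lies in the interior of $\hat S$ by Proposition \ref{prop:kouchnirenko}(i), this maximum is strictly positive and $\Delta$ is a proper face of $\hat S$; in particular its affine span avoids the origin. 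Put $N_k:=\max_i|q_{(k)}^{\ell_i}(y^{(k)})^{b_i}|$ and $c_i^{(k)}:=q_{(k)}^{\ell_i}(y^{(k)})^{b_i}/N_k$, so $|c_i^{(k)}|\le 1$ with equality for some maximizing index. Standard asymptotic estimates show $c_i^{(k)}\to 0$ whenever $b_i\notin\Delta$, while $N_k\to\infty$ because $0\in\mathrm{int}(\hat S)$ forces some $|(y^{(k)})^{b_i}|\to\infty$. By iterated subsequence extraction using the finitely many faces of $\hat S$ (refining the direction $u^{(\infty)}$ whenever the correction $|s_k|(s_k/|s_k|-u^{(\infty)})$ is unbounded), one arranges that $c_i^{(k)}\to c_i^{(\infty)}\in\C$ for all $i$ and that the support $J:=\{i\;;\;c_i^{(\infty)}\ne 0\}$ equals $\{i\;;\;b_i\in\Delta'\}$ for some face $\Delta'$ of $\hat S$ with $\Delta'\subset\Delta$.

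The key algebraic step is to realise the limit as a torus translate of a face polynomial. Set $r_i^{(\infty)}:=c_i^{(\infty)}/q_{(\infty)}^{\ell_i}=\lim(y^{(k)})^{b_i}/N_k$ for $i\in J$. I claim there exists $\alpha\in(\C^*)^n$ with $\alpha^{b_i}=r_i^{(\infty)}$ for every $i\in J$. The image of the homomorphism $\Psi\colon(\C^*)^n\to(\C^*)^J$, $\alpha\mapsto(\alpha^{b_i})_{i\in J}$, is the closed subgroup of tuples $(z_i)$ satisfying $\prod_i z_i^{\mu_i}=1$ for every integer relation $\sum_{i\in J}\mu_ib_i=0$. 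For such a relation,
\[
\prod_{i\in J}(r_i^{(\infty)})^{\mu_i}=\lim_k\prod_{i\in J}((y^{(k)})^{b_i}/N_k)^{\mu_i}=\lim_k N_k^{-\sum\mu_i}.
\]
Since $\Delta'$ is a proper face of $\hat S$ its affine span misses the origin, so every integer relation among $\{b_i\}_{i\in J}$ forces $\sum\mu_i=0$, hence the above limit equals $1$. Thus $(r_i^{(\infty)})\in\mathrm{image}(\Psi)$, giving the desired $\alpha$, and consequently $V_\infty(y):=\sum_{i\in J}c_i^{(\infty)}y^{b_i}=W_{q^{(\infty)},\Delta'}(\alpha y)$.

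Finally, dividing the gradient bound by $N_k^2$ and passing to the limit yields $\sum_{i\in J}b_{ij}c_i^{(\infty)}=0$ for each $j$, i.e.\ $y=1$ is a critical point of $V_\infty$, or equivalently $y=\alpha$ is a critical point of $W_{q^{(\infty)},\Delta'}$. Since $q^{(\infty)}\in\cMo$ and $\dim\Delta'\le n-1$, this contradicts the non-degeneracy at infinity of $W_{q^{(\infty)}}$. The main obstacle is the iterated subsequence extraction in the second paragraph, whereby one must descend through faces of $\hat S$ until the support of $V_\infty$ stabilises on a single face of $\hat S$; this is essentially a tropical/polytope argument that keeps track of which terms in $W_q$ remain of comparable size along the escape direction.
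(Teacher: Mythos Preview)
Your proof is correct and follows the same contradiction strategy as the paper: normalize by the dominant monomial, pass to a limit, and produce a critical point of some face polynomial $W_{q,\Delta'}$, contradicting non-degeneracy at infinity. The executions differ precisely at the step you flag as the main obstacle, namely showing that the support $J$ of the limiting coefficients $c_i^{(\infty)}$ is exactly $\{i:b_i\in\Delta'\}$ for a face $\Delta'$. You achieve this by iterated refinement of the escape direction through the face lattice, which works but is only sketched. The paper avoids iteration altogether: after a subsequence with $y_{(k)}^{b_i-b_1}\to\alpha_i$ (nonzero exactly for $i\le l$), set $V=\operatorname{span}_\C\{b_i-b_1:i\le l\}$ and orthogonally decompose $\xi_{(k)}:=\log y_{(k)}=\xi'_{(k)}+\xi''_{(k)}$ with $\xi''_{(k)}\in V^\perp$. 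Since $\xi'_{(k)}$ converges (its pairings with a spanning set of $V$ do), $\langle\Re\xi''_{(k)},b_i-b_1\rangle$ is $0$ for $i\le l$ and tends to $-\infty$ for $i>l$; thus the single linear functional $\Re\xi''_{(k)}$ (for any large $k$) cuts out the face in one step, and $\tilde y:=\exp(\xi')$ directly furnishes the critical point, bypassing your separate torus-image argument. Your algebraic lemma---integer relations among $\{b_i\}_{i\in J}$ satisfy $\sum\mu_i=0$ because $0\notin\operatorname{aff}\Delta'$, hence the limit ratios lie in the image of $(\C^*)^n\to(\C^*)^J$---is a clean alternative, but the orthogonal-decomposition trick is shorter and eliminates the need for face iteration.
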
 

A similar statement for polynomial functions can be found 
in \cite[Proposition 2.2 and Remarque]{pham_GM} 
and this lemma may also have been well-known. 
We will include a proof in Appendix \ref{subsec:proof_PS}
since we do not know a good reference. 
Lemma \ref{lem:PScond} implies that $f_{q,z}$ 
satisfies the Palais-Smale condition, 
so that usual Morse theory applies to $f_{q,z}$ 
(see \emph{e.g.} \cite{palais}). 
Take $(q,z)\in \cMo\times \C^*$. 
Since  the set $\{y\in Y_q\; ;\; \|df_{q,z}(y)\|<\epsilon\}$ 
is compact, we can choose $M\ll 0$ so that 
this set is contained in $\{y\in Y_q\;;\;f_{q,z}(y)>M\}$. 
Then the relative homology group 
$H_n(Y_q, \{y\in Y_q\;;\; f_{q,z}(y)\le M\};\Z)$ 
is independent of the choice of such $M$ and 
we denote this by 
\begin{equation}
\label{eq:relative_hom}
R_{\Z, (q,z)}^\vee = 
H_n(Y_q, \{y\in Y_q\;;\; f_{q,z}(y) \ll 0\};\Z), \quad 
(q,z) \in \cMo \times \C^*.    
\end{equation} 
The number of critical points of $f_{q,z}(y)$ is 
$N:=|\bN_{\rm tor}|\times n!\Vol(\hat{S})$ 
by Proposition \ref{prop:kouchnirenko}. 
If all the critical points of $W_q(y)$ 
are non-degenerate, by the standard argument in Morse theory,  
we know that $Y_q$ is obtained from $\{f_{q,z}(y)\le M\}$ by attaching 
$N$ $n$-handles and so 
$R_{\Z,(q,z)}^\vee$ is a free abelian group of rank $N$. 
If $W_{q}(y)$ has a critical point $y_0$ 
of multiplicity $\mu_0>1$, one can find\footnote{
We can find $\tilde{f}_{q,z}$ in the following way: 
Let $\rho\colon \R_{\ge 0} \to [0,1]$ be a $C^\infty$-function 
such that $\rho(r)=1$ for $0\le r\le 1/2$ and $\rho(r)=0$ for $r\ge 1$. 
Let $U_0$ be an $\epsilon$-neighborhood of $y_0$ 
(in the above K\"{a}hler metric) 
which does not contain other critical points. 
Let $t=(t_1,\dots,t_n)$ be co-ordinates given by $y_i=y_{0,i} e^{t_i}$. 
For $a=(a_1,\dots,a_n)\in \C^n$, put 
$f^a_{q,z}(y) = f_{q,z}(y) +  \rho(|t|/\epsilon)\Re(at)$. 
Then for a generic, sufficiently small $a$, $\tilde{f}_{q,z}=f^a_{q,z}$ 
satisfies the conditions above (here,  
new critical points are all in $|t|<\epsilon/2$).} 
a small $C^\infty$-perturbation $\tilde{f}_{q,z}$ of 
$f_{q,z}$ on a small neighborhood $U_0$ of $y_0$ such that 
$\tilde{f}_{q,z}$ has just $\mu_0$ 
non-degenerate critical points in $U_0$ 
with Morse index $n$. 
By considering such a perturbation and 
Morse theory for $f_{q,z}$ in families (parametrized by $q$ and $z$), 
we obtain the following. 

\begin{proposition}
The relative homology groups 
$R^\vee_{\Z,(q,z)}$ in (\ref{eq:relative_hom}) 
form a local system of rank $|\bN_{\rm tor}|\times 
n! \Vol(\hat{S})$ over $\cMo\times \C^*$. 
\end{proposition}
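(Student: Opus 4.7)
The plan is to combine Morse theory for the improper real function $f_{q,z}$ with the Palais-Smale condition from Lemma~\ref{lem:PScond}. First, since $z\neq 0$ and $W_q$ is holomorphic, the critical points of $f_{q,z}=\Re(W_q/z)$ on $Y_q$ coincide with those of $W_q$, of which there are $N:=|\bN_{\rm tor}|\times n!\Vol(\hat S)$ counted with multiplicity by Proposition~\ref{prop:kouchnirenko}. Moreover, the holomorphic Morse lemma and a direct computation of the real Hessian of $\Re((z_1^2+\dots+z_n^2)/z)$ show that the quadratic form has signature $(n,n)$ at every non-degenerate critical point, so the Morse index of $f_{q,z}$ there is always $n$.

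Next, for the rank computation I would first assume that all critical points of $W_q$ are non-degenerate, which holds on a Zariski open subset of $\cMo$. Lemma~\ref{lem:PScond} guarantees the Palais-Smale condition, so the standard Morse-theoretic cell-attachment theorem applies: $Y_q$ is obtained from $\{f_{q,z}\le M\}$ (with $M$ below all critical values) by attaching exactly $N$ cells of dimension $n$. This yields $H_k(Y_q,\{f_{q,z}\le M\};\Z)=0$ for $k\neq n$ and $R^\vee_{\Z,(q,z)}\cong\Z^N$ for $k=n$. For a general $(q,z)\in\cMo\times\C^*$, I would apply the compactly supported smooth perturbation trick sketched in the text immediately before the proposition: replace each degenerate critical point of multiplicity $\mu_0$ by $\mu_0$ non-degenerate critical points confined to a small neighborhood that stays above $M$. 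Since the perturbation does not alter $\{f_{q,z}\le M\}$ nor the diffeomorphism type of $Y_q$, the relative homology is preserved and equals $\Z^N$.

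For the local system structure I would argue as follows. Given $(q_0,z_0)$, choose a compact neighborhood $K\subset\cMo\times\C^*$. By Lemma~\ref{lem:PScond} the subset $\bigcup_{(q,z)\in K}\{y\in Y_q:\|df_{q,z}(y)\|\le\epsilon\}$ is compact for a fixed $\epsilon>0$, so one can pick a uniform $M_K\ll 0$ with $\{f_{q,z}\le M_K\}\cap\{\|df_{q,z}\|\le\epsilon\}=\emptyset$ for all $(q,z)\in K$. Using that $\pr\colon Y\to\cM$ is a (trivializable) fibration of tori, horizontal lifts of the coordinate vector fields on $\cM\times\C^*$ give an initial trivialization of the total family of fibers; correcting these lifts on $\{f_{q,z}\le M_K+\delta\}$ by a suitable multiple of the gradient of $f_{q,z}$ produces a vector field that preserves the sublevel sets. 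This correction is well defined thanks to the uniform lower bound on $\|df_{q,z}\|$ in that region, and its flow identifies the pairs $(Y_q,\{f_{q,z}\le M_K\})$ for $(q,z)$ near $(q_0,z_0)$. Hence $\{R^\vee_{\Z,(q,z)}\}$ carries the structure of a local system of rank $N$ over $\cMo\times\C^*$.

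The hard part will be the local triviality step: the ambient fibers $Y_q$ vary with $q$ and $f_{q,z}$ is improper, so Ehresmann's fibration theorem does not apply directly. What saves the day is precisely Lemma~\ref{lem:PScond}, which provides the uniform lower bound on $\|df_{q,z}\|$ needed to define the correcting vector field globally on $\{f_{q,z}\le M_K\}$; without this bound the gradient flow could escape to infinity in finite time and the trivialization would fail.
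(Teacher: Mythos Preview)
Your proposal is correct and follows essentially the same approach as the paper: the rank computation via Morse theory with the Palais--Smale condition from Lemma~\ref{lem:PScond}, the perturbation trick for degenerate critical points, and the local triviality via Morse theory in families are exactly what the paper sketches in the paragraph preceding the proposition. Your treatment of the local system step is in fact more explicit than the paper's, which simply invokes ``Morse theory for $f_{q,z}$ in families (parametrized by $q$ and $z$)'' without spelling out the vector-field construction.
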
 

When all the critical points $\crit_1,\dots,\crit_N$ 
of $W_q\colon Y_q \to \C$ are non-degenerate, 
a basis of the local system $R^\vee_\Z$ is given by a set of 
{\it Lefschetz thimbles} $\Gamma_1,\dots, \Gamma_N$:  
the image of $\Gamma_i$ under $W_q/z$ is given by 
a curve $\gamma_i:[0,\infty) \to \C$ such that 
$\gamma(0)=W_q(\crit_i)/z$, that 
$\Re\gamma_i(t)$ decreases monotonically 
to $-\infty$ as $t\to \infty$ 
and that $\gamma_i$ does not pass through critical values 
other than $W_q(\crit_i)/z$;   
$\Gamma_i$ is the union of cycles in $W_q^{-1}(z\gamma_i(t))$ 
collapsing to $\crit_i$ along the path $\gamma_i(t)$ as $t\to 0$.  
When the imaginary parts 
$\Im(W_q(\crit_1)/z),\dots,\Im(W_q(\crit_N)/z)$ 
are mutually different, $\Gamma_i$ can be taken to be 
the union of downward gradient flowlines 
of $f_{q,z}(y)$ emanating from $\crit_i$. 
(Note that the gradient 
flow of $f_{q,z}=\Re(W_q/z)$ with respect to a K\"{a}hler metric 
coincides with the Hamiltonian flow  
generated by $\Im(W_q/z)$.) 
Then $\gamma_i$ becomes a half-line parallel to the real axis. 
The intersection pairing defines a unimodular pairing:
\begin{equation}
\label{eq:pairing_Rvee}
R_{\Z,(q,-z)}^\vee \times R_{\Z,(q,z)}^\vee \to\Z.  
\end{equation} 
Let $R_\Z \to \cMo \times \C^*$ be the local system dual to $R_\Z^\vee$ 
and $\cR = R_\Z \otimes \cO_{\cMo\times \C^*}$ be the associated 
locally free sheaf on $\cMo\times \C^*$. 
This inherits from the local system  
a Gau\ss-Manin connection $\hatnabla$ and a pairing 
$((-)^*\cR) \otimes \cR \to \cO_{\cMo\times \C^*}$. 

Let $\omega_1$ be the following holomorphic volume form 
on $Y_1 = \Hom(\bN,\C^*)$: 
\[
\omega_1= 
\frac{1}{|\bN_{\rm tor}|}
\frac{dy_1\cdots dy_n}{y_1\cdots y_n} \quad 
\text{on each connected component.}
\]
This is characterized as a unique translation-invariant 
holomorphic $n$-form $\omega_1$ satisfying 
$\int_{\Hom(\bN,S^1)} \omega_1 = (2\pi \iu)^n$. 
By translation, $\omega_1$ defines a holomorphic volume form 
$\omega_q$ on each fiber $Y_q$. 
Let $\pr\colon \Yo\to \cMo$ be the restriction of the family 
$\pr\colon Y\to \cM$ to $\cMo$. 
Consider a relative holomorphic $n$-form of 
$\Yo\times \C^* \to \cMo \times \C^*$ 
of the form 
\begin{equation}
\label{eq:relative_nform}
\varphi = f(q,z,y) e^{W_q(y)/z} \omega_q, \quad 
f(q,z,y) \in \cO_{\cMo \times \C^*}[y_1^\pm,\dots,y_n^\pm]   
\end{equation} 
where $\cO_{\cMo\times \C^*}$ is the analytic structure sheaf. 
This relative $n$-form 
gives a holomorphic section $[\varphi]$ of $\cR$ via the integration 
over Lefschetz thimbles: 
\begin{equation}
\label{eq:pairing_R_Rvee}
\pair{[\varphi]}{\Gamma} = \frac{1}{(-2\pi z)^{n/2}} \int_{\Gamma} 
f(q,z,y) e^{W_q(y)/z} \omega_q
\in \cO_{\cMo \times \C^*}  
\end{equation} 
The convergence of this integral is ensured by the fact that 
$f(q,z,y)$ has at most polynomial growth in $y$ 
and that $\Re(W_q(y)/z)$ goes to $-\infty$ at the end of $\Gamma$. 
More technically, as done in \cite{pham_GM}, 
one may prove the convergence of the integral 
by replacing the end of $\Gamma$ with a semi-algebraic chain.  

Let $\cR'$ be the $\cO_{\cMo \times \C^*}$-submodule of $\cR$ 
consisting of the sections which locally 
arise from relative $n$-forms $\varphi$ 
of the form (\ref{eq:relative_nform}). 
The Gau\ss-Manin connection on $\cR$ preserves the subsheaf $\cR'$. 
In fact, we have  
\begin{align*}
\hatnabla_{a}[\varphi] &= 
[(\partial_a f+ \frac{1}{z} (\partial_aW_q) f) 
e^{W_q/z} \omega_q],  \\
\hatnabla_{z\partial_z}[\varphi] &= 
[(z\partial_z f - \frac{1}{z} W_q f -\frac{n}{2}f ) 
e^{W_q/z} \omega_q], 
\end{align*} 
where $\varphi$ is given in (\ref{eq:relative_nform}) 
and $\partial_a = q_a (\partial/\partial q_a)$. 
Take a generic $q\in \cMo$ such that all the critical points 
of $W_q(y)$ are non-degenerate. 
Let $\Gamma_1,\dots,\Gamma_N$ be Lefschetz thimbles of $W_q(y)/z$ 
corresponding to critical points $\crit_1,\dots,\crit_N$.  
Then we have the following asymptotic expansion 
as $z \to 0$ with $\arg(z)$ fixed: 
\begin{equation}
\label{eq:asymptotic_exp_in_z} 
\frac{1}{(-2\pi z)^{n/2}} 
\int_{\Gamma_i} f(q,z,y) e^{W_q(y)/z} \omega_q 
\sim \frac{1}{|\bN_{\rm tor}|} 
\frac{e^{W_q(\crit_i)/z} }{\sqrt{\Hess(W_q)(\crit_i)}} 
(f(q,0,\crit_i) +O(z)) 
\end{equation} 
where $f(q,z,y) \in \cO_{\cMo\times \C}[y_1^\pm,\dots,y_n^\pm]$ 
is regular at $z=0$ and 
$\Hess(W_q)$ is the Hessian of $W_q$ calculated in 
co-ordinates $\log y_1,\dots,\log y_n$. 
Let $J(W_q)$ be the Jacobi ring of $W_q$: 
\[
J(W_q):= \C[Y_q]\Big/ \langle 
\parfrac{W_q}{y_1},\dots, \parfrac{W_q}{y_n} \rangle,   
\]
where $\C[Y_q]$ is the co-ordinate ring of $Y_q$. 
Let $\phi_i(y)\in \C[Y_q]$ be a function 
which represents a delta-function supported on 
$\crit_i$ in the Jacobi ring $J(W_q)$. 
Put $\varphi_i = \phi_i(y) e^{W_q/z} \omega_q$.  
By the asymptotics of $\pair{[\varphi_i]}{\Gamma_j}$,  
we know that $[\varphi_1],\dots,[\varphi_N]$ form a basis of $\cR$ for 
sufficiently small $|z|>0$. 
Since $\cR'$ is preserved by the Gau\ss-Manin connection, 
we have $\cR=\cR'$ on the whole $\cMo\times \C^*$. 
In other words, $\cR$ is generated by 
relative $n$-forms of the form (\ref{eq:relative_nform}). 
Let $\Gamma_1^\vee,\dots,\Gamma_N^\vee$ be 
Lefschetz thimbles of $W_q/(-z)$    
dual to $\Gamma_1,\dots,\Gamma_N$ with respect to 
the intersection pairing (\ref{eq:pairing_Rvee}).  
Then the pairing on $\cR$ can be written as   
\begin{equation}
\label{eq:B-model_pairing}
([\varphi(-z)],[\varphi'(z)])_{\cR} 
= \frac{1}{(2\pi \iu z )^n}\sum_{i=1}^N  
\int_{\Gamma_i^\vee} \varphi(-z)  \cdot \int_{\Gamma_i} 
\varphi'(z).  
\end{equation} 
We define an extension $\cRz$ of $\cR$ to $\cMo \times \C$ 
as follows: a section of $\cR$ on an open set 
$U\times \{0<|z|<\epsilon\}$ 
is defined to be extendible to $z=0$ if it is the image of 
a relative $n$-form $s$ of the form (\ref{eq:relative_nform}) such that 
$f(q,z,y)$ in (\ref{eq:relative_nform}) is regular at $z=0$. 
When $[\varphi]$ and $[\varphi']$ are extendible to $z=0$, 
we have from (\ref{eq:B-model_pairing}) 
and (\ref{eq:asymptotic_exp_in_z}) 
\[
([\varphi],[\varphi'])_{\cR} 
\sim \frac{1}{|\bN_{\rm tor}|^2}
\sum_{i=1}^N \frac{f(q,0,\crit_i) f'(q,0,\crit_i)}{\Hess W_q(\crit_i)} 
+ O(z) 
\]
where we put $\varphi = f(q,z,y) e^{W_q(y)/z} \omega_q$ and 
$\varphi'=f'(q,z,y) e^{W_q(y)/z} \omega_q$. 
This shows that $([\varphi],[\varphi'])_{\cR}$ is 
regular at $z=0$ and the value at $z=0$ equals the residue 
pairing on $J(W_q)$.  
By continuity, we have at all $q\in \cMo$: 
\[
([\varphi],[\varphi'])_{\cR}|_{z=0} = \frac{1}{|\bN_{\rm tor}|^2}
\Res_{\Yo/\cMo} \left [ 
\frac{f(q,0,y) f'(q,0,y) \frac{dy_1\cdots dy_n}{y_1\cdots y_n}}
{y_1\parfrac{W_q}{y_1},\dots, y_n\parfrac{W_q}{y_n}} 
\right].  
\]
Let $\phi'_1,\dots,\phi'_N$ be an arbitrary basis 
of the Jacobi ring and 
put $s_i:=[\phi'_i(y) e^{W_q(y)/z}\omega_q]$. 
Then the Gram matrix $(s_i,s_j)_{\cR}$ is non-degenerate 
in a neighborhood of $z=0$ 
since the residue pairing is non-degenerate. 
This implies that $s_1,\dots,s_N$ 
form a local basis of $\cRz$ around $z=0$. 
Summarizing, 
\begin{proposition}[{\cite[Lemma 2.21]{CIT:I}}]
The $\cO_{\cMo\times \C^*}$-module $\cR$ 
is generated by relative $n$-forms 
of the form (\ref{eq:relative_nform}). The extension $\cRz$ of $\cR$ 
to $\cMo\times \C$ is locally free and the pairing on 
$\cR$ extends to a non-degenerate pairing 
$((-)^*\cRz) \otimes \cRz \to \cO_{\cMo\times \C}$.   
\end{proposition}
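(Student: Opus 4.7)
The strategy is to construct local bases of $\cR$ and $\cRz$ by choosing relative $n$-forms that diagonalize the Gau\ss-Manin connection in the semiclassical limit $z \to 0$, and then propagate these bases using the fact that $\cR'$ (the subsheaf generated by forms of the shape (\ref{eq:relative_nform})) is preserved by $\hatnabla$.

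First I would fix a generic $q_0 \in \cMo$ at which all critical points $\crit_1,\dots,\crit_N$ of $W_{q_0}$ are non-degenerate, and pick functions $\phi_i \in \C[Y_{q_0}]$ representing the $N$ delta-function idempotents of the Jacobi ring $J(W_{q_0})$. Extending each $\phi_i$ to a Laurent polynomial depending holomorphically on $q$ in a neighborhood of $q_0$, form $\varphi_i = \phi_i(q,y) e^{W_q(y)/z} \omega_q$ and consider the pairings $\pair{[\varphi_i]}{\Gamma_j}$ with a collection of Lefschetz thimbles $\Gamma_1,\dots,\Gamma_N$ dual to the critical points. The asymptotic expansion (\ref{eq:asymptotic_exp_in_z}) shows that the matrix $(\pair{[\varphi_i]}{\Gamma_j})$ is (after factoring out the exponential factors) non-degenerate to leading order in $z$, so $[\varphi_1],\dots,[\varphi_N]$ form a local basis of $\cR$ near $(q_0,0)$ with $z \neq 0$.

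Next I would propagate this local generation to all of $\cMo\times \C^*$. Since $\cR$ is a rank-$N$ locally free $\cO_{\cMo\times \C^*}$-module and $\cR'\subset \cR$ contains a local basis near $(q_0,z)$ for $|z|$ small, $\cR' = \cR$ on some open subset. The explicit formulas
\[
\hatnabla_a [f e^{W_q/z} \omega_q] = [(\partial_a f + z^{-1}(\partial_a W_q) f) e^{W_q/z} \omega_q], \quad
\hatnabla_{z\partial_z}[f e^{W_q/z}\omega_q] = [(z\partial_z f - z^{-1} W_q f - \tfrac{n}{2} f) e^{W_q/z} \omega_q]
\]
show $\cR'$ is stable under $\hatnabla$. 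As $\hatnabla$ has no singularities on $\cMo\times \C^*$ and $\cMo\times \C^*$ is connected, flat parallel transport spreads the local basis of $\cR'$ everywhere; hence $\cR' = \cR$ globally.

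For the extension $\cRz$ and non-degeneracy of the pairing across $z=0$, I would use the same forms $\varphi_i$ (now viewed as sections defined near $z=0$). From the leading-order asymptotics (\ref{eq:asymptotic_exp_in_z}) and formula (\ref{eq:B-model_pairing}), the pairing matrix $([\varphi_i],[\varphi_j])_{\cR}$ extends continuously to $z=0$ with value the residue pairing of the idempotents $\phi_i,\phi_j$ in $J(W_{q_0})$. For a general basis $\phi'_1,\dots,\phi'_N$ of $J(W_q)$ the Gram matrix at $z=0$ is the full residue pairing, which is non-degenerate because $W_q$ is convenient and non-degenerate at infinity (Proposition \ref{prop:kouchnirenko}). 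Therefore the $s_i = [\phi'_i(y) e^{W_q/z}\omega_q]$ form a local $\cO_{\cMo\times \C}$-basis of $\cRz$ in a neighborhood of any $(q,0)$, $\cRz$ is locally free of rank $N$, and the pairing extends to a non-degenerate pairing on $\cRz$ as required.

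The main technical obstacle I anticipate is justifying that the asymptotic expansion (\ref{eq:asymptotic_exp_in_z}) is genuinely uniform enough in $q$ and in the choice of $\Gamma_i$ to upgrade pointwise non-degeneracy into a local-basis statement for $\cRz$; this needs the non-degeneracy of critical points at $q_0$ (handled by the convenience and non-degeneracy at infinity of $W_{q_0}$ via Proposition \ref{prop:kouchnirenko}) together with an argument at degenerate $q$ which proceeds by continuity of the residue pairing on the family of Jacobi rings $\{J(W_q)\}_{q\in \cMo}$.
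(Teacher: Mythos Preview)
Your proposal is correct and follows essentially the same approach as the paper: choose idempotent-like classes at a generic Morse point, use the asymptotic expansion (\ref{eq:asymptotic_exp_in_z}) to obtain a local basis of $\cR$ for small $|z|$, propagate via $\hatnabla$-invariance of $\cR'$ to get $\cR'=\cR$ globally, and then identify the value of the pairing at $z=0$ with the residue pairing to deduce local freeness of $\cRz$ and non-degeneracy. The paper handles the degenerate-$q$ issue you flag exactly as you suggest, by continuity of the residue pairing in $q$.
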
 
Sabbah \cite{sabbah-hypergeometric} gave 
a different construction of $\cRz$ 
based on the algebraic Gau\ss-Manin system 
and the Fourier-Laplace transformation. 
The corresponding results were shown  
in \cite[Corollary 10.2]{sabbah-hypergeometric}.

We introduce the \emph{Euler vector field} $E$  
on $\cMo$ by 
\[
E := \pr_*\left(\sum_{i=1}^m w_i\parfrac{}{w_i}\right)  
= \sum_{a=1}^r \rho_a q_a \parfrac{}{q_a}, \quad 
\rho_a = \sum_{i=1}^m m_{ia}. 
\]
The grading operator $\Grading$ 
acting on sections of $\cRz$ is defined by 
\begin{align}
\label{eq:B-model_grading}
\Grading[\varphi] 
&= 2\left[\left(z\parfrac{f}{z} + \sum_{i=1}^m w_i \parfrac{f}{w_i}\right) 
e^{W/z} \omega \right]  
\end{align} 
for a section $[\varphi]$ of the form (\ref{eq:relative_nform}). 
This grading operator can be written in terms of 
the Gau\ss-Manin connection and the Euler vector field 
(\emph{c.f.} (\ref{eq:hatnabla})):  
\begin{lemma} 
$\Grading = 2(\hatnabla_E + \hatnabla_{z\partial_z} + \frac{n}{2} )$.
\end{lemma}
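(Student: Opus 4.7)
The plan is to compute $\hatnabla_E[\varphi]$ using the lift $\tilde E := \sum_{i=1}^m w_i \partial_{w_i}$ of $E$ from the base to the total space $Y$. One first checks that $\pr_* \tilde E = E$: this follows from $q_a = \prod_i w_i^{m_{ia}}$ together with $\rho_a = \sum_i m_{ia}$, since $\tilde E(\log q_a) = \sum_i m_{ia} = \rho_a$. The key observation is that the Gauss-Manin connection $\hatnabla_E$ is independent of the choice of lift when applied to classes in $\cRz$: for any two lifts of $E$, their difference is a fiber-tangent vector field $X$, and since $f e^{W/z} \omega$ is a relative $n$-form (top fiber degree), Cartan's formula reduces to $L_X(f e^{W/z}\omega) = d_{\rm rel}\bigl(\iota_X(f e^{W/z}\omega)\bigr)$, which is relative-exact and hence represents zero in $\cRz$. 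Thus the lift used in the definition of $\hatnabla_a$ (the ``horizontal'' lift $q_a\partial_{q_a}|_y$) may be replaced by $\tilde E$ without altering the output.

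The advantage of $\tilde E$ is twofold. First, since $W = w_1+\cdots+w_m$ is linear in the $w_i$, we have $\tilde E(W) = W$, which dramatically simplifies the Euler action on the potential. Second, since $\tilde E(\log w_i) = 1$ for each $i$, the translation-invariant form $\bigwedge_{i=1}^m d\log w_i$ on $Y$ is preserved by $\tilde E$, and consequently so is the associated relative volume form $\omega$; in particular $L_{\tilde E}\omega = 0$. Combining these, the Lie-derivative formula yields
\[
\hatnabla_E [\varphi] \;=\; \bigl[L_{\tilde E}(f e^{W/z}\omega)\bigr] \;=\; \Bigl[\Bigl(\sum_{i=1}^m w_i\partial_{w_i} f + \tfrac{W}{z}\, f\Bigr) e^{W/z}\omega\Bigr].
\]

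The proof will then conclude by a purely algebraic cancellation. Multiplying by $2$ and combining with $2\hatnabla_{z\partial_z}[\varphi] = 2\bigl[(z\partial_z f - (W/z) f - (n/2) f)\, e^{W/z}\omega\bigr]$, the $\pm W/z$ contributions cancel exactly and the $-n f$ term is absorbed by adding $n[\varphi]$, giving
\[
2\hatnabla_E[\varphi] + 2\hatnabla_{z\partial_z}[\varphi] + n[\varphi] \;=\; 2\Bigl[\Bigl(z\partial_z f + \sum_{i=1}^m w_i\partial_{w_i} f\Bigr) e^{W/z}\omega\Bigr],
\]
which is precisely $\Grading[\varphi]$ by (\ref{eq:B-model_grading}). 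The main substantive point — the only place where work is really hidden — is the lift-independence of $\hatnabla_E$ modulo relative-exact forms; once this is granted, the computation is immediate because $\tilde E$ is an Euler field for both the potential $W$ and the volume form $\omega$ simultaneously.
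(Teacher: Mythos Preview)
Your proof is correct and follows essentially the same approach as the paper's. Both arguments use the total-space Euler field $\tilde E = \sum_i w_i\partial_{w_i}$ and exploit $\tilde E(W) = W$; the paper writes the explicit decomposition $\tilde E = E + \sum_i c_i y_i\partial_{y_i}$ in the $(q,y)$-coordinates and shows the fiber-tangent piece contributes an exact form, while you package the same observation as the lift-independence of the Gauss--Manin connection modulo relative-exact forms. The content is identical.
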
 
\begin{proof}
By the multi-valued splitting of the fibration 
(\ref{eq:fibration_LG}) appearing in the beginning 
of this section, we can regard $E$ as a vector field on $Y$. 
Using the co-ordinate system $(q_a, y_i)$ associated to this splitting, 
we write $\sum_{i=1}^m w_i \parfrac{}{w_i} 
= E + \sum_{i=1}^n c_i y_i\parfrac{}{y_i}$ for some $c_i\in \Q$. 
Because $(\sum_{i=1}^m w_i \parfrac{}{w_i})W = W$, 
we have 
\begin{align*} 
\frac{1}{2} \Grading[\varphi] &= 
\left[\left(\left(z\partial_z + 
\textstyle\sum_{i=1}^m w_i \partial_{w_i}\right) 
(f e^{W/z})\right) \omega\right] \\ 
& = \left(
\hatnabla_{z\partial_z} + \frac{n}{2} + \hatnabla_E \right)[\varphi] 
+ \left[\left(\left(\textstyle\sum_{i=1}^n 
c_i y_i \partial_{y_i}\right) 
(f e^{W/z})\right) \omega \right]. 
\end{align*} 
The second term is zero in cohomology since it is exact. 
\end{proof} 

\begin{definition}
Let $\pi\colon \cMo\times \C \to \cMo$ be the projection. 
The \emph{B-model \seminf VHS of the Landau-Ginzburg model} 
is a locally free $\pi_*\cO_{\cMo\times \C}$-module $\cF:=\pi_*\cRz$ 
endowed with a flat connection   
$\nabla\colon \cF \to z^{-1} \cF\otimes \Omega^1_{\cM}$ 
induced from the Gau\ss-Manin connection $\hatnabla$,  
a pairing $(\cdot,\cdot)_{\cF}\colon 
\cF\times \cF \to \pi_*\cO_{\cM\times \C}$ 
in (\ref{eq:B-model_pairing}) 
induced from the intersection pairing (\ref{eq:pairing_Rvee}) 
and a grading operator $\Grading\colon \cF\to \cF$ 
in (\ref{eq:B-model_grading}).  
The B-model \seminf VHS has a natural integral structure given by 
the local system $R_\Z$ of relative cohomology groups 
on $\cMo\times \C^*$.   
\end{definition} 

\subsection{Mirror symmetry for toric orbifolds} 
\label{subsec:mirrorsym_toric} 
We explain a version of mirror symmetry conjecture 
for weak Fano toric orbifolds, 
which we assume in the rest of the paper. 
This has been proved for weak Fano toric manifolds 
\cite{givental-mirrorthm-toric} and 
weighted projective spaces \cite{CCLT:wp}. 
A general case for toric orbifolds will be proved in 
\cite{CCIT:toric}. 

Mirror symmetry roughly states that 
the B-model \seminf VHS in the previous section is isomorphic to 
the A-model \seminf VHS restricted to a certain subspace (basically 
$H^{\le 2}_{\rm orb}(\cX)$) of $H^*_{\rm orb}(\cX)$ 
under a suitable identification of the base space. 
This isomorphism is given by a so called {\it $I$-function}  
which is a cohomology-valued function on an open domain of 
the base space $\cMo$ of the B-model \seminf VHS. 
\begin{definition}[\cite{CCIT:toric}] 
\label{def:I-funct} 
The $I$-function 
for a projective toric orbifold $\cX$ is 
a cohomology-valued power series on $\cM$ defined by 
\[
I(q,z) = e^{\sum_{a=1}^{r} \ov{p}_a \log q_a/z} 
\sum_{d \in \K_{\rm eff}} q^d 
\frac{\prod_{i; \pair{D_i}{d}<0} \prod_{\pair{D_i}{d}\le \nu <0} 
(\ov{D}_i + (\pair{D_i}{d}-\nu) z) }
{\prod_{i; \pair{D_i}{d}>0} \prod_{0\le \nu< \pair{D_i}{d}}
(\ov{D}_i + (\pair{D_i}{d}-\nu) z) }
\unit_{v(d)} 
\]
where $q^d = q_1^{\pair{p_1}{d}}\dots q_r^{\pair{p_r}{d}}$ 
for $d\in \bL\otimes \Q$, the index $\nu$ moves in $\Z$. 
Recall that $\ov{p}_a$ and $\ov{D}_j$ are images of 
$p_a$ and $D_j$ under the projection 
$\bL^\vee\otimes \Q \to H^2(\cX,\Q)$.   
Note that $\ov{p}_a = 0$ for $a>r'$, 
$\ov{D}_j =0$ for $j>m'$ and 
$\pair{p_a}{d}\ge 0$ for $d\in \K_{\rm eff}$. 
\end{definition} 
The following lemma is easy to check.  
\begin{lemma}
\label{lem:convergence} 
The $I$-function is a convergent power series in $q_1,\dots,q_r$ 
if and only if 
$\hrho$ is in the closure $\cl(\tilde{C}_\cX)$ 
of the extended K\"{a}hler cone.  
In this case, the $I$-function has the asymptotics
\[
I(q,z) = 1 + \frac{\tau(q)}{z} + o(z^{-1})
\]
where $\tau(q)$ is the multi-valued function 
taking values in $H^{\le 2}_{\rm orb}(\cX)$.  
\end{lemma}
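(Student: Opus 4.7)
My plan is to split the lemma into its two claims---convergence, and the leading $1/z$ asymptotic---and handle them separately, reducing the first to a Stirling estimate combined with a dual-cone identity and the second to a direct $z$-degree count. For convergence, I will rewrite $I(q,z) = e^{\sum_a \ov{p}_a \log q_a / z}\sum_{d\in\K_{\rm eff}} q^d A_d(z)\,\unit_{v(d)}$, where $A_d(z)$ is the rational function of $z$ appearing as the big fraction in the definition, and stratify the sum by the box $v\in\Boxop$. For each chosen lift $d_v\in\K$ of $v$, contributions from sector $v$ are parametrized by $\ell\in\bL$ with $d_v+\ell\in\K_{\rm eff}$, and their asymptotic directions fill the cone $\mathbb{E}:=\bigcup_{I\in\cA}\{\ell\in\bL\otimes\R:\pair{D_i}{\ell}\ge 0\text{ for all }i\in I\}$. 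Applying Stirling to the nearly-factorial products hidden in $A_{d_v+n\ell_0}(z)$ along a ray $\ell=n\ell_0$, $n\to\infty$, I obtain an estimate
\[
\bigl|A_{d_v+n\ell_0}(z)\bigr|\sim |z|^{-n\pair{\hrho}{\ell_0}}\cdot n^{-n\pair{\hrho}{\ell_0}}\cdot c(\ell_0)^n\cdot\mathrm{poly}(n),
\]
so that absolute summability on a polydisc near $q=0$ along every such ray is equivalent to $\pair{\hrho}{\ell_0}\ge 0$ for all $\ell_0\in\mathbb{E}$. By the elementary dual-cone identity $\mathbb{E}^\vee=\bigcap_{I\in\cA}\sum_{i\in I}\R_{\ge 0}D_i=\cl(\tilde{C}_\cX)$, this is exactly $\hrho\in\cl(\tilde{C}_\cX)$; if instead $\hrho\notin\cl(\tilde{C}_\cX)$, some $\ell_0\in\mathbb{E}$ has $\pair{\hrho}{\ell_0}<0$ and the super-exponential blow-up $n^{n|\pair{\hrho}{\ell_0}|}$ prevents convergence at any nonzero $q$.

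For the asymptotic, writing $\pair{D_i}{d}=\lfloor\pair{D_i}{d}\rfloor+\{\pair{D_i}{d}\}$ and noting that each factor $\ov{D}_i+(\pair{D_i}{d}-\nu)z$ has $z$-degree $1$ except when $\pair{D_i}{d}\in\Z_{<0}$ and $\nu=\pair{D_i}{d}$ (where it degenerates to the cohomology class $\ov{D}_i$), I will derive the clean identity
\[
\deg_z A_d = -\pair{\hrho}{d}-\iota_{v(d)}-\#\{i:\pair{D_i}{d}\in\Z_{<0}\}.
\]
Under $\hrho\in\cl(\tilde{C}_\cX)$ each summand on the right is non-negative on $\K_{\rm eff}$, and simultaneous vanishing forces $v(d)=0$ (so all $\pair{D_i}{d}\in\Z$), $\pair{D_i}{d}\ge 0$ for every $i$, and $\sum_i\pair{D_i}{d}=0$, whence $\pair{D_i}{d}=0$ for every $i$ and $d=0$ by the exact sequence (\ref{eq:exactsequence_toric}). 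Therefore $A_d(z)=O(z^{-1})$ for every nonzero $d\in\K_{\rm eff}$; combined with the prefactor expansion $e^{\sum_a\ov{p}_a\log q_a/z}=1+(\sum_a\ov{p}_a\log q_a)/z+O(z^{-2})$ this yields $I(q,z)=1+\tau(q)/z+o(z^{-1})$ with
\[
\tau(q)=\sum_a\ov{p}_a\log q_a+\sum_{\substack{d\ne 0 \\ \deg_z A_d=-1}}q^d\,[z^{-1}]A_d(z)\,\unit_{v(d)}.
\]
To see that $\tau(q)\in H^{\le 2}_{\rm orb}(\cX)$, I will note that the leading $z^{-1}$ coefficient of $A_d$ picks up exactly one factor $\ov{D}_i$ for each $i$ with $\pair{D_i}{d}\in\Z_{<0}$ (from the $\nu=\pair{D_i}{d}$ term in the numerator), hence lies in $H^{2\#\{i:\pair{D_i}{d}\in\Z_{<0}\}}(\cX_{v(d)})$; combined with $\unit_{v(d)}\in H^{2\iota_{v(d)}}_{\rm orb}$ and the constraint $\pair{\hrho}{d}+\iota_{v(d)}+\#\{i:\pair{D_i}{d}\in\Z_{<0}\}=1$ forced by $\deg_z A_d=-1$ and $\pair{\hrho}{d}\ge 0$, the total orbifold degree is at most $2$.

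The hard part will be the uniform Stirling estimate: one must control the ratio of factorials in $A_{d_v+\ell}(z)$ not merely along each ray individually but uniformly enough to conclude that the tail sum defines an analytic function on a fixed polydisc, rather than only that each monomial coefficient is finite. Once this estimate is in hand, the dual-cone identity $\mathbb{E}^\vee=\cl(\tilde{C}_\cX)$ and the explicit $\deg_z A_d$ formula reduce the rest to elementary bookkeeping using the exact sequence (\ref{eq:exactsequence_toric}) and the age formula (\ref{eq:ageofX_v}).
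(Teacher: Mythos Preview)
The paper omits the proof entirely, asserting only that the lemma is ``easy to check.'' Your plan is correct and fills in exactly the details one would expect: the Stirling estimate along rays $d = d_v + n\ell_0$ gives the $n^{-n\pair{\hrho}{\ell_0}}$ behaviour that makes $\hrho\in\mathbb{E}^\vee$ the sharp convergence criterion, and the dual-cone identity $\mathbb{E}^\vee=\cl(\tilde{C}_\cX)$ follows from the definition $\tilde{C}_\cX=\bigcap_{I\in\cA}\sum_{i\in I}\R_{>0}D_i$ by standard Farkas duality. Your $z$-degree computation is likewise the right mechanism for the asymptotic; it is worth noting that the paper later rewrites $I$ via Gamma functions as
\[
I(q,z)=e^{\sum_a\ov{p}_a\log q_a/z}\sum_{d\in\K_{\rm eff}}\frac{q^d}{z^{\pair{\hrho}{d}+\iota_{v(d)}}}\prod_{i=1}^m\frac{\Gamma(1-\{-\pair{D_i}{d}\}+\ov{D}_i/z)}{\Gamma(1+\pair{D_i}{d}+\ov{D}_i/z)}\,\unit_{v(d)},
\]
from which both the Stirling estimate and the $z$-degree formula can be read off more cleanly.

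One small caveat: your identity $\deg_z A_d=-\pair{\hrho}{d}-\iota_{v(d)}-\#\{i:\pair{D_i}{d}\in\Z_{<0}\}$ is really an \emph{upper bound}, since the leading coefficient $\prod_{\pair{D_i}{d}\in\Z_{<0}}\ov{D}_i$ may vanish in $H^*(\cX_{v(d)})$. This does not affect the argument---you only need $\deg_z A_d\le -1$ for $d\neq 0$, and when the bound equals $-1$ the leading coefficient (vanishing or not) is what you identify as the contribution to $\tau(q)$. Your cohomological degree count then shows each such contribution lies in $H^{\le 2}_{\rm orb}(\cX)$, and the cross term $P/z\cdot\sum_{d\neq 0}q^dA_d\unit_{v(d)}$ contributes nothing to $z^{-1}$ since $[z^0]A_d=0$ for $d\neq 0$.
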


To state mirror symmetry conjecture,  
we need to assume that 
$\hrho\in \cl(\tilde{C}_\cX)$. 
See Section \ref{subsubsec:weakFano} for this condition. 
The $I$-function is multi-valued  
and the fundamental group $\pi_1(\cM) \cong\bL^\vee$ 
acts on it by monodromy transformations. 
Take a loop $t\mapsto e^{-2\pi\iu \xi t} q 
= (e^{-2\pi\iu\xi_1 t} q_1, \dots, e^{-2\pi\iu\xi_r t} q_r)$ 
for $\xi = \sum_{a=1}^r \xi_a p_a\in \bL^\vee$.  
The monodromy of $I(q,z)$ along this loop is given by  
\[
I(e^{-2\pi\iu \xi}q,z) = G^\cH([\xi]) I(q,z)
\] 
where $G^\cH([\xi])$ is the Galois action (\ref{eq:GaloisH}) 
corresponding to $[\xi] \in \bL^\vee/\sum_{j>m'} \Z D_j 
\cong H^2(\cX,\Z)$.   
Therefore, we have 
\[
\tau(e^{-2\pi\iu\xi}q) = G([\xi]) \tau(q)  
\]
where $\tau(q)$ is a function in Lemma \ref{lem:convergence} and 
$G([\xi])$ is given in (\ref{eq:Galois}).  
This shows that $\tau(q)$ induces a single-valued map 
\[
\tau \colon 
\text{(neighborhood of $q=0$ in $\cM$)} 
\longrightarrow H^{\le 2}_{\rm orb}(\cX)/H^2(\cX,\Z).   
\]
Let $\cF_{\rm B}\to \cMo$ be the B-model \seminf VHS associated with 
Landau-Ginzburg model mirror to $\cX$ and 
$\cF_{\rm A} = (\tcF_{\rm A} \to U)/H^2(\cX,\Z)$ 
be the A-model \seminf VHS of $\cX$, where $U$ is a suitable open domain 
in $H^*_{\rm orb}(\cX)$ (see Section \ref{sec:A-model}). 
\begin{conjecture} 
\label{conj:toricmirror} 
Assume that our toric data satisfy $\hrho\in \cl(\tilde{C}_\cX)$. 
There exists an isomorphism of graded \seminf VHS 
$\Mir \colon \cF_{\rm B} \cong \tau^* \cF_{\rm A}$. 
This isomorphism sends the section 
$[e^{W_q/z}\omega_q]$ of $\cF_{\rm B}$ 
to the $I$-function $I(q,z)\in \cH^\cX$, i.e. 
\[
\cJ_{\tau(q)} (\Mir[e^{W_q/z} \omega_q])  = I(q,z)  
\]
where $\cJ_{\tau(q)} \colon \tcF_{{\rm A},\tau(q)} \to \cH^\cX$ 
is the embedding (\ref{eq:fundamentalsol_A}) given by the 
fundamental solution.    
\end{conjecture}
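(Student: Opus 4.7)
The plan is to establish the isomorphism in two stages: first, exhibit a distinguished generating section on each side, show that both satisfy a common system of toric hypergeometric (GKZ-type) differential equations in $q$ and $z\partial_z$, and match them directly; second, promote this match to an isomorphism of the full \seminf VHS using the flat connections.

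For the first stage, on the B-side I would derive the GKZ system satisfied by the oscillatory integrals $\pair{[e^{W_q/z}\omega_q]}{\Gamma}$. The key relations are the box equations
\[
\prod_{\pair{D_i}{d}>0}(z\partial_i)^{\pair{D_i}{d}} \;-\; q^d \prod_{\pair{D_i}{d}<0}(z\partial_i)^{-\pair{D_i}{d}}\;=\;0, \quad d\in \bL,
\]
which come from integration by parts together with the relation $q_a = \prod_i w_i^{m_{ia}}$ defining the fibration (\ref{eq:fibration_LG_formula}), and an Euler-type equation from the homogeneity of $W$. A parallel direct calculation on the A-side, manipulating the sum over $d\in \K_{\rm eff}$ in Definition \ref{def:I-funct} by shifting $d$ by $\bL$-translates and using that $\unit_{v(d)}$ depends only on $d$ modulo $\bL$, shows that $I(q,z)$ is annihilated by the same operators; this is essentially Givental's original argument for toric manifolds, with the orbifold refinement that the twisted sector units $\unit_{v(d)}$ appear precisely with the age shift (\ref{eq:ageofX_v}) needed to make the hypergeometric series solve the same system. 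The hypothesis $\hrho\in \cl(\tilde{C}_\cX)$ enters through Lemma \ref{lem:convergence}, making $I(q,z)$ a genuine analytic section with the normalization $I(q,z) = 1 + \tau(q)/z + o(z^{-1})$.

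To extend a single-section match to an isomorphism of \seminf VHS's, I would invoke the Givental--Coates Lagrangian cone formalism: the A-model Lagrangian cone $\cL_\cX \subset \cH^\cX$ is swept out by the images $\F_{\tau(q)} = \cJ_{\tau(q)}(\tcF_{{\rm A},\tau(q)})$, and once one knows that $zI(q,z)$ lies on $\cL_\cX$ (the content of the mirror theorem), its first derivatives $z\partial_a(zI(q,z))$ span $\F_{\tau(q)}$ at each point, so that the correspondence $[e^{W_q/z}\omega_q]\mapsto I(q,z)$ extends uniquely to a flat isomorphism $\Mir$ by applying powers of $\hatnabla$. Compatibility with the remaining structures is then verified separately: the grading $\Grading$ on the B-side was built in (\ref{eq:B-model_grading}) to match $2(z\partial_z + E + n/2)$, the Euler field matches by construction of $\tau$, and the pairing compatibility reduces via stationary-phase asymptotics (\ref{eq:asymptotic_exp_in_z}) to checking that the residue pairing at $z=0$ on the B-side agrees with the orbifold Poincar\'{e} pairing on the A-side.

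The hard part is the orbifold mirror theorem itself, i.e.\ showing that $zI(q,z)$ actually lies on the Givental cone $\cL_\cX$. In the manifold case this is Givental's theorem, proved via torus-equivariant localization on the moduli of stable maps together with uniqueness of solutions to the common GKZ system; for weighted projective spaces the analogous argument was carried out in \cite{CCLT:wp}. For a general toric orbifold one must either work with orbifold quasimap/graph spaces, carefully bookkeeping the contributions localized on components corresponding to twisted sectors $\cX_{v(d)}$, or produce a direct recursive argument showing $zI\in \cL_\cX$; this is precisely the geometric substance of the forthcoming \cite{CCIT:toric}. The formal steps outlined above are comparatively routine once this input is granted, but isolating the twisted-sector localization contributions and matching them with the explicit hypergeometric factors in Definition \ref{def:I-funct} is the genuine obstacle.
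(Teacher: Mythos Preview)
The statement you are attempting to prove is stated in the paper as a \emph{conjecture}, not a theorem; the paper does not contain a proof. Immediately before Conjecture~\ref{conj:toricmirror} the author writes that this mirror symmetry statement ``has been proved for weak Fano toric manifolds \cite{givental-mirrorthm-toric} and weighted projective spaces \cite{CCLT:wp}. A general case for toric orbifolds will be proved in \cite{CCIT:toric},'' and the conjecture is then \emph{assumed} as a hypothesis in the subsequent results (Theorems~\ref{thm:pulledbackintstr} and \ref{thm:connection_cI_H}). There is therefore no proof in the paper to compare your proposal against.

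That said, your outline is a reasonable sketch of the strategy one expects for such a result, and you correctly isolate the genuine difficulty: the assertion that $zI(q,z)$ lies on the Givental Lagrangian cone $\cL_\cX$ is the orbifold mirror theorem itself, and this is exactly the content deferred to \cite{CCIT:toric}. The formal steps you describe (GKZ annihilation of both $I$ and the oscillatory integrals, extension to a \seminf VHS isomorphism via the flat connection, matching of gradings and pairings) are indeed the routine part once that input is available. But as written your proposal is not a proof of the conjecture so much as a reduction to it, which you yourself acknowledge in the final paragraph.
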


\begin{remark} 
\label{rem:aftermirrorconj} 
(i) We have $\rank \cF_{\rm A}=\dim H^*_{\rm orb}(\cX)$ 
and $\rank \cF_{\rm B}=|\bN_{\rm tor}| \times n! \Vol(\hat{S})$.  
These two numbers match if and only if $\hrho\in \cl(\tilde{C}_\cX)$. 
 
(ii) The map $\tau$ takes the form 
\[
\tau(q) = \sum_{a=1}^{r'} (\log q_a) p_a +
 \sum_{j=m'+1}^m q^{D^\vee_j} \frD_j 
+ \text{ higher terms}.    
\]
Thus $\tau$ is a local embedding (isomorphism) 
near $q=0$ 
if $p_1,\dots,p_{r'},\frD_{m'+1},\dots,\frD_m$ 
are linearly independent (resp. basis of $H^{\le 2}_{\rm orb}(\cX)$).   
See (\ref{eq:frD}) for $\frD_j$.  

(iii) Because of the asymptotic of the $I$-function 
in Lemma \ref{lem:convergence}, the conjecture 
further implies that $\Mir[e^{W_q/z} \omega_q]$ 
is the unit section $\unit$ of the A-model \seminf VHS 
$\cF_{\rm A}$. 
This, however, fails to hold for non-weak-Fano case 
\cite{iritani-genmir}.  
\end{remark} 
As in the previous section, we denote by $\cR$ 
the locally free $\cO_{\cMo\times \C^*}$-module 
associated to the local system $R$ of the Landau-Ginzburg model.   
We regard $[e^{W_q/z}\omega_q]$ as a section of $\cR$. 
Via the identification (\ref{eq:solutionmap_z}) 
of $\cV^\cX = H^*_{\rm orb}(\cX)$ 
with the quantum cohomology local system, 
the mirror map $\Mir$ induces an isomorphism
\begin{equation}
\label{eq:mirror_isom}
\Mir_{(q,z)} \colon \cR_{(q,z)} \rightarrow \cV^\cX \quad \text{s.t. }
\Mir_{(q,z)}([e^{W_q/z} \omega_q]) = z^{-\rho} z^\mu I(q,z),  
\end{equation} 
at $(q,z)\in (\cMo\times \C^*)\sptilde$. 
We will compute integral linear co-ordinates on $\cV^\cX$ 
corresponding to Lefschetz thimbles $\Gamma_k \in R^\vee_{\Z}$
through this map $\Mir_{(q,z)}$.

Now we can state the main theorem. 

\begin{theorem}
\label{thm:pulledbackintstr} 
Let $\cX$ be a weak Fano toric orbifold defined by 
initial data satisfying $\hrho\in \cl(\tilde{C}_\cX)$. 
Assume that Conjecture \ref{conj:toricmirror} and (A3) in Section 
\ref{subsec:Amodel_intstr} hold for $\cX$. 
Then via the mirror isomorphism, the integral structure in 
Landau-Ginzburg B-model induces 
the $\hGamma$-integral structure of $\cX$ 
in Definition-Proposition \ref{def-prop:A-model_int}.  
\end{theorem}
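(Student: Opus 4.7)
The plan is to identify a distinguished Lefschetz thimble whose image under $\Mir_{(q,z)}$ matches $\Psi([\cO_\cX])$, then use the Galois-equivariance of both sides to spread this match over a set that generates $K(\cX)$ and finally verify compatibility of pairings. The key computational input is an asymptotic evaluation of an oscillatory integral that produces the $\hGamma$-class as a ratio of $\Gamma$-functions — precisely through the expansion $\Gamma(1+x) = 1 - \gamma x + \cdots$ of the gamma function, matched with the mirror $I$-function.

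First, I would construct the \emph{positive real Lefschetz thimble} $\Gamma_0 \subset Y_q$, defined near the large radius limit $|q_a|\ll 1$ as the connected component of $\{y \in Y_q : \arg y_i = 0,\ y_i > 0\}$ inside the identity component of $Y_q$. Since $W_q(y) = \sum q^{\ell_i} y^{b_i}$ has a unique critical point on this positive real locus (by convexity, for small positive $q^{\ell_i}$), and $\Re(W_q/z) \to -\infty$ at the boundary for $z>0$ small enough, $\Gamma_0$ defines a class in $R_{\Z,(q,z)}^\vee$ near the large radius limit. The central claim to verify is the asymptotic formula
\[
\cJ_{\tau(q)}\bigl(\Mir[e^{W_q/z}\omega_q]\bigr) = I(q,z), \quad
\Mir_{(q,z)}(\Gamma_0^\vee) \;\longleftrightarrow\; \Psi([\cO_\cX]) = \tfrac{1}{(2\pi)^{n/2}}\hGamma_\cX,
\]
where $\Gamma_0^\vee$ denotes the class in $\cR_{(q,z)}$ paired unimodularly with $\Gamma_0$.

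Next, I would carry out the oscillatory integral $(-2\pi z)^{-n/2}\int_{\Gamma_0} e^{W_q/z}\omega_q$ by rescaling $y_i \mapsto y_i z^{\alpha_i}$ for appropriate $\alpha_i$ and recognizing the result as a Mellin–Barnes integral. After changing variables to the exponents and computing contour integrals, the resulting expansion in $q$ produces a hypergeometric series whose coefficients are products of gamma functions. Expanding each $\Gamma(1 + \ov{D}_i/z + \text{integer shift})$ as a series in $\ov{D}_i/z$, and comparing with Definition \ref{def:I-funct}, the series matches the $I$-function times the prefactor $\prod_{i=1}^m \Gamma(1 + \ov{D}_i/z) = z^{-\rho/z}\hGamma(T\cX)|_{z\to z}$ (after renormalizing by $z^{\mu}z^{-\rho}$ and applying the conventions (\ref{eq:solutionmap_z})). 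This step identifies $\Mir_{(q,z)}(\Gamma_0^\vee) = \Psi([\cO_\cX])$.

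The remaining steps: (i) Apply the Galois action of $H^2(\cX,\Z)$. The monodromy of $\Gamma_0^\vee$ along the loop $q \mapsto e^{-2\pi\iu \xi} q$ in $\cMo$ corresponds on the A-side to $G^\cV([\xi])\Psi([\cO_\cX]) = \Psi([L_{-\xi}])$ by part (ii) of Definition-Proposition \ref{def-prop:A-model_int}, so all line bundle classes $[L_\xi]$ are realized. (ii) Observe that the $\Z$-span of $\{[L_\xi] : \xi \in H^2(\cX,\Z)\}$, or more generally of monodromy translates of $\Gamma_0$, generates $K(\cX)$; this follows for toric orbifolds from a Koszul/Beilinson-type resolution combined with assumption (A3), which ensures unimodularity of the induced pairing. (iii) Verify that the B-model pairing (\ref{eq:B-model_pairing}) between thimbles agrees with the Mukai pairing on $K(\cX)$; this reduces via (iii) of Definition-Proposition \ref{def-prop:A-model_int} to the fact that both pairings are compatible with the flat structure and coincide on $\Gamma_0^\vee \otimes (\text{Galois translates})$, a computation essentially controlled by the same gamma-function identity $\Gamma(1-z)\Gamma(z) = \pi/\sin(\pi z)$ already used in proving part (iii) of that Definition-Proposition.

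The main obstacle will be the explicit evaluation of the oscillatory integral over $\Gamma_0$ and its identification with $\hGamma_\cX \cup I(q,z)$ (up to the normalization $z^{-\mu}z^{\rho}$). This is where the $\hGamma$-class genuinely appears: one must track, on the twisted sectors $\cX_{v(d)}$, how residues coming from non-integer exponents $\{-\pair{D_i}{d}\}$ combine with the gamma-function factors to reproduce the age-shifted structure of $\tch$ and $\hGamma_\cX$. I expect this to be a delicate but essentially combinatorial Mellin–Barnes calculation; the extension of the match from $\Gamma_0$ to all of $R_\Z^\vee$ via monodromy and flatness is then formal.
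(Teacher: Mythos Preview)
Your overall strategy matches the paper's: identify the positive real thimble $\Gamma_0$ with $\cO_\cX$, spread via the Galois action to all line bundles, invoke that line bundles generate $K(\cX)$ for toric orbifolds (the paper cites Borisov--Horja \cite{borisov-horja-K} rather than a Koszul argument), and conclude equality of lattices from unimodularity on both sides plus (A3). Your step (iii) is unnecessary: pairing compatibility is already built into Conjecture~\ref{conj:toricmirror} (an isomorphism of graded \seminf VHS preserves pairings) combined with part (iii) of Definition-Proposition~\ref{def-prop:A-model_int}.

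The substantive difference is in the oscillatory integral computation. You propose a direct Mellin--Barnes evaluation; the paper instead passes through Givental's \emph{equivariant} perturbation $W^\lambda = W + \sum_i \lambda_i \log w_i$ and an equivariant $H$-function $H^\lambda$, shows that both $\cI^\lambda_{\Gamma_0}$ and the components $H^\lambda_{\sigma,v}$ satisfy the same GKZ-type differential system, and determines the connecting coefficients $c_{\sigma,v}(\lambda)$ by matching asymptotics as $q\searrow 0$ at each torus fixed point $(\sigma,v)\in I\cX^T$, using a moment-map argument to isolate which fixed point dominates for a given $\lambda$. Localization then identifies $\sum c_{\sigma,v} H^\lambda_{\sigma,v}$ with $\int_{I\cX} H^\lambda \Td^\lambda_\cX$, and the non-equivariant limit gives $\pair{[e^{W_q/z}\omega_q]}{\Gamma_0} = (\tch^{-1}H(q,z),\cO_\cX)_{K(\cX)_\C}$. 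The equivariant detour buys a clean separation of the $N$ components of the $H$-function, which is exactly what handles the twisted-sector contributions you flag as delicate; a bare Mellin--Barnes approach would have to resolve the coincident poles from different sectors by hand. Minor correction: on $\Gamma_0 = (\R_{>0})^n$ the potential $W_q$ is positive, so convergence of $\int e^{W_q/z}\omega_q$ requires $\Re(z)<0$, not $z>0$.
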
 
The next section is devoted to the proof of this theorem. 
\begin{remark}
Since the $\hGamma$-integral structure 
is defined to be the image of the $K$-group, 
we can identify 
the integral lattice $R_{\Z,(q,z)}^\vee$ 
generated by Lefschetz thimbles 
with (the dual of) the $K$-group $K(\cX)$ by 
Theorem \ref{thm:pulledbackintstr}. 
This correspondence also identifies 
the intersection numbers 
of vanishing cycles  
with the Mukai pairing on the $K$-group. 
On the other hand, by mirror symmetry again, 
the Stokes matrices of the quantum differential equations 
at the irregular singular point $z=0$ are known to 
arise as the intersection numbers of vanishing cycles. 
In particular, there exist 
$V_1,\dots,V_N\in K(\cX)$ 
(which correspond to a set of Lefschetz thimbles) 
such that the Stokes matrix is 
given by a matrix of the Mukai pairing 
$(V_i,V_j)_{K(\cX)}$. 
Dubrovin's conjecture \cite{dubrovin-analyticth} 
says that $V_1,\dots,V_N$ here 
should come from an exceptional collection in the derived category. 
This should follow from homological mirror symmetry 
for toric orbifolds. 
Several versions of homological mirror symmetry for toric 
manifolds were proved by Abouzaid \cite{abouzaid}, 
Fang-Liu-Treumann-Zaslow \cite{FLTZ}. 
\end{remark} 

\subsection{Oscillatory integrals}
The proof of Theorem \ref{thm:pulledbackintstr} 
is based on a calculation of oscillatory integrals  
$\int_{\Gamma_0} e^{W_q/z} \omega_q$ for some special 
Lefschetz thimble $\Gamma_0$. 
We will make use of Givental's \emph{equivariant mirror}  
which gives a perturbation of oscillatory integrals. 
This is considered to be the mirror of 
the equivariant quantum cohomology of toric orbifolds,  
although we do not give a precise formulation of 
equivariant mirror symmetry. 

\subsubsection{Equivariant mirror} 
Let $T:=(\C^*)^m$ act on our toric orbifold $\cX=\C^m/\!/\T$ 
via the diagonal action of $(\C^*)^m$ on $\C^m$. 
Let $-\lambda_1,\dots,-\lambda_m$ be the equivariant variables 
corresponding to generators of $H^*_{T}(\pt)$. 
We will regard $\lambda_i$ either as a cohomology class or 
as a complex number depending on the context. 
Givental's equivariant mirror \cite{givental-mirrorthm-toric} 
is given by the following perturbed potential $W^{\lambda}$: 
\[
W^{\lambda} := \sum_{i=1}^m (w_i + \lambda_i \log w_i) 
= W+ \sum_{i=1}^m \lambda_i \log w_i  
\] 
Here $\lambda_i$ denotes a complex number. 
This is a multi-valued function on each fiber $Y_q$. 
Morse theory for $\Re(W^\lambda(y)/z)$ will compute 
relative homology with coefficients in some local system. 
For a cycle $\Gamma\subset Y_q$ in such a relative homology, 
we can define the \emph{equivariant oscillatory integral}: 
\[
\int_{\Gamma} e^{W^{\lambda}/z} \omega_q 
= \int_{\Gamma} e^{W/z} \prod_{i=1}^m w_i^{\lambda_i/z} \omega_q.  
\]
For our purpose, it is more convenient to 
use the exponent $\lambda_i/(2\pi\iu)$ instead of  $\lambda_i/z$.  
Define 
\begin{equation}
\label{eq:equiv_osc_int}
\cI^\lambda_\Gamma(q,z) := 
\frac{1}{(-2\pi z)^{n/2}} 
\int_{\Gamma} e^{\frac{w_1+\dots+w_m}{z}} 
\prod_{i=1}^m w_i^{\frac{\lambda_i}{2\pi\iu}} \omega_q. 
\end{equation} 
Consider the fibration formed by real points on 
(\ref{eq:fibration_LG}):  
\[
\begin{CD}
\unit @>>> \Hom(\bN,\R_{>0}) @>>> Y_\R := \R_{>0}^m 
@>{\pr|_{Y_\R}}>> \cM_\R :=\Hom(\bL,\R_{>0}) @>>> \unit. 
\end{CD}
\]
Here we regard $\R_{>0}$ as an abelian group 
with respect to the multiplication.  
Note that this exact sequence splits and that 
the section given by the matrix $(\ell_{ia})$ 
in Section \ref{subsec:LGmodel} is single-valued when restricted  
to this real locus. 
Take a point $q\in \cMo_\R=\cM_\R\cap \cMo$. 
Let $\Gamma_0:=Y_q\cap Y_\R \cong \Hom(\bN,\R_{>0})$ 
be a cycle formed by real points in the fiber $Y_q$. 
In co-ordinates $y_1,\dots,y_n$ in Section \ref{subsec:LGmodel}, 
we have $\Gamma_0 = \{(y_1,\dots,y_n)\in Y_q\;;\; y_i\in \R_{>0}\}$. 
Then the integral $\cI^\lambda_{\Gamma_0}(q,z)$ is well-defined  
when $q\in \cMo_\R$ and $\Re(z)<0$.

\subsubsection{$H$-function}
It is convenient to introduce another cohomology-valued 
hypergeometric function $H(q,z)$, which is related to 
$I(q,z)$ by a $q$-independent linear transformation. 
This has been used by Horja \cite{horja1},  
Hosono \cite{hosono} and Borisov-Horja \cite{borisov-horja-FM}
in the context of homological mirror symmetry. 
Using Gamma functions, we can write 
\[
I(q,z) = e^{\sum_{a=1}^{r} \ov{p}_a \log q_a/z} 
\sum_{d\in \K_{\rm eff}} 
\frac{q^d}{z^{\pair{\hrho}{d}}} 
\prod_{i=1}^m 
\frac{\Gamma( 1- \{-\pair{D_i}{d}\} + \ov{D}_i/z)}
     {\Gamma( 1+ \pair{D_i}{d}+ \ov{D}_i/z) }
     \frac{\unit_{v(d)}}{z^{\iota_{v(d)}}}. 
\]
Via the identification 
$z^{-\rho}z^\mu \colon \sfH^\cX_z \cong \cV^\cX$ 
in (\ref{eq:solutionmap_z}),  
the $I$-function gives a $\cV^\cX$-valued function 
\[
z^{-\rho} z^\mu I(q,z) = z^{-n/2} \sum_{d\in \K_{\rm eff}}
x^{\ov{p}+d} 
\prod_{i=1}^m 
\frac{\Gamma( 1- \{-\pair{D_i}{d}\} + \ov{D}_i)}
     {\Gamma( 1+ \pair{D_i}{d}+ \ov{D}_i) } 
\unit_{v(d)} 
\]
where we used the notation: 
\[
x^{\ov{p}+d} := e^{\sum_{a=1}^{r} \ov{p}_a \log x_a} 
\prod_{a=1}^r x_a^{\pair{p_a}{d}}, \quad 
x_a := \frac{q_a}{z^{\rho_a}}.   
\]  
We can decompose the map $\Psi\colon K(\cX) \to \cV^\cX$ in 
(\ref{eq:stdintstr_A}) as 
\[
\begin{CD}
\Psi \colon K(\cX) @>{\tch}>> H^*(I\cX) 
@>{(2\pi)^{-n/2} \hGamma_\cX (2\pi\iu)^{\deg/2}\inv^*}>> \cV^\cX. 
\end{CD} 
\]
The $H$-function is defined to be a function 
which takes values in the middle vector space $H^*(I\cX)$ 
and corresponds to $z^{-\rho}z^{\mu} I(q,z)$ via the second map above: 
\begin{align*}
H(q,z) &:= (2\pi)^{\frac{n}{2}} \inv^* (2\pi\iu)^{-\frac{\deg}{2}} 
\hGamma_\cX^{-1} ( z^{-\rho}z^\mu I(q,z)) \\ 
&= (2\pi)^{\frac{n}{2}} z^{-\frac{n}{2}} \sum_{d\in \K_{\rm eff}}
x^{\frac{\ov{p}}{2\pi\iu}+d} 
\frac{\unit_{\inv(v(d))}} 
{\prod_{i=1}^m\Gamma( 1+ \pair{D_i}{d}+ \frac{\ov{D}_i}{2\pi \iu}) }.  
\end{align*} 
Here we used that $\hGamma_{\cX}^{-1}$ cancels exactly with the 
numerator in the formula of $z^{-\rho}z^\mu I(q,z)$.  

The $I$-function and the $H$-function admit 
equivariant generalizations. 
An element $\xi\in \bL^\vee$ defines a 
$T$-equivariant orbifold line bundle $L_\xi$ on $\cX$:  
\[
L_\xi = \cU_\eta \times \C\Big/
(z_1,\dots,z_m,c) 
\sim (t^{D_1}z_1,\dots,t^{D_m}z_m,t^\xi c), \ t\in \T  
\]
where $T=(\C^*)^m$ acts on $L_\xi$ by the diagonal action 
on the first factor and the trivial action on the second factor. 
By taking the equivariant first Chern class, 
we can identify $\xi\in \bL^\vee$ with $c_1^{T}(L_\xi)$ of 
$H^2_T(\cX)$. 
By abuse of notation, 
we denote by $\ov{p}_1,\dots,\ov{p}_r\in H_T^2(\cX)$ 
the \emph{$T$-equivariant} cohomology classes 
corresponding to $p_1,\dots,p_r\in \bL^\vee$. 
Note that $\ov{p}_{r'+1},\dots,\ov{p}_r$ are 
non-zero only in equivariant cohomology. 
Similarly, we denote by $\ov{D}_i\in H_T^2(\cX)$ 
the $T$-equivariant Poincar\'{e} dual of the toric divisor $\{z_i=0\}$. 
Note that $\ov{D}_j=0$ for $j>m'$ even in equivariant cohomology 
(since $\{z_j=0\}$ is empty). 
Then we have (\emph{c.f.} Equation (\ref{eq:Dprel})) 
\begin{equation}
\label{eq:Dprel_equiv}
\ov{D}_i = \sum_{a=1}^r m_{ia} \ov{p}_a - \lambda_i  \quad 
\text{in $H_T^2(\cX)$.}  
\end{equation} 
The equivariant $I$-function is defined by the same formula 
in Definition \ref{def:I-funct} with all the appearance of 
$\ov{p}_a$ and $\ov{D}_j$ replaced by their equivariant counterparts. 
The equivariant $H$-function 
$H^\lambda(q,z)$ is a slight modification 
of $H(q,z)$: 
\begin{equation}
\label{eq:equiv_H}
H^{\lambda}(q,z) := (2\pi)^{\frac{n}{2}}  
z^{-\frac{n}{2}+\frac{\lambda_1+\dots+\lambda_m}{2\pi\iu}} 
\sum_{d\in \K_{\rm eff}} x^{\frac{\ov{p}}{2\pi\iu}+d} 
\frac{\unit_{\inv(v(d))}}
{\prod_{i=1}^m \Gamma(1+ \pair{D_i}{d}+ \frac{\ov{D}_i}{2\pi\iu})}, 
\end{equation} 
where $\ov{p}_a, \ov{D}_i \in H_T^2(\cX)$. 
The equivariant $I$- and $H$-functions take values in 
$H_{{\rm orb},T}^*(\cX)$ and $H^*_{T}(I\cX)$ respectively 
(here $H_{{\rm orb},T}^*(\cX)$ is $H_{T}^*(I\cX)$ 
with a different grading). 

\subsubsection{Oscillatory integral and $H$-function}
We will show the following: 
\begin{theorem}
\label{thm:connection_cI_H} 
Assume that $\hrho\in \cl(\tilde{C}_\cX)$.  
The equivariant oscillatory integral (\ref{eq:equiv_osc_int}) 
and the equivariant $H$-function (\ref{eq:equiv_H}) are related by
\begin{equation}
\label{eq:connection_cI_H}
\cI_{\Gamma_0}^\lambda (q,z) = 
\int_{I\cX} 
H^\lambda (q,z) \Td^\lambda_\cX, \quad 
q\in \cMo_\R, \ z<0. 
\end{equation} 
where $\Td^\lambda_\cX$ is the $T$-equivariant Todd class. 
The branch of the $H$-function is chosen so that 
$\Im \log z=\pi, \Im \log q_a =0$.  
In the non-equivariant limit, we have 
\begin{equation}
\label{eq:Gamma_0_strsheaf}
\pair{[e^{W_q/z}\omega_q]}{\Gamma_0} 
= (\tch^{-1}H(q,z),\cO_\cX)_{K(\cX)_\C}, 
\end{equation} 
where the left-hand side is the pairing in (\ref{eq:pairing_R_Rvee}) 
and the right-hand side is the complexified Mukai pairing. 
Therefore, under the mirror isomorphism (\ref{eq:mirror_isom}) 
and $\Psi^{-1}\colon \cV^\cX \cong K(\cX)\otimes \C$, 
the real thimble $\Gamma_0\in \cR^\vee_\Z$ corresponds to 
the linear form $(\cdot, \cO_\cX)_{K(\cX)_\C}$ on $K(\cX)$. 
\end{theorem}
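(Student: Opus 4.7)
The plan is to compute both sides of (\ref{eq:connection_cI_H}) as power series in $q$ via Mellin--Barnes techniques and to match them term-by-term. On the LHS, I would replace each factor $e^{w_i/z}$ in the oscillatory integral (\ref{eq:equiv_osc_int}) by its Mellin--Barnes representation
\[
e^{w/z} = \frac{1}{2\pi\iu}\int_{C}\Gamma(s)(-w/z)^{-s}\,ds,
\]
valid for $\Re(-w/z)>0$, which is guaranteed on $\Gamma_0$ when $z<0$ and $q\in\cMo_\R$. Writing $w_i = q^{\ell_i}y^{b_i}$ in fiber coordinates $(y_1,\dots,y_n)\in\R_{>0}^n$ and using $y_j = e^{t_j}$, the integration over $\Gamma_0$ produces a Fourier-type delta constraint $\sum_{i=1}^m (s_i - \lambda_i/(2\pi\iu))b_i = 0$ in $\bN\otimes\C$. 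By the exact sequence (\ref{eq:exactsequence_toric}), this is equivalent to $s_i = \pair{D_i}{d} + \lambda_i/(2\pi\iu)$ for some $d\in\bL\otimes\C$, reducing the $m$-fold contour integral to an $r$-fold integral
\[
\cI_{\Gamma_0}^\lambda(q,z) = \frac{C}{(2\pi\iu)^r}\int_{c+\iu\bL_\R}\prod_{i=1}^m\Gamma\!\bigl(\pair{D_i}{d}+\tfrac{\lambda_i}{2\pi\iu}\bigr)\bigl(-q^{\ell_i}/z\bigr)^{-\pair{D_i}{d}-\lambda_i/(2\pi\iu)}\,dd,
\]
for an explicit constant $C$ depending on $|\bN_{\rm tor}|$ and powers of $2\pi$.

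Next, I would close the contour in a half-space of $\bL_\R$ chosen so that $|q^d|$ decays (using that $-\log|q|\in\tC_\cX$) and sum the multidimensional residues of $\prod_i\Gamma(\pair{D_i}{d}+\lambda_i/(2\pi\iu))$. A residue is supported at a point $d_\star$ where $r$ of the Gamma arguments hit poles $-\nu_i\in\Z_{\le 0}$ for $i$ in some maximal anticone $I\in\cA$, contributing $\prod_{i\in I}(-1)^{\nu_i}/\nu_i!$ together with the remaining Gamma values $\prod_{i\notin I}\Gamma(\pair{D_i}{d_\star}+\lambda_i/(2\pi\iu))$. Reindexing the residue sum by $d\in\K_{\rm eff}$ (whose image in $\K/\bL\cong\Boxop$ determines the inertia component $\cX_{v(d)}$ via the fractional parts $\{-\pair{D_i}{d}\}$, see (\ref{eq:ageofX_v})) yields an expansion in $q^d$ whose convergence under $\hrho\in\cl(\tC_\cX)$ is parallel to that of the $H$-function in Lemma \ref{lem:convergence}.

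Finally, I would match this residue expansion with the RHS $\int_{I\cX}H^\lambda(q,z)\cup\Td_\cX^\lambda$ via $T$-equivariant localization. The $T$-fixed points of $I\cX$ correspond, with appropriate multiplicities coming from twisted sectors $\cX_v$ with $v\in\Boxop$, to maximum cones of $\Sigma$, hence bijectively to maximal anticones $I\in\cA$. Localization produces one fixed-point contribution per anticone $I$; the equivariant Todd class $\Td_\cX^\lambda$ contributes factors $\prod_{i\notin I}(\ov{D}_i/(2\pi\iu))/(1-e^{-2\pi\iu f_i}e^{-\ov{D}_i/(2\pi\iu)})$ which, via the reflection identity $\Gamma(s)\Gamma(1-s)=\pi/\sin(\pi s)$ combined with the shift $\Gamma(1+s) = s\Gamma(s)$, convert into exactly the Gamma factors $\prod_{i\in I}\Gamma(\pair{D_i}{d}+\lambda_i/(2\pi\iu))$ of the residue expansion, while the denominator $\prod_i\Gamma(1+\pair{D_i}{d}+\ov{D}_i/(2\pi\iu))$ in $H^\lambda$ supplies the complementary reciprocal Gammas. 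The identification proves (\ref{eq:connection_cI_H}). The non-equivariant specialization (\ref{eq:Gamma_0_strsheaf}) then follows on setting $\lambda=0$ and applying orbifold Riemann--Roch (\ref{eq:orbifoldRR}): $\int_{I\cX}\tch(V)\cup\Td_\cX = \chi(V) = (V,\cO_\cX)_{K(\cX)}$ with $V=\tch^{-1}H(q,z)$.

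The main technical obstacle is justifying the contour manipulations --- absolute convergence of the Mellin--Barnes integral, the Fubini interchange between fiber and contour integration, and the residue-closure argument --- since some Gamma functions degenerate at non-positive integers in the non-equivariant limit. The equivariant parameters $\lambda_i$ are essential as a regularization, and the final identity in equivariant form is specialized to $\lambda=0$ by analytic continuation. A secondary bookkeeping challenge is the combinatorial matching between multidimensional residue loci and inertia components $\cX_{v(d)}$, which relies precisely on the duality between anticones in $\cA$ and cones in $\Sigma$ established by the stacky-fan construction.
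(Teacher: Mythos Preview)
Your Mellin--Barnes/residue approach is a genuinely different route from the paper's. The paper instead first records (in a lemma whose proof is deferred) that both $\cI_{\Gamma_0}^\lambda$ and the fixed-point components $H^\lambda_{\sigma,v}$ of the $H$-function satisfy the same GKZ-type differential system, with the $H^\lambda_{\sigma,v}$ forming a basis of solutions for generic $\lambda$; this yields $\cI_{\Gamma_0}^\lambda = \sum_{(\sigma,v)} c_{\sigma,v}(\lambda)\, H^\lambda_{\sigma,v}$ with undetermined coefficients. It then sets $z=-1$ and pins down each $c_{\sigma,v}$ by matching asymptotics as $q\searrow 0$: for a chosen fixed point $\sigma$ it picks $\lambda$ (via a moment-map/Hamiltonian argument) so that the $\sigma$-terms dominate, expands the oscillatory integral directly in fiber coordinates adapted to $\sigma$ (Lemma~\ref{lem:expansion_oscint}), and reads off $c_{\sigma,v} = V(\sigma)^{-1}\Td^\lambda_\cX|_{(\sigma,v)}/e_T(T_\sigma\cX_v)$. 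Localization then reassembles the sum into $\int_{I\cX}H^\lambda\,\Td^\lambda_\cX$. Your approach trades the differential-equation black box for an explicit integral computation, closer in spirit to Borisov--Horja.

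The one place your sketch needs repair is the ``Fourier-type delta constraint'': the fiber integral $\int_{\R_{>0}^n}\prod_i w_i^{-s_i+\lambda_i/(2\pi\iu)}\,\omega_q$ diverges for every $(s_1,\dots,s_m)$ on the Mellin--Barnes contours (which force $\Re(s_i)>0$), so the Fubini interchange you invoke is not literally valid and no genuine delta function is produced. The standard fix is not to introduce all $m$ Mellin--Barnes integrals simultaneously, but to choose a maximal anticone $I\in\cA$, use $\{w_j:j\notin I\}$ as fiber coordinates, apply Mellin--Barnes only to the $r$ factors $e^{w_i/z}$ with $i\in I$, and evaluate the remaining $n$-fold integral $\prod_{j\notin I}\int_0^\infty e^{-w_j}w_j^{\alpha_j}\,dw_j/w_j$ directly as a product of Gamma functions---which is precisely the paper's Lemma~\ref{lem:expansion_oscint} in disguise. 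With that adjustment your residue closure and reflection-formula matching with $\Td^\lambda_\cX$ should go through, and the non-equivariant specialization via orbifold Riemann--Roch is the same as the paper's.
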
 

\begin{remark}
(i) Even if $\hrho\notin \cl(\tilde{C}_\cX)$, the left-hand side of 
(\ref{eq:connection_cI_H}) 
makes sense as an analytic function in $q$ and $z$. 
In this case, the right-hand side could be understood as 
the asymptotic expansion in $q_1,\dots,q_r$ 
of the left-hand side in the limit $q_a \searrow +0$.  


(ii) The relation (\ref{eq:connection_cI_H}) gives a connection 
between solutions to ordinary differential equations in $z$. 
Both hand sides satisfy the same differential equations in $z$ 
(see below). 
The oscillatory integral admits an asymptotic expansion 
in $z$ and the $H$-function is by definition a power series in $z^{-1}$ 
(when $\hrho\in \cl(\tilde{C}_\cX)$). 

(iii) This theorem suggests that, under homological mirror symmetry, 
the thimble $\Gamma_0$, 
an object of Fukaya-Seidel category of the Landau-Ginzburg model, 
should correspond to the structure sheaf $\cO_\cX$, an object of 
the derived category of coherent sheaves on $\cX$.  
This correspondence is consistent with the SYZ picture. 
The Lefschetz thimble $\Gamma_0$ 
gives a Lagrangian section of the SYZ fibration, 
so should correspond to the structure sheaf. 
\end{remark}

By the localization theorem \cite{atiyah-bott} 
in equivariant cohomology, 
the inclusion $i\colon I\cX^T \rightarrow I\cX$ induces an isomorphism 
$i^* \colon H^*_T(I\cX)\otimes_{H^*_T(\pt)} \C(\lambda)
\to H^*(I\cX^T)\otimes \C(\lambda)$, 
where $I\cX^T$ is the set of $T$-fixed points in $I\cX$ 
and $\C(\lambda)$ is the fraction ring of 
$H^*_T(\pt)=\C[\lambda_1,\dots,\lambda_m]$. 
For the case of toric orbifolds, the number of fixed points in $I\cX$ 
is equal to $N=\dim H^*_{\rm orb}(\cX)$. 
A $T$-fixed point in $I\cX$ is labeled by a pair $(\sigma, v)$ 
of a fixed point $\sigma \in \cX^T$ and $v\in \Boxop$ 
such that $\sigma \in \cX_v$. 
Note that a fixed point $\sigma\in \cX$ is in one-to-one 
correspondence with a top dimensional cone of the fan $\Sigma$ 
spanned by $\{b_i\;;\; \sigma\in \{z_i=0\}\}$. 
By restricting $H^\lambda(q,z)$ to a fixed point $(\sigma,v)$, 
we get a function $H^\lambda_{\sigma,v}(q,z)$ 
in $q$, $z$ and $\lambda$. 
We call it a \emph{component} of the $H$-function.

\begin{lemma}
The equivariant $H$-function $H^\lambda(q,z)$ and 
the oscillatory integral $\cI^\lambda_{\Gamma_0}(q,z)$ are solutions to 
the following GKZ-type differential equations: 
\begin{align*}
\left[ 
\prod_{\pair{D_i}{d}>0} \prod_{\nu =0}^{\pair{D_i}{d}-1} 
(\hD^\lambda_i - \nu z ) 
- q^d \prod_{\pair{D_i}{d}<0} \prod_{\nu=0}^{-\pair{D_i}{d}-1}
(\hD^\lambda_i - \nu z) 
\right] f &=0, \quad d\in \bL, \\
\left (z\partial_z + \sum_{a=1}^r \rho_a \partial_a -
\frac{\lambda_1+\cdots+\lambda_m}{2\pi\iu} + \frac{n}{2}\right) 
f &=0,  
\end{align*}
where $\hD^\lambda_i= 
z \sum_{a=1}^r m_{ia}\partial_a - z \lambda_i/(2\pi\iu)$ and 
$\partial_a = q_a (\partial/\partial q_a)$. 
Note that $\pair{D_i}{d}\in\Z$ for $d\in \bL$. 
The $N$ components $H^\lambda_{\sigma,v}(q,z)$ of the $H$-function 
form a basis of solutions to these differential equations 
for a generic $\lambda$. 
\end{lemma}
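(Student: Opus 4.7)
The plan is to verify each of the two families of differential equations separately for the two solutions, and then establish the basis statement by dimension counting combined with equivariant localization.

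\textbf{Step 1 (oscillatory integral).} The key reduction is to show that $\hD_i^\lambda$ acts on $\cI_{\Gamma_0}^\lambda$ as insertion of $w_i$ into the integrand. Working in the multi-valued parametrization $w_i = q^{\ell_i} y^{b_i}$ of (\ref{eq:fibration_LG_formula}) and computing $z\sum_a m_{ia}\partial_a$ on $e^{W/z}\prod_j w_j^{\lambda_j/(2\pi\iu)}$, one finds a discrepancy from the naive $w_i$-insertion that is controlled by the matrix $\pi := I - m\ell^T$. Since $\ell^T m = I_r$ (from the splitting of (\ref{eq:exactsequence_toric}) over $\Q$), $\pi$ is idempotent and satisfies $\pi m = 0$, so the vector field $v^{(i)} := \sum_j \pi_{ij} w_j \partial_{w_j}$ is tangent to each fiber $Y_q$. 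Using translation-invariance of $\omega_q$ on the fiber and integration by parts along $\Gamma_0$ (whose decay at infinity is controlled by $\Re(W/z)\to -\infty$), the $v^{(i)}$-correction cancels the remaining $\lambda_i$-contribution, yielding $\hD_i^\lambda \cI^\lambda = \cI^\lambda[w_i]$, where $[w_i]$ denotes insertion. Iterating, using $\cI^\lambda[w_i] = \cI^{\lambda + 2\pi\iu e_i}$ together with $\hD_i^{\lambda+2\pi\iu e_i} = \hD_i^\lambda - z$, one obtains $\prod_{\nu=0}^{k-1}(\hD_i^\lambda - \nu z)\cI^\lambda = \cI^\lambda[w_i^k]$; the GKZ relation then drops out of the toric identity $\prod_i w_i^{\pair{D_i}{d}} = q^d$ on $Y_q$ for $d\in \bL$. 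For the Euler equation, the same technique applies to the total dilation $V_{\rm tot} = \sum_i w_i \partial_{w_i}$, which projects to $E$ on $\cM$ because $\sum_i m_{ia} = \rho_a$. Splitting $V_{\rm tot}$ into its horizontal lift of $E$ and a fiberwise remainder (whose fiberwise tangency again uses $\ell^T m = I_r$), and integrating the remainder by parts, combines with $z\partial_z(e^{W/z}) = -(W/z)e^{W/z}$ and the $-n/2$ contribution from the prefactor $(-2\pi z)^{-n/2}$ to give the stated identity.

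\textbf{Step 2 ($H$-function).} The check is purely algebraic. For fixed $d$, the operator $\hD_i^\lambda$ multiplies $x^{\ov{p}/(2\pi\iu)+d}$ by $z\bigl(\ov{D}_i/(2\pi\iu)+\pair{D_i}{d}\bigr)$; this uses $\partial_a x^{\ov{p}/(2\pi\iu)+d} = (\ov{p}_a/(2\pi\iu)+\pair{p_a}{d})x^{\ov{p}/(2\pi\iu)+d}$ together with $\sum_a m_{ia}\ov{p}_a = \ov{D}_i + \lambda_i$ from (\ref{eq:Dprel_equiv}). Setting $a_i := \ov{D}_i/(2\pi\iu) + \pair{D_i}{d}$ and $b_i := \pair{D_i}{d'}$, the first term of the GKZ operator contributes, at the monomial $x^{\ov{p}/(2\pi\iu)+d}\unit_{\inv v(d)}$, a coefficient $z^{K_+}\bigl(\prod_{b_i>0}\Gamma(a_i+1-b_i)\prod_{b_i\le 0}\Gamma(1+a_i)\bigr)^{-1}$ with $K_+ = \sum_{b_i>0}b_i$; the second term, after using $q^{d'} = x^{d'}z^{\pair{\hrho}{d'}}$ and the invariance $v(d-d') = v(d)$ for $d'\in \bL$, contributes $z^{K_+}\bigl(\prod_{b_i<0}\Gamma(1+a_i)\prod_{b_i\ge 0}\Gamma(1+a_i-b_i)\bigr)^{-1}$. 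These two expressions coincide by a case analysis on the sign of $b_i$. The Euler equation is immediate: since $x_a = q_a/z^{\rho_a}$, the operator $z\partial_z + E$ annihilates every monomial $x^{\ov{p}/(2\pi\iu)+d}$, while the prefactor $z^{-n/2+\sum_j\lambda_j/(2\pi\iu)}$ contributes exactly $-n/2 + \sum_j \lambda_j/(2\pi\iu)$, compensating the remaining constant.

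\textbf{Step 3 (basis statement) and main obstacle.} The GKZ system together with the Euler equation is a holonomic system whose generic rank equals $N = |\bN_{\rm tor}|\cdot n!\Vol(\hat{S})$, by Proposition \ref{prop:kouchnirenko} and the standard GKZ rank formula (valid for $\lambda$ off the resonance locus). The localization isomorphism $i^*\colon H_T^*(I\cX)\otimes_{\C[\lambda]}\C(\lambda) \cong H^*((I\cX)^T)\otimes\C(\lambda)$ produces exactly $N$ components $H_{\sigma,v}^\lambda$ of the $H$-function. Their linear independence for generic $\lambda$ follows by comparing leading monomials as $q \to 0$: each $H_{\sigma,v}^\lambda$ has leading exponent $(\ov{p}|_\sigma)/(2\pi\iu)$, and the equivariant characters $\ov{p}_a|_\sigma$ at distinct $T$-fixed points $(\sigma,v)$ differ by generically nonzero linear combinations of $\lambda_1,\dots,\lambda_m$. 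The most delicate step is the integration-by-parts argument in Step 1: one must verify that $v^{(i)}$ is genuinely fiberwise (which follows from $\pi m = 0$), that $\omega_q$ is annihilated by $L_{v^{(i)}}$ (which follows from translation-invariance in the fiber coordinates), and that the noncompact boundary of $\Gamma_0$ contributes no surface term, where the uniform decay is supplied by Lemma \ref{lem:PScond}. Once this step is secure, the remainder is bookkeeping with the linear algebra identity $\ell^T m = I_r$ and the functional equation of $\Gamma$.
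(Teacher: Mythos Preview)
The paper does not actually give a proof of this lemma; it defers to the revised version \cite{iritani-Int} via a footnote. So there is nothing to compare against, and the question becomes whether your argument stands on its own.

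Your overall approach is correct and is the standard one. Step~1 is solid: the identity $\hD_i^\lambda\cI^\lambda=\cI^\lambda[w_i]$ does follow from the projector computation $\pi=I-m\ell^{\rm T}$ with $\pi m=0$, and the boundary terms vanish because the integrand decays exponentially along $\Gamma_0$ (you invoke Lemma~\ref{lem:PScond}, but what you really need is the direct estimate that $e^{W/z}$ with $z<0$ dominates any polynomial factor on the real locus; the Palais--Smale lemma is not quite the same statement). Step~2 is also correct, but you glossed over one point: the $H$-function is summed over $\K_{\rm eff}$, not over a full coset $d_0+\bL$, so when you shift by $d'\in\bL$ you must check that the ``missing'' terms vanish. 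They do, but only after restriction to a fixed point $(\sigma,v)$: for $i\in I^\sigma$ one has $\ov{D}_i(\sigma)=0$, so $1/\Gamma(1+\pair{D_i}{d})=0$ whenever $\pair{D_i}{d}\in\Z_{<0}$, and this is exactly what characterizes $d\in(d_0+\bL)\setminus\K_{\rm eff,\sigma}$. This point should be stated explicitly.

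Step~3 has a small imprecision. Your linear-independence argument says the leading exponent of $H_{\sigma,v}^\lambda$ is $\ov p(\sigma)/(2\pi\iu)$, but this depends only on $\sigma$, not on $v$; fixed points $(\sigma,v)$ and $(\sigma,v')$ with the same $\sigma$ would then have the same leading exponent. The correct statement is that the leading exponent is $\ov p(\sigma)/(2\pi\iu)+d_v$ for some $d_v$ determined by $v(d_v)=\inv(v)$, and for $v\neq v'$ the difference $d_v-d_{v'}$ is a non-integer vector in $\bL\otimes\Q$, while for $\sigma\neq\sigma'$ the $\ov p(\sigma)$'s differ by a generically nonzero linear function of $\lambda$. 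With this correction the exponents are pairwise distinct for generic $\lambda$ and the independence follows. The rank count itself is fine once you cite the Gelfand--Kapranov--Zelevinsky/Adolphson theorem on the holonomic rank of nonresonant $A$-hypergeometric systems.
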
 

From this lemma\footnote{For the proof, 
see the revised version \cite{iritani-Int} of this paper.}, 
we know that there exist coefficient functions  
$c_{\sigma,v}(\lambda)$ such that 
\begin{equation}
\label{eq:oscint_H_coeff}
\cI^\lambda_{\Gamma_0} (q,z) = 
\sum_{(\sigma,v)\in I\cX^T} c_{\sigma,v}(\lambda) 
H_{\sigma,v}^\lambda(q,z). 
\end{equation} 
We will determine a holomorphic function 
$c_{\sigma,v}(\lambda)$ in $\lambda$ 
by putting $z=-1$ and studying the asymptotic behavior of 
the both hand sides in the limit $q_a\searrow +0$. 
Take a fixed point $\sigma\in\cX^T$. 
Define $I^\sigma\in \cA$ by 
$I^\sigma=\{i\;;\; \sigma \notin \{z_i=0\}\}$. 
We can take $\{w_j \;;\; j\notin I^\sigma \}$ 
as a co-ordinate system on 
$Y_q\cap Y_\R=\Gamma_0$.  
We can express $w_i$ for $i\in I^\sigma$ in terms of 
$\{w_j\;;\; j\notin I^\sigma\}$ and $q_a$, $a=1,\dots,r$  
by solving (\ref{eq:fibration_LG_formula}). 
Put 
\[
w_i = \prod_{a=1}^r q_a^{\ell_{ia}^\sigma} 
\prod_{j\notin I^\sigma} w_j^{b^{\sigma}_{ij}}, \quad i\in I^\sigma. 
\]
Note that $(\ell_{ia}^\sigma)_{i\in I^\sigma,1\le a\le r}$ 
is the matrix inverse to $(m_{ia})_{i\in I^\sigma, 1\le a\le r}$. 
Because $p_a\in \cl(\tilde{C}_\cX) \subset 
\sum_{i\in I^\sigma} \R_{\ge 0} D_i$, 
it follows that $\ell_{ia}^\sigma \ge 0$. 
It is also easy to see that $b^\sigma_{ij}$ is determined by 
$b_i = \sum_{j\notin I^\sigma} b^\sigma_{ij} b_j$ in $\bN\otimes \R$. 
Let $V(\sigma)$ be $n!|\bN_{\rm tor}|$ times 
the volume of the convex hull of  
$\{b_j\;;\; j\notin I^\sigma\}\cup \{0\}$ in $\bN\otimes \R$.  
Then the holomorphic volume form $\omega_q$ is given by 
\[
\omega_q = \frac{1}{V(\sigma)} 
\prod_{j\notin I^\sigma} \frac{dw_j}{w_j}.  
\]
We set 
\[
\K_{\rm eff,\sigma} := 
\{d\in \bL\otimes \Q \;;\; \pair{D_i}{d}\in \Z_{\ge 0},  
\forall i\in I^\sigma\} 
= \bigoplus_{i\in I^\sigma} \Z_{\ge 0} \ell_i^\sigma.    
\]
Here, $\ell_i^\sigma\in \bL\otimes \Q$ is defined by 
$\pair{p_a}{\ell_i^\sigma} = \ell_{ia}^\sigma$. 
Then we have 
$\K_{\rm eff} = \bigcup_{\sigma\in \cX^T} \K_{\rm eff,\sigma}$. 
We denote by $\ov{p}_a(\sigma)$ and $\ov{D}_j(\sigma)$  
the restrictions of $\ov{p}_a, \ov{D}_j \in H^*_T(\cX)$ 
to the fixed point $\sigma$. 
By using $\ov{D}_i(\sigma) =0$ for $i\in I^\sigma$ and 
(\ref{eq:Dprel_equiv}), we calculate 
\begin{equation}
\label{eq:ovDjsigma}
\ov{p}_a(\sigma) = 
\sum_{i\in I^\sigma} \lambda_i \ell_{ia}^\sigma, \quad 
\ov{D}_j(\sigma) = 
-\lambda_j - \sum_{i\in I^\sigma} \lambda_i b_{ij}^\sigma, \quad 
j\notin I^\sigma.  
\end{equation} 
For a function $f(q_1,\dots,q_r)$ in $(q_1,\dots,q_r)\in (\R_{>0})^r$, 
we write $f(q_1,\dots,q_r) =O(M)$ for $M\in \R$ 
when $f(tq_1,\dots,tq_r) =O(t^M)$ as $t\searrow +0$. 
\begin{lemma}
\label{lem:expansion_oscint} 
Let $\sigma$ be a fixed point in $\cX$. 
For any $M>0$, there exists $M'>0$ such that the following holds. 
For $\lambda_1,\dots,\lambda_m$ such that 
$\Re(-\frac{\ov{D}_j(\sigma)}{2\pi\iu}) >M'$ 
for all $j\notin I^\sigma$, $\cI^\lambda_{\Gamma_0}(q,-1)$ with 
$(q_1,\dots,q_r)\in (\R_{>0})^r$ 
has the expansion  
\begin{align*} 
\cI^\lambda_{\Gamma_0}&(q,-1) =
\frac{(2\pi)^{n/2}e^{(\lambda_1+\dots+\lambda_m)/2}}
{\iu^n V(\sigma)} 
(e^{-\pi\iu \hrho}q)^{\frac{\ov{p}(\sigma)}{2\pi\iu}}\times \\ 
 & \left(
\sum_{\substack{d\in \K_{\rm eff,\sigma}, \\ |d|<M}} 
\frac{(e^{-\pi\iu \hrho} q)^d} 
{\prod_{j\notin I^\sigma}
(1-e^{-2\pi\iu\pair{D_j}{d}-\ov{D}_j(\sigma)}) 
\prod_{i=1}^m 
\Gamma(1+\pair{D_i}{d} + \frac{\ov{D}_i(\sigma)}{2\pi\iu}) }  
+ O(M) \right). 
\end{align*}
where $|d| = \sum_{a=1}^r \pair{p_a}{d}$ and we set 
\begin{align*}
(e^{-\pi\iu\hrho}q)^{\frac{\ov{p}(\sigma)}{2\pi\iu}} &:= 
\prod_{a=1}^r 
(e^{-\pi\iu\rho_a}q_a)^{\frac{\ov{p}_a(\sigma)}{2\pi\iu}}, 
\quad 
(e^{-\pi\iu\hrho}q)^d :
= \prod_{a=1}^r (e^{-\pi\iu\rho_a}q_a)^{\pair{p_a}{d}}. 
\end{align*}
\end{lemma}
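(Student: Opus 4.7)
The plan is to compute $\cI^\lambda_{\Gamma_0}(q,-1)$ directly by parametrising $\Gamma_0$ with the coordinates $\{w_j\}_{j\notin I^\sigma}$ adapted to $\sigma$, Taylor-expanding the $q$-dependent part of the integrand, and evaluating the resulting one-dimensional Gamma integrals.

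First I would use the substitutions $w_i = \prod_a q_a^{\ell^\sigma_{ia}}\prod_{j\notin I^\sigma}w_j^{b^\sigma_{ij}}$ for $i\in I^\sigma$ together with $\omega_q = V(\sigma)^{-1}\prod_{j\notin I^\sigma}dw_j/w_j$, and reorganise $\prod_i w_i^{\lambda_i/(2\pi\iu)}$ via (\ref{eq:ovDjsigma}), to rewrite
\begin{equation*}
\cI^\lambda_{\Gamma_0}(q,-1)
=\frac{q^{\ov p(\sigma)/(2\pi\iu)}}{(2\pi)^{n/2}V(\sigma)}
\int_{(\R_{>0})^n} e^{-\sum_{j} w_j}\,e^{-\sum_{i\in I^\sigma}w_i(q,w)}\prod_{j\notin I^\sigma}\frac{dw_j}{w_j^{1+\ov D_j(\sigma)/(2\pi\iu)}}.
\end{equation*}
Then I Taylor-expand $e^{-\sum_{i\in I^\sigma}w_i(q,w)}$ in the small parameters $\{q^{\ell^\sigma_i}\}_{i\in I^\sigma}$, which are genuinely small as $q\to 0$ because $p_a\in\cl(\tilde C_\cX)\subset\sum_{i\in I^\sigma}\R_{\geq 0}D_i$ forces $\ell^\sigma_{ia}\geq 0$. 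A multi-index $(n_i)_{i\in I^\sigma}\in\Z_{\geq 0}^{|I^\sigma|}$ corresponds to $d=\sum_{i\in I^\sigma}n_i\ell^\sigma_i\in\K_{\rm eff,\sigma}$ with $n_i=\pair{D_i}{d}$, and the identity $\sum_{i=1}^m\pair{D_i}{d}b_i=0$ in $\bN\otimes\R$ yields $\prod_{i\in I^\sigma}w_i^{n_i}=q^d\prod_{j\notin I^\sigma}w_j^{-\pair{D_j}{d}}$. Truncating at $|d|<M$ and integrating term by term produces
\begin{equation*}
\sum_{\substack{d\in\K_{\rm eff,\sigma}\\|d|<M}}\frac{(-1)^{\sum_{i\in I^\sigma}\pair{D_i}{d}}q^d}{\prod_{i\in I^\sigma}\pair{D_i}{d}!}\prod_{j\notin I^\sigma}\Gamma(-\pair{D_j}{d}-\ov D_j(\sigma)/(2\pi\iu)),
\end{equation*}
each individual integral converging because the hypothesis on $M'$ forces $\Re(-\ov D_j(\sigma)/(2\pi\iu))>\pair{D_j}{d}$ for all the finitely many $d,j$ involved.

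To match the stated formula, I apply the reflection identity $\Gamma(-a-b)=-2\pi\iu\,e^{-\iu\pi(a+b)}/[(1-e^{-2\iu\pi(a+b)})\Gamma(1+a+b)]$ to each Gamma factor with $a=\pair{D_j}{d}$, $b=\ov D_j(\sigma)/(2\pi\iu)$. Using $\ov D_i(\sigma)=0$ for $i\in I^\sigma$, the factorials $\pair{D_i}{d}!$ become $\Gamma(1+\pair{D_i}{d}+\ov D_i(\sigma)/(2\pi\iu))$, completing the Gamma product to $\prod_{i=1}^m$. The $n$ copies of $(-2\pi\iu)$ combine with $(2\pi)^{-n/2}$ to produce $(2\pi)^{n/2}/\iu^n$. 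The phase $\prod_{j\notin I^\sigma}e^{-\ov D_j(\sigma)/2}$ equals $e^{(\lambda_1+\cdots+\lambda_m)/2}e^{-\sum_a\rho_a\ov p_a(\sigma)/2}$ by summing (\ref{eq:ovDjsigma}) and using $\rho_a=\sum_i m_{ia}$; together with the prefactor $q^{\ov p(\sigma)/(2\pi\iu)}$ this assembles into $e^{(\lambda_1+\cdots+\lambda_m)/2}(e^{-\pi\iu\hrho}q)^{\ov p(\sigma)/(2\pi\iu)}$. Finally $(-1)^{\sum_{i\in I^\sigma}\pair{D_i}{d}}\prod_{j\notin I^\sigma}e^{-\iu\pi\pair{D_j}{d}}=e^{-\iu\pi\pair{\hrho}{d}}$ via $\hrho=\sum_i D_i$, converting $q^d$ into $(e^{-\pi\iu\hrho}q)^d$.

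For the remainder, I apply the alternating-series bound $|e^{-\xi}-\sum_{k<N}(-\xi)^k/k!|\leq \xi^N/N!$ coordinate-wise to $\xi_i=w_i(q,w)\geq 0$; since $w_i^{n_i}=q^{n_i\ell^\sigma_i}\prod_j w_j^{n_i b^\sigma_{ij}}$, the tail is bounded by $C\cdot(\max_a q_a)^M$ times a polynomial in $\{w_j^{\pm 1}\}$, and after enlarging $M'$ once more the weighted integral stays finite, yielding an $O(M)$ contribution. The principal technical obstacle is the phase bookkeeping of the third step: ensuring that the signs, factors of $2\pi\iu$, and phases $e^{-\iu\pi(a+b)}$ produced by the reflection formula, together with $(-1)^{\sum_{i\in I^\sigma}\pair{D_i}{d}}$, assemble into precisely $e^{(\lambda_1+\cdots+\lambda_m)/2}\iu^{-n}(e^{-\pi\iu\hrho}q)^{\ov p(\sigma)/(2\pi\iu)+d}$ and not a spurious nearby twist; the identities $\rho_a=\sum_im_{ia}$, $\hrho=\sum_i D_i$, and $\sum_a\rho_a\ell^\sigma_{ia}=1-\sum_{j\notin I^\sigma}b^\sigma_{ij}$ (which follows from these) are the essential combinatorial inputs.
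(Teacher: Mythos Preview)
Your proposal is correct and follows essentially the same route as the paper's proof: parametrise $\Gamma_0$ by $\{w_j\}_{j\notin I^\sigma}$, Taylor-expand $\exp(-\sum_{i\in I^\sigma}q^{\ell_i^\sigma}w_\sigma^{b_i})$, integrate term by term to get Gamma functions, and then apply the reflection formula. The paper simply writes ``Using $n_i=\pair{D_i}{d}$, $\sum_{i\in I^\sigma}n_ib^\sigma_{ij}=-\pair{D_j}{d}$ and $\Gamma(z)\Gamma(1-z)=\pi/\sin(\pi z)$, we arrive at the formula'', whereas you have made the phase bookkeeping and the remainder estimate explicit; your identity $\sum_{j\notin I^\sigma}\ov D_j(\sigma)=\sum_a\rho_a\ov p_a(\sigma)-\sum_i\lambda_i$ (which follows at once from $\ov D_i(\sigma)=0$ for $i\in I^\sigma$ and summing (\ref{eq:Dprel_equiv})) is exactly what is needed.
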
 
\begin{proof}
Using the notation above, we can write 
\[
\cI_{\Gamma_0}^\lambda(q,-1) = 
\frac{q^{\frac{\ov{p}(\sigma)}{2\pi\iu}}}
{(2\pi)^{n/2}V(\sigma)}
\int_{(0,\infty)^n} 
\exp( -\sum_{i\in I^\sigma} q^{\ell_i^\sigma} w_\sigma^{b_i})  
e^{-\sum_{j\notin I^\sigma} w_j}
w_\sigma^{-\frac{\ov{D}(\sigma)}{2\pi\iu}}  
\frac{dw_\sigma}{w_\sigma}. 
\]
where we put 
$w_\sigma^{b_i} := \prod_{j\notin I^\sigma} w_j^{b^\sigma_{ij}}$, 
$w_\sigma^{-\frac{\ov{D}(\sigma)}{2\pi\iu}} := \prod_{j\notin I^\sigma} 
w_j^{-\frac{\ov{D}_j(\sigma)}{2\pi\iu}}$ 
and $dw_\sigma/w_\sigma := \prod_{j\notin I^\sigma} (dw_j/w_j)$. 
Consider the Taylor expansion: 
\[
\exp(-\sum_{i\in I^\sigma} q^{\ell_i^\sigma} w_\sigma^{b_i}) 
= \sum_{\substack{n_i\ge 0\;;\; i\in I^\sigma, \\ 
|\sum_{i\in I^\sigma} n_i \ell_i^\sigma |<M} } 
\frac{\prod_{i\in I^\sigma} (-1)^{n_i}q^{ n_i\ell_i^\sigma}
w_\sigma^{n_ib_i}}{\prod_{i\in I^\sigma} n_i!}  
+O(M). 
\] 
When $\Re(-\frac{\ov{D}_j(\sigma)}{2\pi\iu})$ is sufficiently big 
for all $j\notin I^\sigma$, each term in the right-hand side 
is integrable for the measure $e^{-\sum_{j\notin I^\sigma} w_j}
w_\sigma^{-\frac{\ov{D}(\sigma)}{2\pi\iu}}  
(dw_\sigma/w_\sigma)$ on $(0,\infty)^n$. 
Therefore, we calculate 
\[
\cI_{\Gamma_0}^\lambda(q,-1) = \frac{q^{\frac{\ov{p}(\sigma)}{2\pi\iu}}}
{(2\pi)^{n/2}V(\sigma)} 
\left( 
\sum_{\substack{d\in \K_{\rm eff,\sigma},\\ |d|<M} }
\frac{(-1)^{\sum_{i\in I^\sigma} n_i} q^d}
{\prod_{i\in I^\sigma}n_i!} 
\prod_{j\notin I^\sigma} 
\Gamma\left
(\sum_{i\in I^\sigma} n_i b_{ij}^\sigma -\tfrac{\ov{D}_j(\sigma)}{2\pi\iu}
\right) +O(M)\right), 
\]
where $d=\sum_{i\in I^\sigma} n_i \ell_i^\sigma$. 
Using $n_i = \pair{D_i}{d}$, $\sum_{i\in I^\sigma} n_i b_{ij}^\sigma 
= -\pair{D_j}{d}$ and $\Gamma(z)\Gamma(1-z)= \pi /\sin (\pi z)$, 
we arrive at the formula in the lemma. 
\end{proof} 

Next we study the asymptotic behavior of 
$H^\lambda_{\sigma,v}(q,-1)$ in the limit $q\searrow +0$. 
We can take the vector 
$\eta\in \bL\otimes \R$ in our initial data of $\cX$ 
to be $p_1+\cdots + p_r \in \tilde{C}_\cX$. 
Recall that $\cX$ can be written as a symplectic quotient 
(\ref{eq:X_symplecticquot}), so is endowed with 
a reduced symplectic form which depends on $\eta$. 
For simplicity, we assume that $\lambda_j$ is purely imaginary. 
Define a Hamiltonian function 
$\mathfrak{h}_{\eta,\lambda}\colon \cX\to \R$ by 
\[
\mathfrak{h}_{\eta,\lambda} (z_1,\dots,z_m) 
= -\sum_{i=1}^m \frac{\lambda_j}{2\pi\iu} |z_j|^2, \quad 
(z_1,\dots,z_m)\in \mathfrak{h}^{-1}(\eta).  
\]
This generates a Hamiltonian $\R$-action $(z_1,\dots,z_m) \mapsto 
(e^{-\lambda_1 s}z_1,\dots,e^{-\lambda_m s} z_m), s\in \R$ on $\cX$. 
In general, the moment map for an $\R$-action 
preserving the complex structure on $\cX$ 
attains its global maximum value at  
every critical point of index $2n = \dim_\R \cX$.  
(This follows from the so-called Mountain-Path Lemma and 
the fact that there are no critical points of odd index. 
See \emph{e.g.} \cite{audin}). 
Because the weights of $T_\sigma \cX$ for this $\R$-action are  
$\{\frac{\ov{D}_j(\sigma)}{2\pi\iu}\;;\; j\notin I^\sigma\}$, it follows 
that 
\begin{equation}
\label{eq:maximum}
\text{$\mathfrak{h}_{\eta,\lambda}$ 
attains its unique maximum value at $\sigma$} 
\Longleftrightarrow 
-\frac{\ov{D}_j(\sigma)}{2\pi\iu} >0, \ \forall j \notin I^\sigma. 
\end{equation} 
For a given $M>0$ and a fixed point $\sigma\in \cX$, 
we can choose $\lambda_1,\dots,\lambda_m \in \iu \R$ 
such that the expansion in Lemma \ref{lem:expansion_oscint} holds. 
(This is possible since one can make 
$\Re(-\frac{\ov{D}_j(\sigma)}{2\pi\iu})$ arbitrarily large. 
See \eqref{eq:ovDjsigma}.) 
Then by (\ref{eq:maximum}), we know that 
$\mathfrak{h}_{\eta,\lambda}(\sigma)
>\mathfrak{h}_{\eta,\lambda}(\sigma')$ for any other 
fixed point $\sigma' \neq \sigma$. 
On the other hand, we can easily see that 
$\mathfrak{h}_{\eta,\lambda}(\sigma) =
 -\sum_{a=1}^r \frac{\ov{p}_a(\sigma)}{2\pi\iu}$.  
Therefore, by rescaling $\lambda_i$ if necessary, 
we can assume that 
\[
\sum_{a=1}^r \frac{\ov{p}_a(\sigma)}{2\pi\iu} +M 
< \sum_{a=1}^r \frac{\ov{p}_a(\sigma')}{2\pi\iu}, 
\quad \forall \sigma'\neq \sigma. 
\]
Then we have the following expansions:  
\begin{align*} 
H_{\tau,v}^\lambda(q,-1) &= 
\frac{(2\pi)^{n/2}e^{(\lambda_1+\cdots+\lambda_m)/2} }{\iu^n} 
(e^{-\pi\iu\hrho} q)^{\frac{\ov{p}(\sigma)}{2\pi\iu}} \\
\times &
\begin{cases}
\displaystyle
\sum_{\substack{d\in \K_{\rm eff,\sigma}; \\ \inv(v(d)) = v, |d|<M}}
\frac{(e^{-\pi\iu\hrho}q)^d}{\prod_{i=1}^m 
\Gamma(1+\pair{D_i}{d} + \frac{\ov{D}_i(\sigma)}{2\pi\iu})}+O(M) \quad 
& \text{if }\tau = \sigma; \\
O(M) & \text{if }\tau\neq \sigma.
\end{cases} 
\end{align*} 
Comparing this with the expansion in Lemma \ref{lem:expansion_oscint}, 
we conclude 
\[
c_{\sigma,v}(\lambda) = 
\frac{1}{V(\sigma) 
\prod_{i\notin I^\sigma} (1-e^{-2\pi\iu f_{v}([D_i])-\ov{D}_i(\sigma)})}, 
\]
where $c_{\sigma,v}$ is the coefficient 
appearing in (\ref{eq:oscint_H_coeff}) 
and $f_v([D_i])\in [0,1)$ is the rational number associated 
to $[D_i]\in H^2(\cX,\Z)$ (see Section \ref{sec:A-model} and 
(\ref{eq:toric_fv})). 
Hence, we find 
\[
c_{\sigma,v}(\lambda) = \frac{1}{V(\sigma)} 
\frac{\Td_{\cX}^\lambda|_{(\sigma,v)} }{e_T(T_\sigma\cX_v)}, 
\]
where $\Td_\cX^\lambda|_{(\sigma,v)}$ 
is the restriction of the equivariant Todd class $\Td_\cX^\lambda$ 
to the fixed point $(\sigma,v)$ in $I\cX$ and 
$e_T(T_\sigma\cX_v)$ is the $T$-equivariant Euler class, \emph{i.e.} 
products of the $T$-weights of $T_\sigma\cX_v$. 
Since $V(\sigma)$ is the order of the stabilizer at $\sigma\in \cX$, 
the Equation (\ref{eq:connection_cI_H}) follows from the localization 
theorem in $T$-equivariant cohomology \cite{atiyah-bott}. 
In the non-equivariant limit, we have (\ref{eq:Gamma_0_strsheaf}). 
Because $[e^{W_q/z}\omega_q]$ and 
its derivatives $z\nabla_{a_1}\cdots z\nabla_{a_k} [e^{W_q/z} \omega_q]$  
generate the $B$-model \seminf VHS, it follows that 
$\Gamma_0$ corresponds to 
the linear form $(\cdot, \cO_\cX)_{K(\cX)_\C}$. 

\subsubsection{Proof of Theorem \ref{thm:pulledbackintstr}}
Let $\cV_\Z^\cX\subset \cV^\cX$ 
be the $\hGamma$-integral structure 
in Definition-Proposition 
\ref{def-prop:A-model_int} and 
$\tilde{\cV}_\Z^\cX\subset \cV^\cX$ 
be the integral structure pulled back 
from the B-model via mirror isomorphism (\ref{eq:mirror_isom}). 
We know by Theorem \ref{thm:connection_cI_H} that 
the integral vector $\Gamma_0\in \cR_\Z^\vee$ 
corresponds to the linear form 
$\alpha\mapsto (\alpha,\Psi(\cO_\cX))_{\cV^\cX}$. 
Since the B-model integral structure is unimodular 
(and the B-model pairing corresponds to $(\cdot,\cdot)_{\cV^\cX}$ 
by mirror conjecture assumption),  
it follows that $\Psi(\cO_\cX)\in \tilde{\cV}^\cX_\Z$.  
By Proposition \ref{prop:char_A_real_int_str}, 
$\tilde{\cV}^\cX_\Z$ must 
be invariant under Galois action. Hence by Definition-Proposition  
\ref{def-prop:A-model_int}, we know that 
$\Psi(\Z[\Pic(\cX)] \cO_\cX ) \subset \tilde{\cV}_\Z^\cX$. 
Because $K(\cX)$ is generated by 
line bundles \cite{borisov-horja-K}, 
we have $\cV_\Z^\cX \subset \tilde{\cV}^{\cX}_\Z$. 
Since the pairing $(\cdot,\cdot)_{\cV^\cX}$ 
is unimodular on $\cV_\Z^\cX$ by assumption (A3), 
we must have $\cV_\Z^\cX = \tilde{\cV}^\cX_\Z$.

\section{Example: $tt^*$-geometry of $\Proj^1$} 
\label{sec:exampleP1tt*} 
We calculate the Cecotti-Vafa structure on quantum 
cohomology of $\Proj^1$ with respect to 
the real structure induced from the 
$\hGamma$-integral structure 
in Definition-Proposition \ref{def-prop:A-model_int}. 
By Theorem \ref{thm:pulledbackintstr},  
this is the same as 
the Cecotti-Vafa structure associated to the 
Landau-Ginzburg model (mirror of $\Proj^1$): 
\[
\pi\colon (\C^*)^2 \to (\C^*), \quad (x,y) \mapsto q=xy, 
\quad W = x + y. 
\]

Let $\omega\in H^2(\Proj^1)$ be the unique 
integral K\"{a}hler class.  
Let $\{t^0,t^1\}$ be the linear co-ordinate system 
on $H^*(\Proj^1)$ 
dual to the basis $\{\unit, \omega\}$. 
Put $\tau = t^0 \unit + t^1 \omega$. 
The quantum product $\circ_\tau$ is given by 
\[
(\unit \circ_\tau) = 
\begin{bmatrix}
1 & 0 \\
0 & 1 
\end{bmatrix} 
, \quad 
(\omega \circ_\tau) =  
\begin{bmatrix}
0 & e^{t^1} \\ 
1 & 0 
\end{bmatrix}, 
\]
where we identify $\unit,\omega$ with  
column vectors $[1,0]^{\rm T}$, $[0,1]^{\rm T}$ and
the matrices act on vectors by the 
left multiplication. 
The exponential $e^{t^1}$ corresponds to 
$q$ in the Landau-Ginzburg model via the mirror map, 
so we set $q = e^{t^1}$.  
Hereafter, we restrict $\tau$ to lie on $H^2(\Proj^1)$ 
but we will not lose any information by this 
(see Remark \ref{rem:P1tt*} below). 
Recall that the Hodge structure $\F_\tau$ 
associated with the quantum cohomology 
of $\Proj^1$ is given by the image of 
$\cJ_\tau \colon H^*(\Proj^1) \otimes \C\{z\} 
\to \cH^{\Proj^1} = H^*(\Proj^1) 
\otimes \C\{z,z^{-1}\}$ in (\ref{eq:fundamentalsol_A}). 
The map $\cJ_\tau$ is given by the explicit 
hypergeometric function 
$J(q,z)$ \cite{givental-mirrorthm-toric}:  
\begin{align*}
&\cJ_\tau = 
\begin{pmatrix}
\vert & \vert \\ 
J(e^{t^1},z) & z \partial_1 J(e^{t^1},z) \\ 
\vert & \vert  
\end{pmatrix} 
= e^{t^1 \omega/z} \circ Q, \quad 
Q:= 
\begin{pmatrix}
J_0 & z\partial_1 J_0  \\
J_1/z &  J_0 + \partial_1 J_1 
\end{pmatrix}, \\
&J(q,z) :=  e^{t^1 \omega/z} 
\sum_{k=0}^\infty 
\frac{q^k \unit}{(\omega + z)^2 \cdots (\omega+kz)^2}
= e^{t^1\omega/z} (J_0(q,z) \unit + J_1(q,z) \frac{\omega}{z}), 
\end{align*}  
where $\partial_1 = (\partial/\partial t^1)$. 
By Definition-Proposition \ref{def-prop:A-model_int}, 
an integral basis of $\cV^{\Proj^1}=H^*(\Proj^1)$ is given by 
\[
\Psi(\cO_{\Proj^1})  = 
\frac{1}{\sqrt{2\pi}}(\unit -2\gamma \omega),  \quad 
\Psi(\cO_{\pt})  = \sqrt{2\pi} \iu \omega,   
\] 
where $\gamma$ is the Euler constant. 
Hence the real involutions on $\cV^{\Proj^1}$ and 
$\cH^{\Proj^1}$ are given respectively by 
(see (\ref{eq:relation_kappa_H_V})):  
\[
\kappa_\cV =
\begin{bmatrix}
1 & 0 \\ 
-4\gamma & -1 
\end{bmatrix}\circ \ov{\phantom{A}}, \quad 
\kappa_\cH = 
\begin{bmatrix}
z & 0 \\ 
-4\gamma & -z^{-1} 
\end{bmatrix}\circ \ov{\phantom{A}}.    
\]
where $\ov{\phantom{A}}$ is the usual complex conjugation 
(when $z$ is on $S^1 =\{|z|=1\}$). 

To obtain the Cecotti-Vafa structure, we need to find 
a basis of $\F_\tau \cap \kappa_\cH(\F_\tau)$. 
The procedure below follows the proof of 
Theorem \ref{thm:pure_polarized} 
in Section \ref{subsec:pure_polarized}. 
Put $\F_\tau ' := e^{-t^1\omega/z}\F_\tau$ 
and $\kappa^\tau_\cH := e^{-(t^1+\ov{t^1})\omega/z}\kappa_\cH$. 
By 
\[ 
\F_\tau \cap \kappa_\cH(\F_\tau) 
= e^{t^1\omega/z} (\F_\tau' \cap \kappa_\cH^\tau (\F'_\tau)), 
\] 
it suffices to calculate a basis of 
$\F_\tau' \cap \kappa_{\cH}^\tau (\F'_\tau)$. 
First we approximate $\F_\tau'$ by 
$\Flim := H^*(\Proj^1)\otimes \C\{z\}$ 
and solve for a basis of $\Flim \cap \kappa^\tau_{\cH}(\Flim)$. 
By elementary linear algebra, we find 
the following Birkhoff factorization of 
$[\kappa^\tau_\cH(\unit), \kappa^\tau_\cH(\omega)]$: 
\[
[\kappa^\tau_\cH(\unit),\kappa^\tau_\cH(\omega)] = BC, \quad 
B:= 
\begin{bmatrix}
1 & z/a_\tau \\ 
0 & 1 
\end{bmatrix}, \quad 
C:= 
\begin{bmatrix}
0 & 1/a_\tau \\
a_\tau & -1/z 
\end{bmatrix}, 
\]
where $a_\tau := -t^1-\ov{t^1} - 4\gamma$. 
Then the column vectors of $B$ give a basis of 
$\Flim \cap \kappa^\tau_{\cH}(\Flim)$ (\emph{c.f.} (\ref{eq:A=BC})). 
Note that the column vectors of $Q$ above 
form a basis of $\F'_\tau$. 
Thus it suffices to calculate 
the Birkhoff factorization of $Q^{-1}\kappa_\cH^\tau(Q)$ 
to solve for a basis of $\F'_\tau \cap \kappa_\cH^\tau(\F'_\tau)$. 
Define a matrix $S$ by 
\[
\kappa_\cH^\tau (Q) = Q B S C. 
\]
Using that $Q^{-1}$ is given by the adjoint of $Q(-z)$ 
(by Proposition \ref{prop:fundamentalsol_A}), 
we have 
\[
S = 
\large
\begin{bmatrix}
\substack{
2\Re(J_0\ov{J_1})a_\tau^{-1} 
+ |J_0|^2 + 2\Re(\partial_1J_0\ov{J_1}
+J_0\ov{\partial_1J_1}) \\
+ 2  \Re(\partial_1J_0\ov{\partial_1 J_1})a_\tau 
-  |\partial_1J_0|^2 a_\tau^2 
}
&  
\substack{
( 2\Re(J_0\ov{J_1}) a_\tau^{-2} \\    
+ (\partial_1 J_0\ov{J_1}   
+ \ov{J_0}\partial_1 J_1) a_\tau^{-1}   
- \partial_1 J_0\ov{J_0}) z    
}
\\ 
\substack{
( -2\Re(J_0\ov{J_1}) 
- (\ov{\partial_1 J_0}J_1 + J_0
   \ov{\partial_1 J_1}) a_\tau \\
 + J_0\ov{\partial_1 J_0} a_\tau^2) z^{-1} 
}     
& 
\substack{
-2\Re(J_1\ov{J_0}) a_\tau^{-1} + |J_0|^2
} 
\end{bmatrix}, 
\]
where we restrict $z$ to lie on $S^1=\{|z|=1\}$. 
Because $S = \unit + O(|q|^{1-\epsilon})$, $\epsilon>0$ as 
$|q|\to 0$, 
this admits the Birkhoff factorization 
$S= \tilde{B}\tilde{C}$ for $|q|\ll 1$, where 
$\tilde{B}\colon\D_0 \to GL_2(\C)$, 
$\tilde{C}\colon \D_\infty \to GL_2(\C)$ 
such that $\tilde{B}(0) =\unit$. 
Then the column vectors of $QB\tilde{B}=
\kappa_{\cH}^\tau(Q) C^{-1}\tilde{C}^{-1}$ give a 
basis of $\F_\tau'\cap \kappa_\cH^\tau(\F_\tau')$. 
We perform the Birkhoff factorization in the following way. 
Note that $S$ is expanded in a power series in $q$ and $\ov{q}$ 
with coefficients in Laurent polynomials in $a_\tau$ and 
$z$: 
\[
S = \sum_{n,m\ge 0} 
S_{n,m} q^n \ov{q}^m, \quad 
S_{n,m} \in 
\End(\C^2)[z,z^{-1},a_\tau,a_\tau^{-1}]. 
\]
We put $\tilde{B} = \sum_{n,m\ge 0}
\tilde{B}_{n,m}q^n\ov{q}^m$, 
$\tilde{C} = \sum_{n,m\ge 0} \tilde{C}_{n,m} q^n \ov{q}^m$. 
Since $S_{0,0}=\tilde{B}_{0,0}=\tilde{C}_{0,0}=\id$, 
we can recursively solve for 
$\tilde{B}_{n,m}$ and $\tilde{C}_{n,m}$ 
by decomposing 
\[
\tilde{B}_{n,m} + \tilde{C}_{n,m} = S_{n,m} - 
\sum_{(i,j)\neq 0, (n-i,m-j)\neq 0} 
\tilde{B}_{i,j} \tilde{C}_{n-i,m-j} 
\] 
into strictly positive power series 
$\tilde{B}_{n,m}$ and 
non-positive power series $\tilde{C}_{n,m}$ in $z$.   
The first six terms of $B\tilde{B}$ are given by 
\begin{align*}
B\tilde{B} &= 
\begin{bmatrix}
1 & \frac{z}{a_\tau} \\ 
0 & 1 
\end{bmatrix} 
+ \ov{q}
\begin{bmatrix} 
(1+a_\tau)z^2 & \frac{z^3}{a_\tau} \\ 
(2+2a_\tau + a_\tau^2) z & \frac{(2+a_\tau)z^2}{a_\tau}  
\end{bmatrix} 
+ q\ov{q}
\begin{bmatrix}
0 & -\frac{(8+8a_\tau+2 a_\tau^2)z}{a_\tau^2} \\
0 & 0 
\end{bmatrix} \\
+ & \ov{q}^2 
\begin{bmatrix}
\frac{(1+2a_\tau)z^4}{4} & \frac{z^5}{4a_\tau} \\
\frac{(3+6a_\tau+2a_\tau^2)z^3}{4} & 
\frac{(3+a_\tau)z^4}{4a_\tau}  
\end{bmatrix} 
+ q \ov{q}^2 
\begin{bmatrix}
\frac{(33+34a_\tau+18a_\tau^2+4a_\tau^3)z^2}{4} & 
-\frac{(32+31a_\tau+12a_\tau^2 +2a_\tau^3)z^3}{4a_\tau^2} \\
\frac{(25+50 a_\tau +34a_\tau^2 +12 a_\tau^3 +2 a_\tau^4)z}{2} &
-\frac{(64+78a_\tau+ 45 a_\tau^2+14 a_\tau^3 +2 a_\tau^4)z^2}
{4a_\tau^2} 
\end{bmatrix}
\\ 
+ &\ov{q}^3 
\begin{bmatrix}
\frac{(1+3a_\tau)z^6}{36} & \frac{z^7}{36a_\tau} \\
\frac{(11+33a_\tau+9a_\tau^2)z^5}{108} & 
\frac{(11+3a_\tau)z^6}{108a_\tau} 
\end{bmatrix}  
+ O((\log|q|)^5 |q|^4) 
\end{align*} 
Let $\Phi_\tau$ denote the inverse 
to the natural projection 
$\F_\tau \cap \kappa_{\cH}(\F_\tau) \to \F_\tau/z\F_\tau 
= H^*(\Proj^1)$. 
Because $B\tilde{B}=\unit+O(z)$, 
we have 
$[\Phi_\tau(\unit),\Phi_\tau(\omega)] 
= e^{t^1\omega/z} QB\tilde{B}$: 
\[
\begin{CD}
\Phi_\tau \colon H^*(\Proj^1) = 
\F_\tau'/z\F_\tau' @>{QB\tilde{B}}>>
\F'_\tau \cap \kappa_{\cH}^\tau(\F_\tau') 
@>{e^{t^1\omega/z}}>> 
\F_\tau \cap \kappa_{\cH}(\F_\tau).    
\end{CD} 
\] 
The Cecotti-Vafa structure for $\Proj^1$ 
is defined on the trivial vector bundle 
$K := H^*(\Proj^1)\times H^*(\Proj^1) \to H^*(\Proj^1)$. 
Recall that the Hermitian metric $h$ 
on $K_\tau$ is the pull-back of the Hermitian metric 
$(\alpha, \beta) \mapsto 
(\kappa_{\cH}(\alpha),\beta)_{\cH}$ 
on $\F_\tau \cap \kappa_\cH(\F_\tau)$ 
through $\Phi_\tau \colon K_\tau 
\cong \F_\tau \cap \kappa_\cH(\F_\tau)$. 
The Hermitian metric $h$ is of the form: 
\[
h = 
\begin{bmatrix}
h_{\ov{0}0} & 0 \\
0 & h_{\ov{0}0}^{-1}  
\end{bmatrix}, \quad 
h_{\ov{0}0}:= \int_{\Proj^1} 
\kappa_{\cH}(\Phi_\tau(\unit))\Big|_{z\mapsto -z} 
\cup \Phi_\tau(\unit). 
\] 
The first seven terms of the expansion of $h_{\ov{0}0}$ 
are (with $a_\tau = -t^1-\ov{t^1}-4\gamma$, $q=e^{t^1}$) 
\scriptsize 
\begin{align*} 
h_{\ov{0}0} = & a_\tau+ 
|q|^{2} 
\left( {a_\tau^3}+4{a_\tau^2}+8 a_\tau+8 \right) + 
|q| ^{4} 
\left( {a_\tau^5}+ 8{a_\tau^4}+{\frac{121}{4}}{a_\tau^3}+
{\frac {129}{2}}{a_\tau^2}+{\frac 
{145}{2}} a_\tau+{\frac {145}{4}} \right) \\ 
&+ |q|^{6} 
\left( a_\tau^{7}+12 a_\tau^{6}+{\frac {275}{4}}a_\tau^{5}+{
\frac {477}{2}}a_\tau^{4}+{\frac {9539}{18}}a_\tau^{3}  
+ {\frac {81001}{108}} 
a_\tau^{2}+{\frac {50342}{81}}a_\tau 
+{\frac {55526}{243}} \right) \\ 
& +|q|^{8} 
\left( a_\tau^{9}+16a_\tau^{8}+{
\frac {493}{4}}a_\tau^{7}+{\frac {1185}{2}}a_\tau^{6}+{\frac {31001}{16}
}a_\tau^{5}+{\frac {79939}{18}}a_\tau^{4}+{\frac {49077907}{6912}}a_\tau^{3}
\right. \\ 
& \qquad \quad \left. 
+{\frac {52563371}{6912}}a_\tau^{2}+{\frac {614694323}{124416}}a_\tau+{
\frac {736622003}{497664}} \right)  \\
& + |q|^{10}
\left( a_\tau^{11}+20a_\tau^{10}+{\frac {775}{4}}a_\tau^{9}+
{\frac {2381}{2}}a_\tau^{8}+{\frac {368599}{72}}a_\tau^{7}+{\frac {
1738481}{108}}a_\tau^{6}+{\frac {780126811}{20736}}a_\tau^{5} 
\right. \\ 
& \qquad \quad +\left. {\frac {
4053627445}{62208}}a_\tau^{4}+{\frac {254355946241}{3110400}}a_\tau^{3}+
{\frac {1465574917127}{20736000}}a_\tau^{2}+{\frac {163291639271}{
4320000}}a_\tau +{\frac {1840366543439}{194400000}} \right) \\
&+|q| ^{12} 
\left( a_\tau^{13}+24a_\tau^{12}+{\frac {
1121}{4}}a_\tau^{11}+{\frac {4193}{2}}a_\tau^{10}+{\frac {1606399}{144}}
a_\tau^{9}+{\frac {2398517}{54}}a_\tau^{8}+{\frac {2814667745}{20736}}
a_\tau^{7}+{\frac {20004983519}{62208}}a_\tau^{6} \right. \\ 
& \qquad \quad +{\frac {407437321759}{
691200}}a_\tau^{5}+{\frac {51278023471273}{62208000}}a_\tau^{4}+{\frac {
796478452045403}{933120000}}a_\tau^{3}+{\frac {11553263487112967}{
18662400000}}a_\tau^{2} \\ 
& \qquad \quad \left. +{\frac {11823418405646927}{41990400000}}a_\tau
+{\frac{15268380040196927}{251942400000}} \right)
+ \cdots. 
\end{align*} 
\normalsize 
The other data 
$(\kappa,g, C,\tilde{C},D,\cU,\ov{\cU},\cQ)$ 
of the Cecotti-Vafa structure 
are given in terms of $h_{\ov{0}0}$. 
In fact, we have $C_0= \tC_{\ov{0}} =\id$, 
$D_0=\partial/\partial t^0$, 
$D_{\ov{0}} = \partial/\partial \ov{t^0}$ and 
\begin{gather*}
g = 
\begin{bmatrix}
0 & 1 \\
1 & 0
\end{bmatrix},  
\quad 
\kappa = 
\begin{bmatrix}
0 & h_{\ov{0}0}^{-1}  \\
h_{\ov{0}0} & 0
\end{bmatrix} \circ \ov{\phantom{A}},  
\quad 
D_1 = \partial_1 + 
\begin{bmatrix}
\partial_1 \log h_{\ov{0}0} & 0 \\
0 & -\partial_1 \log h_{\ov{0}0} 
\end{bmatrix}, \\  
D_{\ov{1}} = \ov{\partial_1}, \quad  
C_1 = \frac{1}{2} \cU = 
\begin{bmatrix}
0 & e^{t^1} \\
1 & 0  
\end{bmatrix}, \quad 
\tC_{\ov{1}} = 
\frac{1}{2} \ov\cU =
\begin{bmatrix}
0 &  h_{\ov{0}0}^{-2} \\
e^{\ov{t^1}} h_{\ov{0}0}^2 & 0  
\end{bmatrix},  \\ 
\cQ =  \partial_E+ \mu - D_E   
= 
\begin{bmatrix}
-\frac{1}{2} - 2 \partial_1 \log h_{\ov{0}0} & 0 \\
0 & \frac{1}{2} +2 \partial_1 \log h_{\ov{0}0} 
\end{bmatrix},    
\end{gather*} 
where $\partial, \ov{\partial}$ are 
the connections given by the given trivialization 
of $K$. 

\begin{remark}
\label{rem:P1tt*} 
(i) The fact that the Hermitian metric $h$ is 
represented by a diagonal matrix with determinant $1$ 
follows from an elementary argument. 
See \cite[Lemma 2.1]{takahashi-tt*}. 

(ii) From the general theory 
of (trTERP)$+$(trTLEP) structure on the tangent bundle, 
it follows that $h,C,\tC,\cU,\ov{\cU},\cQ$ are invariant 
under the flow of the unit vector field 
$(\partial/\partial t^0), (\partial/\partial \ov{t^0})$. 
Therefore, the calculation here 
determines the Cecotti-Vafa structure on the big 
quantum cohomology. Moreover we have 
$D_E + \cQ = \partial_E + \mu$ and $\Lie_{E-\ov{E}} h =0$. 
In the case of $\Proj^1$, this means that $h_{\ov{0}0}$  
depends only on $|q|$. See \cite{hertling-tt*}.  

(iii) We can show that our procedure for 
the Birkhoff factorization gives convergent 
series for sufficiently small values of $|q|$. 
In particular, the expansion for 
$h_{\ov{0}0}$ converges for small $|q|$. 
\end{remark} 

Our calculation of the Hermitian metric 
$h_{\ov{0}0}$ matches with 
Cecotti-Vafa's result 
\cite{cecotti-vafa-exactsigma} 
for the sigma model of $\Proj^1$. 
The $tt^*$-equation 
$[D_1,D_{\ov{1}}] + [C_1,\tC_{\ov{1}}] =0$ 
gives the following differential equation 
for $h_{\ov{0}0}$:  
\begin{equation}
\label{eq:diffeq_h00}
\partial_1 \ov{\partial_{1}} 
\log h_{\ov{0}0} = - 
h_{\ov{0}0}^{-2} + |q|^2 h_{\ov{0}0}^2. 
\end{equation} 
In \cite{cecotti-vafa-exactsigma}, $h_{\ov{0}0}$ 
was identified with a unique solution to (\ref{eq:diffeq_h00}) 
expanded in the form 
\[
h_{\ov{0}0} = \sum_{n=0}^\infty F_n|q|^{2n}, \quad 
F_0 = a_\tau, \quad F_n\in \C[a_\tau,a_\tau^{-1}], 
\quad a_\tau = -2 \log |q| -4\gamma. 
\]  
The equation (\ref{eq:diffeq_h00}) gives an infinite 
set of recursive differential equations for $F_n$. 
It is easy to check that the differential equations 
determine the Laurent polynomial $F_n$ \emph{uniquely}; 
Moreover it turns out that 
$F_n\in \Q[a_\tau]$ and $\deg F_n = 2n+1$. 
The \emph{existence} of such a solution 
seems to be non-trivial, however 
our Birkhoff factorization method 
certainly gives such $h_{\ov{0}0}$. 
The differential equation (\ref{eq:diffeq_h00}) is   
equivalent to Painlev\'{e} III 
equation \cite{cecotti-vafa-top-antitop}: 
\[
\frac{d^2u}{dz^2} + \frac{1}{z} \frac{du}{dz} = 4 
\sinh(u), \quad h_{\ov{0}0} = e^{u/2} |e^{-t^1/2}|, 
\quad z = 4 |e^{t^1/2}|.  
\] 
that $h_{\ov{0}0}$ here is positive and 
regular on the positive real axis 
$|q|\in (0,\infty)$\footnote{
For this, the constant $\gamma$ in 
$a_\tau$ must be the very Euler constant.}.  
By physical arguments, Cecotti-Vafa 
\cite{cecotti-vafa-top-antitop, cecotti-vafa-classification, 
cecotti-vafa-exactsigma} 
showed that $h_{\ov{0}0}$ is positive and smooth 
on the positive real axis $0< |q| <\infty$. 
Since the Landau-Ginzburg mirror of $\Proj^1$ 
is defined by a cohomologically tame function, 
this fact also follows from Sabbah's result 
\cite{sabbah} in the singularity theory 
(see Remark \ref{rem:sabbah}). 
Therefore, the Cecotti-Vafa structure for $\Proj^1$ 
is well-defined and positive definite 
on the whole $H^*(\Proj^1)$. 
It seems that the same solution as $h_{\ov{0}0}$ 
has been obtained in the study of 
Painlev\'{e} III equation 
\cite{its-novokshenov, mccoy-tracy-wu-PIII} 
(the first few terms of the expansion are the same as ours).  
If this is the case, $h_{\ov{0}0}$ should have 
the asymptotics \cite{its-novokshenov, mccoy-tracy-wu-PIII} 
(also appearing in \cite{cecotti-vafa-exactsigma}):  
\[ 
h_{\ov{0}0} \sim \frac{1}{\sqrt{|q|}} 
(1-\frac{1}{2\sqrt{\pi}|q|^{1/4}}e^{-8|q|^{1/2}}) 
\] 
as $|q|\to \infty$. 
With respect to the metric $h_{\ov{1}1}=h_{\ov{0}0}^{-1}$ 
on the K\"{a}hler moduli space $H^2(\Proj^1)/2\pi\iu 
H^2(\Proj^1,\Z)$, a neighborhood of the 
large radius limit point $q=0$ has negative curvature, 
but does not have finite volume. 
The curvature 
$-\frac{2}{h_{\ov{0}0}}(1-|q|^2 h_{\ov{0}0}^4)$  
goes to zero as $|q|\to 0$ and $|q|\to \infty$ 
and the total curvature is $-\pi/4$. 
Much more examples including $\Proj^n$, $\Proj^1/\Z_n$ 
are calculated in physics literature. 
We refer the reader to    
\cite{cecotti-vafa-top-antitop,cecotti-vafa-exactsigma, 
cecotti-vafa-massiveorb}. 

\section{Integral periods and Ruan's conjecture} 
\label{sec:integralperiods} 
In mirror symmetry for Calabi-Yau manifolds
(see \emph{e.g.} \cite{CDGP,morrison,deligne}), 
flat co-ordinates (or mirror map) $\tau_i$ on the B-model 
in a neighborhood of a maximally unipotent monodromy point 
was given by periods over integral cycles  
$A_1,\dots, A_r$ of a holomorphic $n$-form $\Omega$ 
\[
\tau_i = \int_{A_i} \Omega,   
\]
where $\Omega$ is normalized by the condition: 
\[
\int_{A_0} \Omega = 1. 
\]
Here, $A_0$ is a monodromy-invariant cycle (unique up to sign)  
and $A_1,\dots, A_r$ are such that 
$\Z A_0 \oplus \sum_{i=1}^r \Z A_i$ is preserved 
under monodromy transformations. 
We should note that \emph{flat co-ordinates 
are constructed as integral periods}. 
In this section, by choosing an integral structure 
on the A-model, we define integral periods of 
quantum cohomology analogously. 
We study relationships between integral periods and flat 
co-ordinates in the conformal limit (\ref{eq:conformal_limit}). 
Using integral periods, we will speculate on 
specialization values of quantum parameters 
appearing in Ruan's crepant resolution conjecture. 
From this viewpoint, the specialization to roots of unity 
seems to be natural. 
Throughout this section, we assume that $\cX$ is a 
weak Fano (\emph{i.e.} $\rho=c_1(\cX)$ is nef) 
Gorenstein orbifold without generic stabilizer. 

\subsection{Integral periods} 
The integral periods for a general \seminf VHS can be defined in 
the following way. Recall that a choice of integral structures 
defines an integral lattice $\cV_\Z$ in the space $\cV$ of 
multi-valued flat sections of 
$(\sfH,\hatnabla_{z\partial z})$ on $\C^*$. 
Take a basis $\Gamma_1,\dots, \Gamma_N$ of $\cV_\Z^\vee$. 
Each $\Gamma_i$ defines a multi-valued section $\Gamma_i(\log z)$ 
of the dual bundle 
$\cH^\vee \to \C^*$. Take a $\C\{z\}$-basis 
$s_1(\tau,z),\dots, s_N(\tau,z)$ of the Hodge structure 
$\F_\tau \subset \cH$. We call the pairing 
\[
\tau \mapsto \pair{\Gamma_i(\log z)}{s_j(\tau,z)}
\]
an \emph{integral period}.  
For the Landau-Ginzburg mirror of toric orbifolds, 
these integral periods are given by 
oscillatory integrals over Lefschetz thimbles.  
These periods are themselves multi-valued functions in $z$  
and not easy to understand. 
They will become more tractable if we choose 
$\Gamma_i$ to be invariant under monodromy 
transformations around $z=\infty$. 

\subsection{A-model integral periods in the conformal limit} 
\label{subsec:periods_conformallimit} 
Consider the A-model \seminf VHS of $\cX$ 
with an integral structure. 
The assumption that $\cX$ is Gorenstein 
implies that the age $\iota_v$ is an integer 
for all $v\in \sfT$. 
Therefore, $H_{\rm orb}^*(\cX)$ is graded by even integers.  
Thus by (\ref{eq:z-monodromy_V}), 
the monodromy transformation $M_z \in \End(\cV^\cX)$ 
around $z=\infty$ is of the form: 
\[
M_z = (-1)^n e^{2\pi\iu\rho} 
\]
where $n=\dim_\C\cX$ and $\rho = c_1(\cX)$. 
We define $\cV^\cX_{\Z,1} \subset  
H^*_{\rm orb}(\cX) $ by 
\[
\cV^\cX_{\Z,1} := \Ker(\id - M_z^2) \cap \cV^\cX_\Z =  
\Ker(\rho) \cap \cV^\cX_\Z. 
\]
Under the map (\ref{eq:solutionmap_z}), 
an element of $\cV^\cX_{\Z,1}$ corresponds 
to a flat section of $(\sfH^\cX,\hatnabla_{z\partial_z})$ 
which is single-valued (when $n$ is even) or 
two-valued (when $n$ is odd). 
For convenience, we introduce the space $\hcH^\cX$ 
of possibly two-valued sections of $\sfH^\cX \to \C^*$:  
\[
\hcH^\cX = \cH^\cX\otimes_{\C\{z,z^{-1}\}} 
\C\{z^{1/2},z^{-1/2}\}.  
\]
The pairing on $\cH^\cX$ is extended on $\hcH^\cX$ 
by 
\[
(\alpha,\beta)_{\cH^\cX} = 
(\alpha(\iu z^{1/2}), \beta(z^{1/2}))_{\rm orb} 
\]
where we regard $\alpha,\beta\in \hcH^\cX$ 
as cohomology-valued functions in $z^{1/2}$. 
Under (\ref{eq:solutionmap_z}), $A \in \cV^\cX_{\Z,1}$ 
corresponds to $z^{-\mu}A \in \hcH^\cX$ and 
gives an integral period:   
\begin{equation}
\label{eq:monodromyfree_integralperiod}
\tau \mapsto (z^{-\mu}A, \cJ_\tau(\alpha))_{\cH^\cX}  
\in \C\{z^{1/2},z^{-1/2}\},   
\end{equation} 
where $\alpha\in H_{\rm orb}^*(\cX)\otimes \C\{z\}$ and 
$\cJ_\tau \colon H_{\rm orb}^*(\cX)\otimes \C\{z\} 
\to \cH^\cX$ is an embedding in (\ref{eq:fundamentalsol_A}). 
These integral periods behaves well 
in the following limit:  
\begin{equation}
\label{eq:conformal_limit}
\tau - s \rho, \quad \Re(s) \to \infty  
\end{equation} 
with a fixed $\tau \in H^2_{\rm orb}(\cX)$. 
We call such a sequence in $H^2_{\rm orb}(\cX)$ 
the \emph{conformal limit}. 
Recall that we assumed that $\rho = c_1(\cX)$ is nef. 
For the embedding $\cJ_\tau$ in (\ref{eq:fundamentalsol_A}), 
we define $\cJ^{\rm CY}_\tau\colon 
H^*_{\rm orb}(\cX)\otimes \C\{z\} \to \cH^\cX$ as 
\begin{align} 
\label{eq:CY_Jfunct}
\begin{split}
\cJ_{\tau}^{\rm CY}(\alpha) 
&:= \lim_{\Re(s) \to \infty} 
e^{s\rho/z} \cJ_{\tau-s\rho}(\alpha) \\ 
& = e^{\tau_{0,2}/z}\left( \alpha + 
\sum_{\substack{(d,l)\neq (0,0),\\ d\in \Ker(\rho)}} 
\sum_{i=1}^N \frac{1}{l!}
\corr{\alpha,\tau_{\rm tw},\dots,\tau_{\rm tw}, 
\frac{\phi_i}{z-\psi}}_{0,l+2,d}^\cX 
e^{\pair{\tau_{0,2}}{d}}\phi^i \right).  
\end{split} 
\end{align} 
Here we put $\tau = \tau_{0,2} + \tau_{\rm tw}$ with 
$\tau_{0,2} \in H^2(\cX)$ and 
$\tau_{\rm tw} \in \bigoplus_{\iota_v=1} H^0(\cX_v)$ 
and used that $\pair{\rho}{d}\ge 0$ for all $d \in \Eff_\cX$. 
When $\alpha \in H^{2k}_{\rm orb}(\cX)$, 
$\cJ^{\rm CY}_\tau(\alpha)$ is homogeneous of degree $2k$ 
if we set $\deg(z) =2$.  
From this calculation, the following definition makes sense. 

\begin{definition}
\label{def:CY_limit_VHS}
Assume that $\rho=c_1(\cX)$ is nef. 
Then we can define a new \seminf VHS 
$\F_\tau^{\rm CY}\subset \cH^\cX$ 
(in the moving subspace realization) by  
\[
\F_\tau^{\rm CY} := \lim_{\Re(s)\to \infty} 
e^{s\rho/z} \F_{\tau-s\rho}
= \cJ_\tau^{\rm CY}(H^*_{\rm orb}(\cX)\otimes \C\{z\}), 
\quad \tau 
\in H^2_{\rm orb}(\cX).  
\]
This satisfies $
\F^{\rm CY}_{\tau+ a\rho} = e^{a\rho/z} \F^{\rm CY}_{\tau}$ 
and is homogeneous 
$(z\partial_z +\mu) \F^{\rm CY}_\tau \subset 
\F^{\rm CY}_\tau$. 
\end{definition}  
\begin{remark}
We can also define the new \seminf VHS above
as a sheaf of $\cO_{H^2_{\rm orb}(\cX)}\{z\}$-modules, 
using Dubrovin connection associated with  
a new quantum product  
$\circ^{\rm CY}_\tau := \lim_{\Re(s)\to \infty} \circ_{\tau-s\rho}$.   
The conformal limit of quantum cohomology 
is closely related to Y.\ Ruan's quantum corrected ring 
\cite{ruan-crc}, which is defined by 
counting rational curves contained 
in the exceptional locus (in the case of 
crepant resolution). 
The conformal limit of a \seminf VHS  
appears in the work of 
Sabbah \cite[Part I]{sabbah-hypergeometric}   
as the associated graded of a free 
$\C[z]$-module $G_k$ (an algebraization of $z^{-k}\F_\tau$) 
with respect to the  Kashiwara-Malgrange 
$V$-filtration at $z=\infty$. 
See also Hertling and Sevenheck 
\cite[Section 7]{hertling-sevenheck} for a review. 
\end{remark}

Because of the homogeneity of $\F^{\rm CY}_{\tau}$, 
the \seminf VHS $\{\F^{\rm CY}_\tau\subset \cH^\cX\}$ 
reduces to a \emph{finite dimensional} VHS. 
Set $\hbF^{\rm CY}_\tau := \F_\tau^{\rm CY} \otimes_{\C\{z\}} 
\C\{z^{1/2}\} \subset \hcH^\cX$ 
and $H_0:= \Ker(z\partial_z +\mu) \subset \hcH^\cX$.  
By restriction, the pairing on $\hcH^\cX$ 
induces a $(-1)^n$-symmetric $\C$-valued pairing 
$(\cdot,\cdot)_{H_0}$ on $H_0$. 
By restricting the semi-infinite flag 
$\cdots \supset z^{-1} \hbF^{\rm CY}_\tau \supset 
\hbF^{\rm CY}_\tau \supset z 
\hbF^{\rm CY}_\tau \supset \cdots$ to $H_0$, 
we obtain a finite dimensional flag 
$H_0= F_\tau^0 \supset F_\tau^1 
\supset \cdots \supset F_\tau^n \supset 0$: 
\begin{align*} 
F^{p}_\tau &:= z^{p-n/2}\hbF^{\rm CY}_\tau \cap H_0 \\ 
& = \Span \left
\{ z^{p-n/2} \cJ^{\rm CY}_{\tau}(z^j \alpha)\;;\; \alpha 
\in H^{2n-2p-2j}_{\rm orb}(\cX),j\ge 0 
\right\}.   
\end{align*} 
One can check that $F^p_\tau$ satisfies 
the Griffiths transversality and 
Hodge-Riemann bilinear relation: 
\[
\parfrac{}{t^i} F_\tau^p \subset F_\tau^{p-1}, \quad 
(F_\tau^p, F_\tau^{n-p+1})_{H_0} =0. 
\]
The real involution $\kappa_\cH$ on $\cH^\cX$ induces  
those on $\hcH^\cX$ and $H_0$ since 
$z\partial_z + \mu$ is purely imaginary on $\cH^\cX$ 
by (\ref{eq:grading_imaginary_H}).   
Denote by $\kappa_{H_0}$ the real involution on $H_0$. 
When moreover $\F^{\rm CY}_\tau$ is pure and polarized 
(these properties hold near the large radius limit 
if the conditions of Theorem 
\ref{thm:pure_polarized} are satisfied), 
one can easily check that $F^p_{\tau}$ satisfies 
the Hodge decomposition and 
Hodge-Riemann bilinear inequality:  
\[
H_0 = F^p_\tau \oplus \kappa_{H_0}(F^{n-p+1}_\tau), 
\quad 
(-\iu)^{2p-n} (\phi, \kappa_{H_0}(\phi))_{H_0} >0 
\]
where $\phi \in F^{p}_\tau \cap \kappa_{H_0}(F^{n-p}_\tau) 
= z^{p-n/2} 
(\hbF^{\rm CY}_\tau \cap \kappa_{\cH}(\hbF^{\rm CY}_\tau)) 
\cap H_0$. 
Conversely, this finite dimensional VHS $F^\bullet_\tau$ 
recovers the \seminf VHS $\F^{\rm CY}_\tau$ by 
\[
\F_\tau^{\rm CY} = z^{-n/2} F^n_\tau \otimes \C\{z\} + 
z^{-n/2+1} F^{n-1}_\tau \otimes \C\{z\} + 
\dots  + z^{n/2} F^0_\tau \otimes \C\{z\}.  
\]

In contrast to the real structure, 
the integral structure on the A-model \seminf VHS 
does not induce a full integral lattice of $H_0$. 
One can see however that the lattice 
$\cV^\cX_{\Z,1}$ is naturally 
contained in $H_0$ by $A \mapsto  z^{-\mu} A$  
as a {\it partial} lattice.  
The pairing between $z^{-\mu}A$ with $A\in\cV^\cX_{\Z,1}$ 
and a section of $F^p_\tau$ gives an integral period 
for $F^p_\tau$. 
Let us consider the conformal limit of the A-model 
integral period (\ref{eq:monodromyfree_integralperiod}) 
with $\alpha \in H^{2p}_{\rm orb}(\cX)$: 
\begin{align*} 
\lim_{\Re(s)\to \infty} 
(z^{-\mu}A, \cJ_{\tau-s\rho} (\alpha))_{\cH^\cX} 
&= 
\lim_{\Re(s)\to \infty} 
(z^{-\mu}A, e^{s\rho} \cJ_{\tau - s\rho}(\alpha))_{\cH^\cX} \\ 
&= z^{p-n/2} 
(z^{-\mu}A, z^{n/2-p} \cJ_{\tau}^{\rm CY}(\alpha))_{H_0} 
\in  z^{p-n/2} \C.    
\end{align*} 
Note that the last line is a period of  
$z^{n/2-p} \cJ_{\tau}^{\rm CY}(\alpha) \in F^{n-p}_\tau$. 
Therefore, \emph{the A-model integral period 
(\ref{eq:monodromyfree_integralperiod}) 
approaches to a period of the finite dimensional VHS 
in the conformal limit}. 
Note that in this limit, the integral period depends only on 
$\tau \in H^2_{\rm orb}(\cX)/\C\rho$. 

Now we focus on integral periods for $F^n_\tau \subset H_0$. 
Note that $F^n_\tau = z^{n/2} \hbF^{\rm CY}\cap H_0 
=z^{n/2}\cJ^{\rm CY}_\tau(H^0_{\rm orb}(\cX))$ 
is one dimensional over $\C$. 
We use the Galois action (monodromy action) to choose
a good set of integral vectors in $\cV_{\Z,1}^\cX$. 
Let $L$ be a line bundle on $\cX$ which is a 
pull-back of an ample line bundle 
on the coarse moduli space $X$ of $\cX$. 
Then the Galois action $G^\cV([L])
\in \End(\cV^\cX)$ in (\ref{eq:Galois_V}) is unipotent 
since $f_v([L])=0$ for a pulled-back line bundle $L$. 
Take a weight filtration $W_k$ on $\cV^\cX$ 
defined by 
the logarithm $\Log(G^\cV([L])) = - 2\pi\iu c_1(L)$.  
See the proof of Proposition \ref{prop:basic_realstr} 
for the weight filtration. 
This is given by (independent of a choice of $L$)  
\begin{equation}
\label{eq:weightfiltr_ampleLoncoarse} 
W_k = \bigoplus_{v\in \sfT} H^{\ge n_v-k}(\cX_v).  
\end{equation} 
The weight filtration is defined over $\Q$. 
We will also use the subspace 
$\Ker(H^2(\cX)) = \{\alpha\in \cV^\cX \;;\; 
\tau_{0,2}\cdot \alpha=0, \forall 
\tau_{0,2}\in H^2(\cX)\}$. 
Since this consists of $\alpha\in \cV^\cX$ 
satisfying $G^\cV(\xi) \alpha = \alpha$ for every 
integral cohomology class $\xi\in H^2(X,\Z)$ 
on the coarse moduli space, 
this is also defined over $\Q$. 
These subspaces define 
the following filtration on $\cV_{\Z,1}^\cX$: 
\[
(W_{-n} \cap \cV_{\Z,1}^\cX) \subset 
(\Ker(H^2(\cX))\cap W_{-n+2} \cap \cV_{\Z,1}^\cX) \subset 
(W_{-n+2} \cap \cV_{\Z,1}^\cX)  
\]
which are full lattices of the vector spaces: 
\[
H^{2n}(\cX)\subset 
H^{2n}(\cX) \oplus \bigoplus_{n_v=n-2} H^{2n_v}(\cX_v) 
\subset 
(H^{\ge 2n-2}(\cX)\cap \Ker(\rho)) 
\oplus \bigoplus_{n_v=n-2} H^{2 n_v}(\cX_v). 
\]
Note that by the Gorenstein assumption, 
there is no $v\in \sfT$ satisfying $n_v=n-1$ and 
that $n_v=n-2$ implies $\iota_v=1$. 
Thus these subspaces are contained in 
$H^{\ge 2n-2}_{\rm orb}(\cX)$. 
We take integral vectors $A_0,A_1,\dots, A_\flat$, 
$A_{\flat+1},\dots,A_\sharp$ in $\cV_{\Z,1}^\cX$ 
compatible with this filtration: 
\begin{align*}
W_{-n} \cap \cV_{\Z,1}^\cX &= \Z A_0, \\
\Ker(H^2(\cX)) \cap W_{-n+2} \cap \cV_{\Z,1}^\cX 
& = \Z A_0 + \textstyle\sum_{i=1}^\flat \Z A_i, \\ 
W_{-n+2} \cap \cV_{\Z,1}^\cX 
&= \Z A_0 + \textstyle\sum_{i=1}^\flat \Z A_i
+ \sum_{i=\flat+1}^\sharp \Z A_i. 
\end{align*} 
The vector $A_0$ is unique up to sign 
and invariant under all Galois action. 
In analogy with the Calabi-Yau B-model,  
we normalize a generator $\Omega_\tau \in F^n_\tau$ 
by the condition 
\begin{equation}
\label{eq:normalization_Omega} 
(z^{-\mu}A_0, \Omega_\tau)_{H_0} = 1.  
\end{equation} 

\begin{proposition} 
\label{prop:integralperiods_basic} 
For $\tau \in H^2_{\rm orb}(\cX)$, 
we write $\tau = \tau_{0,2} + \tau_{\rm tw} = 
\tau_{0,2} + \tau_{\rm tw}' + \tau_{\rm tw}''$ 
with $\tau_{0,2} \in H^2(\cX)$, 
$\tau_{\rm tw} \in \bigoplus_{\iota_v=1}H^0(\cX_v)$, 
$\tau_{\rm tw}'\in \bigoplus_{n_v = n-2}H^0(\cX_v)$ 
and $\tau_{\rm tw}''\in 
\bigoplus_{n_v<n-2, \ \iota_v=1} H^0(\cX_v)$. 
Define $a_i:= (A_i,1)_{\rm orb}$. 
Under the normalization (\ref{eq:normalization_Omega}), 
we have $\Omega_\tau = 
\iu^n a_0^{-1} z^{n/2} \cJ^{\rm CY}_\tau(1)$ 
and the integral periods $(z^{-\mu}A_i,\Omega_\tau)_{H_0}$  
give an affine co-ordinate system 
on $(H^2(\cX)/\C\rho) \oplus \bigoplus_{n_v=n-2} H^0(\cX_v)$:   
\begin{align*} 
(z^{-\mu}A_i, \Omega_\tau)_{H_0} &= 
a_0^{-1}a_i - (a_0^{-1}A_i,\tau_{\rm tw}')_{\rm orb}, \quad 
1\le i\le \flat,  \\ 
(z^{-\mu}A_i, \Omega_\tau)_{H_0} & =
a_0^{-1}a_i  - (a_0^{-1}A_i,\tau_{\rm tw}')_{\rm orb} 
- \frac{1}{2\pi\iu} [C_i]\cap\tau_{0,2},      
\quad \flat+ 1\le i \le \sharp 
\end{align*} 
where $[C_i]\in H_2(\cX)$ is the Poincar\'{e} dual 
of the $H^{2n-2}(\cX)$-component of $2\pi\iu a_0^{-1} A_i$ 
and 
\begin{equation}
\label{eq:integer_A_H2} 
[C_i]\in H_2(X,\Z)\cap \Ker\rho, \quad 
\text{where $X$ is the coarse moduli space of $\cX$}.  
\end{equation} 
$[C_{\flat+1}],\dots,[C_{\sharp}]$ form a 
$\Q$-basis of $H_2(X,\Q)\cap \Ker\rho$. 
The period for 
$B \in \Ker(H^2(\cX)) \cap \cV^\cX_{\Z,1}$ is possibly non-linear 
and has the asymptotic 
\[
(z^{-\mu}B, \Omega_\tau)_{H_0} \sim 
a_0^{-1}b - (a_0^{-1} B,\tau_{\rm tw})_{\rm orb}, \quad 
b := (B,1)_{\rm orb}  
\]
as $\tau$ goes to the large radius limit point:  
\[ 
\Re(\pair{\tau_{0,2}}{d}) \to -\infty, \quad 
\forall d\in \Eff_{\cX}\setminus \{0\}, 
\quad \tau_{\rm tw} \to 0.  
\] 
For the constant terms $a_0^{-1} a_i$, $a_0^{-1} b$ of 
integral periods, we have the following: 
\begin{itemize} 
\item[(i)] 
If the following condition holds, 
\begin{align} 
\label{eq:sep_codim2_trivial}
&\forall v \in \sfT \ 
(n_v = n-2 \ \Longrightarrow \ 
\exists \xi\in H^2(\cX,\Z) \text{ such that }  f_v(\xi)>0),  
\end{align} 
we have  $a_0^{-1} a_i  \in \Q$ for $1\le i\le \flat$.  
\item[(ii)] 
If moreover $H^*(\cX)$ is generated by $H^2(\cX)$ as a ring 
and the following holds,  
\begin{equation}
\label{eq:sep_trivial}
\forall v \in \sfT \ 
(v \neq 0 \ \Longrightarrow \ \exists 
\xi \in H^2(\cX,\Z) \text{ such that } f_v(\xi)>0), 
\end{equation} 
we have $a_0^{-1} b\in \Q$ for $b=(B,1)_{\rm orb}$ 
and $B\in \Ker(H^2(\cX))\cap \cV_{\Z,1}^\cX$. 
\item[(iii)] 
If the following holds for the integral structure,   
\begin{equation} 
\label{eq:curveclass_overQ}
(H^{2n-2}_{\rm orb}(\cX)\cap W_{-n+2} 
\cap \Ker\rho)  
\subset \cV^\cX   
\text{ is defined over $\Q$,} 
\end{equation} 
we have $a_0^{-1} a_i \in \Q$ for 
$1\le i \le\sharp$.  
\end{itemize}  

\end{proposition}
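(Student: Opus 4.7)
The plan is to first identify $\Omega_\tau$ by computing one pairing, then derive each period formula by expanding $\cJ^{\rm CY}_\tau(1)$ along the orbifold-degree filtration, and finally establish the rationality from the Galois action on the $\hGamma$-integral lattice.

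First, since $F^n_\tau = z^{n/2}\cJ^{\rm CY}_\tau(H^0_{\rm orb}(\cX))$ is one-dimensional over $\C$, we must have $\Omega_\tau = c\cdot z^{n/2}\cJ^{\rm CY}_\tau(1)$ for some $c\in\C$. To pin down $c$ I would evaluate $(z^{-\mu}A_0, z^{n/2}\cJ^{\rm CY}_\tau(1))_{H_0}$ using the extension of the $\cH^\cX$-pairing to $\hcH^\cX$ via $(\alpha,\beta)_{\hcH^\cX}=(\alpha(\iu z^{1/2}),\beta(z^{1/2}))_{\rm orb}$; since $\mu A_0 = (n/2) A_0$, the pairing reduces to $\iu^{-n}(A_0,\cJ^{\rm CY}_\tau(1))_{\rm orb}$, and $A_0 \in H^{2n}(\cX)$ picks out only the $H^0(\cX)$-component of $\cJ^{\rm CY}_\tau(1)$. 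A string equation plus dimension argument (the virtual dimension $n-1+l$ of $\cX_{0,l+2,d}$ for $d\in\Ker\rho$ cannot balance the degree of a $H^{2n}(\cX)$-insertion $\phi_i$ even with descendants) shows this component is identically $1$, so $c = \iu^n a_0^{-1}$.

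Next, writing $A_i\in W_{-n+2}\cap\cV^\cX_{\Z,1}$ as a sum of components in $H^{2n}(\cX)$, $H^{2n-2}(\cX)$ and $\bigoplus_{n_v=n-2}H^{2n_v}(\cX_v)$ (the Gorenstein no-generic-stabilizer hypothesis rules out $n_v=n-1$), each piece acquires a sign from the factor $\iu^n e^{-\pi\iu\mu}$: $+1$ on the top degree, $-1$ on orbifold degree $2n-2$. The key expansion is
\[
z^{n/2}\cJ^{\rm CY}_\tau(1) = z^{n/2}\cdot 1 + z^{n/2-1}(\tau_{0,2}+\tau_{\rm tw}) + (\text{orbifold degree }\ge 4),
\]
where the $z^{n/2-1}$-coefficient carries no quantum corrections: every potential correlator $\corr{1,\tau_{\rm tw}^l,\phi_i}_{0,l+2,d}$ with $\phi_i\in H^{2n-2}(\cX)$ untwisted is killed by the string equation (either because $l+2\ge 4$ or $d\ne 0$, or because the surviving case $l=1,d=0$ gives $(\tau_{\rm tw},\phi_i)_{\rm orb}=0$). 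Reading off the pairings: $A_i^{(n)}$ contributes $a_0^{-1}a_i$; $A_i^{(n-1)}$ contributes $-a_0^{-1}(A_i^{(n-1)},\tau_{0,2})_{\rm orb}$, which vanishes for $1\le i\le\flat$ by the $\Ker H^2$ condition and otherwise equals $-\tfrac{1}{2\pi\iu}[C_i]\cap\tau_{0,2}$ after setting $[C_i]:=2\pi\iu a_0^{-1}\mathrm{PD}(A_i^{(n-1)})$; and $A_i^{\rm tw}$ contributes $-(a_0^{-1}A_i,\tau_{\rm tw}')_{\rm orb}$. The claims on $[C_i]$ follow because $A_i\in\Ker\rho\cap\cV^\cX_\Z$ forces $\pair{\rho}{[C_i]}=0$, and integrality/unimodularity of the $\hGamma$-integral pairing provides the $H_2(X,\Z)$-structure (the $2\pi\iu a_0^{-1}$ factor precisely absorbs the $\hGamma$-normalization). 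Linear independence over $\Q$ of $[C_{\flat+1}],\ldots,[C_\sharp]$ is built into their definition as representatives of a basis of the quotient $W_{-n+2}\cap\cV^\cX_{\Z,1}/(\Ker H^2\cap W_{-n+2}\cap\cV^\cX_{\Z,1})$. For the asymptotic of $B\in\Ker H^2\cap\cV^\cX_{\Z,1}$, the $\Ker H^2$ hypothesis annihilates all $\tau_{0,2}^j$-terms with $j\ge 1$ in the orbifold pairing, so the same expansion of $\cJ^{\rm CY}_\tau(1)$ yields $a_0^{-1}b-(a_0^{-1}B,\tau_{\rm tw})_{\rm orb}$ plus subleading contributions that vanish in the large-radius limit.

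For the rationality assertions, the $\hGamma$-integral structure sits inside the $\Q$-subspace $\cV^\cX_\Q := \Psi(K(\cX)\otimes\Q)\subset\cV^\cX$, which is preserved by every Galois operator $G^\cV(\xi)$ (the eigenvalues $e^{2\pi\iu f_v(\xi)}$ are roots of unity, so the action is defined over some cyclotomic extension of $\Q$). Under (\ref{eq:sep_codim2_trivial}), for each $v$ with $n_v = n-2$ one picks $\xi$ with $f_v(\xi)>0$; using (\ref{eq:Galois_V}) the operator $G^\cV(\xi)$ acts as identity on $H^{2n}(\cX)$ and on $\Ker H^2\cap H^{2n-2}(\cX)$ (since $\xi_0\cup$ kills both), and multiplies the $H^{2n_v}(\cX_v)$-part by $e^{2\pi\iu f_v(\xi)}\ne 1$. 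Galois-equivariant $\Q$-decomposition therefore isolates $\Q A_0\subset H^{2n}(\cX)$ as a direct summand of $\cV^\cX_\Q\cap\Ker H^2\cap W_{-n+2}$, so that the projection to $\Q A_0$ of any integer vector is rational, yielding $a_0^{-1}a_i\in\Q$. Part (ii) follows by the same argument using (\ref{eq:sep_trivial}) to separate all twisted sectors together with $H^2$-generation to force $\Ker H^2\cap H^{2k}(\cX)=0$ for $0<k<n$; part (iii) is immediate under (\ref{eq:curveclass_overQ}). The principal technical obstacle throughout is the quantum-correction analysis of the $z^{-1}$-level of $\cJ^{\rm CY}_\tau(1)$ --- one must verify carefully via the string and unit axioms in orbifold Gromov--Witten theory that no $d\ne 0$ contribution survives in the $H^2(\cX)$-direction, since this clean vanishing is what makes the periods genuinely linear (rather than merely asymptotically linear) in the $\tau_{0,2}$ and $\tau_{\rm tw}$ variables.
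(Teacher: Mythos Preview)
Your overall strategy---expand $\cJ^{\rm CY}_\tau(1)$ via the string equation, read off the periods by pairing with the components of $A_i$, and use the Galois action for rationality---matches the paper's proof. The paper compresses the expansion step into the single observation $\cJ^{\rm CY}_\tau(1) = 1 + \tau/z + z^{-2}H^{\ge 4}_{\rm orb}(\cX)\otimes\C\{z^{-1}\}$, which follows at once from homogeneity and the string-equation form of $\cJ^{\rm CY}_\tau(1)$; this makes your correlator-by-correlator vanishing discussion unnecessary but your version is not wrong.

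There is, however, a genuine gap. The proposition is stated for an \emph{arbitrary} integral structure on the A-model \seminf VHS, yet you repeatedly invoke the $\hGamma$-integral structure: your $\cV^\cX_\Q := \Psi(K(\cX)\otimes\Q)$ and, more seriously, your justification of $[C_i]\in H_2(X,\Z)$ via ``the $2\pi\iu a_0^{-1}$ factor precisely absorbs the $\hGamma$-normalization'' are specific to that case and not obviously valid in general. The paper's argument for $[C_i]\in H_2(X,\Z)$ works for any integral structure and is worth internalizing: for $\xi\in H^2(X,\Z)$ pulled back from the coarse moduli space one has $f_v(\xi)=0$ for all $v$, so $G^\cV(\xi)=e^{-2\pi\iu\xi_0}$; since $A_i\in W_{-n+2}$ one has $\xi_0^2 A_i=0$, hence $G^\cV(\xi)A_i = A_i - 2\pi\iu\xi_0 A_i$. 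Galois-invariance of the lattice forces $2\pi\iu\xi_0 A_i\in\cV^\cX_\Z\cap W_{-n}\cap\Ker\rho = \Z A_0$, say $2\pi\iu\xi_0 A_i = m_i A_0$ with $m_i\in\Z$, and then $[C_i]\cap\xi = a_0^{-1}(2\pi\iu\xi_0 A_i,1)_{\rm orb} = m_i$. Your rationality arguments for (i)--(iii) have the right shape but should likewise be phrased in terms of the $\Q$-structure $\cV^\cX_\Z\otimes\Q$ coming from the given integral lattice (preserved by Galois by axiom), not $\Psi(K(\cX)\otimes\Q)$.
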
 
\begin{proof} 
By (\ref{eq:CY_Jfunct}) and the string equation (see \cite{AGV}), 
$\cJ^{\rm CY}_{\tau}(1)$ has the following expansions: 
\begin{align*} 
\cJ^{\rm CY}_\tau(1) & = e^{\tau_{0,2}/z} 
\left (1 + \frac{\tau_{\rm tw}}{z} + 
\sum_{\substack{d\in \Eff_\cX\cap \Ker(\rho),  
\\ l\ge 0,\\ 
d=0 \Rightarrow l\ge 2.}}
\sum_{i=1}^N 
\corr{\tau_{\rm tw},\dots,\tau_{\rm tw},  
\frac{\phi_i}{z(z-\psi)} }_{0,l+1d}^\cX 
e^{\pair{\tau_{0,2}}{d}} \phi^i \right ) \\ 
& = 1 + \frac{\tau}{z} + 
z^{-2} H^{\ge 4}_{\rm orb}(\cX)\otimes \C\{z^{-1}\}.  
\end{align*}  
The forms of $\Omega_\tau$, 
$(z^{-\mu}A_i,\Omega_\tau)_{H_0}$ and $(z^{-\mu}B,\Omega_\tau)$ 
easily follow from these expansions. 
If $\xi \in H^2(X,\Z)$ 
is an integral class on the coarse moduli space, 
we have $G^{\cV}(\xi) = e^{-2\pi\iu\xi}$ by (\ref{eq:Galois_V}).  
Because the Galois action preserves the integral structure, 
$e^{-2\pi\iu\xi} A_i = A_i - m_i A_0$ 
for some integer $m_i$. Here, 
$2\pi\iu\xi A_i = m_i A_0$. 
Hence, 
$[C_i]\cap\xi = (2\pi\iu a_0^{-1} A_i, \xi)_{\rm orb} = 
a_0^{-1} (2\pi\iu\xi A_i, 1)_{\rm orb} = m_i\in \Z$. 
This shows (\ref{eq:integer_A_H2}). 
We set $V:= H^{2n}(\cX) \oplus 
\bigoplus_{n_v=n-2} H^{2n_v}(\cX_v) \subset \cV^\cX$. 
The Galois action preserves the full-lattice 
$\Z A_0 + \sum_{i=1}^\flat \Z A_i$ of $V$. 
The Galois action on $V$ is simultaneously diagonalizable. 
Under the condition (\ref{eq:sep_codim2_trivial}), 
$\C A_0$ gives the simultaneous eigenspace of eigenvalue $1$ 
and $V'= \bigoplus_{n_v=n-2} H^{2n_v}(\cX_v)$ 
gives the sum of simultaneous eigenspaces other than $\C A_0$. 
Then the direct sum decomposition 
\[
V = \C A_0 \oplus V' 
\]
is actually defined over $\Q$ since 
this is invariant under the Galois group over $\Q$. 
Therefore, there exists a rational number 
$c_i$ and $A_i'\in V'$ 
such that $A_i = c_i A_0 + A_i'$ for $1\le i\le \flat$. 
Hence $a_i=(A_i,1)_{\rm orb} = c_i (A_0,1)_{\rm orb}
= c_i a_0$. 
This shows (i). 
On the other hand, the Galois action on 
$\Ker(H^2(\cX))$ is again simultaneously diagonalizable 
and preserves its full lattice 
$\Ker(H^2(\cX))\cap \cV^\cX_{\Z,1}$. 
When $H^*(\cX)$ is generated by $H^2(\cX)$, we have 
$\Ker(H^2(\cX)) \cap H^*(\cX) = H^{2n}(\cX)$ 
by Poincar\'{e} duality. 
Therefore, if moreover (\ref{eq:sep_trivial}) holds, 
$H^{2n}(\cX)= \C A_0$ 
is the simultaneous eigenspace of eigenvalue $1$ 
of the Galois action on $\Ker(H^2(\cX))$. 
For the same reason as above, the decomposition 
$\Ker(H^2(\cX)) = \C A_0 \oplus 
(\bigoplus_{v\in \sfT'} H^*(\cX_v) \cap  \Ker(H^2(\cX)))$ 
is defined over $\Q$. 
(ii) follows from this. 
The condition (\ref{eq:curveclass_overQ}) 
implies the decomposition over $\Q$: 
$W_{-n+2}\cap \Ker\rho = H^{2n}(\cX) \oplus 
(H_{\rm orb}^{2n-2}(\cX)\cap W_{-n+2} \cap \Ker\rho)$.  
(iii) follows from this similarly. 
\end{proof} 

\begin{remark}
The conditions (\ref{eq:sep_codim2_trivial}), 
(\ref{eq:sep_trivial}) are weaker versions of 
(\ref{eq:separation}). 
The condition (\ref{eq:curveclass_overQ}) 
does not seem to follow from monodromy consideration.  
But this happens for the $\hGamma$-integral structures. 
See Example \ref{ex:hGamma} below. 
\end{remark} 

\begin{example}
\label{ex:hGamma} 
Taking the $\hGamma$-integral structure 
in Definition-Proposition \ref{def-prop:A-model_int}, 
we give explicit examples of A-model integral periods.  
By a natural map from the $K$-group of coherent sheaves 
to the $K$-group of topological orbifold vector bundles, 
we can regard a coherent sheaf as an element of $K(\cX)$. 
The integral vector $A_0 \in W_{-n}\cap \cV^\cX_{\Z,1}$ 
is given by the image of the structure sheaf 
$\cO_x$ of a non-stacky point $x\in \cX$: 
\[
A_0 = \Psi([\cO_x]) = 
\frac{(2\pi\iu)^n}{(2\pi)^{n/2}}
[\pt]. 
\] 
Here, we used the Poincar\'{e} duality to identify 
$[\pt]\in H_0(\cX)$ with an element in $H^{2n}(\cX)$.  
Hence we have 
$\Omega_\tau = (2\pi)^{-n/2} z^{n/2} 
\cJ^{\rm CY}_\tau(1)$. 

(i) Let $\cX=X$ be a manifold and $C\subset X$ be a 
smooth curve of genus $g$ such that $[C]\cap \rho =0$.  
Then $[\cO_C(g-1)]$ gives an integral vector 
$A_C\in W_{-n+2} \cap \cV_{\Z,1}^\cX$ 
\[
A_C: = \Psi([\cO_C(g-1)]) = 
\frac{(2\pi\iu)^{n-1}}{(2\pi)^{n/2}} [C] 
\]
and an integral period 
\[
(z^{-\mu}A_C, \Omega_\tau)_{H_0} 
= - \frac{1}{2\pi\iu} [C]\cap \tau. 
\]

(ii) Let $\Psi([V])$ be any integral vector 
in $W_{-n+2}\cap \cV_{\Z,1}^\cX$.  
Since $\hGamma_\cX$ on the untwisted sector 
is of the form $1 - \gamma \rho + \text{higher degree}$  
($\gamma$ is the Euler constant), it follows that  
the $H^{2n}(\cX)$-component of $\Psi([V])$ belongs to 
$(2\pi)^{-n/2}(2\pi\iu)^n H^{2n}(\cX,\Q)=\Q A_0$. 
This implies that the component projection 
$W_{-n+2}\cap \cV_{\Z,1}^\cX
\to H^{2n}(\cX) = \C A_0$ is defined over $\Q$. 
Therefore, the condition (\ref{eq:curveclass_overQ}) 
holds for the $\hGamma$-integral structure. 
We have 
\[
(z^{-\mu}\Psi([V]), \Omega_{\tau})_{H_0} 
= \int_{\cX} \ch(V) - 
(a_0^{-1}\Psi([V]), \tau'_{\rm tw})_{\rm orb}
- \frac{1}{2\pi\iu}[C]\cap\tau_{0,2}.  
\]
for some $[C]\in H_2(X,\Z)\cap \Ker\rho$ 
and $a_0 = (2\pi)^{-n/2}(2\pi\iu)^n$. 

(iii) Let $y \in \cX$ be a possibly stacky point. 
Let $\clubsuit\colon \Aut(y) \to \End(V)$ be a 
finite dimensional representation of 
the automorphism group of $y$. 
This defines a coherent sheaf $\cO_y\otimes V$ 
supported on $y$ and an integral vector 
$A_{(y,V)} 
:= \Psi([\cO_y\otimes V]) \in 
\Ker(H^2(\cX)) \cap \cV_{\Z,1}^\cX$.  
Using Toen's Riemann-Roch formula \cite{toen}, 
one calculates 
\[
A_{(y,V)}  
= \frac{(2\pi\iu)^n}{(2\pi)^{n/2}}  
\sum_{(g)\subset \Aut(y)}  
\frac{(-1)^{n+n_{v(g)}+\iota_{v(g)}}\Tr(\clubsuit(g^{-1}))}
{|C(g)| \prod_{j=1}^{n-n_{v(g)}} \Gamma(f_{g,j})} 
[\pt]_{v(g)}, 
\]
where the sum is over all conjugacy classes 
$(g)$ of $g\in \Aut(y)$, 
$C(g)$ is the centralizer of $g$, 
$v(g)\in \sfT$ is the inertia component 
containing $(y,g) \in I\cX$, 
$[\pt]_{v(g)}$ is the homology class of a point 
on $\cX_{v(g)}$ (represented by a map $\pt \to \cX_v$ of stacks), 
$f_{g,1},\dots f_{g, n-n_{v(g)}}$ are 
rational numbers in $(0,1)$ such that 
$\{e^{2\pi\iu f_{g,j}}\}_j$ is a multi-set of 
the eigenvalues $\neq 1$ of 
the $g$ action on $T_y\cX$. 
The corresponding integral period has the asymptotic 
\begin{align*} 
(z^{-\mu}A_{(y,V)}, \Omega_\tau)_{H_0} 
 \sim  \frac{\dim(V)}{|\Aut(y)|} + 
\sum_{\substack{(g)\subset \Aut(y) \\ \iota_{v(g)}=1}} 
\frac{\Tr(\clubsuit(g))}
{|C(g)| \prod_{j=1}^{n-n_{v(g)}} \Gamma(1-f_{g,j})} 
[\pt]_{v(g)} \cap \tau_{\rm tw}  
\end{align*}  
in the large radius limit. This asymptotic is exact 
if $y\notin \cX_v$ for all $v$ with 
$\codim \cX_v = n-n_v \ge 3$ 
or equivalently, $A_{(y,V)} \in 
\Ker(H^2(\cX))\cap W_{-n+2}\cap \cV_{\Z,1}^\cX$. 
\end{example}

\subsection{Ruan's conjecture with integral structure} 
Yongbin Ruan's crepant resolution conjecture states that 
when $Y$ is a crepant resolution of  
the coarse moduli space $X$ of a Gorenstein orbifold $\cX$, 
\[
\pi\colon Y \to X, \quad \pi^*(K_X) = K_Y,  
\]
the quantum cohomology for $Y$ and the orbifold quantum 
cohomology for $\cX$ are related by analytic continuation
in the quantum parameters. 
See \cite{ruan-crc,bryan-graber,coates-ruan} for references. 
In the joint work \cite{CIT:I} 
with Coates and Tseng, 
in some examples of toric wall-crossings, 
we found the picture\footnote{
The symplectic transformation here also appeared 
in the work of Aganagic-Bouchard-Klemm \cite{ABK} 
and was also conceived by Ruan himself.} that  
the A-model \seminf VHS's of $Y$ and $\cX$ are 
connected by analytic continuation 
and that the two \seminf VHS's will match under 
a certain linear symplectic transformation 
$\U\colon \cH^\cX \to \cH^Y$.  
This symplectic transformation $\U$ encodes 
all the information on relationships between 
the genus zero Gromov-Witten theories of $\cX$ and $Y$. 
We refer the reader to \cite{coates-ruan} 
for a detailed discussion on 
the symplectic transformation and relationships 
to other versions of Ruan's conjecture. 
In this section, we incorporate integral structures 
into this picture and propose a possible relationship between 
the classical McKay correspondence and Ruan's conjecture. 

Mirror symmetry and the integral structure calculation 
in Section \ref{sec:intstr_via_toricmirrors} 
suggest the following refined picture 
involving $K$-groups: 
\begin{itemize}
\item[(a)] There exist ``natural" integral structures 
on the (algebraic) A-model \seminf VHS of $\cX$ and $Y$. 
The corresponding integral lattices in $\cV^\cX$ and $\cV^Y$ 
are given by the images of 
the $K$-groups of topological 
(resp. algebraic) orbifold vector bundles: 
\[
\Psi^\cX \colon K(\cX) \to \cV^\cX, \quad 
\Psi^Y \colon K(Y) \to \cV^Y.  
\]
In the discussion below, we do not need to assume that 
$\Psi^\cX$ and $\Psi^Y$ are defined by the 
same formula as $\Psi$ in the $\hGamma$-integral structure,  
but we assume that they satisfy the same conditions (i), (ii), (iii) 
in Definition-Proposition \ref{def-prop:A-model_int} 
as the $\hGamma$-integral structure satisfies.  

\item[(b)] 
There exists an isomorphism of $K$-groups 
\[
\U_K \colon K(\cX) \cong K(Y) 
\]
which preserves the Mukai pairing 
(as given in Definition-Proposition 
\ref{def-prop:A-model_int}) 
and commutes with the tensor by 
a topological (resp. algebraic) line bundle $L$ 
on the coarse moduli space of $\cX$, 
$\U_K(L \otimes \cdot) = \pi^*(L)\otimes \U_K(\cdot)$. 

\item[(c)] 
Via $\Psi^\cX$ and $\Psi^Y$, 
$\U_K$ induces an isomorphism 
$\U_\cV\colon \cV^\cX\cong \cV^Y$ preserving the pairing. 
By (\ref{eq:solutionmap_z}), 
$\cV^\cX$ (resp. $\cV^Y$) is identified with  
the space of multi-valued flat sections of a flat bundle 
$(\sfH^\cX,\hatnabla_{z\partial_z})$ 
(resp. $(\sfH^Y,\hatnabla_{z\partial_z})$) over $\C^*$. 
Because $\U_\cV$ commutes with the monodromy transformation 
in $z$, $\U_\cV$ induces a map of flat bundles 
$\U\colon (\sfH^\cX,\hatnabla_{z\partial_z}) 
\to (\sfH^Y,\hatnabla_{z\partial_z})$. 
This is considered as a $\C\{z,z^{-1}\}$-linear 
symplectic isomorphism $\U \colon \cH^\cX\to \cH^Y$ 
(with respect to the symplectic form (\ref{eq:pairing_H})) 
making the following diagram commute: 
\[
\begin{CD}
K(\cX) @>{\U_K}>> K(Y) \\ 
@V{z^{-\mu}z^{\rho}\Psi^\cX}VV  @VV{z^{-\mu}z^\rho \Psi^Y}V  \\
\Gamma(\widetilde{\C^*},\sfH^\cX) @>{\U}>> 
\Gamma(\widetilde{\C^*},\sfH^Y). 
\end{CD} 
\]
The isomorphism $\U$ so defined 
sends the A-model \seminf VHS 
$\F^\cX_\tau\subset \cH^\cX$ of $\cX$ to 
that $\F^Y_\tau \subset \cH^Y$ of $Y$, \emph{i.e.} 
\[
\U (\F^\cX_\tau) = \F^Y_{\Upsilon(\tau)} 
\]
where $\Upsilon$ is a map 
from a subdomain of $H_{\rm orb}^*(\cX)$ 
to a subdomain of $H^*(Y)$ where the quantum cohomology 
of $\cX$ and $Y$ can be analytically continued respectively.  
\end{itemize} 

\begin{remark}
The isomorphism of $K$-groups (or even the equivalence 
of derived categories of coherent sheaves) 
are studied in the context of McKay correspondence 
and usually given by a Fourier-Mukai transformation. 
We expect that the isomorphism $\U_K$ in (b) will be 
given as a Fourier-Mukai transformation. 
In fact, 
Borisov-Horja \cite{borisov-horja-FM} showed  
that an analytic continuation of solutions to 
the GKZ-system corresponds to a Fourier-Mukai 
transformation between $K$-groups of 
toric Calabi-Yau orbifolds. 
We can also ask if the integral structures 
have the same ``functoriality" as the $K$-theory has.  
In this viewpoint, the map $\Psi^\cX$ will play a role of 
``natural transformation" from 
$K$-theory integral structures 
to quantum cohomology. 
\end{remark} 

We discuss what follows from this picture, 
assuming $\cX$ is weak Fano, \emph{i.e.} $c_1(\cX)$ is nef. 
Since this picture contains 
the suggestions we made in \cite{CIT:I},  
it in particular implies that 
quantum cohomology of $\cX$ and $Y$ 
are identified via $\Upsilon$ and $\U$ 
\emph{as a family of algebras} 
(\emph{not necessarily as Frobenius manifolds}). 
However, the large radius limit points for 
$\cX$ and $Y$ are not identified under $\Upsilon$, 
so we need analytic continuations indeed. 
We will not repeat the argument in \cite{CIT:I, coates-ruan} 
on the isomorphism of quantum cohomology algebras here. 
Let us first observe that integral periods of $\cX$ and $Y$ 
in the conformal limit match under $\Upsilon$ and $\U$ 
(see (\ref{eq:matching_integralperiods}) below). 
Because $\U_K$ commutes with the tensor 
by a line bundle pulled back from $X$, 
it follows that $\U$ must commute with $H^2(\cX)$ 
((b), Conjecture 4.1 in \cite{coates-ruan};  
(b), Section 5 in \cite{CIT:I}), \emph{i.e.} 
\begin{equation}
\label{eq:U_commutes_H2}
\U (\alpha\cup \cdot) = \pi^*(\alpha) \cup \U(\cdot), \quad 
\alpha \in H^2(\cX).   
\end{equation} 
By definition, $\U$ commutes with 
$\hatnabla_{z\partial_z}$-action 
on $\cH^\cX$ and $\cH^Y$. 
Hence by (\ref{eq:U_commutes_H2}) and (\ref{eq:conn_z-direction}),   
\[
\U \circ (z\partial_z + \mu^\cX) = (z\partial_z +\mu^Y) \circ \U 
\]
\emph{i.e.} $\U$ is degree-preserving. 
Since $\cX$ is weak Fano, 
by the discussion leading to 
Theorem 8.2 in \cite{coates-ruan} 
(essentially using Lemma 5.1 \emph{ibid.}),  
we know that $\Upsilon$ should map $H^2_{\rm orb}(\cX)$ 
to $H^2(Y)$:  
\[
\Upsilon(H^2_{\rm orb}(\cX)) \subset H^2(Y). 
\]
The conformal limit $\tau \to \tau -s \rho$, $\Re(s)\to \infty$ 
on $H^2_{\rm orb}(\cX)$ should also be mapped to 
the conformal limit on $H^2(Y)$ under $\Upsilon$ 
because this flow is generated by the Euler vector field 
and the two Euler vector fields should match under $\Upsilon$  
(the Euler vector field is a part of the data of \seminf VHS).  
Therefore, by (\ref{eq:U_commutes_H2}) and 
$\pi^*c_1(\cX) = c_1(Y)$, 
the \seminf VHSs appearing in the conformal limit 
(see Definition \ref{def:CY_limit_VHS}) 
also match under $\U$: 
\[
\U(\F_{\tau}^{\cX, {\rm CY}}) = \F_{\Upsilon(\tau)}^{Y, {\rm CY}}.  
\] 
In particular, 
the finite dimensional VHS's 
$(F^{\cX,\bullet}_\tau\subset H_0^\cX)$, 
$(F^{Y,\bullet}_\tau \subset H_0^Y)$ 
associated with these also match:  
\[
\U(F^{\cX,\bullet}_{\tau}) = F^{Y,\bullet}_{\Upsilon(\tau)}, \quad 
\U \colon \cH^\cX \supset H_0^{\cX} 
\to H_0^{Y} \subset \cH^Y.  
\] 
We used that $\U$ induces a map from 
$H_0^{\cX} = \Ker(z\partial_z + \mu^{\cX})$ 
to $H_0^Y = \Ker(z\partial_z + \mu^Y)$.  
Let $L$ be an ample line bundle on $X$.  
Consider the weight filtration $W_k^\cX$ 
(\ref{eq:weightfiltr_ampleLoncoarse}) 
on $\cV^\cX$ defined by 
the Galois action logarithm 
$-2\pi\iu c_1(L)$. 
The first term $W_{-n}^\cX$ of the weight filtration 
is given by $\Image(c_1(L)^n)$. 
Thus $\U_\cV(W_{-n}^\cX) = \Image(\pi^*(c_1(L))^n) 
= H^{2n}(Y)$. Note that $\pi^*(c_1(L))^n$ is non-trivial   
since $\pi \colon Y\to X$ is birational. 
Therefore, for the weight filtration $W_k^Y$ on $\cV^Y$ 
(defined similarly by the Galois action logarithm 
corresponding to an ample line bundle on $Y$), 
we have 
\[
\U_{\cV}(W_{-n}^\cX) = W_{-n}^Y. 
\]
As we did before, we use an integral vector 
$A_0^\cX$ (unique up to sign)  
in $W_{-n}^\cX\cap \cV_{\Z,1}^\cX$ 
to normalize a generator $\Omega^{\cX}_\tau 
\in F^{\cX,n}_\tau$ and then 
use $A_0^Y:=\U_\cV(A_0^\cX)\in W_{-n}^Y \cap \cV_{\Z,1}^Y$ 
to normalize $\Omega^Y_\tau \in F^{Y,n}_\tau$  
(see (\ref{eq:normalization_Omega})).  
Because the $\U$ preserves the pairing, we have
\[
\U(\Omega_\tau^\cX) = \Omega_{\Upsilon(\tau)}^Y.  
\]
When $A^\cX \in \cV_{\Z,1}^\cX = \cV_{\Z}^\cX\cap \Ker(c_1(\cX))$, 
the corresponding vector $A^Y =\U_\cV(A^\cX)$ 
belongs to $\cV_{\Z}^Y \cap \Ker(\pi^*(c_1(\cX))) 
= \cV_{\Z,1}^Y$ and the integral periods match 
\begin{equation}
\label{eq:matching_integralperiods}
(z^{-\mu}A^\cX, \Omega_\tau^\cX)_{H_0^\cX} = 
(z^{-\mu}A^Y, \Omega_{\Upsilon(\tau)}^Y)_{H_0^Y}.  
\end{equation} 

Now we can make predictions   
on the specialization values of quantum parameters. 
$\Ker(\pi^*H^2(\cX))\subset \cV^Y$ 
is defined over $\Q$ since this is the intersection of 
$\Ker(\id-G^\cV(\pi^*\xi))$ over integral class $\xi\in H^2(X,\Z)$. 
Take a basis $A_0^Y,A_1^Y,\dots,A_\natural^Y$ of 
$\Ker(\pi^*H^2(\cX))\cap W_{-n+2}^Y \cap \cV^Y_{\Z,1}$.  
These generate a full lattice in $H^{2n}(Y) \oplus 
(H^{2n-2}(Y) \cap \Ker \pi_*)$ over $\C$.  
By Proposition \ref{prop:integralperiods_basic}, 
the integral periods for 
$A_1^Y,\dots,A_\natural^Y$ are of the form:
\begin{align}
\label{eq:exccurve_intperiod}
(z^{-\mu}A_i^Y, \Omega_{\tau}^Y)_{H_0^Y} = 
a_0^{-1} a_i - \frac{1}{2\pi\iu} [C_i]\cap \tau,
\quad  a_i :=(A_i^Y, 1),  
\end{align} 
for some $[C_i]\in H_2(Y,\Z)\cap \Ker\pi_*$. 
$[C_1],\dots,[C_\natural]$ are a $\Q$-basis of 
$H_2(Y,\Q)\cap \Ker\pi_*$, so these form 
an affine co-ordinate system on $H^2(Y)/\Image\pi^*$. 
The integral vector $A_i^\cX$ corresponding to $A_i^Y$ 
belongs to $\Ker(H^2(\cX)) \cap \cV_{\Z,1}^\cX$. 
From (\ref{eq:matching_integralperiods}),  
Proposition \ref{prop:integralperiods_basic} and 
Example \ref{ex:hGamma}, 
our picture leads to the following prediction: 

\begin{itemize}
\item[(i)] 
Assume that $H^*(\cX)$ is generated by $H^2(\cX)$ 
and that the condition (\ref{eq:sep_trivial}) is satisfied.  
Then the integral periods of $Y$ of the form 
(\ref{eq:exccurve_intperiod}) take rational values 
at the large radius limit point of $\cX$. 

\item[(ii)] 
Assume in addition to (i) that 
the condition (\ref{eq:curveclass_overQ}) holds 
for $Y$. Then $a_0^{-1} a_i$ above is rational, so 
the ``quantum parameter" $q_C := \exp([C]\cap \tau)$ with 
$[C]\in H_2(Y,\Z)\cap \Ker\pi_*$ for $Y$ 
specializes to a root of unity at the 
large radius limit point of $\cX$. 


\item[(iii)] 
Let $C\subset Y$ be a smooth rational curve in the 
exceptional set. 
Assume in addition to (iii) that 
$\U_K^{-1}$ sends $[\cO_C(-1)]\in K(Y)$ to 
$[\cO_x\otimes V]\in K(\cX)$ for $x = \pi(C)$ and 
some representation $V$ of $\Aut(x)$. 
Then the quantum parameter $q_C$ 
specializes to 
$\exp(-2\pi\iu \dim V/|\Aut(x)|)$ 
at the large radius limit point of $\cX$. 
\end{itemize} 

For the $A_n$ singularity resolution, 
each irreducible curve in the exceptional set 
corresponds to a one-dimensional 
irreducible representation of $\Z/(n+1)\Z$ 
under McKay correspondence. 
If we use this McKay correspondence as $\U_K$, 
the prediction of specialization values 
made in (iii) is true \cite{CCIT:An}.  
Also, under the McKay correspondence, (iii) gives 
the same prediction (up to complex conjugation)  
made by Bryan-Graber \cite{bryan-graber} and 
Bryan-Gholampour \cite{bryan-gholampour}
for the ADE surface singularities   
and $\C^3/G$ with a finite subgroup $G\subset SO(3)$. 

\section{Appendix} 
\subsection{Proof of (\ref{eq:Birkhoff_withestimate})} 
\label{subsec:orderestimate_Birkhoff} 
Birkhoff's theorem implies that 
there exists an open dense neighborhood of $\unit$ 
in the loop group $LGL_N(\C)$ 
which is diffeomorphic to the product of subgroups
$L^+_1GL_N(\C)\times L^-GL_N(\C)$ \cite{pressley-segal}.  
We use the inverse function theorem for Hilbert manifolds 
to explain the order estimate in 
(\ref{eq:Birkhoff_withestimate}). 
Consider the space $LGL_N(\C)^{1,2}$ of Sobolev loops 
which consists of maps $\lambda\colon S^1 \to GL_N(\C)$ such 
that $\lambda$ and its weak derivative $\lambda'$ are 
square integrable. 
Note that this is a subgroup of 
the group of continuous loops by 
Sobolev embedding theorem 
$W^{1,2}(S^1) \subset C^0(S^1)$ and 
the multiplication theorem 
$W^{1,2}(S^1)\times W^{1,2}(S^1) \to W^{1,2}(S^1)$. 
$LGL_N(\C)^{1,2}$ is a Hilbert manifold 
modeled on the Hilbert space $W^{1,2}(S^1,\gl_N(\C))$. 
A co-ordinate chart of a neighborhood of $\unit$ is 
given by the exponential map $A(z) \mapsto e^{A(z)}$. 
Let $L^+_1GL_N(\C)^{1,2}$ be the subgroup of 
$LGL_N(\C)^{1,2}$ consisting of the boundary values 
of holomorphic maps $\lambda_+ \colon 
\{|z|<1\} \to GL_N(\C)$ satisfying $\lambda_+(0)=\unit$. 
Let $L^-GL_N(\C)^{1,2}$ be the subgroup of 
$LGL_N(\C)^{1,2}$ consisting of the boundary values 
of holomorphic maps $\lambda_- \colon 
\{|z|>1\}\cup\{\infty\} \to GL_N(\C)$. 
Notice that $W^{1,2} := W^{1,2}(S^1,\gl_N(\C))$ has 
the direct sum decomposition: 
\begin{equation}
\label{eq:Liealgdecomp_pos_neg}
W^{1,2} = W^{1,2}_+ \oplus W^{1,2}_-, \quad  
\end{equation} 
where $W^{1,2}_+$ ($W^{1,2}_-$) 
is the closed subspace of 
$W^{1,2}(S^1,\gl_N(\C))$ consisting of 
strictly positive Fourier series $\sum_{n>0} a_n z^n$ 
(non-positive Fourier series $\sum_{n\le 0} a_nz^n$ resp.) 
with $a_n\in \gl_N(\C)$. 
The subgroups $L^+_1GL_N(\C)^{1,2}$ and $L^-GL_N(\C)^{1,2}$ are 
modeled on the Hilbert spaces $W^{1,2}_+$ and 
$W^{1,2}_-$ respectively. 
Consider the multiplication map 
$L^+_1GL_N(\C)^{1,2} \times L^-GL_N(\C)^{1,2}
\to LGL_N(\C)^{1,2}$. 
The differential of this map at the identity is given by 
the sum $W^{1,2}_+ \times W^{1,2}_- \to W^{1,2}$ 
and is clearly an isomorphism. 
By the inverse function theorem for Hilbert manifolds, 
there exists a differentiable inverse map 
on a neighborhood of $\unit$.  
In the case at hand, we have  
$\|(B_t^{-1}Q_t B_t) (C_t \ov{Q}_t C_t^{-1})-\unit\|_{W^{1,2}}
= O(e^{-\epsilon t})$ as $t\to \infty$.  
Therefore, this admits the Birkhoff factorization 
(\ref{eq:Birkhoff_withestimate}) for $t\gg 0$ with 
$\|\tilde{B}_t-\unit\|_{W^{1,2}} = O(e^{-\epsilon t})$ and 
$\|\tilde{C}_t - \unit\|_{W^{1,2}} = O(e^{-\epsilon t})$.  
By Sobolev embedding, the order estimates hold also for 
the $C^0$-norm. 
(The method here does not work directly 
for the Banach manifold of continuous loops, 
since the decomposition (\ref{eq:Liealgdecomp_pos_neg}) 
is not true in this case.)

\subsection{Proof of Lemma \ref{lem:PScond}} 
\label{subsec:proof_PS} 
Let $B \subset \cMo\times \C^*$ be a compact set. 
We need to show that $B' = \{(q,z,y)\;;\; (q,z)\in B, \ y\in Y_q, \ 
\|df_{q,z}(y)\|\le \epsilon \}$ is compact. 
Assume that there exists a divergent 
sequence $\{(q_{(k)},z_{(k)}, y_{(k)})\}_{k=0}^\infty$ in $B'$, 
\emph{i.e.} any subsequence of it does not converge. 
Take an arbitrary Hermitian norm $\|\cdot\|$ on $\bN\otimes \C$. 
Note that we have 
\[
\|df_{q,z}(y)\| = \frac{1}{|z|} 
\|\sum_{i=1}^m q^{\ell_i} y^{b_i}b_i\|.   
\]
By passing to a subsequence and renumbering $b_1,\dots,b_m$, 
we can assume that $q_{(k)}$ and $z_{(k)}$ converge and that 
$|y_{(k)}^{b_1}| \ge |y_{(k)}^{b_2}| \ge \cdots \ge |y_{(k)}^{b_m}|$
for all $k$. 
Since $0$ is in the interior of $\hat{S}$, 
there exist $c_i>0$ such that $\sum_{i=1}^m c_i b_i=0$. 
Hence $\prod_{i=1}^m |y_{(k)}^{b_i}|^{c_i} = 1$. 
Because $y_{(k)}$ diverges, 
we must have $\lim_{k\to\infty} |y_{(k)}^{b_1}| = \infty$. 
Since $\|df_{q_{(k)},z_{(k)}}(y_{(k)})\|$ is bounded, we have 
\[
0 = \lim_{k\to \infty} \frac{|z_{(k)}|}{|y_{(k)}^{b_1}|} 
\|df_{q_{(k)},z_{(k)}}(y_{(k)})\|
= \lim_{k\to \infty} \| \sum_{i=1}^m q_{(k)}^{\ell_i} y_{(k)}^{b_i-b_1} b_i \|.  
\]
Because $|y_{(k)}^{b_i-b_1}|\le 1$, 
by passing to a subsequence again, 
we can assume that $y_{(k)}^{b_i-b_1}$ converges to $\alpha_i\neq 0$ 
for all $1\le i\le l$ and $y_{(k)}^{b_i-b_1}$ goes to $0$ for 
$i> l$. Then we have 
\[
0= \sum_{i=1}^l \tilde{q}^{\ell_i} \alpha_i b_i, \quad 
\tilde{q}= \lim_{k\to\infty} q_{(k)} \in \cMo.  
\]
Put $\xi_{(k),i}:=\log y_{(k),i}$. 
By choosing a suitable branch of the logarithm, 
we can assume that 
$\lim_{k\to \infty} \pair{\xi_{(k)}}{b_i-b_1} = \log \alpha_i$ 
for $1\le i\le l$ 
and $\lim_{k\to \infty} \pair{\Re(\xi_{(k)})}{b_i-b_1} = -\infty$ 
for $i> l$. 
Let $V$ be the $\C$ subspace of $\bN\otimes \C$ 
spanned by $b_i-b_1$ with $1\le i\le l$. 
Take the orthogonal decomposition $\bN\otimes \C \cong V\oplus V^\perp$
and write $\xi_{(k)}=\xi_{(k)}'+\xi_{(k)}''$, 
where $\xi'_{(k)}\in V$ and $\xi''_{(k)}\in V^\perp$. 
Then $\xi_{(k)}'$ converges to some $\xi'\in V$. 
Putting $\tilde{y}_i = \exp(\xi'_i)$, we have 
$\tilde{y}^{b_i-b_1}=\alpha_i$ for $1\le i\le l$ and so 
\begin{equation}
\label{eq:crit_W_Delta}
\sum_{i=1}^l \tilde{q}^{\ell_i} \tilde{y}^{b_i} b_i  
= \tilde{y}^{b_1}
(\sum_{i=1}^l \tilde{q}^{\ell_i} \tilde{y}^{b_i-b_1} b_i) 
=0.    
\end{equation} 
On the other hand, for a sufficiently big $k$, 
$\pair{\Re(\xi''_{(k)})}{b_i-b_1}=0$ for $1\le i\le l$ 
and $\pair{\Re(\xi''_{(k)})}{b_i-b_1}<0$ for $i>l$. 
This means that $b_1,\dots,b_l$ are on some face $\Delta$ of $\hat{S}$. 
But the equation (\ref{eq:crit_W_Delta}) shows that 
$\tilde{y}$ is a critical point of $W_{\tilde{q},\Delta}$.  
This contradicts to the assumption that $W_{\tilde{q}}$ 
is non-degenerate at infinity.

\bibliographystyle{amsplain}

\end{document}